\author{Casey Rodriguez}
\address{Department of Mathematics\\
  University of Chicago\\
  5734 S. University Avenue \\
  Chicago, IL, 60637}
\email{c-rod216@math.uchicago.edu}
\numberwithin{equation}{section}
\DeclareMathOperator{\arcsinh}{arcsinh}
\newcommand{\R}{\mathbb R}
\newcommand{\C}{\mathbb C}
\newcommand{\N}{\mathbb N}
\newcommand{\Z}{\mathbb Z}
\newcommand{\lims}{\varlimsup}
\newcommand{\M}{\mathcal M}
\newcommand{\s}{\mathbb S}
\newcommand{\vphi}{\varphi}
\newcommand{\ra}{\rangle}
\newcommand{\la}{\langle}
\newcommand{\rar}{\rightarrow}
\newcommand{\p}{\partial}
\newcommand{\al}{\alpha}
\newcommand{\de}{\delta}
\newcommand{\e}{\epsilon}
\newcommand{\tht}{\theta}
\newcommand{\lam}{\lambda}
\newcommand{\h}{\mathcal H}
\newcommand{\om}{\omega}
\newcommand{\cl}{\mathcal}
\newcommand{\supp}{\mbox{supp }}
\newtheorem{lem}{Lemma}[section]
\newtheorem{thm}[lem]{Theorem}
\newtheorem{ppn}[lem]{Proposition}
\newtheorem{defn}[lem]{Definition}
\newtheorem{clm}[lem]{Claim}
\newtheorem{cor}[lem]{Corollary}
\title{Soliton Resolution for Equivariant \\ Wave Maps on a Wormhole: I}
\begin{document}

\begin{abstract}
In this paper, we initiate the study of finite energy equivariant wave maps from the $(1+3)$--dimensional spacetime $\R \times
(\R \times \s^2) \rar \s^3$ where the metric on $\R \times (\R \times \s^2)$ is given by  
\begin{align*} 
ds^2 = -dt^2 + dr^2 + (r^2 + 1) \left ( d \theta^2 + \sin^2 \theta d \varphi^2 \right ), \quad t,r \in \R, 
(\theta,\varphi) \in \s^2. 
\end{align*}
The constant time slices are each given by the Riemannian manifold $\M := \R \times \s^2$
with metric 
\begin{align*}
ds^2 = dr^2 + (r^2 + 1) \left ( d \theta^2 + \sin^2 \theta d \varphi^2 \right ).
\end{align*}
The Riemannian manifold $\M$ contains two asymptotically Euclidean ends at $r \rightarrow \pm \infty$ that
are connected by a spherical throat of area $4 \pi^2$ at $r = 0$.  The spacetime $\R \times \M$ is a simple example 
of a wormhole geometry in general relativity.  In this work we will consider 1--equivariant or corotational wave maps.  Each corotational wave map can be indexed by its topological degree $n$.  For each $n$, there exists 
a unique energy minimizing corotational harmonic map $Q_{n} : \M \rightarrow \s^3$ of degree $n$.  In this work, we show that modulo a free 
radiation term, every corotational wave map of degree $n$ converges strongly to $Q_{n}$.  This resolves 
a conjecture made by Bizon and Kahl in \cite{biz2} in the corotational case.

\end{abstract}

\maketitle




\section{Introduction}

There has been an increased interest in recent years in the study of geometric nonlinear wave equations.  One of the fundamental 
models considered is the following wave map model.  Let $(M,g)$ be a $(1+d)$--dimensional Lorentzian spacetime, and let $(N,h)$ be a 
Riemannian manifold.  A wave map $U: M \rar N$ is a formal critical point of the action functional
\begin{align}\label{s01}
\cl S(U,\p U) = \frac{1}{2} \int_M g^{\mu \nu} \la \p_{\mu} U, \p_{\nu} U \ra_h dg.
\end{align}
In local coordinates, the Euler--Lagrange equations associated to $\cl S$ is the following system of semilinear wave equations 
\begin{align}\label{s02}
\Box_g U^i + \Gamma^i_{jk}(U)  \p_{\mu} U^j \p_{\nu} U^k g^{\mu \nu} = 0, 
\end{align}
where $\Box_g := \frac{1}{\sqrt{-g}} \p_\mu(g^{\mu \nu} \sqrt{-g} \p_\nu)$ is the wave operator associated to the 
background spacetime $(M,g)$ and $\Gamma^{i}_{jk}$
are the Christoffel symbols associated to the target $(N,h)$.  The system  is collectively referred to as the 
\emph{wave map system} and is also known in the physics literature as the classical nonlinear $\sigma$--model.  
A particular case that has been intensely studied is the case when $M$ is $(1+d)$--dimensional 
Minkowski space $\R^{1+d}$ with the flat metric (see the classical reference \cite{shst} and the recent review \cite{sch}).  From a mathematical point of view, a wave map $U: \R^{1+d} \rar N$ 
can be considered as a geometric generalization of the
free wave equation on Minkowski space.  Indeed, if we take $N = \R$ with , then the wave map equations \eqref{s02} reduce to the free wave equation on Minkowski space
\begin{align*}
\p_t^2 U - \Delta U = 0, \quad (t,x) \in \R^{1+d}.
\end{align*}
From a physical point 
of view, wave maps $U: \R^{1+3} \rar \s^3$ describe fields which approximate a low energy regime of QCD (see \cite{geba1} and 
\cite{geba2} for nice introductions to this perspective).  

The case of a curved spacetime is relatively unexplored. In this work, we study corotational wave maps on a \emph{curved} background.  In particular, we consider wave maps $U: \R \times (\R \times \s^2) \rar \s^3$ where the background metric is
given by 
\begin{align}\label{s02b}
ds^2 = -dt^2 + dr^2 + (r^2 + 1)(d\theta^2 + \sin^2 \tht d\vphi^2 ), \quad t,r \in \R, (\theta,\varphi) \in \s^2.
\end{align}
The constant time slices correspond to the Riemannian manifold $\M := \R \times \s^2$ with metric 
\begin{align}\label{s02c}
ds^2 = dr^2 + (r^2 + 1)(d\theta^2 + \sin^2 \tht d\vphi^2 ), \quad r \in \R, (\theta,\varphi) \in \s^2.
\end{align}
Heuristically, $\M$ has two asymptotically Euclidean ends at $r = \pm \infty$ connected by a spherical throat 
at $r = 0$.  In the general relativity literature, the spacetime $\R \times \M$ is a simple example of a \lq wormhole geometry.' 
A corotational wave map $U: \R \times \M \rar \s^3$ is given by the ansatz
\begin{align}\label{s03}
U(t,r,\theta,\vphi) = (\psi(t,r),\theta,\varphi) \in \s^3,
\end{align}
where $\psi$ is the azimuth angle on $\s^3$.  For $U$ given by \eqref{s03}, the action $\cl S$ given by \eqref{s01} reduces
to 
\begin{align*}
\cl S(\psi, \p \psi) = \frac{1}{2} \int_\R \int_\R \left [ -|\p_t \psi(t,r)|^2 + |\p_r \psi(t,r)|^2 + 
\frac{2\sin^2 \psi(t,r)}{r^2 + 1} \right ] (r^2 + 1) dr dt,
\end{align*}
and the wave map equation \eqref{s02} reduces to the single semilinear wave equation
\begin{align}
\begin{split}\label{s04}
&\p_t^2 \psi - \p_r^2 \psi - \frac{2r}{r^2 + 1} \p_r \psi + \frac{\sin 2 \psi}{r^2 + 1} = 0, \quad (t,r) \in \R \times \R,\\
&\vec \psi(0) = (\psi_0,\psi_1).
\end{split}
\end{align}
Here we use the notation $\vec \psi(t) = (\psi(t,\cdot), \p_t \psi(t,\cdot))$.  In this work, solutions $\psi$ to \eqref{s04}
will be referred to as corotational \emph{wave maps on a wormhole}. The equation \eqref{s04} has the following conserved energy along the flow:
\begin{align*}
\mathcal E(\vec \psi(t)) := \frac{1}{2} \int_\R \left [ |\p_t \psi(t,r)|^2 + |\p_r \psi(t,r)|^2 + 
\frac{2\sin^2 \psi(t,r)}{r^2 + 1}  \right ] (r^2 + 1) dr = \mathcal E(\vec \psi(0)). 
\end{align*}
In order for the initial data to have finite energy, we must have for some $m,n \in \Z$, 
\begin{align*}
\psi_0(-\infty) = m\pi \quad \mbox{and} \quad \psi_0(\infty) = n\pi. 
\end{align*}
For a finite energy solution $\vec \psi(t)$ to \eqref{s04} to depend continuously on $t$, we must have that $\psi(t,-\infty) = m
\pi$ and $\psi(t,\infty) = n\pi$ for all $t$.  In this work, we will, without loss of generality, fix $m = 0$ and assume 
$n \in \N \cup \{0\}$.  Thus, we only consider wave maps which send the left Euclidean end at $r = -\infty$ to the 
north pole of $\s^3$.  The integer $n$ is referred to as the topological degree of the map $\psi$ and, heuristically, 
represents the number of times $\M$ gets wrapped around $\s^3$ by $\psi$.  For each $n \in \N \cup \{0\}$, we denote the 
space of finite energy pairs of degree $n$ by 
\begin{align*}
\mathcal E_n := \left \{ (\psi_0,\psi_1) : \mathcal E(\psi_0,\psi_1) < \infty, \quad \psi_0(-\infty) = 0, \quad \psi_0(\infty) = n\pi
\right \}.
\end{align*}
In this work, we classify the long time dynamics of all finite energy corotational wave maps on a wormhole.

There has been widespread belief in the mathematical physics community that for most globally well--posed dispersive equations, a solution
asymptotically decouples into a coherent element and a purely dispersive element.  The coherent element is nonlinear in nature
and is determined by the static solutions and symmetries of the equation (i.e. solitons).  The purely dispersive element
is a solution to the underlying linear equation.  This heuristic belief goes by the name of the \emph{soliton resolution 
	conjecture}.  There are features that wave maps on a wormhole exhibit that make it an interesting model in which to study this phenomenon.
The first feature is that showing global well--posedness, i.e. every solution $\vec \psi(t)$ to \eqref{s04} exists for all $t \in \R$, 
is simple.  The geometry of the domain removes the possibility of singularity formation at the origin and renders the equation 
essentially energy--subcritical.  Another feature of this model is the abundance of finite energy static solutions to 
\eqref{s04} which also go by the name
of harmonic maps.  In particular, it can be shown that for every $n \in \N \cup \{0\}$, there exists a unique solution $Q_n$
to the static equation 
\begin{align}
\begin{split}\label{s04b}
&\p_r^2 F + \frac{2r}{r^2 + 1} \p_r F - \frac{\sin 2 F}{r^2 + 1} = 0, \\
&F(-\infty) = 0, \quad F(\infty) = n\pi,
\end{split}
\end{align}
and $Q_n$ has finite energy (see Section 2).  Moreover, each $Q_n$ is linear stable (see Section 5).  In \cite{biz2}, Bizon and Kahl gave numerical evidence for the following soliton resolution 
conjecture for this model: for every $n \in \N \cup \{0\}$ and for any $(\psi_0,\psi_1) \in \mathcal E_n$, there exist 
a unique globally defined solution $\psi$ to \eqref{s04} and solutions 
$\vphi^{\pm}_L$ to the underlying linear equation 
\begin{align}\label{s05}
\p_t ^2 \vphi - \p_r^2 \vphi - \frac{2r}{r^2 + 1} \p_r \vphi + \frac{2}{r^2 + 1} \vphi = 0, 
\end{align}
such that 
\begin{align*}
\vec \psi(t) = (Q_n, 0) + \vec \vphi_L^{\pm}(t) + o(1),
\end{align*}
as $t \rar \pm \infty$.  In this work we verify this conjecture.  As alluded to in the initial description of the background
$\R \times \M$, the spacetime $\R \times \M$ and Riemannian manifold $\M$ have appeared in contexts outside of 
this work.  For example, $\R \times \M$ has been considered in 
the general relativity as a prototype geometry representing a wormhole
since it was first introduced by Ellis in the 1970's and later popularized by Morris and Thorne in the 1980's 
(cf. \cite{mt} and \cite{fjtt} and the references therein).  Also, the two dimensional version of $\M$ given by 
$\M^2 = \R \times \s^1$ with metric
\begin{align*}
ds^2 = dr^2 + (r^2 + 1) d\vphi^2, 
\end{align*}
is simply an intrinsic description of the classical catenoid surface.

We now turn to stating our main result.  In what follows we use the following notation.  If $r_0 \geq -\infty$ and 
$w(r)$ is a positive continuous function on $[r_0,\infty)$, then we define
\begin{align*}
\| (\psi_0,\psi_1) \|_{\h([r_0,\infty); w(r)dr)}^2 :=
\int_{r_0}^\infty \left [ |\psi_0(r)|^2 + |\psi_1(r)|^2 dr \right ] w(r) dr. 
\end{align*}
The Hilbert space $\h([r_0,\infty); w(r)dr)$ is then defined to be the completion of pairs of smooth functions with compact 
support in $(r_0,\infty)$ under the norm previously defined.  Let $n \in \N \cup \{0\}$ be a fixed topological degree. 
In the $n=0$ case, the natural space to place the solution $\vec \psi(t)$ to \eqref{s04} in is the \emph{energy space}
$\h_0 := \h((-\infty,\infty); (r^2 + 1)dr)$.  Indeed, it is easy to show that $\| \vec \psi \|_{\mathcal E_0} \simeq
\| \vec \psi \|_{\h_0}$.  For $n \geq 1$, we measure distance relative to $(Q_n,0)$ and define $\h_n := \mathcal E_n - (Q_n,0)$
with \lq norm'
\begin{align*}
\| \vec \psi \|_{\h_n} := \| \vec \psi - (Q_n,0) \|_{\h_0}.
\end{align*}
Note that $\psi(r) - Q_n(r) \rar 0$ as $r \rar \pm \infty$. The main result of this work is the following.  

\begin{thm}\label{t01}
	For any energy data $(\psi_0,\psi_1) \in \mathcal E_n$, there exists a unique globally defined solution $\vec \psi(t) 
	\in C(\R; \h_n)$ which scatters forwards and backwards in time to the harmonic map $(Q_n,0)$, i.e. there exist
	solutions $\varphi^{\pm}_L$ to the linear equation \eqref{s05} such that  
	\begin{align*}
	\vec \psi(t) = (Q_n, 0) + \vec \vphi_L^{\pm}(t) + o_{\h_0}(1),
	\end{align*}
	as $t \rar \pm \infty$. 
\end{thm}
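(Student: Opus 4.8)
The plan is to run the Kenig--Merle concentration--compactness/rigidity scheme adapted to this curved background, following the broad strategy used for energy-critical wave maps on $\R^{1+2}$ (Côte--Kenig--Lawrie--Schlag) and for radial wave equations on hyperbolic-type manifolds. Because the geometry of $\M$ removes the origin singularity and makes \eqref{s04} effectively subcritical, I expect small-data global well-posedness and scattering for \eqref{s05}-perturbations to be comparatively soft; the real content is the large-data rigidity. First I would set up the Cauchy theory: rewrite \eqref{s04} for $\vec\psi(t)=(Q_n,0)+\vec u(t)$, so that $u$ solves a wave equation with the linear operator appearing in \eqref{s05} (the linearization of the harmonic-map operator about $Q_n$) plus a superlinear nonlinearity, and establish local well-posedness in $\h_0$, a small-data scattering result, a perturbation lemma, and a finite-time blowup criterion stated in terms of a suitable Strichartz-type norm on $\R\times\M$. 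Here I would use the linear stability of $Q_n$ (Section 5) to know the linearized operator has no negative spectrum and no threshold resonance/eigenvalue, so the linear flow obeys the same dispersive and Strichartz estimates as \eqref{s05}; the key technical input is a local energy decay / resolvent bound for this operator on $\M$, exploiting that $\M$ has two Euclidean ends and a compact throat, which gives integrated local energy decay with no trapping.

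Next I would prove global existence for all of $\mathcal E_n$. Since the equation is essentially energy-subcritical and $Q_n$ is an energy minimizer in its degree class, the energy provides an a priori bound on $\|\vec u(t)\|_{\h_0}$ that does not degenerate, so the blowup criterion is never triggered and every solution is global in $C(\R;\h_n)$; this disposes of the existence and uniqueness half of Theorem~\ref{t01}. For the scattering statement I would argue by contradiction: if scattering to $(Q_n,0)$ fails for some data, then by a profile decomposition adapted to the linearized flow on $\R\times\M$ (Bahouri--Gérard / Keraani style, accounting for the two ends via spatial translations $r\mapsto r-r_k$ that send bubbles out to one of the Euclidean ends, where the equation limits to the flat $3$D radial wave map / free wave equation) together with the perturbation lemma, one extracts a nonzero \emph{critical element}: a nonscattering solution $\vec u_\ast$ whose trajectory $\{\vec u_\ast(t):t\in\R\}$ is precompact in $\h_0$ modulo the (here trivial, since there is no scaling and translations cost energy at the throat) symmetry group.

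The rigidity step is where I expect the main difficulty, and it splits according to where the compact trajectory can "live." If the critical element concentrates at one of the asymptotically Euclidean ends, I would rule it out by comparison with the known soliton resolution / rigidity for equivariant wave maps $\R^{1+3}\to\s^3$ (equivalently, after the standard substitution, a subcritical radial semilinear wave equation on $\R^{3}$), using that a nontrivial finite-energy harmonic map cannot sit at infinity along $\M$. If instead the trajectory stays in a compact region of $\M$, I would derive a contradiction via a virial/Morawetz identity: multiply the equation for $u$ by a well-chosen multiplier built from the $r$-weight $(r^2+1)$ (the analogue of $r\p_r + \tfrac{d-1}{2}$, suitably truncated), integrate in space-time over the compact core, and use the absence of trapped geodesics on $\M$ together with the linear stability (coercivity) of the linearized energy about $Q_n$ modulo the natural symmetries. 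The monotone quantity forces $\vec u_\ast\equiv 0$, contradicting nontriviality. The delicate points will be: (i) getting the profile decomposition to correctly see both ends and to respect the linearized (potential-perturbed) flow rather than the free flow, including orthogonality of free-energy and potential-energy parts; (ii) proving the linear dispersive/Strichartz and local-energy-decay estimates for the linearization around $Q_n$ on $\M$ with sufficiently large negative power of $t$, which is needed both for small-data theory and to close the compactness argument; and (iii) making the Morawetz multiplier genuinely coercive in the presence of the $Q_n$-potential, for which I would lean on the spectral information from Section~5. Once the critical element is excluded, the contradiction shows the Strichartz norm of every solution is finite, which upgrades via the perturbation lemma to scattering to $(Q_n,0)$ in the sense stated, completing the proof of Theorem~\ref{t01}.
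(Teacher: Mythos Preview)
Your overall architecture (Kenig--Merle scheme, linearize about $Q_n$, small-data theory via Strichartz for the perturbed linear flow, profile decomposition, rigidity) matches the paper. However, two of your key technical claims are incorrect and would derail the argument as written.

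First, you assert that $\M$ has ``no trapping'' and that this yields integrated local energy decay. In fact the wormhole \emph{does} have trapping: the throat $\{r=0\}$ is a closed geodesic for the spatial metric, and this is precisely why Strichartz estimates for waves on $\M$ fall outside the standard nontrapping literature. The paper confronts this head-on: it restricts to radial data, conjugates $-\Delta_g$ to a one-dimensional Schr\"odinger operator on the line, and derives the needed frequency-localized dispersive estimate by a refined Jost-function/oscillatory-integral analysis in the style of Schlag--Soffer--Staubach. Your proposed route to Strichartz via ``no trapping'' does not exist here, and a Morawetz/virial inequality built on geodesic convexity will run into the same obstruction.

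Second, your rigidity step is the wrong mechanism for this problem. You propose (a) a Morawetz multiplier in the core region and (b) comparison at the ends with equivariant wave maps $\R^{1+3}\to\s^3$. For (a), beyond the trapping issue above, there is no scaling or translation symmetry to quotient out, so a virial argument does not obviously force $u_\ast\equiv 0$; nothing like the standard convexity-in-time identity is available. For (b), the limiting flat problem you invoke is energy-\emph{supercritical} (wave maps are critical in $1+2$, not $1+3$), and no rigidity or soliton resolution is known there, so you cannot import it. The paper instead proves rigidity by the \emph{channels of energy} method: after the substitution $u=\langle r\rangle^{-1}(\psi-Q_n)$ the equation becomes a radial semilinear wave equation on the $5$-dimensional wormhole; in each exterior region $\{\pm r>\eta\}$ a further change of variables turns it into an exterior problem on $\R^5$, where the Kenig--Lawrie--Schlag exterior energy inequality for the free $\R^{1+5}$ wave equation applies. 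This forces the precompact solution to have prescribed $r^{-3}$-type asymptotics, hence to coincide with a finite-energy static solution, and uniqueness of $Q_n$ then gives $u_\ast\equiv 0$. Finally, note that the paper's profile decomposition uses only time translations $t_{j,n}$; there are no spatial translations (the geometry is not translation-invariant in $r$, and such shifts do not define a noncompact symmetry), so that part of your plan should be dropped.
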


We remark that in \cite{biz2} Bizon and Kahl gave numerical evidence that 
soliton resolution holds in the more general $\ell$--equivariant setting (here corotational corresponds to $\ell = 1$).  In the companion work \cite{cr2} we prove this and completely resolve the soliton resolution conjecture for all equivariant wave maps on a wormhole.  

We point out that an equation with properties similar to the model considered in this paper was studied in 
\cite{ls}, \cite{kls1}, and \cite{klls2} and served as a road map for the work carried out here.  In these works, the
authors studied $\ell$--equivariant wave maps $U: \R \times (\R \backslash B(0,1)) \rar \s^3$ such that $U(\p B(0,1)) = \{ (0,0,0,1) \}$.  An $\ell$--equivariant 
wave map $U$ is 
determined by the associated azimuth angle $\psi(t,r)$ which satisfies the equation 
\begin{align}
\begin{split}\label{s05a}
&\p_t^2 \psi - \p_r^2 \psi - \frac{2}{r} \p_r \psi + \frac{\ell(\ell+1)}{2(r^2 + 1)}\sin 2 \psi = 0, \quad t \in \
\R, r \geq 1,\\
&\psi(t,1) = 0, \quad \psi(t,\infty) = n \pi, \quad \forall t.
\end{split}
\end{align}
Such wave maps were called $\ell$--equivariant \emph{exterior wave maps}.  Similar to wave maps on a wormhole, global well--posedness
and an abundance of harmonic maps hold for the exterior wave map equation \eqref{s05a}.  In the works 
\cite{ls}, \cite{kls1}, and \cite{klls2}, the authors proved the soliton resolution conjecture for $\ell$--equivariant exterior
wave maps for arbitrary $\ell \geq 1$.  However, the geometry of the background $\R \times (\R \backslash B(0,1))$
is still flat and could be considered artificial. On the other hand, the wormhole geometry considered in this
work contains curvature which make wave maps on a wormhole more geometric in nature
while still retaining the properties that make them ideal for studying the soliton resolution conjecture.  We remark here that, 
to the author's knowledge, Theorem \ref{t01} is the first result that establishes the soliton resolution conjecture
for arbitrary corotational finite energy wave maps on a curved background.  See \cite{los} for soliton resolution for 
corotational wave maps from $\mathbb \R \times \mathbb H^2 \rar \mathbb H^2$ with a restriction on the behavior at infinity. 

The method of proof used in the works \cite{ls}, \cite{kls1}, and \cite{klls2} to establish the soliton resolution 
conjecture for \eqref{s05a} was the celebrated concentration--compactness/rigidity theorem method pioneered by Kenig and 
Merle in \cite{km06} and \cite{km08}.  In \cite{kls1} and \cite{klls2}, the authors used a \lq channels of energy'
argument based on exterior energy estimates for free waves on $\R^{1+d}$ with $d$ odd to close 
the argument (see \cite{cks} and \cite{klls1}
for these estimates).  The proof of our main result, Theorem \ref{t01}, uses a similar methodology which we now
briefly overview.  The proof is by contradiction
and is split into three main steps.  In the first step, we establish a small data theory for \eqref{s04}, i.e. if 
$\| \vec \psi(0) \|_{\h_n}$ is sufficiently small, then the solution $\psi$ to \eqref{s04} is global and 
scatters to $(Q_n,0)$.  In the second step, using concentration--compactness arguments and the first
step we show that if Theorem \ref{t01} \emph{fails}, then there exists a solution $\vec \psi_* \neq (Q_n,0)$ to \eqref{s04} such that 
the trajectory $\{\vec \psi_*(t) : t \in \R \}$ is precompact in $\h_n$.  In the third and final step, we establish the following
rigidity theorem: if $\psi$ is a solution to \eqref{s04} such that $\{ \vec \psi(t) : t \in \R \}$ is precompact in $\h_n$, then 
$\vec \psi = (Q_n,0)$.  This rigidity theorem contradicts the second step which implies that Theorem \ref{t01} must hold.

We now give an outline of the paper and provide a few more details of the previously sketched steps.  
Section 2, Section 3, and Section 4 contain preliminaries necessary to carry out the concentration--compactness/
rigidity theorem methodology for wave maps on a wormhole.  In Section 2, we establish
various properties of the harmonic maps $Q_n$ needed throughout the work.  In particular, we establish existence, uniqueness, and asymptotics.  Establishing these properties
in the exterior wave map model is considerably simpler since the static solutions to 
\eqref{s05a} (in the corotational case) are governed by the well--known equation for a damped pendulum
\begin{align*}
\frac{d^2 F}{dx^2} + \frac{dF}{dx} = \sin 2F, \quad x = \log r.  
\end{align*}
The properties needed can then be derived from a simple phase plane analysis.  However, in our setting there is no such change of variables 
that renders \eqref{s04b} autonomous.  We instead use classical ODE arguments inspired by the work 
on corotational Skyrmions \cite{mctr} to derive the properties we need.  In Section 3 and Section 4, we
establish results needed to carry out the first two steps in the concentration--compactness
/rigidity theorem methodology.  We first reformulate Theorem \ref{t01} as the statement 
that all radial solutions to a certain semilinear wave equation of the form 
\begin{align}\label{s06}
\p_t^2 u - \Delta_g u + V(r) u = N(r,u), \quad (t,r) \in \R \times \R. 
\end{align}
scatter to free waves as $t \rar \pm \infty$ (see Theorem \ref{t41} for the exact statement).  Here $u$ is related to $\psi$ by
$u = \frac{1}{(r^2 + 1)^{1/2}} (\psi - Q_n)$, $V(r)$ is a smooth potential arising from linearizing about $Q_n$, and $-\Delta_g$ is the Laplace operator on the 
$5d$ wormhole $\M^5 = \R \times \s^4$ with metric
\begin{align*}
ds^2 = dr^2 + (r^2 + 1) d\Omega_{\s^4}^2,
\end{align*}
where $d\Omega_{\s^4}^2$ is the round metric on the sphere $\s^4$.  In the remainder of the paper we carry out the concentration--compactness/
rigidity theorem method in the equivalent \lq $u$--formulation.'  Establishing the first two steps in the $u$--formulation follows
from fairly standard arguments once Strichartz estimates for radial solutions to the free wave equation $\p_t^2 u - \Delta_g u = 0$
are established.  In the exterior wave map model, these estimates follow from previously known results on Strichartz
estimates for free waves on Riemannian manifolds.  However, Strichartz estimates for free waves on a wormhole fall outside of 
the literature devoted to free waves on Riemannian manifolds because of the trapping that occurs at the throat $r = 0$. 
In the works \cite{sss1} and \cite{sss2}, the authors established dispersive estimates
in geometries with trapping which are asymptotic to wormholes as $r \rar \pm \infty$ as long as the initial data is localized to a fixed
spherical harmonic (i.e. angular momentum).  Since
we are only interested in radial free waves on a wormhole, in Section 3 we are able to refine the dispersive estimates from \cite{sss1} and \cite{sss2} 
in the zero angular momentum case and obtain the Strichartz estimates we need.  In fact, we establish Strichartz
estimates for radial free waves on $d$--dimensional wormholes for arbitrary $d \geq 3$.  This section is independent
of all other sections and may be of interest in its own right. In Section 4, we make the reduction previously described and
transfer the Strichartz estimates established in Section 3 for $\p_t^2 - \Delta_g$ to the perturbed operator $\p_t^2 - 
\Delta_g + V$.  The fact that the Strichartz estimates for the free wave operator carry over to the perturbed operator
hinges on spectral information for the Schr\"odinger operator $-\Delta_g + V$.  In Section 5
and Section 6, we use the concentration--compactness/rigidity theorem method to prove our main result.  In Section 5 we carry out the 
first two steps of the concentration--compactness/rigidity theorem methodology in the $u$--formulation.  The main result of this 
section is that if Theorem \ref{t01} fails so that not all solutions to \eqref{s06} scatter, then there exists a nonzero solution $u_*$ to \eqref{s06} such that 
$\{ \vec u_*(t) : t \in \R \}$ is precompact in $\h := \h((-\infty,\infty); (r^2 + 1)^2 dr)$.  In Section 6, we show that a solution
$u$ to \eqref{s06} such that $\{ \vec u(t) : t \in \R \}$ is precompact in $\h$ must be identically 0 which completes the proof.
In particular, we show that $u$ is zero by showing it must be a static solution to \eqref{s06} with finite energy.  This is 
achieved using a change of variables valid in the exterior regions $|r| \gtrsim 1$ that transforms \eqref{s06} into an 
`exterior wave map equation'.  We then use channels of energy arguments similar
to those used in \cite{kls1} and \cite{klls2} to show that $u$ is a static solution to \eqref{s06}.  This then implies that 
$\psi = Q_n + (r^2 + 1)^{1/2} u$ satisfies \eqref{s04b}.  By the uniqueness of harmonic maps, we deduce that $u \equiv 0$
and conclude the proof.   

\textbf{Acknowledgments}: This work was completed during the author’s doctoral studies at the University of Chicago. The author
would like to thank his adviser, Carlos Kenig, for his invaluable guidance and careful reading of the original manuscript.  The author would also like to thank Wilhelm Schlag, Andrew Lawrie, and 
Piotr Bizon for helpful discussions and encouragement during the completion of this work.




\section{Harmonic Maps}

For the remainder of the paper, we fix a topological degree $n \in \N \cup \{ 0 \}$.  
In this section, we study static solutions to \eqref{s04}.  In particular, we prove the following. 

\begin{ppn}\label{harm}
	There exists a unique smooth solution $Q_{n}$ to the equation   
	\begin{align*}
	&F'' + \frac{2r}{r^2 + 1}F' - \frac{\sin2F}{r^2 + 1} = 0, \quad r \in \R, \\
	&F(-\infty) = 0, \quad F(\infty) = n\pi.
	\end{align*}
	In the case $n = 0$, $Q_{0} = 0$.  For $n \in \N$, $Q_{n}$ is increasing on $\R$, satisfies $Q(r) + Q(-r) = n\pi$ for all $r$  
	and there exists  
	$\alpha_{n} > 0$ such that, 
	\begin{align*}
	Q_{n}(r) &= n \pi - \alpha_{n} r^{-2} + O(r^{-4}), \quad \mbox{as } r \rightarrow \infty, \\
	Q_{n}(r) &= \alpha_{n} r^{-2} + O(r^{-4}), \quad \mbox{as } r \rightarrow -\infty.
	\end{align*}
	The $O(\cdot)$ terms also satisfy the natural derivative bounds. 
\end{ppn}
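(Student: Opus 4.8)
The plan is to attack the boundary value problem by classical ODE shooting, in the spirit of the treatment of corotational Skyrmions in \cite{mctr}. The starting point is that the equation can be written in divergence form $\frac{d}{dr}\big((r^2+1)F'\big) = \sin 2F$, and, after the substitution $s = \arctan r$ (so $s \in (-\pi/2,\pi/2)$ and $\frac{ds}{dr} = (r^2+1)^{-1}$), as the first-order-term-free pendulum equation $F_{ss} = (r^2+1)\sin 2F = \sec^2 s\,\sin 2F$ with boundary conditions $F(-\pi/2) = 0$, $F(\pi/2) = n\pi$. I would build $Q_n$ in two stages: (i) a local/asymptotic analysis near the two ends $r = \pm\infty$, and (ii) a global shooting argument matching the resulting families.

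For stage (i), linearizing about a constant $k\pi$ gives to leading order the Euler equation $(r^2+1)g'' + 2rg' - 2g = 0$, with indicial exponents $1$ and $-2$; only the $r^{-2}$ mode is compatible with decay. A contraction mapping argument in a space of functions controlled (together with their derivatives) by the weight $r^{-2}$ then produces, for each $b \ge 0$, a unique solution $F_b$ near $r = -\infty$ with $F_b(-\infty) = 0$ and $F_b(r) = b\,r^{-2} + O(r^{-4})$, differentiably; boundedness of $\sin 2F$ gives $(r^2+1)|F_b'(r)| \lesssim 1 + |r|$, hence $|F_b'(r)| \lesssim (1+|r|)^{-1}$, so $F_b$ extends to a global solution on $\R$ growing at most logarithmically. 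The same construction at $r = +\infty$ about $n\pi$ yields the claimed asymptotics there. Next I would record the monotonicity of any solution with integer-multiple-of-$\pi$ limits: multiplying $F_{ss} = \sec^2 s\,\sin 2F$ by $F_s$, the quantity $E(s) := \tfrac12 F_s^2 - \sec^2 s\,\sin^2 F$ satisfies $E'(s) = -2\sec^2 s\,\tan s\,\sin^2 F$, so $E$ is increasing on $(-\pi/2,0)$ and decreasing on $(0,\pi/2)$; since the asymptotics give $E(\pm\pi/2) = 0$, we get $E \ge 0$, i.e. $F_s^2 \ge 2\sec^2 s\,\sin^2 F$. Thus $F_s$ vanishes only where $\sin F = 0$, and at such a point the constant $k\pi$ solves the equation with the same data, so by uniqueness $F \equiv k\pi$; hence every non-constant such solution is strictly monotone. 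In particular, for $n = 0$ there is no non-constant solution with $F(\pm\infty) = 0$, so $Q_0 \equiv 0$, and for $n \ge 1$ any solution of the BVP is strictly increasing.

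For stage (ii) with $n \ge 1$, I would shoot in the parameter $b \ge 0$ labelling the family $\{F_b\}$ built at $r = -\infty$. Each $F_b$ ($b>0$) is globally defined, positive and strictly increasing, and a short analysis of the flow near $r = +\infty$ shows that it either escapes to $+\infty$ or converges to a value $L$ with $\sin 2L = 0$, i.e. $L \in \{\pi/2,\pi,3\pi/2,\ldots\}$; moreover near an even multiple $k\pi$ the linearised flow has only a one-parameter family of solutions converging to it (the $r^{-2}$ mode), a codimension-one matching condition, whereas near an odd multiple every nearby solution converges. Setting $\alpha_n := \sup\{\, b \ge 0 : F_b(r) < n\pi \text{ for all } r\,\}$, an intermediate-value/semicontinuity argument — using continuous dependence on $b$ on compact $r$-intervals, made uniform up to the singular ends $s = \pm\pi/2$ by the weighted estimates of stage (i) — then shows $F_{\alpha_n}$ converges to $n\pi$, so $Q_n := F_{\alpha_n}$ solves the BVP, and strict monotone dependence of $F_b$ on $b$ shows $\alpha_n$ is the only such $b$, giving uniqueness. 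Uniqueness immediately yields the reflection symmetry $Q_n(r) + Q_n(-r) = n\pi$, since $r \mapsto n\pi - Q_n(-r)$ solves the same BVP. Finally $\alpha_n > 0$: $Q_n < n\pi$ everywhere (a non-constant increasing solution cannot attain its supremum), and a short argument with the divergence form shows that vanishing of the $r^{-2}$ coefficient in the expansion at $+\infty$ would force $n\pi - Q_n \equiv 0$, a contradiction.

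The main obstacle is the comparison statement underpinning the shooting: that $b \mapsto F_b$ is strictly increasing pointwise and, crucially, that two members of the family never cross. Writing $w = F_{b_2} - F_{b_1}$ for $b_1 < b_2$ one has $w_{ss} = 2\sec^2 s\,\cos(F_{b_1}+F_{b_2})\,\sin w$, with $w > 0$ and $w_s > 0$ near $s = -\pi/2$; as long as $F_{b_1}+F_{b_2} < \pi/2$ the coefficient $\cos(F_{b_1}+F_{b_2})$ is positive and a convexity argument rules out a zero of $w$, but once the sum passes $\pi/2$ this sign is lost, and precluding a first crossing in that regime requires the careful global ODE analysis used for Skyrmions in \cite{mctr} (following $w$ together with an auxiliary quantity through the successive humps of the pendulum nonlinearity). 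A secondary technical point is establishing the continuous dependence uniformly up to the singular endpoints $s = \pm\pi/2$, which I would handle via the weighted contraction estimates from stage (i).
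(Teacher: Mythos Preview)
Your proposal shares the paper's essential ingredients (asymptotic analysis via the linearized Euler equation, an auxiliary energy, a shooting argument, and the McLeod--Troy device for the comparison step), but differs in how the shooting is set up, and this difference matters. The paper shoots from the \emph{center}: it imposes $F(0,\alpha)=n\pi/2$, $F'(0,\alpha)=\alpha$, so the reflection symmetry $F(r)+F(-r)=n\pi$ is automatic and the problem reduces to matching a single boundary condition at $+\infty$; moreover the ``large parameter'' side of the shooting is then elementary, since a large initial velocity at a regular point forces $F$ to reach $n\pi$ in finite time by a Taylor estimate. In your scheme you shoot in the decay coefficient $b$ at $-\infty$; this is conceptually natural but makes the ``large $b$'' step genuinely harder, because the expansion $F_b(r)\sim b\,r^{-2}$ is only valid where $b\,r^{-2}$ is small, so you cannot read off behavior at any fixed finite $r$ uniformly as $b\to\infty$. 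You do not address this, and it is not as routine as your ``intermediate-value/semicontinuity'' phrasing suggests. (Your $E(s)$ is exactly $(r^2+1)Q(r)$ for the paper's auxiliary $Q$, so the monotonicity arguments are equivalent.)

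There are also two gaps. First, your claim that \emph{every} $F_b$ with $b>0$ is strictly increasing is not justified by your energy argument: $E\ge 0$ requires $E(\pi/2)=0$, which holds only when $F(\infty)\in\mathbb Z\pi$; if $F(\infty)=(k+\tfrac12)\pi$ then $E(s)\to-\infty$, and a generic $F_b$ may well have a positive critical point and spiral into a half-integer limit. This does not kill the shooting but it means your description of the family is oversimplified, and the openness of your undershooting set (the analogue of the paper's most technical Claim) needs the full $Q$-analysis. Second, on uniqueness: ``strict monotone dependence of $F_b$ on $b$ shows $\alpha_n$ is the only such $b$'' is incomplete as stated --- two ordered solutions $F_{b_1}<F_{b_2}$ can both tend to $n\pi$ from below without any immediate contradiction. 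The paper does not rely on pointwise comparison of the $F_b$; instead it makes $F$ the independent variable and $p=dF/dx$ the dependent one, and tracks the pair $(p_2-p_1,\,x_2-x_1)$ from $F=n\pi$ down to $F=0$, obtaining a contradiction between the asymptotic coefficients at the two ends. That is precisely the McLeod--Troy argument you cite, so you are pointing at the right tool, but the step from ``monotone in $b$'' to ``unique $b$'' needs the full two-ended comparison, not just ordering.
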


The proof of existence follows from a simple shooting argument sketched in \cite{biz2}.  The proof of uniqueness
and properties needed are inspired by the work on the equivariant Skyrme equation \cite{mctr}. 
The proof of Proposition \ref{harm} will be contained in the following various lemmas.

\subsection{Existence of Harmonic Maps}

In this section we prove the existence part of Proposition \ref{harm}.  In order to achieve this and, in fact, uniqueness
of the harmonic map constructed, 
we will need to study general solutions to 
\begin{align}
F'' + \frac{2r}{r^2 + 1}F' - \frac{ \sin 2F}{r^2 + 1} = 0, \quad r \in \R. \label{ode}
\end{align}

We begin with the following simple lemma. 

\begin{lem}\label{odeprop1}
	If $F$ is a solution to \eqref{ode}, then $F$ exists on all of $\R$.  Moreover, $F$ has limits at $\pm \infty$ in 
	$\Z \pi \cup \left (\Z + \frac{1}{2} \right ) \pi$. 
\end{lem}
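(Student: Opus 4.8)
\emph{Global existence.} Local existence and uniqueness for the initial value problem attached to \eqref{ode} at any point $r_0 \in \R$ is immediate from Picard--Lindel\"of, the right-hand side $-\frac{2r}{r^2+1}F' + \frac{\sin 2F}{r^2+1}$ being smooth and locally Lipschitz in $(F,F')$, uniformly on compact $r$-intervals. To push the solution out to all of $\R$, the plan is to exhibit a monotone ``energy.'' Set
\begin{align*}
\mathcal E(r) := \tfrac12(r^2+1)(F'(r))^2 - \sin^2 F(r);
\end{align*}
a short computation using \eqref{ode} gives $\mathcal E'(r) = -r(F'(r))^2$, so $\mathcal E$ is nonincreasing on $[0,\infty)$ and nondecreasing on $(-\infty,0]$, whence $\mathcal E(r) \le \mathcal E(0)$ throughout the maximal interval of existence, while trivially $\mathcal E(r) \ge -\sin^2 F(r) \ge -1$. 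The upper bound yields $(r^2+1)(F'(r))^2 = 2(\mathcal E(r) + \sin^2 F(r)) \le 2(\mathcal E(0)+1)$, hence $|F'(r)| \le \sqrt{2(\mathcal E(0)+1)}$ and $|F(r)| \le |F(0)| + \sqrt{2(\mathcal E(0)+1)}\,|r|$; since neither $F$ nor $F'$ blows up in finite $r$, the solution exists on all of $\R$.

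\emph{Limits at $\pm\infty$.} The same bound gives $(F'(r))^2 \le 2(\mathcal E(0)+1)/(r^2+1) \to 0$, so $F'(r) \to 0$ as $r \to \pm\infty$, and $\mathcal E$, being bounded and monotone near each end, converges there. Since \eqref{ode} is invariant under $r \mapsto -r$ it suffices to treat $r \to +\infty$. Here I would change variables to $x = \log r$ and put $\tilde F(x) := F(e^x)$; a direct computation recasts \eqref{ode} as the \emph{asymptotically autonomous damped pendulum equation}
\begin{align*}
\tilde F_{xx} + \tanh(x)\,\tilde F_x = \frac{1}{1+e^{-2x}}\,\sin 2\tilde F .
\end{align*}
This is the crucial step: although the damping coefficient $\frac{2r}{r^2+1}$ degenerates to $0$ at $r = \infty$, in the $x$ variable it tends to the constant $1$, so we land in the classical damped-pendulum regime where every trajectory converges to an equilibrium, i.e.\ to a zero of $\sin 2F$.

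To see this concretely, note $\tilde F_x(x) = rF'(r)$ is bounded, say $|\tilde F_x| \le M$, so the modified energy $\mathcal G(x) := \tfrac12 \tilde F_x^2 + \tfrac12 \cos 2\tilde F$ obeys, for $x \ge 1$,
\begin{align*}
\mathcal G'(x) &= -\tanh(x)\,\tilde F_x^2 - \frac{e^{-2x}}{1+e^{-2x}}\,\tilde F_x \sin 2\tilde F \\
&\le -\tanh(1)\,\tilde F_x^2 + M e^{-2x}.
\end{align*}
Integrating this shows $\int_1^\infty \tilde F_x^2\,dx < \infty$ and that $\mathcal G(x)$ converges, say to $L$; and since \eqref{ode} bounds $\tilde F_{xx}$, the function $\tilde F_x$ is Lipschitz, and a Lipschitz, square-integrable function tends to $0$, so $\tilde F_x(x) \to 0$. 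Consequently $\cos 2\tilde F(x) = 2\mathcal G(x) - \tilde F_x(x)^2 \to 2L$. If $|2L| < 1$, the level set $\{\, y : \cos 2y = 2L\,\}$ is discrete with gaps bounded below, so $\tilde F$ --- continuous, with $\tilde F_x \to 0$ --- is eventually confined near a single point $\ell$ of it and converges to $\ell$; but then \eqref{ode} forces $\tilde F_{xx}(x) \to \sin 2\ell \ne 0$, contradicting $\tilde F_x \to 0$. Hence $|2L| = 1$, so $\sin 2\tilde F(x) \to 0$, i.e.\ $\dist(\tilde F(x), \tfrac{\pi}{2}\Z) \to 0$, and the same confinement argument (now consistent with $\tilde F_x \to 0$) shows $\tilde F(x)$ converges to a point of $\tfrac{\pi}{2}\Z = \Z\pi \cup (\Z + \tfrac12)\pi$. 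Undoing $x = \log r$ gives the limit at $r = +\infty$, and reflecting gives the limit at $r = -\infty$. The one genuinely delicate point is this final convergence --- excluding oscillation or drift of $F$ among the equilibria --- which is exactly what the passage to the damped pendulum is designed to handle.
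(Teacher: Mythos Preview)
Your proof is correct. Both you and the paper start from the same Lyapunov function $\mathcal E(r) = \tfrac12(r^2+1)(F')^2 - \sin^2 F$ with $\mathcal E'(r) = -r(F')^2$, and both finish with the same endgame: once one knows $rF'(r)\to 0$ and $F(r)\to F_\infty$, the equation forces $(r^2+1)F''\to \sin 2F_\infty$, which is incompatible with $rF'\to 0$ unless $\sin 2F_\infty=0$.

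Where the two diverge is in the middle step, getting $rF'\to 0$ and convergence of $F$. The paper stays in the $r$ variable and uses a second monotone quantity, $((r^2+1)\mathcal E)'=-2r\sin^2 F$, to pin down $\mathcal E(r)\le \mathcal E(0)/(r^2+1)$; from this it argues by contradiction that $F'(r)=o(1/r)$ (if not, $\mathcal E$ would drop by a fixed amount on each of infinitely many dyadic blocks), then reads off convergence of $\sin^2 F$ and hence of $F$. You instead pass to $x=\log r$, obtaining the asymptotically autonomous damped pendulum $\tilde F_{xx}+\tanh(x)\tilde F_x=\tfrac{1}{1+e^{-2x}}\sin 2\tilde F$, and run a second Lyapunov function $\mathcal G=\tfrac12\tilde F_x^2+\tfrac12\cos 2\tilde F$ to get $\tilde F_x\in L^2$, hence $\tilde F_x\to 0$ by uniform continuity, hence $\cos 2\tilde F$ converges. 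Your route makes the damped-pendulum mechanism explicit and avoids the paper's somewhat ad hoc $o(1/r)$ contradiction; the paper's route avoids introducing a second energy and a change of variables. Note incidentally that the paper uses essentially your change of variables (in the form $x=\operatorname{arcsinh} r$, which gives the exact equation $\tilde F_{xx}+\tanh(x)\tilde F_x=\sin 2\tilde F$) only in the \emph{next} lemma, to obtain asymptotics; you are effectively front-loading that idea.
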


\begin{proof}
	Suppose that $F$ solves \eqref{ode}.  Due to the sublinear growth in $F, F'$ in \eqref{ode}, it follows from standard ODE theory that 
	$F$ is globally defined.  Because of the invariance of the equation under the change $r \leftrightarrow -r$, 
	we need only show that $F$ has a limit at $\infty$.  
	
	Define the following auxiliary function 
	\begin{align*}
	Q(r) = (r^2 + 1) \frac{(F')^2}{2} -  \sin^2 F. 
	\end{align*}
	Using that $F$ solves \eqref{ode}, we have that 
	\begin{align}
	Q'(r) = -r (F')^2. \label{hm1}
	\end{align}
	Thus, $Q$ is nonincreasing on $r \geq 0$ and by definition is also bounded below.  Thus, $Q(r) \rightarrow c \in [-1,\infty)$ as 
	$r \rightarrow \infty$. Moreover, we note that 
	\begin{align*}
	((r^2 + 1)Q)' = -2r \sin^2 F \leq 0,
	\end{align*}
	so that 
	\begin{align*}
	Q(r) \leq \frac{Q(0)}{r^2 + 1}, \quad r \geq 0. 
	\end{align*}
	This implies $c \leq 0$. 
	
	The previous bound on $Q$ implies that 
	\begin{align*}
	F'(r)^2 &= \frac{Q(r)}{r^2 + 1} + \frac{\sin^2 F(r)}{r^2 + 1} \sin^2 F(r)
	= O \left ( \frac{1}{r^2} \right ). 
	\end{align*}
	We now claim that $F'$ isn't just $O(r^{-1})$ but in fact satisfies
	\begin{align*}
	F'(r) = o \left ( \frac{1}{r} \right ). 
	\end{align*}
	Suppose towards a contradiction that this is not the case. Then there exist $\delta > 0$ and a sequence $r_n \rightarrow \infty$ with the property 
	\begin{align*}
	\frac{\delta}{r_n} \leq |F'(r_n)|.
	\end{align*}
	Since $F$ solves \eqref{ode}, we have that 
	\begin{align*}
	|F''(r)| \leq \frac{K}{r^2}. 
	\end{align*}
	Thus, for $r_n \leq r \leq (1 + \delta/2K ) r_n$, we have 
	\begin{align*}
	| F'(r) - F'(r_n) | \leq K \int_{r_n}^r \rho^{-2} d \rho \leq K \left ( \frac{1}{r_n} - \frac{1}{r} \right ) 
	\leq \frac{\delta}{2r_n},
	\end{align*}
	so that 
	\begin{align*}
	|F'(r)| \geq \frac{\delta}{2r_n}, \quad r_n \leq r \leq (1 + \delta/2K) r_n. 
	\end{align*}
	Hence
	\begin{align*}
	- Q'(r) = r F'(r)^2 \geq \frac{\delta^2}{4r_n}, \quad r_n \leq r \leq (1 + \delta/2K) r_n.
	\end{align*}
	The previous estimate implies that 
	\begin{align*}
	|Q(r_n) - Q((1 + \delta/2K) r_n)| = \int_{r_n}^{(1+\delta/2K)r_n} -Q(r) dr \geq \frac{\delta^3}{8K},
	\end{align*}
	which contradicts the fact that $\lim_{r \rightarrow \infty} Q(r)$ exists in $[-1,0]$.  Thus, the claim 
	$F'(r) = o(r^{-1})$ holds. 
	
	We now show that as $r \rightarrow \infty$, $F(r)$ tends to $k\pi$ or $\left ( k + \frac{1}{2} \right ) \pi$ 
	for some $k \in \Z$.   
	Since $F'(r) = o(r^{-1})$ and $Q(r) \rightarrow c \in [-1,0]$, we have that 
	\begin{align*}
	\sin^2 F(r) \rightarrow  \tilde c \in [0,1], \quad r \rightarrow \infty.
	\end{align*}
	Thus, $F(r)$ tends to some limit $F_\infty \in \R$ as $r \rightarrow \infty$. Since $F$ solves \eqref{ode} and satisfies
	$F'(r) = o(r^{-1})$, we see that 
	\begin{align*}
	(r^2 + 1) F''(r) \rightarrow \sin 2F_\infty, \quad \mbox{as } r \rightarrow \infty. 
	\end{align*}
	If $\sin 2 F_\infty \neq 0$, then for large $r$ we have 
	\begin{align*}
	F'(r) = \int_r^{\infty} F''(\rho) d\rho  \sim \sin 2 F_\infty \int_r^{\infty} \frac{1}{\rho^2 + 1} d\rho
	\sim \sin 2 F_\infty \frac{1}{r}, 
	\end{align*}
	which contradicts $F'(r) = o(r^{-1})$. Thus, we must have that $\sin 2 F_\infty = 0$ as desired. 
\end{proof}

We remark that we will now be interested solely in solutions to \eqref{ode} which satisfy $F(\pm \infty) \in \Z \pi$.  This is because these solutions are the only solutions that have the potential to have finite energy $\cl E(F,0)< \infty$.  Using Lemma \ref{odeprop1} we can establish the following asymptotics for solutions to \eqref{ode}.

\begin{lem}\label{odeprop2}
	Suppose that $F$ solves \eqref{ode} and there exists $k \in \N \cup \{ 0 \}$ such that $
	F(\infty) = k \pi.$
	Then there exists $\alpha \in \R$ such that 
	\begin{align}\label{odeprop22}
	F(r) = k\pi + \alpha r^{-2} + O(r^{-4}), 
	\end{align}
	as $r \rightarrow \infty$ where the $O(\cdot)$ term satisfies the natural derivative bounds.  A similar statement holds as $r \rightarrow -\infty$ if $F(-\infty) = k \pi$. 
\end{lem}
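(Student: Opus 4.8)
The plan is to first normalize the problem, then establish a crude algebraic decay rate, and finally extract the precise asymptotics via a reduction-of-order device. Since \eqref{ode} is invariant under $F \mapsto F - k\pi$ (as $\sin 2(F-k\pi) = \sin 2F$) and under $r \mapsto -r$, it suffices to treat the case $k = 0$ and the behavior as $r \to +\infty$; so assume $F$ solves \eqref{ode} with $F(\infty) = 0$ and aim for $F(r) = \alpha r^{-2} + O(r^{-4})$. The first step is to show $F(r) = O(1/r)$. For this I would use the auxiliary function $Q(r) = \tfrac12(r^2+1)(F')^2 - \sin^2 F$ from the proof of Lemma~\ref{odeprop1}. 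Since $F(\infty)=0$ forces $\sin^2 F(r)\to 0$, that proof gives $Q(r) \to 0$; and as $Q$ is nonincreasing on $[0,\infty)$, it is nonnegative there. The identity $((r^2+1)Q)' = -2r\sin^2 F$ then shows $(r^2+1)Q$ is nonincreasing and nonnegative, hence convergent, so $\int_0^\infty s\sin^2 F(s)\,ds < \infty$; write $\eta(r) := \int_r^\infty s\sin^2 F(s)\,ds \to 0$. On the other hand, rearranging the definition of $Q$ and using $Q(r) \le Q(0)/(r^2+1)$ gives $(F'(r))^2 = \tfrac{2Q(r)}{r^2+1} + \tfrac{2\sin^2 F(r)}{r^2+1} \lesssim \tfrac{1}{(r^2+1)^2} + \tfrac{\sin^2 F(r)}{r^2+1}$. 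Since $F(r) = -\int_r^\infty F'(s)\,ds$, a weighted Cauchy--Schwarz bound on $\int_r^\infty |F'(s)|\,ds$ then yields $|F(r)| \lesssim \tfrac1r + \tfrac1r\sqrt{\eta(r)}$, i.e. $F(r) = O(1/r)$.

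The heart of the argument exploits that the linearization of \eqref{ode} about $0$, namely $((r^2+1)v')' - 2v = 0$, has the explicit solution $v(r) = r$. Accordingly, set $W(r) := (r^2+1)\big(rF'(r) - F(r)\big)$. Using $((r^2+1)F')' = \sin 2F$, a direct computation gives $W'(r) = r\big(\sin 2F(r) - 2F(r)\big)$, which only sees the cubic-and-higher part of the nonlinearity. Since $|\sin 2F - 2F| \le \tfrac43|F|^3$ and $F(r) = O(1/r)$, we get $W'(r) = O(r^{-2})$, so $W(r) \to W_\infty$ for some $W_\infty \in \R$ with $W(r) = W_\infty + O(1/r)$. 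Now $rF'(r) - F(r) = W(r)/(r^2+1)$, equivalently $\big(F(r)/r\big)' = W(r)/\big(r^2(r^2+1)\big) = W_\infty r^{-4} + O(r^{-5})$; integrating from $r$ to $\infty$ and using $F(r)/r \to 0$ gives $F(r) = -\tfrac13 W_\infty r^{-2} + O(r^{-3})$. Feeding this improved bound back into the formula for $W'$ gives $W'(r) = O(r^{-5})$, hence $W(r) = W_\infty + O(r^{-4})$; repeating the integration, now expanding $(r^2+1)^{-1} = r^{-2} - r^{-4} + O(r^{-6})$, upgrades the error to $F(r) = \alpha r^{-2} + O(r^{-4})$ with $\alpha := -W_\infty/3$. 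The natural derivative bound on $F - \alpha r^{-2}$ follows from $rF'(r) - F(r) = W_\infty r^{-2} + O(r^{-4})$, which gives $F'(r) = -2\alpha r^{-3} + O(r^{-5})$; the higher derivative bounds follow by differentiating \eqref{ode} repeatedly and solving for the top-order derivative. The case $k \ge 1$ and the limit $r \to -\infty$ then follow from the normalizations noted above.

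I expect the first step to be the main obstacle: extracting \emph{any} algebraic decay of $F$ from the bare hypothesis $F(\infty) = 0$ is precisely what the monotonicity of $(r^2+1)Q$ together with the weighted Cauchy--Schwarz inequality is designed to achieve. Once $F = O(1/r)$ is in hand, the function $W$ — essentially the reduction-of-order device used for the Skyrme equation in \cite{mctr} — reduces the rest to routine integration and bootstrapping.
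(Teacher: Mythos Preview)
Your argument is correct. The crude $O(1/r)$ bound via the monotone quantity $(r^2+1)Q$ and weighted Cauchy--Schwarz is clean, and the reduction-of-order device $W = (r^2+1)(rF'-F)$ based on the explicit solution $v=r$ of the linearized equation $((r^2+1)v')'=2v$ is exactly the right object: the identity $W' = r(\sin 2F - 2F)$ kills the linear part of the nonlinearity, and the bootstrap from $O(r^{-3})$ error to $O(r^{-4})$ error goes through as you describe.

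Your route differs from the paper's. The paper does not work directly in the variable $r$; instead it substitutes $x = \arcsinh r$, which transforms \eqref{ode} into a perturbation $F'' + F' - \sin 2F + O(e^{-2x}) = 0$ of the damped pendulum, and then invokes the analysis of Case~1 of Theorem~2.3 in \cite{mctr} for that autonomous limit. The desired expansion $F = k\pi + \alpha e^{-2x} + O(e^{-4x})$ in $x$ becomes \eqref{odeprop22} upon undoing the substitution. Your approach stays in the $r$-variable throughout and is fully self-contained: you supply the crude decay step explicitly (the paper defers this to \cite{mctr}), and your $W$ is the $r$-variable analogue of the reduction-of-order quantity one would build for the damped pendulum. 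The trade-off is that the paper's change of variables makes the connection to a classical autonomous ODE transparent and lets it cite \cite{mctr} wholesale, whereas your direct computation avoids the change of variables and the external reference at the cost of a short bootstrap.
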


We note that Lemma \ref{odeprop2} provides the asymptotics stated in Proposition \ref{harm}. 

\begin{proof}
	The proof of Lemma \ref{odeprop2} follows in almost exactly the same way as the proof of Case 1 of Theorem 2.3 in \cite{mctr}.  The idea is to 
	make the change of variables $x = \arcsinh r$ and use the fact that $dF/dx = rdF/dr = o(1)$ to write \eqref{ode} 
	as 
	\begin{align}\label{odeprop21}
	\frac{d^2 F}{dx^2} + \frac{dF}{dx} - \sin 2F + O(e^{-2x} ) = 0.  
	\end{align}
	The ODE \eqref{odeprop21} is asymptotically the autonomous ODE $F'' + F' - \sin 2F = 0$ (the damped pendulum) near $x = \infty$ for which 
	the desired expansion \eqref{odeprop22} holds in the $x$ variable.  We omit the details 
	and refer the reader to the proof of Case 1 in Theorem 2.3 in \cite{mctr} for the full details of the argument. 
\end{proof}

A fact that will be useful in Section 6 is that one can obtain a solution to \eqref{ode} with prescribed 
asymptotics as $r \rar \infty$.  

\begin{ppn}\label{prescribe}
	Let $k \in \N \cup \{ 0 \}$, and let $\alpha \in \R$.  Then there exists a unique solution $F_\alpha$ to \eqref{ode} such that 
	\begin{align}
	F_\alpha(r) = k \pi + \alpha r^{-2} + O ( r^{-4} )\label{pre1}
	\end{align}
	as $r \rar \infty$ where the $O(\cdot)$ term satisfies the natural derivative bounds. 
\end{ppn}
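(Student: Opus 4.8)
The plan is to prove Proposition~\ref{prescribe} by a shooting argument from infinity, using the change of variables $x = \arcsinh r$ already exploited in Lemma~\ref{odeprop2}. In the $x$ variable the equation becomes \eqref{odeprop21}, namely $\ddot F + \dot F - \sin 2F + O(e^{-2x}) = 0$, which is a $C^1$-small perturbation of the autonomous damped pendulum $\ddot F + \dot F - \sin 2F = 0$ near $x = +\infty$. The point $F = k\pi$, $\dot F = 0$ is a hyperbolic rest point of the autonomous system (linearization $\ddot\eta + \dot\eta - 2\eta = 0$ has eigenvalues $\mu_\pm = \frac{-1 \pm 3}{2} = 1, -2$, one stable and one unstable), so standard stable manifold theory for asymptotically autonomous systems gives a one-parameter family of solutions decaying to $(k\pi,0)$ as $x \to \infty$, tangent to the stable eigendirection associated to $\mu = -2$. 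Along such a solution $F(x) - k\pi \sim \alpha e^{-2x} \sim \alpha r^{-2}$, which is exactly the asymptotic \eqref{pre1}, and the derivative bounds follow from differentiating the integral equation. So the construction amounts to: (i) set up the stable manifold / variation-of-parameters fixed point near $x=\infty$, (ii) identify the coefficient $\alpha$ of the leading $e^{-2x}$ term as the parameter, and (iii) transfer back to the $r$ variable and invoke Lemma~\ref{odeprop1} to extend the solution to all of $\R$ (this is automatic since every solution of \eqref{ode} is global).

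In more detail, I would fix $k$ and work with $G = F - k\pi$, rewriting \eqref{odeprop21} in $x$ as $\ddot G + \dot G - 2G = h(x,G,\dot G)$ where $h = \sin 2G - 2G + O(e^{-2x})$ collects the genuinely nonlinear terms (quadratic and higher in $G$) together with the explicit exponentially small error from the non-autonomous part. Decomposing onto the eigenbasis of the linear part, the bounded-as-$x\to\infty$ solutions solve the integral equation
\begin{align*}
G(x) = \alpha e^{-2x} + \int_{\infty}^{x} \frac{e^{-2(x-s)} - e^{(x-s)}}{3}\, h(s,G(s),\dot G(s))\, ds,
\end{align*}
where the kernel is chosen (Green's function for $\partial_x^2 + \partial_x - 2$ with the boundedness/decay boundary conditions) so that the unstable mode is killed and only the $\alpha e^{-2x}$ term survives to leading order. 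A contraction mapping argument in a weighted space such as $\{G : \sup_{x \ge x_0} e^{2x}(|G| + |\dot G|) < \infty\}$, with $x_0 = x_0(\alpha)$ large, yields a unique fixed point $G_\alpha$ for each $\alpha \in \R$; since the self-interaction $\sin 2G - 2G = O(G^3)$ is cubic and the forcing $O(e^{-2x})$ is integrable against the kernel, the map is a contraction on a small ball once $x_0$ is large, and $G_\alpha(x) = \alpha e^{-2x} + O(e^{-4x})$ with matching derivative bounds. Pulling back via $e^{-2x} = (r + \sqrt{r^2+1})^{-2} = r^{-2} + O(r^{-4})$ gives \eqref{pre1}.

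For uniqueness, suppose $F_1, F_2$ both solve \eqref{ode} with the expansion \eqref{pre1} for the same $\alpha$; then in the $x$ variable their difference $w = F_1 - F_2$ satisfies a linear equation $\ddot w + \dot w - (2\cos 2\xi)w + O(e^{-2x})w = 0$ (for some intermediate $\xi$ by the mean value theorem) with $w(x) = O(e^{-4x})$, i.e. $w$ decays strictly faster than the slow stable rate $e^{-2x}$ and than the unstable rate $e^{x}$; a Gronwall / shooting argument on the same Green's function representation, now with the $\alpha e^{-2x}$ term absent, forces $w \equiv 0$ for $x$ large, and then uniqueness of solutions to the ODE initial value problem propagates this to all of $\R$. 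The main obstacle, and where care is genuinely needed, is handling the non-autonomous $O(e^{-2x})$ term: because it decays at exactly the same rate $e^{-2x}$ as the slow stable mode, one must verify that it does not shift the leading coefficient in a way that obstructs the parametrization by $\alpha$ — i.e. that the contribution of this forcing to $G_\alpha$ is genuinely $O(e^{-4x})$ (or at worst $O(x e^{-2x})$, which the paper's stated $O(r^{-4})$ bound would rule out, so one needs the sharper analysis) rather than $O(e^{-2x})$. This is resolved by noting the forcing enters the integral only through the kernel's $e^{-2(x-s)}$ branch integrated from $\infty$, $\int_\infty^x e^{-2(x-s)} e^{-2s}\,ds = \int_\infty^x e^{-2x}\,ds$, which would indeed be linearly growing — so in fact one must use that the $O(e^{-2x})$ error in \eqref{odeprop21} multiplies $\dot F$ or $\sin 2F$, both of which are themselves $O(e^{-2x})$ along the solution, upgrading the effective forcing to $O(e^{-4x})$ and restoring the clean expansion. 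Modulo this bookkeeping, which mirrors Case~1 of Theorem~2.3 in \cite{mctr}, the argument is routine, and as with Lemma~\ref{odeprop2} I would largely defer to that reference for the fully detailed estimates.
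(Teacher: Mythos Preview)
Your proposal is correct and follows essentially the same route as the paper: change variables $x=\arcsinh r$, linearize about $k\pi$, and run a contraction mapping in an exponentially weighted $C^1$ space on $[x_0,\infty)$, with uniqueness coming from the fixed point together with Lemma~\ref{odeprop2}. The only cosmetic difference is that the paper first substitutes $G=e^{x/2}(F-k\pi)$, which removes the first--order term and turns the linear part into $G''-\tfrac{9}{4}G=0$ (fundamental system $e^{\pm 3x/2}$), whereas you work directly with $G=F-k\pi$ and the asymmetric pair $e^{-2x},e^{x}$; both lead to the same integral equation and the same $O(e^{-4x})$ remainder. Your diagnosis of the potential resonance between the $(1-\tanh x)=O(e^{-2x})$ perturbation and the stable mode, and its resolution via the extra factor of $\dot G=O(e^{-2x})$, is exactly the point that makes the contraction close in either formulation.
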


Before giving the proof, we note that the symmetry $r \mapsto -r$ of \eqref{ode} allows us 
to conclude from Proposition \ref{prescribe} that given $k \in \N \cup \{0\}$ and $\beta \in \R$, there exists a solution 
$F_{\beta}(r)$ to \eqref{ode} such that
\begin{align*}
F_\beta(r) = k \pi + \beta r^{-2} + O ( r^{-4})
\end{align*}
as $r \rar -\infty$. 

\begin{proof}
	We seek a solution $F$ to \eqref{ode} with the stated asymptotics \eqref{pre1}.  We first make the change of variables 
	$x = \arcsinh r$ so that \eqref{ode} becomes 
	\begin{align}
	F'' + \tanh x F' -  \sin 2F = 0, \quad x \in \R, \label{pre2}
	\end{align}
	where $F' = \frac{dF}{dx}$. We now rewrite \eqref{pre2} as 
	\begin{align}
	F'' + F' - 2 F = \left [ \sin 2F - 2F \right ] + \left ( 1 - \tanh x \right ) F'.
	\end{align}
	Define $G = e^{x/2}(F - k\pi)$.  Then $G$ satisfies 
	\begin{align}
	G'' + \frac{9}{4} G = N(x, G, G'), \label{pre3}
	\end{align}
	where 
	\begin{align}
	N(x,G,G') = e^{x/2} \left [ \sin (2 e^{-x/2} G) - 2 e^{-x/2} G \right ] + 
	\left ( 1 - \tanh x \right ) \left [ G' - \frac{1}{2} G \right ]. \label{pre4}
	\end{align}
	A fundamental system to the underlying linear equation $G'' - \frac{9}{4} G = 0$ is given by 
	\begin{align*}
	G_1(x) = e^{-3x/2}, \quad 
	G_2(x) = e^{3x/2}.
	\end{align*}
	The Wronskian $W(G_1,G_2) = G_1' G_2 - G_1 G_2'$ is given by $-3$.  By the variation of constants formula, we seek 
	a solution $G = G_\alpha$ to the integral equation 
	\begin{align}
	G = \alpha G_1(x) + \frac{1}{3} \int_x^\infty \left [ G_1(x)G_2(y) - G_1(y)G_2(x) \right ]
	N(y,G, G') dy,  \label{pre5}
	\end{align}
	for $x \geq R$ for some $R$. For $R > 0$, define the 
	Banach space $X_R = \left \{ G \in C^1([R,\infty)) : \| G \|_{X_R} < \infty \right \}$ where 
	\begin{align*}
	\| G \|_{X_R} := \sup_{x \geq R} e^{3x/2} \left [ |G(x)| + |G'(x)| \right ]. 
	\end{align*}
	Denote the right side of \eqref{pre5} by $\Phi(G)$.  From \eqref{pre4}, it is easy to see that 
	\begin{align*}
	|N(y,G,G')| \leq e^{-y} |G|^3 + e^{-2y} \left [ |G| + |G'| \right ].  
	\end{align*}
	Thus, 
	\begin{align*}
	\| \Phi(G) \|_{X_R} \leq |\alpha| + 3|\alpha|/2+ 
	C \left [ e^{-R} \| G \|_{X_R}^3 + e^{-2R} \| G \|_{X_R} \right ].  
	\end{align*}
	For $R$ sufficiently large, a fixed point argument yields the existence of a unique solution $G_\alpha$ to \eqref{pre5}.  Moreover, 
	$G_\alpha$ satisfies
	\begin{align*}
	G_\alpha(x) = \alpha e^{-3x/2} + O ( e^{-7x/2} )
	\end{align*}
	as $x \rar \infty$.  This means $F_\alpha(x) = k\pi + e^{-x/2} G_\alpha(x)$ satisfies \eqref{pre2} and 
	\begin{align*}
	F_\alpha(x) = k \pi + \alpha e^{- 2 x} + O ( e^{-4 x} )
	\end{align*}
	as $x \rar \infty$.  This is the same as \eqref{pre1} under the change of variables $r = \sinh x$. This concludes the proof of existence of $F_\alpha$. 
	Uniqueness follows from the fixed point argument and Lemma \ref{odeprop2}.
\end{proof}

Using Lemma \ref{odeprop2} and monotonicity of the auxiliary function $Q(r)$, we deduce the following monotonicity
result for solutions to \eqref{ode}.

\begin{lem}\label{odeprop3}
	Suppose that $F$ solves \eqref{ode} and 
	\begin{align*}
	F(-\infty) = l\pi, \quad
	F(\infty) = k\pi.
	\end{align*}
	Then $F$ is monotonic on $\R$.  In particular, if $l = k$, then $F$ is the constant solution.
\end{lem}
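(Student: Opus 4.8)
The plan is to use the auxiliary function $Q(r) = (r^2+1)\frac{(F')^2}{2} - \sin^2 F$ from the proof of Lemma \ref{odeprop1}, together with the identity $Q'(r) = -r(F')^2$. This identity shows that $Q$ is nondecreasing on $(-\infty,0]$ and nonincreasing on $[0,\infty)$. The first step is to identify the limits of $Q$ at $\pm\infty$: since $F(\infty) = k\pi$ and $F(-\infty) = l\pi$ with $k,l$ integers, Lemma \ref{odeprop2} gives expansions $F(r) = k\pi + \alpha r^{-2} + O(r^{-4})$ as $r \to \infty$ and $F(r) = l\pi + \beta r^{-2} + O(r^{-4})$ as $r \to -\infty$, with the natural derivative bounds. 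Substituting these into the definition of $Q$ forces both $(r^2+1)(F')^2 \to 0$ and $\sin^2 F(r) \to 0$, so that $Q(r) \to 0$ as $r \to \pm\infty$. (Alternatively, one can avoid Lemma \ref{odeprop2} here and use only the bound $F'(r) = o(1/r)$ established inside the proof of Lemma \ref{odeprop1}, since that already gives $(r^2+1)(F')^2 = o(1)$.)

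Combining the vanishing of $Q$ at the ends with its one-sided monotonicity yields $Q(r) \geq \lim_{s\to\infty} Q(s) = 0$ for all $r \geq 0$ and $Q(r) \geq \lim_{s\to-\infty} Q(s) = 0$ for all $r \leq 0$; hence $Q(r) \geq 0$ on all of $\R$. This nonnegativity is the heart of the matter. Suppose $F'(r_0) = 0$ at some $r_0 \in \R$. Then $0 \leq Q(r_0) = -\sin^2 F(r_0) \leq 0$, which forces $\sin^2 F(r_0) = 0$, i.e. $F(r_0) \in \Z\pi$. Since every constant map with value in $\Z\pi$ solves \eqref{ode}, and $F$ satisfies $F(r_0) \in \Z\pi$ and $F'(r_0) = 0$, uniqueness for the Cauchy problem associated to \eqref{ode} forces $F \equiv F(r_0)$.

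This produces the dichotomy: either $F$ is constant, or $F'$ has no zero on $\R$ and therefore, being continuous, has a fixed sign, so that $F$ is strictly monotonic. In both cases $F$ is monotonic on $\R$, which is the first assertion. For the second, if $l = k$ then strict monotonicity is incompatible with $F(-\infty) = l\pi = k\pi = F(\infty)$, so the constant alternative must occur and $F \equiv k\pi$. I do not expect a genuine obstacle in this argument; the only points that need care are the decay $Q(r) \to 0$ at both ends (hence the appeal to Lemma \ref{odeprop2} or to the $o(1/r)$ bound) and the observation that any critical point of $F$ must occur at a multiple of $\pi$, which is exactly what allows ODE uniqueness to upgrade a single zero of $F'$ to global constancy.
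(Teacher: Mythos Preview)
Your proof is correct. Both you and the paper use the auxiliary function $Q(r)=(r^2+1)\tfrac{(F')^2}{2}-\sin^2 F$ and its monotonicity $Q'(r)=-r(F')^2$, but the concluding arguments differ. The paper first disposes of the constant case by noting that the asymptotic coefficients $\beta_\pm$ in $Q(r)=\beta_\pm^2 r^{-4}+O(r^{-6})$ vanish precisely when $F$ is constant; assuming $\beta_\pm\neq 0$, it then has $Q(r)>0$ strictly for large $|r|$, and a hypothetical critical point $r_0$ with $Q(r_0)\le 0$ is ruled out by chasing the monotonicity of $Q$ to a contradiction with that strict positivity. Your route is cleaner: from $Q(\pm\infty)=0$ and the one-sided monotonicity you get $Q\ge 0$ on all of $\R$ in one stroke, and then a critical point forces $0\le Q(r_0)=-\sin^2 F(r_0)\le 0$, hence $F(r_0)\in\Z\pi$, whence ODE uniqueness immediately gives $F\equiv F(r_0)$. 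This avoids the case split on the $\beta_\pm$ and the somewhat delicate ``strictly decreasing'' step in the paper's contradiction; the price is that you invoke uniqueness for the Cauchy problem, which the paper does not need here but which is of course available.
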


\begin{proof}
	Recall from the proof of Lemma \ref{odeprop1} that the function 
	\begin{align*}
	Q(r) = (r^2 + 1) \frac{(F')^2}{2} - \sin^2 F,
	\end{align*}
	satisfies $Q'(r) = -r (F')^2$.  In particular the function $Q$
	is nondecreasing on $(-\infty,0)$ and nonincreasing on $(0,\infty)$.    
	
	By Lemma \ref{odeprop2}, there exist $\beta_{\pm} \in \R$ such that as $r \rightarrow -\infty$
	\begin{align*}
	Q(r) = \beta^2 r^{-4} + O(r^{-6}).
	\end{align*}
	Moreover, the case $\beta_{+} = 0$ or $\beta_{-} = 0$ corresponds to the constant solution (which is 
	trivially monotonic). We will assume that $\beta_{\pm} \neq 0$,
	and therefore, $F$ is not the constant solution. Thus, if $|r|$ large, then $Q(r)$ is positive.  
	
	We now conclude that $F$ has no critical points. If not, and there exists $r_0 \in \R$, a critical point for $F$, then $Q(r_0) \leq 0$.  In particular, since $Q(r)$ is nondecreasing 
	on $(-\infty, 0)$ from a positive value near $r = -\infty$, we must have that $r_0 > 0$.  However, since $F$ is nonconstant, 
	$Q(r)$ is strictly decreasing on $[0,\infty)$ since $Q'(r) = -r(F')^2$.  Thus, 
	we have $Q(r) < Q(r_0) \leq 0$ for all $r > r_0$.  This contradicts the fact that $Q(r) > 0$ for 
	large positive $r$. Thus, $F$ has no critical points so that $F$ is monotonic on $\R$.   
\end{proof}

We now prove the existence part of Proposition \ref{harm}.

\begin{lem}\label{odethm1}
	For each $n \in \N$, there exists a solution $Q_{n}$ to \eqref{ode} that satisfies
	\begin{align*}
	Q_{n}(-\infty) = 0, \quad Q_n(\infty) = n \pi, \\
	\forall r, \quad Q_n(r) + Q_n(-r) = n\pi. 
	\end{align*}
\end{lem}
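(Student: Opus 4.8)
The plan is to prove existence by a shooting argument, using the reflection symmetry of \eqref{ode} to reduce to a one--parameter problem and using the auxiliary function $Q$ from Lemma \ref{odeprop1} as the main a priori tool. First I would note that $F \mapsto \widetilde F$, $\widetilde F(r) := n\pi - F(-r)$, maps solutions of \eqref{ode} to solutions (a direct check, since $\sin 2\widetilde F(r) = -\sin 2F(-r)$ and $r \mapsto 2r/(r^2+1)$ is odd). Hence if $F$ solves \eqref{ode} with $F(0) = n\pi/2$ and $F'(0) = b$, then $\widetilde F(0) = n\pi/2 = F(0)$ and $\widetilde F'(0) = b = F'(0)$, so $\widetilde F \equiv F$ by uniqueness for the IVP; in particular every solution with $F(0) = n\pi/2$ automatically satisfies $F(r) + F(-r) = n\pi$, and $F(-\infty) = n\pi - F(\infty)$. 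For $b > 0$ let $F_b$ denote the solution with $F_b(0) = n\pi/2$, $F_b'(0) = b$; it is globally defined and $F_b(\infty) \in \Z\pi \cup (\Z+\tfrac12)\pi$ by Lemma \ref{odeprop1}. It therefore suffices to produce $b > 0$ with $F_b(\infty) = n\pi$, and then set $Q_n := F_b$.

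\textbf{The two ends of the shooting.} Recall $Q_b(r) = (r^2+1)(F_b')^2/2 - \sin^2 F_b$ satisfies $Q_b' = -r(F_b')^2$, so $Q_b$ is nonincreasing on $[0,\infty)$, and since $F_b'(r) = o(1/r)$ one has $Q_b(\infty) = -\sin^2 F_b(\infty)$. For $b$ large, from $((r^2+1)Q_b)' = -2r\sin^2 F_b \geq -2r$ I get $Q_b(r) \geq (Q_b(0) - r^2)/(r^2+1)$, which on $[0,1]$ forces $Q_b \gtrsim b^2$, hence $F_b' \gtrsim b$ and $F_b$ strictly increasing there, so $F_b(1) \geq n\pi/2 + cb > n\pi$ once $b$ is large; in particular $F_b$ then crosses $n\pi$ transversally on $(0,1)$. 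For $b$ small, $Q_b(0) = b^2/2 - \sin^2(n\pi/2)$, and combining $Q_b(r) \leq Q_b(0)$ with the change of variables $x = \arcsinh r$ that turns \eqref{ode} into an exponentially small perturbation of the damped pendulum $F_{xx} + F_x - \sin 2F = 0$ (as in Lemma \ref{odeprop2}), one sees that $F_b$ stays trapped in an $O(b)$--neighborhood of the nearest ``stable'' rest point $n\pi/2$ (when $n$ is odd) or $n\pi/2 + \pi/2$ (when $n$ is even), so $F_b(r) < n\pi$ for all $r$ when $b$ is small.

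\textbf{Intermediate value and conclusion.} Set $\mathcal A := \{ b > 0 : F_b(r) < n\pi \text{ for all } r \geq 0 \}$. By the previous step $\mathcal A$ is nonempty and bounded above. It is open: if $b \in \mathcal A$ then $F_b(\infty) \in \Z\pi \cup (\Z+\tfrac12)\pi$ with $F_b(\infty) \leq n\pi - \tfrac\pi2$, and the monotonicity of $Q_b$ near $r = \infty$ confines $F_b$ to a fixed neighborhood of $F_b(\infty)$ for all large $r$; continuous dependence on $b$ on the remaining compact $r$--interval then puts nearby $b'$ back in $\mathcal A$. Let $b^* := \sup\mathcal A > 0$. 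Then $F_{b^*}(r) \leq n\pi$ for all $r$ (continuous dependence) but $b^* \notin \mathcal A$, so $F_{b^*}$ either attains $n\pi$ at a finite $r$ or has $F_{b^*}(\infty) = n\pi$; using Lemma \ref{odeprop1} and the $Q$--analysis one excludes the former (a finite crossing of $n\pi$ with $F_{b^*}(\infty) < n\pi$ would push $b^*$ into the open set $\mathcal A$), leaving $F_{b^*}(\infty) = n\pi$. Taking $Q_n := F_{b^*}$, Step 1 gives $Q_n(0) = n\pi/2$, $Q_n(-\infty) = 0$ and $Q_n(r) + Q_n(-r) = n\pi$, and monotonicity of $Q_n$ on $\R$ follows from Lemma \ref{odeprop3}.

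\textbf{Main obstacle.} The delicate point is the noncompactness of the shooting interval $[0,\infty)$: both the openness of $\mathcal A$ and the identification of $F_{b^*}(\infty)$ need uniform control of $F_b$ near $r = \infty$, and this is exactly what the monotonicity of $Q_b$ --- together with the $\arcsinh$ reduction to the damped pendulum --- is there to provide. Because \eqref{ode} has no autonomous reformulation valid on all of $\R$, this is where the real work lies.
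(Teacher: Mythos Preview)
Your approach is exactly the paper's: shoot from $F(0)=n\pi/2$, $F'(0)=b>0$, use the reflection symmetry to get the odd--about--$n\pi/2$ structure, and locate the right $b$ by a sup argument on a set of ``undershooting'' parameters. The large--$b$ argument and the odd--$n$ small--$b$ argument (via $Q_b(0)=b^2/2-1<0$) are essentially correct and match the paper.

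There are two genuine gaps, and they share a common cause. First, your openness argument does not handle the case $F_b(\infty)=m\pi$ with $m<n$: here $Q_b(r)\to 0^+$ rather than to a negative value, so ``monotonicity of $Q_b$ confines $F_b$'' is not enough --- for nearby $b'$ you cannot use $Q_{b'}<0$ to keep $F_{b'}$ away from $(m+1)\pi$. (Relatedly, your set $\mathcal A=\{b:F_b(r)<n\pi\ \forall r\}$ need not be open: any $b$ with $F_b(\infty)=n\pi$ and $F_b$ strictly increasing lies in $\mathcal A$, but nearby larger $b'$ should overshoot. The paper works with $A=\{b:F_b(\infty)<n\pi\}$ instead.) Second, for even $n$ the damped--pendulum heuristic for small $b$ is not a proof: $F_b$ starts at the \emph{saddle} $n\pi/2$, and you have to rule out that it climbs past $(n/2+1)\pi$ before settling.

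The paper fixes both gaps with one device you did not use: the identity $\bigl((r^2+1)Q_b\bigr)'=-2r\sin^2 F_b$. Whenever $F_b$ traverses a subinterval like $\bigl[(m+\tfrac16)\pi,(m+\tfrac14)\pi\bigr]$ one has $\sin^2 F_b\ge\tfrac14$, and since $|F_b'|\lesssim r^{-1}$ this traversal costs a definite amount of $r$--length, forcing $(r^2+1)Q_b$ to drop by a fixed positive quantity. Starting from $(r^2+1)Q_b$ small (which is exactly what holds for small $b$ when $n$ is even, and for $b'$ near $b$ when $F_b(\infty)=m\pi$), this drives $Q_b$ negative, after which $F_b$ can never reach $\mathbb Z\pi$. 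This is the missing quantitative mechanism; once you insert it, your outline becomes the paper's proof.
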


The proof of Lemma \ref{odethm1} now follows from the previous lemmas and a classical shooting argument sketched in \cite{biz2}. For every $\alpha \in (0,\infty)$, define $F(r,\alpha)$ to be the solution
to \eqref{ode} such that 
\begin{align*}
F(0,\alpha) &= \frac{n\pi}{2}, \\
F'(0,\alpha) &= \alpha.
\end{align*}
The variable $\alpha$ is referred to as the shooting variable.  We will show that 
we can choose $\alpha$ so that $F(\infty,\alpha) = n\pi$.  Note that if $F(\infty,\alpha) = n\pi$ for some 
$\alpha$, then the symmetry $F \mapsto n\pi - F$ of the equation yields $F(-r,\alpha) + F(r,\alpha) = n\pi$ so that 
$F(-\infty,\alpha) = 0$. Thus, 
to prove Lemma \ref{odethm1}, it suffices to show there exists $\alpha^* \in (0,\infty)$ such that 
\begin{align*}
F(\infty,\alpha^*) = n\pi. 
\end{align*}
We then set $Q_{n}(r) = F(r,\alpha^*)$. 

Define 
\begin{align*} 
A := \left \{ \alpha \in (0,\infty) : \lim_{r \rightarrow \infty} F(r,\alpha) < n \pi \right \} 
\end{align*}
The proof of Lemma \ref{odethm1} requires a few claims.   

\begin{clm}\label{clm1}
	There exists $\alpha_0 > 0$ so that $(0,\alpha_0) \subset A$. 
\end{clm}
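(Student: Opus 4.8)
\bigskip

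The plan is to exploit the auxiliary function $Q(r) = (r^2+1)\tfrac{(F')^2}{2} - \sin^2 F$ from the proof of Lemma~\ref{odeprop1}, together with the asymptotic dichotomy established there, after passing to the variable $x = \arcsinh r$ in which the damped--pendulum structure is manifest. Writing $F = F(\cdot,\alpha)$, recall that $Q' = -r(F')^2$ (so $Q$ is nonincreasing on $[0,\infty)$), that $((r^2+1)Q)' = -2r\sin^2 F \le 0$ (so $Q(r) \le Q(0)/(r^2+1)$), that $Q(r) \downarrow c \le 0$ with $-c = \sin^2 F(\infty,\alpha)$ since $F'(r) = o(1/r)$, and that $Q(0) = \tfrac{\alpha^2}{2} - \sin^2\!\big(\tfrac{n\pi}{2}\big)$. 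In the $x$--variable, \eqref{ode} becomes \eqref{pre2}, i.e. $F_{xx} + \tanh x\,F_x = \sin 2F$ (where $F_x = dF/dx$), and $Q = E - \tfrac12$ with $E(x) := \tfrac12 F_x^2 + \tfrac12\cos 2F$, so that $E_x = -\tanh x\, F_x^2 \le 0$ for $x \ge 0$. I will also use continuous dependence on $\alpha$: since the constant $\tfrac{n\pi}{2}$ is the unique solution of \eqref{ode} with data $(\tfrac{n\pi}{2},0)$, we have $F(\cdot,\alpha) \to \tfrac{n\pi}{2}$ uniformly on each fixed compact interval as $\alpha \to 0^+$.

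Suppose first that $n$ is odd. Then $\sin^2(\tfrac{n\pi}{2}) = 1$, so $Q(0) = \tfrac{\alpha^2}{2} - 1 < 0$ whenever $\alpha < \sqrt2$, hence $Q(r) < 0$ for all $r \ge 0$, and therefore $\sin^2 F(r) = (r^2+1)\tfrac{(F')^2}{2} - Q(r) \ge -Q(0) = 1 - \tfrac{\alpha^2}{2}$ for every $r$. Thus $|\cos F(r)| \le \tfrac{\alpha}{\sqrt2} < 1$, so $F$ stays in the connected component of $\{|\cos\,\cdot\,| \le \tfrac{\alpha}{\sqrt2}\}$ containing $F(0) = \tfrac{n\pi}{2}$, i.e. $F(r) \in \big(\tfrac{n\pi}{2} - \arcsin\tfrac{\alpha}{\sqrt2},\ \tfrac{n\pi}{2} + \arcsin\tfrac{\alpha}{\sqrt2}\big)$; letting $r \to \infty$ gives $\lim_{r\to\infty}F(r,\alpha) \le \tfrac{n\pi}{2} + \arcsin\tfrac{\alpha}{\sqrt2} < n\pi$ since $n \ge 1$. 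Hence $(0,\sqrt2) \subset A$ and one may take $\alpha_0 = \sqrt2$.

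Now suppose $n$ is even, so $n \ge 2$; this is the substantive case, since $Q(0) = \tfrac{\alpha^2}{2} \ge 0$ and $F$ no longer remains near $\tfrac{n\pi}{2}$ — the point is to show it cannot climb over the next unstable equilibrium at $\big(\tfrac{n}{2}+1\big)\pi$. Assume for contradiction that $\lim_{r\to\infty}F(r,\alpha) \ge n\pi$. Since $n\pi \ge \big(\tfrac{n}{2}+1\big)\pi$ and $F(0) = \tfrac{n\pi}{2} < \big(\tfrac{n}{2}+1\big)\pi$, by continuity $F$ reaches the value $\big(\tfrac{n}{2}+1\big)\pi$; let $r_2 \in (0,\infty]$ be the first such $r$ (understood as a limit if $r_2 = \infty$) and $x_2 = \arcsinh r_2$. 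Because $\cos 2F = \cos((n+2)\pi) = 1$ at that value, $E(x_2) \ge \tfrac12$, and as $E$ is nonincreasing, $E \ge \tfrac12$ on $[0,x_2]$, forcing $F_x^2 = 2E - \cos 2F \ge 1 - \cos 2F = 2\sin^2 F$ there. Now pick $\alpha$ small enough (continuous dependence) that $\sup_{x\le 1}F < \tfrac{n\pi}{2} + \tfrac{\pi}{4}$, and set $x_* = \sup\{x < x_2 : F(x) = \tfrac{n\pi}{2}+\tfrac{\pi}{4}\}$; then $x_* > 1$, so $\tanh x \ge \tanh 1$ on $[x_*,x_2]$, where also $F \in \big[\tfrac{n\pi}{2}+\tfrac{\pi}{4},\ \big(\tfrac{n}{2}+1\big)\pi\big]$. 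Using $|F_x| \ge \sqrt2\,|\sin F|$ on that interval,
\begin{align*}
E(0) - E(x_2) \;=\; \int_0^{x_2}\tanh x\,F_x^2\,dx \;\ge\; \sqrt2\,\tanh(1)\int_{x_*}^{x_2}|F_x\sin F|\,dx \;\ge\; \sqrt2\,\tanh(1)\,\big|\cos F(x_*)-\cos F(x_2)\big|,
\end{align*}
and the last factor equals $1 + \tfrac{1}{\sqrt2} > 1$. Hence $E(x_2) \le \tfrac{\alpha^2}{2} + \tfrac12 - \sqrt2\,\tanh 1 < \tfrac12$ as soon as $\alpha^2 < 2\sqrt2\,\tanh 1$, contradicting $E(x_2) \ge \tfrac12$. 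Taking $\alpha_0$ below $(2\sqrt2\,\tanh 1)^{1/2}$ and below the continuous--dependence threshold yields $(0,\alpha_0) \subset A$.

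The main obstacle is precisely this even case. When $n$ is odd, $Q(0) < 0$ pins $F$ in a tiny neighbourhood of $\tfrac{n\pi}{2}$ and there is essentially nothing to do; when $n$ is even that fails, and one must instead forbid $F$ from clearing the saddle at $\big(\tfrac{n}{2}+1\big)\pi$. The mechanism — that clearing it forces $F$ to sweep a full hump of $|\sin F|$ while the damping $\tanh x$ has already become $\gtrsim 1$ (since for small $\alpha$ the orbit lingers near $\tfrac{n\pi}{2}$ until $x$ is large), which by $E_x = -\tanh x\,F_x^2$ costs an amount of energy bounded below independently of $\alpha$ while only $O(\alpha^2)$ is available above the floor $E = \tfrac12$ — is intuitively clear, but turning it into the quantitative estimate above, via the dissipation identity together with the two small--$\alpha$ inputs (initial energy $\tfrac{\alpha^2}{2}+\tfrac12$ and $F$ still within $\tfrac{\pi}{4}$ of $\tfrac{n\pi}{2}$ at $x=1$), is the heart of the argument.
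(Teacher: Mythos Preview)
Your proof is correct. The odd case is essentially the paper's argument (the paper phrases it as ``$Q<0$ forbids $F=k\pi$'' rather than bounding $|\cos F|$, but these are equivalent). In the even case both arguments exploit the same mechanism---dissipation of the Lyapunov function prevents $F$ from clearing the saddle at $(l+1)\pi$---but the implementations differ. The paper works in the $r$ variable with the quantity $(r^2+1)Q$: it uses the a~priori bound $|F'|<2/r$ together with continuous dependence to find $1<r_1<r_2$ with $F(r_1)=(l+\tfrac16)\pi$, $F(r_2)=(l+\tfrac14)\pi$, obtains a uniform lower bound on $r_2-r_1$, and then integrates $((r^2+1)Q)'=-2r\sin^2 F\le -r/2$ over $[r_1,r_2]$ to drive $Q$ negative. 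You instead pass to $x=\arcsinh r$ and use the damped--pendulum energy $E=Q+\tfrac12$: the contradiction hypothesis gives $E\ge\tfrac12$ on $[0,x_2]$, hence the pointwise inequality $|F_x|\ge\sqrt2\,|\sin F|$, and then
\[
\int \tanh x\,F_x^2\,dx\;\ge\;\sqrt2\,\tanh 1\int_{x_*}^{x_2}|F_x\sin F|\,dx\;\ge\;\sqrt2\,\tanh 1\,\bigl|\cos F(x_*)-\cos F(x_2)\bigr|
\]
does all the work. Your route avoids the separate estimate on $|F'|$ and the lower bound on the crossing time, and handles the boundary case $x_2=\infty$ (i.e.\ $n=2$ with $F(\infty)=n\pi$) cleanly; the paper's route has the advantage of staying entirely in the original variable without invoking the change to $x$.
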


\begin{proof}
	For $\al \in (0,\infty)$, we denote 
	\begin{align*}
	Q(r,\al) = (r^2 + 1) \frac{(F'(r,\al))^2}{2} -  \sin^2 F(r,\al).
	\end{align*}
	The proof is split into two cases depending on whether $n$ is odd or even.
	
	\emph{Case 1.} We first consider the case that $n$ is odd.  Then we may take $\al_0 = \sqrt{2}$.  Indeed, 
	if $\al \in (0,\sqrt{2})$, then 
	\begin{align*}
	Q(0,\al) = \frac{\al^2}{2} -  \sin^2 \left ( \frac{n\pi}{2} \right ) = \frac{\al^2}{2} - 1 < 0.
	\end{align*}
	Since $Q(r,\al)$ is decreasing on $(0,\infty)$, we must have $Q(r,\al) < 0$ for all $r > 0$.  This implies that $F(r_0,\al) \neq 
	n \pi$ for all $r_0 \in (0,\infty)$.  The case that $F(r,\al) \rightarrow n\pi$ as $r \rightarrow \infty$ is also impossible
	since then $Q(r,\al) > 0$ for $r$ sufficiently large (see the proof of Lemma \ref{odeprop3}). Thus, if $n$ is odd, 
	we have $(0,\sqrt{2}) \subset A$. 
	
	\emph{Case 2.} We now consider the case that $n$ is even.  In particular, $\frac{n \pi}{2} = l \pi$ for some $l \in \N$.  We first note that 
	for every $\al \in (0,\infty)$, $F(\cdot,\al)$ is increasing until $F$ leaves the strip $\left ( l \pi, \left ( l + \frac{1}{2} \right ) \pi \right )$. 
	Indeed, if $F$ attains a local maximum for some $r_0$ with $F(r_0,\al) \in 
	\left ( l \pi, \left ( l + \frac{1}{2} \right ) \pi \right )$, then \eqref{ode} implies
	\begin{align*}
	F''(r_0,\al) = \frac{\sin 2F(r_0,\al)}{r_0^2 + 1} > 0.
	\end{align*}
	Thus, $F(\cdot,a)$ is increasing as long as $F \in \left ( l \pi, \left ( l + \frac{1}{2} \right ) \pi \right )$.
	
	Note that since $\frac{n \pi}{2} = l\pi$ for some integer $l$, we have 
	\begin{align*}
	Q(0,\al) = \frac{\al^2}{2}.
	\end{align*}
	We recall that $[(r^2 + 1)Q(r,\al)]' = -2 r \sin^2 F(r,\al)$ so that  
	\begin{align}
	Q(r,\al) \leq \frac{Q(0,\al)}{r^2 + 1} = \frac{\al^2}{2(r^2 + 1)}. \label{clm3e1}
	\end{align}
	Thus, for all $\al$ sufficiently small, we have
	\begin{align}
	F'(r,\al)^2 = \frac{2Q(r,\al)}{(r^2 + 1)^2} + \frac{2}{r^2 + 1} \sin^2 F(r,\al) < \frac{4}{r^2}. \label{clm11}
	\end{align}
	Moreover, by continuity of the initial value problem, for $\al$ sufficiently small, we can also ensure that 
	\begin{align*}
	F(r,\al) &< \left ( l + \frac{1}{6}  \right ) \pi, \quad r \in [0,1].
	\end{align*}
	Fix $\al \in (0,\alpha_0)$ with $\alpha_0$ small to be chosen, and suppose that 
	$F(r,\al)$ leaves the strip $\left ( l \pi, \left ( l + \frac{1}{2} \right ) \pi \right )$ (if not then $\al \in A$ trivially).
	Since $F(\cdot,\al)$ is increasing until it reaches $\left ( l + \frac{1}{2} \right ) \pi$, there exist $1 < r_1 < r_2$ 
	such that 
	\begin{align*}
	F(r_1, \al) = \left ( l + \frac{1}{6} \right ) \pi, \\
	F(r_2, \al) = \left ( l + \frac{1}{4} \right ) \pi. 
	\end{align*}
	Then the fundamental theorem of calculus and \eqref{clm11} imply that 
	\begin{align*}
	\frac{\pi}{4} - \frac{\pi}{6} = \int_{r_1}^{r_2} F'(r,\al) dr < 2 \log (r_2 / r_1),
	\end{align*}
	so that 
	\begin{align*}
	r_2 - r_1 > \left (e^{\pi/24} - 1 \right ) r_1 > e^{\pi/24} - 1 . 
	\end{align*}
	By \eqref{clm3e1}
	\begin{align*}
	(r_2^2 + 1) Q(r_2,\al) &= (r_1^2 + 1) Q(r_1,\al) - 2\int_{r_1}^{r_2} r \sin^2 F(r,\al) dr \\ 
	&\leq Q(0,\al) - \frac{1}{2} \int_{r_1}^{r_2} r dr \\
	&= \frac{\al^2}{2} - \frac{1}{4}(r_2^2 - r_1^2) \\
	&< \frac{\al_0^2}{2} - \frac{e^{\pi/24} - 1}{8}.
	\end{align*}
	Thus, if we choose $\al_0$ so that $\al^2_0 < \frac{e^{\pi/24} - 1}{8}$, we have, for all $\al \in (0,\al_0)$, $Q(r_2,\al) < 0$.  Since 
	$Q(r,\al)$ is decreasing on $(0,\infty)$, it follows that $Q(r,\al) < 0$ for all $r > r_2$.  Thus, we cannot have
	$F(r,\al) = (l+1)\pi$ for any $r \in (0,\infty]$ so that 
	\begin{align*}
	F(\infty,\al) < n \pi. 
	\end{align*}
	Thus, if $\al_0$ is sufficiently small, $\al \in A$ for all $\al \in (0,\al_0)$. 
\end{proof}

\begin{clm}\label{clm3}
	The set $A$ is open. 
\end{clm}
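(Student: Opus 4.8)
The plan is to show that membership in $A$ is stable under small perturbations of the shooting parameter $\al$, using continuous dependence of solutions of \eqref{ode} on initial data together with the barrier function $Q(r,\al)$. Fix $\al^* \in A$. By definition, $F(\infty,\al^*) = L\pi$ for some integer $L$ with $L\pi < n\pi$ (that the limit is an integer multiple of $\pi$ is Lemma \ref{odeprop1}, and it is $<n\pi$ by definition of $A$). The key point is to find a \emph{finite} radius $R$ and an open condition at $r = R$ that already forces $F(\infty,\al) < n\pi$; then continuous dependence on $\al$ over the compact interval $[0,R]$ finishes the argument.

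First I would pin down the finite-radius certificate. Since $F(\infty,\al^*) = L\pi$ and $F'(r,\al^*) = o(r^{-1})$ (Lemma \ref{odeprop1}), one of two things happens: either $F(\cdot,\al^*)$ eventually enters and stays in a strip whose upper endpoint is $< n\pi$, or — in the borderline situation $L\pi < n\pi$ — we still have $Q(r,\al^*) \to c \le 0$. The cleanest certificate comes from the monotonicity of $Q$ on $(0,\infty)$ established in Lemma \ref{odeprop1}: I would argue that for $\al^* \in A$ there is a finite $R > 0$ with
\begin{align*}
Q(R,\al^*) < 0 \quad \text{and} \quad F(R,\al^*) < \big(L + \tfrac12\big)\pi < n\pi.
\end{align*}
Indeed, if $Q$ stayed $\ge 0$ on all of $(0,\infty)$ then (as in the proof of Lemma \ref{odeprop3}) $F$ could not have a critical point and, being monotone with $Q \to c \le 0$, one checks $F(\infty,\al^*)$ would have to be a \emph{half}-integer multiple of $\pi$ unless $F$ is constant — contradicting $F(\infty,\al^*) = L\pi$ and $F(0,\al^*) = n\pi/2 \ne L\pi$ (the only escape, $n$ even with $L = n/2$, i.e. the constant solution $F \equiv n\pi/2$, does not lie in $A$ vacuously, or is handled directly). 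So such an $R$ exists.

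Next I would invoke continuous dependence: the map $\al \mapsto (F(R,\al), F'(R,\al))$ is continuous (standard ODE theory, legitimate since \eqref{ode} has globally defined solutions by Lemma \ref{odeprop1} and the right-hand side is smooth with sublinear growth). Hence there is $\de > 0$ such that for all $\al$ with $|\al - \al^*| < \de$ we still have $Q(R,\al) < 0$ and $F(R,\al) < (L+\tfrac12)\pi$. Now I finish with the barrier argument exactly as in Claim \ref{clm1}: since $Q'(r,\al) = -r F'(r,\al)^2 \le 0$ on $(0,\infty)$, $Q(R,\al) < 0$ forces $Q(r,\al) < 0$ for all $r \ge R$, so $F'(r,\al) \ne 0$ cannot vanish there while $F$ equals a multiple of $\pi$ — more precisely $Q(r,\al) < 0$ rules out $F(r,\al) = (L+1)\pi$ for any $r \ge R$ (at such a point $\sin^2 F = 0$ would force $Q = (r^2+1)(F')^2/2 \ge 0$). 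Combined with $F(R,\al) < (L+\tfrac12)\pi$ and Lemma \ref{odeprop1} (the limit at $\infty$ is a multiple of $\pi$), this gives $F(\infty,\al) \le L\pi < n\pi$, i.e. $\al \in A$. Therefore $A$ is open.

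The main obstacle I anticipate is the first step: producing the finite-radius certificate $R$ cleanly, because $F(\infty,\al^*) = L\pi$ with $L \ge 1$ means $F$ may oscillate or approach its limit from either side, and one must rule out the degenerate possibility that $Q(r,\al^*)$ stays nonnegative. This is precisely where Lemma \ref{odeprop2}'s asymptotics $Q(r,\al^*) = \beta^2 r^{-4} + O(r^{-6})$ and the argument in Lemma \ref{odeprop3} are used: either $F$ is constant (excluded for $\al^* \in A$ once one notes $F(0,\al^*) = n\pi/2$) or $\beta \ne 0$ and $Q > 0$ for large $r$ — but then, since $A$ asks for $F(\infty,\al^*) < n\pi$, tracking where $F$ last crosses $(L+\tfrac12)\pi$ from below and using that $Q$ must have dipped below $0$ somewhere to prevent $F$ reaching $(L+1)\pi$ produces the finite $R$. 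Once $R$ is in hand, the rest is routine continuity plus the monotone-barrier trick already used twice above.
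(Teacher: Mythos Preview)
Your proposal has a genuine gap. The certificate you want --- a finite $R$ with $Q(R,\alpha^*) < 0$ --- simply does not exist in one of the two cases allowed by Lemma \ref{odeprop1}. You argue that if $Q(r,\alpha^*) \ge 0$ for all $r > 0$ then $F$ is monotone and ``one checks $F(\infty,\alpha^*)$ would have to be a half-integer multiple of $\pi$.'' That inference is false. When $F(\infty,\alpha^*) = m\pi$ with $m < n$ an integer, the asymptotics of Lemma \ref{odeprop2} give $Q(r,\alpha^*) = \beta^2 r^{-4} + O(r^{-6}) > 0$ for large $r$ (with $\beta \neq 0$ since $F(\cdot,\alpha^*)$ is nonconstant), and since $Q$ is nonincreasing on $(0,\infty)$ this forces $Q(r,\alpha^*) > 0$ for \emph{all} $r > 0$. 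So in the integer-limit case your certificate cannot be produced; the harmonic maps $Q_n$ themselves already show that monotonicity plus $Q \ge 0$ is perfectly compatible with an integer-multiple limit at infinity. (You also assert at the outset that the limit is always $L\pi$ for integer $L$; Lemma \ref{odeprop1} allows half-integer multiples too --- that case is the easy one, and your argument does cover it.)

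The paper's proof splits exactly along this line. Its Case 1 ($F(\infty,\alpha_0) = (m+\tfrac12)\pi$) is essentially your argument: there $Q(R_0,\alpha_0) < 0$ for large $R_0$, and continuity plus the barrier finishes. Its Case 2 ($F(\infty,\alpha_0) = m\pi$) requires a genuinely different idea, because $Q(\cdot,\alpha_0)$ never goes negative. The substitute certificate is that $(r^2+1)Q(r,\alpha_0) \to 0$: fix $\epsilon_0$ small and $R_0$ with $(R_0^2+1)Q(R_0,\alpha_0) < \epsilon_0$. For $\alpha$ near $\alpha_0$ one still has $(R_0^2+1)Q(R_0,\alpha) < 2\epsilon_0$ and $F(r,\alpha) < m\pi$ on $[0,R_0]$. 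If $F(\cdot,\alpha)$ then tries to climb past $m\pi$, one tracks it through the strip $(m\pi,(m+\tfrac12)\pi)$ exactly as in Claim \ref{clm1}, Case 2: the bound $|F'(r,\alpha)| \le C(\alpha_0)/r$ forces $F$ to spend a definite $r$-length between $(m+\tfrac16)\pi$ and $(m+\tfrac14)\pi$, and the identity $[(r^2+1)Q]' = -2r\sin^2 F$ then drives $(r^2+1)Q$ down by a fixed amount independent of $\alpha$. Choosing $\epsilon_0$ small enough makes $Q$ negative for the \emph{perturbed} $\alpha$ --- not for $\alpha_0$ --- and then your barrier argument applies. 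The point is that in Case 2 the negativity of $Q$ is manufactured dynamically for nearby $\alpha$, not inherited from $\alpha_0$; this is the missing ingredient in your sketch.
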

We recall that 
\begin{proof}
	Let $\alpha_0 \in A$.  We consider two cases. 
	
	\emph{Case 1.}  In this case, we assume that there exists $m < n$ such that  
	\begin{align*}
	F(\infty,\alpha_0) = \left ( m + \frac{1}{2} \right) \pi.
	\end{align*}
	We first note that for all $r \geq 0$ 
	\begin{align}\label{clm31}
	F(r,\alpha_0) < (m + 1) \pi.
	\end{align}
	Indeed, if this were not the case, then, since $F(r,\alpha_0)$ is not constant and $F(\infty,\alpha_0) < (m+1) \pi$, there exist
	$r_1 < r_2 < r_3$ such that 
	\begin{align*}
	F(r_1,\alpha_0) = F(r_3&,\alpha_0) = (m+1)\pi, \\
	F'(r_1,\alpha_0) \neq 0,\quad F'(r_2&,\alpha_0) = 0, \quad F'(r_3,\alpha_0) \neq 0. 
	\end{align*}
	In particular, $Q(r_2,\alpha_0) \leq 0$.  But since $Q(r,\alpha)$ is decreasing on $[0,\infty)$, it follows that $Q(r_3, \alpha_0) < 0$ 
	which is a contradiction to our choice of $r_3$.  Thus, for 
	all $r \geq 0$
	\begin{align*}
	F(r,\alpha_0) < (m + 1) \pi.
	\end{align*}
	
	Since $F(\infty,\alpha_0) = \left ( m + \frac{1}{2} \right ) \pi$ and $F'(r,\alpha_0) = o(r^{-1})$ (see Lemma \ref{odeprop1}), there 
	exists
	$R_0 = R_0(\alpha_0)$ large so that
	\begin{align*}
	Q(R_0,\alpha_0) < 0.
	\end{align*}
	By continuous dependence of $F(\cdot,\alpha)$ on $\alpha$, we can ensure that for all $\alpha$ is a small neighborhood of $\alpha_0$ 
	we have 
	\begin{align}
	F(r,\alpha) &< (m + 1)\pi, \quad r \in [0,R_0], \label{clm32} \\
	Q(R_0,\alpha) &< 0. \label{clm33}
	\end{align}
	Since $Q(r,\alpha)$ is decreasing on $[0,\infty)$, \eqref{clm33} implies for all $\alpha$ sufficiently close to $\alpha_0$, $Q(r,\alpha) < 0$ 
	for all $r \geq R_0$.  In particular, $F(r,\alpha) \neq l \pi$ for any $l \in \N$ and all $r \in [R_0,\infty]$.  This along with \eqref{clm32} implies that
	$F(\infty,\alpha) < (m+1)\pi$.  Thus, for all $\alpha$ sufficiently close to $\alpha_0$, we have $\alpha \in A$ as desired. 
	
	\emph{Case 2.} In this case we assume that there exists $m < n$ such that 
	\begin{align*}
	F(\infty,\alpha_0) = m \pi.
	\end{align*}
	
	We first note that in this case, we have $Q(r,\alpha_0)  = O(r^{-4})$ (see the proof of Lemma \ref{odeprop3}), so that, 
	\begin{align}\label{clm34}
	\lim_{r \rightarrow \infty} (r^2 + 1) Q(r,\alpha_0) = 0. 
	\end{align} 
	Let $\epsilon_0 > 0$ to be chosen later.  Then by \eqref{clm34}, there exists $R_0 = R_0(\epsilon_0) > 1$ such that 
	\begin{align*}
	(r^2 + 1) Q(r,\alpha_0) < \epsilon_0, \quad r \geq R_0.
	\end{align*}
	For $\alpha$ in a small (depending on $\epsilon_0$) neighborhood of $\alpha_0$, we have 
	\begin{align}
	Q(0,\alpha) &< 2 Q(0,\alpha_0), \label{clm35} \\
	(R_0^2 + 1) Q(R_0,\alpha) &< 2 \epsilon_0, \label{clm36} \\
	\frac{n\pi}{2} \leq F(r,\alpha) &< m \pi, \quad r \in [0,R_0]. \label{clm37}
	\end{align}
	We now claim that for each such $\alpha$, we have 
	\begin{align}\label{clm38} 
	F(\infty,\alpha) \leq \left ( m + \frac{1}{2} \right ) \pi. 
	\end{align}
	Let $\alpha$ be sufficiently close to $\alpha_0$ so that \eqref{clm35}, \eqref{clm36}, and \eqref{clm37} are satisfied, and assume that 
	\begin{align*}
	F(\infty,\alpha) > m \pi. 
	\end{align*}
	Then by \eqref{clm37}, there exists $r_0 \geq R_0$ such that $F(r_0,\alpha) = m\pi$. Since $F(\cdot,\alpha)$ is increasing as long 
	as $F(\cdot,\alpha)$ is in the strip $\left ( m \pi, \left ( m + \frac{1}{2} \right )\pi \right )$ (see the proof of Claim \ref{clm1}), there 
	exist $r_1,r_2 > R_0$ such that $r_1 < r_2$ and 
	\begin{align*}
	F(r_1,\alpha) = \left ( m + \frac{1}{6} \right ) \pi, \\
	F(r_2,\alpha) = \left ( m + \frac{1}{4} \right ) \pi.
	\end{align*}
	As in the proof of Claim \ref{clm1}, by \eqref{clm35} we have 
	\begin{align}\label{clm39}
	F'(r,\alpha)^2 \leq \frac{2 Q(0,\alpha)}{(r^2 + 1)^2} + \frac{2}{r^2 + 1} \sin^2 F(r,\alpha) \leq
	\frac{C^2(\alpha_0)}{r^2}, 
	\end{align}
	for some positive constant $C(\alpha_0)$.  By our choice of $r_1,r_2$, \eqref{clm39}, and the fundamental theorem of calculus, 
	we deduce that
	\begin{align*}
	\frac{\pi}{4} - \frac{\pi}{6} &= \int_{r_1}^{r_2} F'(r,\alpha) dr \geq C(\alpha_0) \log (r_2 / r_1),
	\end{align*}
	whence for some (possibly small) constant $c(\alpha_0) > 0$
	\begin{align*}
	r_2 - r_1 \geq c(\alpha_0).
	\end{align*}
	By the relation $[(r^2 + 1) Q(r,\alpha)]' = -2 r \sin^2 F(r,\alpha)$ and \eqref{clm36}, we have 
	\begin{align*}
	(r_2^2+ 1) Q(r_2,\alpha) &= (r_1^2 + 1) Q(r_1,\alpha) - 2 \int_{r_1}^{r_2} r \sin^2 F(r,\alpha) dr \\
	&< 2 \epsilon_0 -  \frac{1}{2} \int_{r_1}^{r_2} r dr \\
	&< 2 \epsilon_0 - \frac{1}{2} (r_2^2 - r_1^2 ) \\
	&< 2 \epsilon_0 - \frac{1}{2} c(\alpha_0) .
	\end{align*}
	By initially choosing $\epsilon_0$ sufficiently small (depending only on $\alpha_0$), we see that if $\alpha$ is sufficiently close to $\alpha_0$ so that \eqref{clm35}, 
	\eqref{clm36}, and \eqref{clm37} are satisfied, we have $Q(r_2, \alpha) < 0$.  Thus, $Q(r,\alpha) < 0$ for all $r \geq r_2$.  
	Hence, for any $l > m$, $F(r,\alpha) \neq l \pi$ for all $r \in [R_0,\infty]$.  This along with \eqref{clm37} proves that 
	$F(r,\alpha) \leq \left ( m + \frac{1}{2} \right ) \pi$ for all $r \geq 0$ which establishes \eqref{clm38}.  Thus, all $\alpha$ sufficiently 
	close to $\alpha_0$ are in $A$ which finishes the proof of Claim \ref{clm3}.  
\end{proof}

\begin{clm}\label{clm2}
	There exists $\alpha_1 > 0$ such that $(\al_1,\infty) \subseteq A^c$. 
\end{clm}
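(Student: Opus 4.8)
The plan is to show that if the shooting parameter $\al$ is large enough, then $F(\cdot,\al)$ overshoots the level $n\pi$ and can never return to it, so that $F(\infty,\al) \geq n\pi$ and hence $\al \in A^c$. Throughout I would work with the auxiliary function $Q(r,\al) = (r^2+1)\tfrac{(F'(r,\al))^2}{2} - \sin^2 F(r,\al)$ from the proof of Lemma \ref{odeprop1}, which satisfies $\p_r Q = -r(F')^2$, so that $Q(\cdot,\al)$ is nonincreasing on $(0,\infty)$.

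First I would record a quantitative lower bound for $Q$. Since $(r^2+1)(F')^2 = 2(Q + \sin^2 F) \leq 2(Q+1)$, a direct computation gives $\p_r\big((r^2+1)(Q+1)\big) = 2r(Q+1) - r(r^2+1)(F')^2 \geq 0$, so $(r^2+1)(Q+1)$ is nondecreasing on $[0,\infty)$. As $Q(0,\al) = \tfrac{\al^2}{2} - \sin^2\tfrac{n\pi}{2} \geq \tfrac{\al^2}{2} - 1$, this yields
\[ Q(r,\al) \geq \frac{\al^2}{2(r^2+1)} - 1, \qquad r \geq 0, \]
hence $Q(r,\al) > 0$ on $[0,R_\al)$ with $R_\al := \sqrt{\al^2/2 - 1}$; together with $F'(0,\al) = \al > 0$ this forces $F(\cdot,\al)$ to be strictly increasing on $[0,R_\al)$. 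Feeding the bound back into $(F')^2 = \tfrac{2}{r^2+1}(Q + \sin^2 F) \geq \tfrac{2Q}{r^2+1}$ gives $(F')^2 \geq \tfrac{\al^2}{2(r^2+1)^2}$ on the subinterval $[0,R_\al']$ with $R_\al' := \sqrt{\al^2/4-1} < R_\al$, so $F'(r,\al) \geq \tfrac{\al}{\sqrt2\,(r^2+1)}$ there. Integrating,
\[ F(R_\al',\al) - \frac{n\pi}{2} \;\geq\; \frac{\al}{\sqrt 2}\arctan R_\al' \;\longrightarrow\; \infty \quad (\al \to \infty), \]
so there is $\al_1 > 0$ such that $F(R_\al',\al) > n\pi$ for every $\al > \al_1$; since $F(\cdot,\al)$ is increasing on $[0,R_\al']$ and $F(0,\al) = \tfrac{n\pi}{2}$, there is a first crossing $\rho = \rho(\al) \in (0,R_\al')$ with $F(\rho,\al) = n\pi$ and $F(\cdot,\al) > n\pi$ on $(\rho,R_\al']$.

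The key step is then to rule out a later return to $n\pi$. Fix $\al > \al_1$ and suppose toward a contradiction that $F(\infty,\al) < n\pi$; by Lemma \ref{odeprop1} the limit is $\leq (n-\tfrac12)\pi$, so $F(\cdot,\al)$ must drop back below $n\pi$ and there is a first $r_3 > R_\al'$ with $F(r_3,\al) = n\pi$ and $F(\cdot,\al) > n\pi$ on $(\rho,r_3)$; in particular $F'(r_3,\al) \leq 0$. Choosing an interior maximum $r_m \in (\rho, r_3)$ we have $F'(r_m,\al) = 0$ and $F(r_m,\al) > n\pi$, so $Q(r_m,\al) = -\sin^2 F(r_m,\al) \leq 0$, whereas $Q(r_3,\al) = (r_3^2+1)(F'(r_3,\al))^2/2 \geq 0$ since $\sin n\pi = 0$. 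Since $Q(\cdot,\al)$ is nonincreasing on $(0,\infty)$ and $0 < r_m < r_3$, both quantities must vanish: $\sin^2 F(r_m,\al) = 0$ forces $F(r_m,\al) \in \Z\pi$, hence $F(r_m,\al) \geq (n+1)\pi$, while $Q \equiv 0$ on $[r_m,r_3]$ forces $F'(\cdot,\al) \equiv 0$ there, i.e. $F(\cdot,\al)$ is constant on $[r_m,r_3]$ — contradicting $F(r_m,\al) \geq (n+1)\pi \neq n\pi = F(r_3,\al)$. Hence $F(\infty,\al) \geq n\pi$ for all $\al > \al_1$, i.e. $(\al_1,\infty) \subseteq A^c$.

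The main obstacle is this last step. Producing an overshoot of $n\pi$ is a soft energy-type ODE estimate, but preventing the (damped, oscillatory) solution from coming back below $n\pi$ afterwards genuinely requires the monotonicity of $Q$ — the correct Lyapunov-type quantity for this non-autonomous flow — together with the fact that $\sin^2 F$ vanishes exactly at the integer multiples of $\pi$.
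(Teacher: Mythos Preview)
Your argument is correct. The two halves map onto the paper's proof as follows.

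For the \emph{no--return} part, the paper proceeds exactly as you do: once $F(\cdot,\al)$ reaches $n\pi$ at some $r_0>0$, an interior critical point would give $Q\le 0$ while a later crossing of $n\pi$ gives $Q>0$, contradicting the monotonicity of $Q$. The only difference is cosmetic: the paper observes directly that $F'(r_2,\al)\neq 0$ at any $n\pi$--crossing (else $F\equiv n\pi$ by ODE uniqueness at the equilibrium), which gives $Q(r_2,\al)>0$ strictly and avoids your equality--case analysis. Your way of handling the borderline via ``$Q$ constant on $[r_m,r_3]$ forces $F'\equiv 0$'' is fine too.

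For the \emph{overshoot} part, your route is genuinely different and more in the spirit of the surrounding section. The paper passes to $x=\arcsinh r$, runs a contraction in $C^1([0,x_0])$ to get $\|F(\cdot,\al)\|_{C^1}\le n\pi+4\al$ on a fixed short interval, and then reads off $F(x_0,\al)\ge n\pi$ for $\al$ large. You instead stay with the Lyapunov quantity: in fact the inequality you use for $(r^2+1)(Q+1)$ is an identity, $\p_r\bigl[(r^2+1)(Q+1)\bigr]=2r\cos^2 F\ge 0$, the exact counterpart of $\p_r[(r^2+1)Q]=-2r\sin^2 F$ already exploited in Claim~\ref{clm1}. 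This gives the clean lower bound $F'\ge \al/(\sqrt2(r^2+1))$ on $[0,R_\al']$ and, after integrating $\arctan$, an explicit threshold roughly of size $n$, sharper than the paper's $\al_1=2n\pi/x_0$ with $x_0$ an unspecified small constant. So your approach buys a cleaner, self--contained ODE argument with a better quantitative threshold; the paper's fixed--point approach buys uniform $C^1$ control near the origin, which is a bit more than is needed here.
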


\begin{proof}
	We first note that if $\al > 0$ and if $F(r,\al) = n\pi$ for some $r > 0$, then $F(\infty,\al) > n\pi$.  Indeed, suppose $F(r_0,\al) = n\pi$ for some $r_0 > 0$ 
	and $F(\infty,\al) \leq n\pi$.  Since $F(r,\al)$ is not the constant function, there exist $r_0 < r_1 < r_2 \leq \infty$ such that
	$F'(r_1,\al) = 0$ and $F(r_2,\al) = n\pi$.  We then have that $Q(r_1,\al) \leq 0$ and $Q(r_2,\al) > 0$.  This contradicts the 
	fact that $Q(r,\al)$ is decreasing on $[0,\infty)$.  Thus, if $F(r,\al) = n \pi$ for some 
	$r > 0$, then $F(\infty,\al) > n\pi$.  In particular, we have shown that 
	\begin{align*}
	\left \{ a > 0 : F(r_0,a) = n\pi \mbox{ for some } r_0 > 0 \right \} \subset A^c. 
	\end{align*}
	Thus, the proof of Claim \ref{clm2} is reduced to showing that there exists $\al_1 > 0$ such that 
	\begin{align*}
	(\al_1,\infty) \subseteq \left \{ a > 0 : F(r_0,a) = n\pi \mbox{ for some } r_0 > 0 \right \}. 
	\end{align*}
	
	The idea of the proof is now simple.  If the initial velocity $\al$ is large enough, then $F(r,\al) = n\pi$ for some $r > 0$ so that
	$\al \in A^c$.  To make this argument precise, we need the precise asymptotics of $F(r,\al)$ for $r$ near $r = 0$.  First we change variables and set 
	$x = \arcsinh r$.  Then $F(x,\al) := F(r(x),\al)$ satisfies $F(0,\al) = n\pi/2$, $F'(0,\al) = \al$ and 
	\begin{align}
	F'' + \tanh x F' - \sin 2F = 0. \label{clm21} 
	\end{align}
	
	We first claim there exists $x_0 > 0$ small such that for all $\al > 0$
	\begin{align}
	\| F(\cdot,\al) \|_{C^1([0,x_0])} \leq n \pi + 4\al. \label{clm2e1}
	\end{align}
	Indeed, we solve \eqref{clm21} near $x = 0$ by a contraction mapping argument.  Let $X = C^1([0,x_0])$ where $x_0$ is to be 
	chosen later.  Define $\Phi: X \rightarrow X$ by 
	\begin{align*}
	\Phi F(x) = \frac{n \pi}{2} + \al x + \int_0^x (x - y) \left [   \sin 2F(y) - \tanh y F'(y) \right ] dy
	\end{align*}
	If $x_0$ is chosen so small so that $\tanh y \leq 2 y$ for $y \in [0,x_0]$, then it is easy to verify that for all $F,G \in X$
	and for some absolute constant $C > 0$
	\begin{align*}
	\| \Phi F \|_X &\leq \frac{n\pi}{2} + 2\al + C x_0 \| F \|_X, \\
	\| \Phi F - \Phi G \|_X &\leq C x_0 \| F - G \|_X.  
	\end{align*}
	Now fix $x_0$ smaller if necessary so that $x_0 < 1/(8C)$.  Then, we may contract in the ball $B_X(0,n\pi + 4\al)$ and find a unique
	fixed point (namely $F(x,\al)$) of $\Phi$.  This shows that 
	there exists $x_0$ small and independent of $\al$ such that $\| F(\cdot,\al) \|_{C^1([0,x_0])} \leq n \pi + 4\al$ as desired.

	We now conclude that if $\al$ is sufficiently large (depending on $x_0$), then in fact $F(x_0,\al) \geq n\pi$ where $x_0$ was defined previously.  
	We write for $x \in [0,x_0]$
	\begin{align*}
	F(x,\al) = \frac{n\pi}{2} + \al x + \int_0^x (x - y) \left [ \sin 2F(y,\al) - \tanh y F'(y,\al) \right ] dy.
	\end{align*}
	Then by \eqref{clm2e1}, for some constant $C > 0$ and by choosing $x_0$ smaller if necessary, we have  
	\begin{align*}
	|F(x_0,\al)| &\geq \frac{n\pi}{2} + \al x_0 - C x_0^2 \| F(\cdot,\al) \|_{C^1([0,x_0])} \\
	&\geq \frac{n\pi}{2} (1 - Cx_0^2) + \al x_0 ( 1- 4x_0 C) \\
	&\geq \frac{n\pi}{4} + \frac{\al x_0}{2}.  
	\end{align*}
	This shows that for all $\al \geq 2n\pi/x_0$, $F(x_0,\al) \geq n \pi$, i.e. $\al \in \left \{ a > 0 : F(r_0,a) = n\pi \mbox{ for some } r_0 > 0 \right \}
	\subset A^c$. 
\end{proof}

\begin{proof}[Proof of Lemma \ref{odethm1}]
	By Claim 1 and Claim 3, 
	\begin{align*}
	\al^* := \sup A \in (0,\infty).
	\end{align*}
	By Claim 2, $\al^* \notin A$.  Suppose that $\al^* \in \left \{ \al \in (0,\infty) : F(\infty,\alpha) > n\pi \right \}$.  Then by continuous
	dependence of initial data, all $\al$ near $\al^*$ are also in $\left \{ \al \in (0,\infty) : F(\infty,\alpha) > n\pi \right \}$.
	This, however, contradicts the facts that $\al^* = \sup A$ and that $A$ is open (by Claim 2).  Thus, $F(\infty, \al^*) = n\pi$, 
	and we are done. 
\end{proof}

\subsection{Uniqueness of the Harmonic Map}
In this section we show uniqueness of the harmonic map constructed in the previous section which concludes the proof of 
Proposition \ref{harm}.

\begin{lem}\label{odethm2}
	Let $F_1$ and $F_2$ solve \eqref{ode} and assume that for $j = 1,2$
	\begin{align*}
	F_j(-\infty) = 0, \quad
	F_j(\infty) = n\pi.
	\end{align*}
	Then $F_1 = F_2$. 
\end{lem}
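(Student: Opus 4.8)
The plan is to reduce the degenerate case $n=0$ to Lemma~\ref{odeprop3} and then argue by contradiction when $n\geq 1$. If $n=0$, then $F_1$ and $F_2$ solve \eqref{ode} with $F_j(-\infty)=F_j(\infty)=0$, so Lemma~\ref{odeprop3} gives $F_1\equiv F_2\equiv 0$. Suppose now $n\geq 1$ and, for contradiction, $F_1\neq F_2$. By Lemma~\ref{odeprop3} each $F_j$ is strictly increasing, and since it is nonconstant it has no critical point, so $F_j'>0$ on all of $\R$. By Lemma~\ref{odeprop2} there are real numbers $\alpha_j,\beta_j$ with
\begin{align*}
F_j(r)=n\pi+\alpha_j r^{-2}+O(r^{-4}),\quad r\to+\infty,\qquad
F_j(r)=\beta_j r^{-2}+O(r^{-4}),\quad r\to-\infty,
\end{align*}
together with the natural derivative bounds; since $F_j$ is increasing from $0$ to $n\pi$ and nonconstant, $\alpha_j<0<\beta_j$. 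The first reduction is that if $\alpha_1=\alpha_2$ (respectively $\beta_1=\beta_2$), then $F_1$ and $F_2$ are solutions of \eqref{ode} sharing the same asymptotic expansion as $r\to+\infty$ (respectively $r\to-\infty$), so the uniqueness clause of Proposition~\ref{prescribe}, with $k=n$ (respectively $k=0$, after the reflection $r\mapsto -r$), forces $F_1=F_2$. Hence we may assume $\alpha_1\neq\alpha_2$ and $\beta_1\neq\beta_2$, and after relabelling $\alpha_1<\alpha_2$; then $h:=F_1-F_2$ satisfies $h(r)=(\alpha_1-\alpha_2)r^{-2}+O(r^{-4})$, so $F_1<F_2$ for all sufficiently large $r>0$.

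Next I would record the equation satisfied by the difference. Writing $\sin 2F_1-\sin 2F_2=q(r)\,h(r)$ with the smooth bounded coefficient $q(r):=2\int_0^1\cos\big(2\,(sF_1(r)+(1-s)F_2(r))\big)\,ds$, subtracting the two copies of \eqref{ode} and multiplying by $r^2+1$ gives
\begin{align*}
\big((r^2+1)h'\big)'=q(r)\,h,\qquad r\in\R.
\end{align*}
Since $h=O(r^{-2})$ and $h'=O(r^{-3})$ as $r\to\pm\infty$, the boundary terms in $\frac{d}{dr}\big[(r^2+1)h\,h'\big]=q\,h^2+(r^2+1)(h')^2$ vanish at $\pm\infty$, yielding the Wronskian-type identity $\int_\R\big[q\,h^2+(r^2+1)(h')^2\big]\,dr=0$, with analogous identities on half-lines after inserting the corresponding boundary contribution.

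The crux is to show that no two distinct solutions of the boundary value problem coexist, and this is where I expect the real difficulty. The natural strategy is to propagate the ordering $F_1<F_2$ inward from $+\infty$: if $h$ had a zero, let $r_0$ be its largest zero, so $h<0$ on $(r_0,\infty)$, $h(r_0)=0$ and (by ODE uniqueness for the linear equation above) $h'(r_0)<0$; one then wants to extract a definite sign from $\big((r^2+1)h'\big)'=qh$, exploiting that $F_1,F_2\to n\pi$ near $+\infty$ forces $q\to 2>0$, together with the monotonicity of the auxiliary function $Q(r)=(r^2+1)\tfrac{(F')^2}{2}-\sin^2 F$ used throughout Section~2, to contradict $h<0,\ h\to 0$ on $(r_0,\infty)$. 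Once $h$ is one-signed, $h<0$ everywhere gives $\beta_1<\beta_2$, and one must still rule out this ``nested'' configuration: here the Wronskian identity does not close the argument directly, since already for $n=1$ (and still more for $n\geq 2$) the mean-value coefficient $q$ is sign-indefinite on the bulk, oscillating as $F$ crosses half-integer multiples of $\pi$. The way I would handle both the non-crossing step and the nested case uniformly is to pass to the coordinate $x=\arcsinh r$, in which \eqref{ode} becomes the perturbed damped pendulum $F''+\tanh x\,F'-\sin 2F=0$ (here $F'=dF/dx$), and then to run the phase-plane comparison of McLeod and Troy \cite{mctr}: the two solutions correspond to heteroclinic orbits from the saddle at $0$ as $x\to-\infty$ to the saddle at $n\pi$ as $x\to+\infty$, and the damping, which renders the pendulum energy $\tfrac12(F')^2+\tfrac12\cos 2F$ monotone in $x$, forces such an orbit to be unique. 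This contradicts $F_1\neq F_2$ and completes the proof of Lemma~\ref{odethm2}, hence of Proposition~\ref{harm}.
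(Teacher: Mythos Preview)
Your setup is sound: the $n=0$ case via Lemma~\ref{odeprop3}, and the reduction to $\alpha_1\neq\alpha_2$, $\beta_1\neq\beta_2$ via the uniqueness clause of Proposition~\ref{prescribe}, are both clean and correct. The difference equation $\big((r^2+1)h'\big)'=q\,h$ and the associated Wronskian identity are also correctly derived. But, as you yourself diagnose, this identity does not close because $q$ changes sign whenever $F_1,F_2$ cross a half-integer multiple of $\pi$; for $n\geq 2$ this happens in the bulk and the integral $\int q h^2$ has no controllable sign.

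The gap is in your last paragraph. Passing to $x=\arcsinh r$ and observing that the pendulum energy $\tfrac12(F')^2+\tfrac12\cos 2F$ is monotone (increasing for $x<0$, decreasing for $x>0$) is just the auxiliary function $Q(r)$ from Section~2 restated; it does not by itself yield uniqueness of the heteroclinic. The equation $F''+\tanh x\,F'-\sin 2F=0$ is \emph{non-autonomous}, so there is no genuine two-dimensional phase plane in which two heteroclinics would be forced to coincide, and the damping coefficient $\tanh x$ changes sign at $x=0$, so no global Lyapunov argument is available either. Saying ``run the phase-plane comparison of McLeod and Troy'' is an appeal, not an argument.

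What the paper actually does---and what the McLeod--Troy idea amounts to here---is the following. Since each $F_j$ is strictly increasing, one inverts and regards $p_j:=dF_j/dx$ as a function of $F\in(0,n\pi)$, with associated inverse $x_j(F)$. The equation becomes $p\,dp/dF+(\tanh x)\,p-\sin 2F=0$. Subtracting the two copies gives a \emph{first-order} linear ODE for $\phi:=p_2-p_1$ as a function of $F$, with forcing $(\tanh x_2-\tanh x_1)\,p_1\,p_2^{-1}$. One then proves the monotonicity statement: if at some $F_0$ one has $p_2>p_1$ and $x_2>x_1$, then both strict inequalities persist for all $F<F_0$ (the forcing has a definite sign because $\tanh$ is increasing, and $p_j>0$). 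Finally the asymptotic expansions give $p_j\sim 2(n\pi-F)-a_j^{-1}\tfrac{4}{5}(n\pi-F)^2$ near $F=n\pi$ and $p_j\sim 2F-b_j^{-1}\tfrac{4}{5}F^2$ near $F=0$; taking $a_2>a_1$ forces $p_2>p_1$ and $x_2>x_1$ near $n\pi$, which propagates down to force $b_1>b_2$, which in turn gives $p_1>p_2$ near $0$---a contradiction. The essential point you are missing is the change of independent variable to $F$, which converts the second-order non-autonomous problem into a first-order comparison with monotone forcing.
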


\begin{proof}
	Since any $F$ that solves \eqref{ode} and connects $0$ to $n\pi$ must be increasing, we may make a change of variables and consider 
	$F$ as the dependent variable and $p = \frac{dF}{dx}$ as the dependent variable, where $x = \arcsinh r$.  Thus, the equation solved
	by $p$ is 
	\begin{align}
	p \frac{dp}{dF} + (\tanh x) p - \sin 2F = 0. 
	\end{align}
	Suppose towards a contradiction, that we have two different solutions $F_1, F_2$.  These determine two $C^\infty$ diffeomorphisms
	$x_1,x_2: (0,n\pi) \rightarrow (-\infty,\infty)$ by the condition $F_j \circ x_j$ is the identity on $(0,n\pi)$.  Then we have
	\begin{align}
	p_j(F) \frac{dp_j}{dF} + (\tanh x_j(F)) p_j(F) - \sin 2F = 0, \quad j = 1,2.  
	\end{align}
	Set $\phi(F) = p_2(F) - p_1(F)$.  Subtracting the equation satisfied by $p_1$ from the equation satisfied by $p_2$
	and rearranging, we have
	\begin{align*}
	0 &= p_2 \frac{dp_2}{dF} - p_1 \frac{dp_1}{dF} + \tanh x_2 p_2 - \tanh x_1 p_1 \\
	&= p_2 \frac{d \phi}{dF} + \left ( \frac{dp_1}{dF} + \tanh x_2 \right ) \phi - \left ( \tanh x_1 - \tanh x_2 \right ) p_1.  
	\end{align*}
	Define $q = p_2^{-1} \left ( \frac{dp_1}{dF} + \tanh x_2 \right )$, $f = \left ( \tanh x_2 - \tanh x_1 \right ) p_1 p_2^{-1}$. 
	Then $\phi$ satisfies  
	\begin{align*}
	\phi' + q \phi = -f \implies 
	(-\phi e^{-Q} )' = f, 
	\end{align*}
	where $Q(F) = \int_F^{F_0} q(\bar F) d\bar F$ for any choice of $F_0 \in (0,n\pi)$.  Hence, we have that 
	\begin{align}
	\phi(F) = e^{Q(F)} \phi(F_0) + \int_F^{F_0} e^{Q(F)-Q(\bar F)} f(\bar F) d\bar F. \label{odethm21}
	\end{align}  We now make an observation based on \eqref{odethm21}. Note that if $p_2(F_0) > p_1(F_0)$ and $x_2(F_0) > x_2(F)$ imply that $p_2(F) > p_1(F)$ and $x_2(F) > x_1(F)$ for all $F \leq F_0$.
	Indeed, suppose $F_1 < F_0$ and $p_2(F) \geq p_1(F)$ for all $F_1 \leq F \leq F_0$.  Then from the definition of $p_j$, we have for 
	all $F_1 \leq F \leq F_0$ 
	\begin{align*}
	p_2(F) \geq p_1(F) &\implies (x_2(F) - x_1(F))' \leq 0.
	\end{align*}
	This implies upon integrating that 
	\begin{align*}
	0 < x_2(F_0) - x_1(F_0) \leq x_2(F) - x_1(F), \quad F_1 \leq F \leq F_0. 
	\end{align*}
	Since $\tanh x$ is increasing on $(-\infty,\infty)$, 
	\begin{align*}
	x_2(F) > x_1(F), \quad F_1 \leq F \leq F_0 \implies f(F) > 0, \quad F_1 \leq F \leq F_0. 
	\end{align*}
	Hence by \eqref{odethm21}
	\begin{align*}
	p_2(F) > p_1(F), \quad F_1 \leq F \leq F_0.  
	\end{align*}
	Thus, if $p_2(F) \geq p_1(F)$ for all $F_1 \leq F \leq F_0$, we must in fact have the strict inequalities $p_2(F) > p_1(F)$ and $x_2(F) > x_1(F)$
	for $F_1 \leq F \leq F_0$. By continuity, we see that $p_2(F) > p_1(F)$ and $x_2(F) > x_1(F)$ for all $F \leq F_0$.
	
	By Lemma \ref{odeprop2}, a solution $F$ to \eqref{ode} such that $F(-\infty) = 0$, $F(\infty) = n\pi$ 
	satisfies for unique $a, b > 0$, 
	\begin{align*}
	F(x) - n\pi &\sim - ae^{-2x} + a\frac{2}{5} e^{-4x}, \quad \mbox{as } x \rightarrow \infty, \\
	F(x) &\sim  be^{2x} - b\frac{2}{5} e^{4x}, \quad \mbox{as } x \rightarrow -\infty.
	\end{align*}
	It follows that $p$ satisfies 
	\begin{align*}
	p &\sim 2ae^{-2x} - a\frac{8}{5} e^{-4x} \\
	&\sim 2 ( n\pi - F) - a\frac{4}{5} e^{-4x} \\
	&\sim 2 (n\pi - F) - a^{-1} \frac{4}{5} (n\pi - F)^{2},
	\end{align*}
	as $F \rightarrow n\pi^-$.  Similarly, we have
	\begin{align*}
	p &\sim 2 F - b^{-1} \frac{4}{5} F^{2}, 
	\end{align*}
	as $F \rightarrow 0^+$. Suppose $F_2$ has coefficients $a_2,b_2 > 0$ and $F_1$ has coefficients $a_1,b_1 > 0$ where (without loss of generality)
	$a_2 > a_1$.  Then clearly $x_2(F) > x_1(F)$ for all $F$ sufficiently close to $n\pi$ since for $x$ large 
	\begin{align*}
	F_2(x) \sim n\pi - a_2 e^{-2x} < n \pi - a_1 e^{-2x} \sim F_1(x). 
	\end{align*}
	Moreover, we have $p_2(F) > p_1(F)$ for $F$ sufficiently close to $n\pi$ by our previous calculation
	\begin{align*}
	p_2(F) &\sim 2(n\pi - F) - a_2^{-1}\frac{4}{5} (n\pi - F)^{2} \\&> 
	2(n\pi - F) - a_1^{-1}\frac{4}{5} (n\pi - F)^{2} \sim p_1(F). 
	\end{align*}
	Thus, by our observation following \eqref{odethm21},
	we have $p_2(F) > p_1(F)$ and $x_2(F) > x_1(F)$ for all $F \in (0,n\pi)$.  In particular, the constraint $x_2(F) > x_1(F)$ for
	all $F \in (0,n\pi)$ 
	implies that $b_1 > b_2$. But then for $F$ near 0
	\begin{align*}
	p_1(F) &\sim 2 F - b_1^{-1} \frac{4}{5} F^{2} \\& > 2 F - b_2^{-1} \frac{4}{5} F^{2} \sim p_2(F),  
	\end{align*}
	which contradicts $p_2(F) > p_1(F)$ for all $F \in (0,n\pi)$. Thus, no two distinct solutions $F_1,F_2$ exist.
	This completes the proof. 
\end{proof}




\section{Strichartz Estimates for the Free Wave Equation on Wormholes}

In this section we establish Strichartz estimates for radial solutions to the free 
wave equation on the $(d+1)$--dimensional wormhole $\mathcal M^{d+1} = \{ (r,\omega) : r \in \R, \omega \in \s^d \}$ with metric $g$ satisfying
\begin{align*}
ds^2 = dr^2 + (r^2 + 1) d\Omega_{\s^d}^2(\omega).
\end{align*}
Here $d\Omega_{\s^d}^2$ is the line element on $\s^d$ corresponding to the usual round metric.  When we say radial functions we mean 
functions $f : \mathcal M^{d+1} \rightarrow \R$ with $f = f(r)$. These Strichartz estimates will be used in 
Section 4 and Section 5 to establish a small data theory for \eqref{s04}.  However, the results and methods of this section are independent of 
all other sections in this work and may be of interest in their own right.

For the remainder of the section, we fix $d \geq 2$ and 
drop the superscript by writing $\M$ instead of $\M^{d+1}$. We denote $\h(\R; (r^2+1)^{d/2} dr)$ simply by $\h$.
For an interval $I$, we denote the spatial norms on $\M$ and spacetime norms on $I \times \M$ by 
\begin{align*}
\| f \|_{L^p} &:= \left ( \int |f(r)|^p (r^2 + 1)^{d/2} dr \right )^{1/p}, \\
\| u \|_{L_t^p L^q_x(I)} &:= \left ( \int_\R \left ( \int_\R |u(t,r)|^q (r^2 + 1)^{d/2} dr \right )^{p/q} dt \right )^{1/p}.
\end{align*}
Since we only consider radial functions on $\M$, we abuse notation 
slightly and let $\Delta_g$ denote the radial part of the Laplace operator on $\M$, 
\begin{align*}
\Delta_g f(r) = \p_r^2 f + \frac{dr}{r^2 + 1} \p_r f.  
\end{align*}

Let $I$ be an interval with $0 \in I$.  Let $F : I \times \R \rar \R$, and let $u = u(t,r)$ solve the inhomogeneous wave 
equation 
\begin{align} 
\begin{split}\label{inwave}
&\partial_t^2 u - \Delta_{g} u = F, \quad (t,r) \in I \times \R. \\
&\vec u(0) = (u_0,u_1) \in \h. 
\end{split}
\end{align}
We say that a triple $(p,q, \gamma)$ is \emph{admissible} if  
\begin{align*}
p > 2, q \geq 2, \quad 
\frac{1}{p} + \frac{d+1}{q} = \frac{d+1}{2} - \gamma, \quad 
\frac{1}{p} \leq \frac{d}{2} \left ( \frac{1}{2} - \frac{1}{q} \right ).
\end{align*} 

The main result of this section is the following family of Strichartz estimates for \eqref{inwave}. 
\begin{ppn}\label{strm}
	Let $(p,q,\gamma)$ and $(a,b,\rho)$ be admissible triples.  Then any solution $u$ to \eqref{inwave} satisfies 
	\begin{align*}
	\| |\nabla|^{1-\gamma} u \|_{L^p_tL^q_x(I)} + 
	\| |\nabla|^{-\gamma} \p_t u \|_{L^p_tL^q_x(I)}
	\lesssim \| \vec u(0) \|_{\h} + \| |\nabla|^{\rho} F \|_{L^{a'}_t L^{b'}_x(I)}. 
	\end{align*}
\end{ppn}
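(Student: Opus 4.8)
The plan is to reduce Proposition~\ref{strm} to a single frequency-localized dispersive estimate for the half-wave propagator on $\M$, and then to prove that estimate by conjugating $-\Delta_g$ to a one-dimensional Schr\"odinger operator and running a distorted-Fourier-transform analysis adapted to the radial sector. Since $-\Delta_g\geq 0$ has trivial kernel on radial functions, the propagators $e^{\pm it\sqrt{-\Delta_g}}$ are defined by the functional calculus, and writing the solution of \eqref{inwave} via Duhamel's formula reduces the proposition to the homogeneous estimate $\|e^{it\sqrt{-\Delta_g}}f\|_{L^p_tL^q_x(I)}\lesssim\||\nabla|^\gamma f\|_{L^2}$ for admissible $(p,q,\gamma)$, together with its dual and the Christ--Kiselev lemma to treat the inhomogeneous term with the independent admissible triple $(a,b,\rho)$; this is legitimate precisely because every admissible triple here has $p>2$ strictly, so the endpoint is excluded. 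The powers of $|\nabla|$ appearing in the statement are exactly those dictated by the two admissibility relations, and are moved between the two sides using the spectral calculus of $\sqrt{-\Delta_g}$ together with the radial Bernstein and Sobolev inequalities on $\M$, which follow from finite speed of propagation and the Euclidean-type volume growth of $\M$. After a Littlewood--Paley decomposition and the usual rescaling in time, the homogeneous estimate follows, by the (non-endpoint) Keel--Tao argument, from the energy bound $\|e^{it\sqrt{-\Delta_g}}P_\lambda f\|_{L^2}\lesssim\|f\|_{L^2}$ and the frequency-localized dispersive estimate
\[
\bigl\|e^{it\sqrt{-\Delta_g}}P_\lambda f\bigr\|_{L^\infty_x}\lesssim\lambda^{d+1}\,(1+\lambda|t|)^{-d/2}\,\|f\|_{L^1_x},\qquad P_\lambda:=\phi(-\Delta_g/\lambda^2),
\]
which is the dispersive decay rate of free waves on $\R^{d+1}$.

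To prove this dispersive estimate I would use the substitution $u=(r^2+1)^{-d/4}v$, which is an isometry of the radial part of $L^2(\M)$ onto $L^2(\R,dr)$ and conjugates $-\Delta_g$ to $H:=-\p_r^2+W$ with the smooth, strictly positive potential
\[
W(r)=\frac{1}{(r^2+1)^2}\Bigl(\frac{d}{2}+\frac{d(d-2)}{4}\,r^2\Bigr),\qquad W(r)=\frac{d(d-2)}{4}\,r^{-2}+O(r^{-4})\ \text{ as }r\rar\pm\infty.
\]
Hence $H\geq 0$, its spectrum is purely absolutely continuous and equal to $[0,\infty)$, it has no eigenvalue at $0$ (by the strict positivity of $W$), and, as one checks directly from the equation $Hv=0$, no zero-energy resonance. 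Because $W$ agrees at both ends with the centrifugal potential of the radial Laplacian on $\R^{d+1}$, I would construct a distorted Fourier basis $e(\cdot,\xi)$, $\xi>0$, for $H$ whose asymptotics at each end are the explicit Bessel/Hankel-type ones of the free radial problem, the throat $r=0$ entering through bounded transmission and reflection coefficients that remain smooth and non-degenerate down to $\xi=0$ precisely because there is no zero resonance. This yields
\[
e^{it\sqrt H}P_\lambda g(r)=\int_\R K^\lambda_t(r,r')\,g(r')\,dr',\qquad K^\lambda_t(r,r')=\int_0^\infty e^{it\xi}\,\phi(\xi^2/\lambda^2)\,e(r,\xi)\,\overline{e(r',\xi)}\,d\rho(\xi),
\]
where the spectral density satisfies $d\rho(\xi)\simeq\xi^{d-1}\,d\xi$ near $\xi=0$ --- the feature encoding that $\M$ is $(d+1)$-dimensional at infinity --- and $|e(r,\xi)|\lesssim\min\bigl(1,(r\xi)^{-(d-1)/2}\bigr)$ with oscillatory asymptotics $e(r,\xi)\sim(r\xi)^{-(d-1)/2}e^{\pm ir\xi}\bigl(1+O((r\xi)^{-1})\bigr)$ for $r\xi\gtrsim 1$. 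Inserting these into $K^\lambda_t$ and integrating by parts repeatedly in $\xi$ (the $\xi$-phase being essentially $\xi(t\pm r\pm r')$) gives $|K^\lambda_t(r,r')|\lesssim\lambda^{d+1}(1+\lambda|t|)^{-d/2}$ uniformly in $r,r'$, the $(1+\lambda|t|)^{-d/2}$ decay being inherited from the $r$-decay of $e(r,\xi)$ in the worst case $r\sim r'\sim|t|$; undoing the conjugation produces the displayed estimate.

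The main obstacle is this last step: constructing the distorted Fourier transform of $H$ with control that is \emph{uniform down to zero energy} --- the low-frequency asymptotics of $e(r,\xi)$ and of $d\rho(\xi)$, and the behavior of the transmission and reflection coefficients at the throat --- and then carrying out the oscillatory-integral estimate uniformly in the base points $r,r'$ (the analysis near the light cone $|t|\approx r+r'$ is somewhat delicate and differs in flavor for $d$ even and $d$ odd). It is precisely here that restricting to the radial ($\ell=0$) sector is decisive: the angular-momentum terms responsible for the loss in the angular parameter in \cite{sss1} and \cite{sss2} are absent, so the clean $\R^{d+1}$ decay rate $(1+\lambda|t|)^{-d/2}$ survives intact. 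As an alternative I would keep in reserve the possibility of proving an integrated local energy decay estimate for radial waves on $\M$ --- there are no trapped radial geodesics --- and combining it with the flat Strichartz estimates valid on the two asymptotically Euclidean ends, in the style of local-energy-decay arguments; the spectral route sketched above is, however, more self-contained.
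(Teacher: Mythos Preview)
Your proposal is correct and follows essentially the same route as the paper: reduce to a frequency-localized $L^1\to L^\infty$ dispersive estimate via Littlewood--Paley and $TT^*$/Keel--Tao, conjugate the radial Laplacian to the one-dimensional Schr\"odinger operator $H=-\p_r^2+W$ with inverse-square tails, express the spectral measure through the Jost solutions and their Wronskian, and then estimate the resulting oscillatory integral by splitting into the regimes $|r\lambda|\lessgtr 1$, $|\rho\lambda|\lessgtr 1$ with integration by parts in $\lambda$, the crucial low-energy behavior being controlled precisely by the absence of a zero-energy resonance. The paper organizes the last step through the Jost-function formalism and the scattering theory of \cite{sss1,sss2} (Proposition~\ref{oscint} and Lemmas~\ref{lem2}--\ref{lem5}) rather than a single distorted Fourier basis $e(r,\xi)$, and it verifies non-resonance via the maximum principle on $\M$ rather than from positivity of $W$, but these are packaging differences, not differences of substance.
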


It is well known (see for example \cite{keeltao} \cite{sogge} \cite{tao}) that by a standard argument using Littlewood--Paley theory (for our wormhole geometry see \cite{zhang}) and $TT^*$ arguments, 
establishing Proposition \ref{strm} can be reduced to proving the following 
frequency localized dispersive estimate:
let $E$ denote the spectral measure for $-\Delta_g$ (restricted to radial functions).  For a standard Littlewood--Paley cutoff 
$\varphi \in C^{\infty}_0(\R)$ with
support in $(1/2,2)$, define (via the functional calculus) 
\begin{align*}
\varphi \left (2^{-j} \sqrt{-\Delta_g} \right ) = \int_0^\infty \varphi(2^{-j}\sqrt{\lambda} ) E(d\lambda). 
\end{align*}
Then for all $f \in C^\infty_0(\R)$, 
\begin{align}\label{disp}
\left \| e^{\pm it\sqrt{-\Delta_{g}}} \varphi \left (2^{-j} \sqrt{-\Delta_{g}} \right ) f \right \|_{L^\infty} \lesssim 
2^{\frac{d+2}{2}} (2^{-j} + |t|)^{-\frac{d}{2}} \| f \|_{L^1}. 
\end{align}

The proof of \eqref{disp} draws heavily from the works \cite{sss1} \cite{sss2}.  In these works, the authors prove 
dispersive estimates for free waves on a manifold with metric of the form 
\begin{align*}
ds^2 = dr^2 + R^2(r) ds_{\Omega}^2(\omega), \quad r \in \R,
\end{align*}
where $ds_{\Omega}^2(\omega)$ is the metric on a compact embedded Riemannian manifold $\Omega \subset \R^N$ with 
dimension $d \geq 1$.  The function $R(r)$ is assumed to be asymptotically conic: 
\begin{align*}
R(r) = |r|\left ( 1 + O(r^{-1}) \right ), \quad \mbox{as } r \rar \pm \infty.
\end{align*}
Note that in the case of the wormhole geometry, $\Omega = \s^d$ and $R(r) = \la r \ra$.  In particular, the authors proved
weighted $L^1 \rar L^\infty$ type estimates for data of the form $f(r)Y_n(\omega)$ where $Y_n$ are eigenfunctions of $-\Delta_{\Omega}$.
For the $n = 0$ case (i.e. a radial solution), they established the dispersive estimate 
\begin{align*}
\left \| e^{\pm it \sqrt{-\Delta_g}} f(r) \right \|_{L^\infty} \lesssim |t|^{-d/2} \left ( \| f \|_{L^1} + \| f' \|_{L^1} \right ).
\end{align*}
In our proof of \ref{disp}, we refine their methods for the case of frequency localized data.   

In what follows, we use the standard Japanese bracket notation $\la r \ra = (r^2 + 1)^{1/2}$. One readily verifies that 
\begin{align*}
-\Delta_{g} f(r) = \left ( \la r \ra^{-d/2} H  \la r \ra^{d/2} \right ) f(r), 
\end{align*}
where $H$ is the Schrodinger operator on $\R$ given by 
\begin{align*}
H = -\frac{d^2}{dr^2} + V, \quad V(r) = \frac{d(d-4)}{4} r^2 \la r \ra^{-4} + \frac{d}{2} \la r \ra^{-2} .
\end{align*}
Note that the potential $V$ satisfies
\begin{align*}
V(r) = \frac{d(d-2)}{2r^2} + O(r^{-3}),
\end{align*}
as $r \rar \pm \infty$ with natural derivative bounds.  We denote the following resolvents $R(z) = (-\Delta_{g} - z)^{-1}$ and $R_H(z) = (H - z)^{-1}$ for $z \notin \sigma(-\Delta_{g})
= \sigma(H) = 
[0,\infty)$.  We note that the decay of $V$ implies that the spectrum of $H$ in $(0,\infty)$ is purely absolutely continuous (in fact, absolute continuity follows from the following explicit formula for the spectral measure). 

Via Stone's theorem, we can write (as an identity of Schwartz kernels) 
\begin{align*}
E(d\lambda^2)(r,\rho) &= \frac{\lambda}{\pi i} \lim_{\epsilon \rightarrow 0^+}
(R(\lambda^2 + i \epsilon) - R(\lambda^2 - i\epsilon))(r,\rho) d \lambda \\
&= \frac{\lambda}{\pi i} \lim_{\epsilon \rightarrow 0^+}
\la r \ra^{-d/2} (R_H(\lambda^2 + i \epsilon) - R_H(\lambda^2 - i\epsilon))(r,\rho) \la \rho \ra^{d/2} d\lambda.
\end{align*}
The final limit may be evaluated \lq explicitly' by using the fact that 
\begin{align*}
\lim_{\epsilon \rightarrow 0^+} \frac{1}{2\pi i}(R_H(\lambda^2 + i \epsilon) - R_H(\lambda^2 - i\epsilon))(r,\rho)
=& \Im \left [ \frac{f_{+}(r,\lambda) f_{-}(\rho,\lambda)}{W(\lambda)} \right ] \chi_{[r > \rho]} \\
&+ \Im \left [ \frac{f_{-}(r,\lambda) f_{+}(\rho,\lambda)}{W(\lambda)} \right ] \chi_{[r < \rho]}, 
\end{align*}
where $f_{\pm}(\cdot, \lambda)$ are the \emph{Jost solutions} which satisfy 
\begin{align*}
H f_{\pm}(r,\lam) = \lambda^2 f_{\pm}(r, \lambda), \\
f_{\pm} (r, \lambda) \sim e^{\pm i r \lambda} \quad \mbox{as } r \rightarrow \pm \infty,
\end{align*}
and 
\begin{align*}
W(\lambda) = W(f_-(\cdot, \lambda), f_+(\cdot, \lambda)) = f_+'(\cdot, \lambda) f_-(\cdot, \lambda) - 
f_+(\cdot, \lambda) f'_-(\cdot, \lambda),
\end{align*}
is their Wronskian. It is easy to see via a standard contraction argument that $f_{\pm}(\cdot,\lambda)$ exist provided 
$V \in L^1(\R)$.  In summary, we see that the spectral measure for $-\Delta_{g}$ satisfies
\begin{align*}
E(d\lambda^2)(r,\rho)=  2 \lambda \la r \ra^{-d/2}
\left \{ \Im \left [ \frac{f_{+}(r,\lambda) f_{-}(\rho,\lambda)}{W_\nu(\lambda)} \right ] \chi_{[r > \rho]} 
+ \Im \left [ \frac{f_{-}(r,\lambda) f_{+}(\rho,\lambda)}{W_\nu(\lambda)} \right ] \chi_{[r < \rho]} \right \}
\la \rho \ra^{d/2}
d\lambda.
\end{align*}
Therefore, the estimate \eqref{disp} (and thus, Proposition \ref{strm}) reduces to proving the following oscillatory integral estimate uniformly in $r > \rho$
(the case $r < \rho$ is analagous) which we state as a proposition.
\begin{ppn}\label{oscint}
	For all $\rho < r$ and $t \in \R$ we have the estimate
	\begin{align*}
	\left | 
	\int_0^{\infty} e^{\pm i t \lambda} \varphi(2^{-j}\lambda) \lambda
	\Im \left [ \frac{f_{+}(r,\lambda) f_{-}(\rho,\lambda)}{W(\lambda)} \right ] d\lambda 
	\right |
	\lesssim (\la r \ra \la \rho \ra)^{d/2} 2^{j(d+2)/2}(2^{-j} + |t|)^{-d/2}.
	\end{align*}
	The implied constant depends only on $\varphi$ and $d$. 
\end{ppn}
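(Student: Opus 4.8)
The plan is to treat the oscillatory integral by the standard stationary/non-stationary phase dichotomy, where the phase is linear in $\lambda$ and all the content is hidden in the amplitude
\begin{align*}
a(r,\rho,\lambda) := \varphi(2^{-j}\lambda)\,\lambda\,\frac{f_+(r,\lambda)f_-(\rho,\lambda)}{W(\lambda)}.
\end{align*}
Because $\varphi$ is supported in $(1/2,2)$, the $\lambda$-integration is effectively over $\lambda\sim 2^j$, so after the substitution $\lambda = 2^j\mu$ with $\mu\sim 1$ the integral becomes $2^j\int e^{\pm i t 2^j\mu}\,a(r,\rho,2^j\mu)\,d\mu$ and the bound we must prove is $(\langle r\rangle\langle\rho\rangle)^{d/2}2^{jd/2}(1+2^j|t|)^{-d/2}$ in these units. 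When $2^j|t|\lesssim 1$ this is just the trivial $L^\infty$ bound on the amplitude times the measure of the support, so the real work is the regime $2^j|t|\gg 1$, where I integrate by parts $N := \lceil d/2\rceil$ times in $\lambda$; each integration by parts gains a factor $(2^j|t|)^{-1}$ at the cost of one $\lambda$-derivative landing on $a$.

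The heart of the matter is therefore the \emph{symbol-type bounds} on the Jost solutions and their Wronskian: I need to show that on $\lambda\sim 2^j$ (with, say, $2^j\gtrsim 1$; the low-frequency range $2^j\lesssim 1$ is handled separately and more crudely using the known behavior of $f_\pm$ and $1/W$ near $\lambda = 0$), one has
\begin{align*}
\big|\partial_\lambda^k f_\pm(r,\lambda)\big| \lesssim \langle r\rangle^{d/2}\,\lambda^{-k},\qquad
\big|\partial_\lambda^k \tfrac{1}{W(\lambda)}\big|\lesssim \lambda^{-k},
\end{align*}
for $0\le k\le N$, uniformly in $r$. This is exactly the type of estimate extracted in \cite{sss1}, \cite{sss2}: one writes $f_\pm(r,\lambda) = e^{\pm i r\lambda}m_\pm(r,\lambda)$ where $m_\pm$ solves a Volterra integral equation with kernel built from $V$, runs the contraction argument in a weighted space to control $m_\pm$ and then differentiates the Volterra equation in $\lambda$ to propagate the bounds to $\partial_\lambda^k m_\pm$ — the $\lambda^{-k}$ gain appears because each $\lambda$-derivative of $e^{\pm i(r-y)\lambda}$ inside the kernel produces a factor $(r-y)$ which must be absorbed using $V\in L^1$ with a weight, and the factor $\langle r\rangle^{d/2}$ is the natural loss coming from the modified Volterra normalization adapted to the $\langle r\rangle^{-d/2}$ conjugation relating $-\Delta_g$ and $H$. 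For $1/W$ one uses that $W(\lambda)$ is bounded below on $\lambda\sim 2^j$ (no embedded eigenvalue or resonance at positive energy, which follows from the absolute continuity of the spectrum on $(0,\infty)$ noted in the text) and differentiates the identity $W\cdot W^{-1}=1$.

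With those amplitude bounds in hand the proof is mechanical: in the region $2^j|t|\le 1$ apply the triangle inequality and $|a|\lesssim (\langle r\rangle\langle\rho\rangle)^{d/2}\lambda$ over an interval of length $\sim 2^j$, giving $(\langle r\rangle\langle\rho\rangle)^{d/2}2^{2j}\lesssim (\langle r\rangle\langle\rho\rangle)^{d/2}2^{j(d+2)/2}(2^{-j}+|t|)^{-d/2}$ since $2^{-j}+|t|\sim 2^{-j}$ there; in the region $2^j|t|\ge 1$ integrate by parts $N$ times, bound each of the $\le N$ terms produced by the Leibniz rule using $|\partial_\lambda^k a|\lesssim (\langle r\rangle\langle\rho\rangle)^{d/2}\lambda^{1-k}$ together with the bound $|\partial_\lambda^k\varphi(2^{-j}\lambda)|\lesssim 2^{-jk}\sim\lambda^{-k}$ on the cutoff, and integrate over the support, picking up $(2^j|t|)^{-N}\cdot 2^j\cdot 2^j\le (\langle r\rangle\langle\rho\rangle)^{d/2}$-free-factor $(2^j)^2(2^j|t|)^{-d/2}$ after using $N\ge d/2$ and $2^j|t|\ge 1$; reinserting the $(\langle r\rangle\langle\rho\rangle)^{d/2}$ factor yields precisely the claimed estimate. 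The main obstacle is genuinely the first half of the second paragraph — establishing the $k$-th order $\lambda$-derivative symbol bounds on $f_\pm$ and $W$ that are uniform in the spatial variables and exhibit the sharp $\lambda^{-k}$ decay — and this is where I expect to have to reproduce, with care, the Volterra/contraction machinery of \cite{sss1} and \cite{sss2} specialized to $R(r)=\langle r\rangle$, $\Omega=\s^d$, and frequency-localized data; everything downstream is stationary phase bookkeeping.
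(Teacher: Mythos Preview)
Your proposal has a genuine gap at precisely the point you identify as the heart of the matter: the symbol bound
\[
\big|\partial_\lambda^k f_\pm(r,\lambda)\big| \lesssim \langle r\rangle^{d/2}\,\lambda^{-k}
\]
is \emph{false}. In the oscillatory regime $|r\lambda|\gtrsim 1$ one has $f_+(r,\lambda)=e^{ir\lambda}m_+(r,\lambda)$ with $m_+$ a genuine symbol, so $\partial_\lambda^k f_+$ picks up factors of $r^k$, not $\lambda^{-k}$. For instance at $r\sim 1$, $\lambda\sim 2^j\gg 1$ you get $|\partial_\lambda f_+|\sim 1$ while your claimed bound gives $\langle r\rangle^{d/2}\lambda^{-1}\sim 2^{-j}\ll 1$. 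Consequently, integrating by parts against the phase $e^{\pm it\lambda}$ alone produces powers of $r$ and $\rho$ that the weight $(\langle r\rangle\langle\rho\rangle)^{d/2}$ cannot absorb, and the argument breaks down.

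The paper's proof addresses exactly this issue. One must \emph{extract} the spatial phases from $f_\pm$ and combine them with the temporal phase, obtaining an effective phase $e^{i\lambda(\pm t + (r-\rho))}$ (or $\pm t + (r+\rho)$, depending on signs and on whether one needs reflection/transmission coefficients when $\rho,r$ lie on the same side of the origin). The remaining amplitude $m_+(r,\lambda)m_-(\rho,\lambda)/W(\lambda)$ is then a genuine symbol of order zero, and integration by parts gains $|\pm t + (r\mp\rho)|^{-1}$ rather than $|t|^{-1}$. This forces a case split: when $|t|\gg |r\mp\rho|$ the integration by parts yields the desired decay in $|t|$; when $|t|\lesssim |r\mp\rho|$ one of $r,\rho$ must be $\gtrsim |t|$, and the decay comes instead from the explicit factor $(\langle r\rangle\langle\rho\rangle)^{-d/2}$ sitting in the spectral measure. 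There is a further subdivision into $|r\lambda|\lessgtr 1$, $|\rho\lambda|\lessgtr 1$, since for $|r\lambda|\ll 1$ the Jost solution is not oscillatory and one uses instead the basis $u_0^\pm(\cdot,\lambda),u_1^\pm(\cdot,\lambda)$ from the small-$\lambda$ scattering theory; the low-frequency regime $j\ll 0$ hinges on the nonresonance condition through the asymptotic $W(\lambda)^{-1}=O(\lambda^{d-2})$. None of this structure is visible in your plan, and it cannot be bypassed.
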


Note that we absorbed the volume form $(r^2 + 1)^{d/2} dr$ implicit in the right hand side of \eqref{disp} into the left hand side
in order to conclude that proving the estimate \ref{disp} reduces to proving Proposition \ref{oscint}. To prove Proposition \ref{oscint}, we will need asymptotics for $f_{\pm}(\cdot,\lambda)$
and $W(\lambda)$ for $\lambda$ small. The asymptotics that we require are contained in the following subsection.

\subsection{Scattering Theory for Schrodinger Operators}

In this section, we briefly summarize the scattering theory developed in Section 3 of \cite{sss2}
for the Schr\"odinger operator $H = -\frac{d^2}{dr^2} + V$ on $\R$ where
$V \in C^{\infty}(\R)$ is real--valued and such that 
\begin{align*}
V(r) = \frac{d(d-2)}{4} r^{-2} + U(r), \quad U \in C^{\infty}(\R \backslash \{0\}),
\end{align*}
with 
\begin{align*}
| U^{(k)}(r) | \leq C_k |r|^{-3-k}, \quad |r| \geq 1.
\end{align*}
In particular, we summarize the asymptotics for $f_{\pm}(\cdot,\lambda)$ and $W(\lambda)$ as $\lambda \rightarrow 0$ under a condition
on the point spectrum of $H$.  This condition will be elaborated on below.  In what follows, we assume, as before, that $d \geq 2$.  

First, solutions to the zero energy equation with slow decay at $\pm \infty$ were constructed. 

\begin{lem}[Lemma 3.2 \cite{sss2}]\label{scattlem1}
	For $j = 0,1$, there exist real--valued solutions $u_{j}^{\pm}(\cdot)$ to the zero energy equation
	\begin{align*}
	- u_{j}^{\pm}(r)'' + V(r)u_{j}^{\pm}(r) = 0, \quad r \in \R,
	\end{align*}
	such that $W(u_{0}^{\pm}(\cdot), u_{1}^{\pm}(\cdot)) = $constant, and $u_j^{\pm}$ have the asymptotics
	\begin{align*}
	u_{0}^{\pm}(r) &= |r|^{d/2}( 1 + O ( |r|^{-1}) ), \quad \mbox{as } r \rightarrow \pm \infty,  \\
	u_{1}^{\pm}(r) &= |r|^{-(d-2)/2} ( 1 + O(|r|^{-1}) ), \quad \mbox{as } r \rightarrow \pm \infty. 
	\end{align*}
	The $O(\cdot)$ terms behave like symbols under differentiation in $r$. 
\end{lem}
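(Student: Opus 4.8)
The plan is to build all four solutions by asymptotic integration, treating the two ends $r\to+\infty$ and $r\to-\infty$ separately; the construction near $-\infty$ is the mirror image of the one near $+\infty$, so I describe only the latter. Near $+\infty$ the zero energy equation $-u''+Vu=0$ is a lower-order perturbation of the Euler equation $-w''+\frac{d(d-2)}{4r^2}w=0$, whose indicial exponents are $d/2$ and $1-d/2$; the corresponding explicit comparison solutions are $\phi_0(r)=r^{d/2}$ and $\phi_1(r)=r^{1-d/2}$, and they have the nonzero constant Wronskian $W(\phi_1,\phi_0)=d-1$ (here $d\ge 2$ is the standing assumption). Since $|U^{(k)}(r)|\lesssim |r|^{-3-k}$, the remainder $U$ is integrable against the weights generated by $\phi_0,\phi_1$, so a Volterra fixed-point argument will promote $\phi_1$ to an exact solution with the same leading behaviour.

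\textbf{Step 1 (subdominant solution).} Using variation of parameters based at $+\infty$, I would solve, on $[r_0,\infty)$ with $r_0$ large, the Volterra equation
\[
u_1^+(r)=\phi_1(r)+\int_r^\infty \mathcal K(r,s)\,U(s)\,u_1^+(s)\,ds,
\]
where $\mathcal K(r,s)=\bigl(\phi_1(r)\phi_0(s)-\phi_0(r)\phi_1(s)\bigr)/W(\phi_1,\phi_0)$. On the Banach space $\{u:\ \|u\|_*:=\sup_{r\ge r_0} r^{d/2-1}|u(r)|<\infty\}$ one has $|\mathcal K(r,s)|\lesssim r^{1-d/2}s^{d/2}$ for $r_0\le r\le s$, and combined with $|U(s)|\lesssim s^{-3}$ this gives $\|Tu\|_*\lesssim r_0^{-1}\|u\|_*$ for the integral operator $T$; hence $T$ is a contraction and the unique fixed point $u_1^+$ satisfies, after plugging back in once, $u_1^+(r)=r^{1-d/2}(1+O(r^{-1}))$. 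Realness is automatic since $V,\phi_0,\phi_1$ are real. Differentiating the Volterra equation in $r$ --- each derivative falling on $\mathcal K$ costing a factor $r^{-1}$, with $|U^{(k)}(s)|\lesssim s^{-3-k}$ handling the rest --- one obtains by induction that the $O(r^{-1})$ remainder is a symbol, i.e. $u_1^+(r)=r^{1-d/2}(1+\sigma(r))$ with $|\sigma^{(k)}(r)|\lesssim r^{-1-k}$.

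\textbf{Step 2 (dominant solution and conclusion).} I would then define, by reduction of order,
\[
u_0^+(r):=(d-1)\,u_1^+(r)\int_{r_0}^r \frac{ds}{u_1^+(s)^2}.
\]
Since $u_1^+(s)^{-2}=s^{d-2}(1+O(s^{-1}))$ with $d-2\ge 0$, the integral equals $\frac{1}{d-1}r^{d-1}(1+O(r^{-1}))$, so $u_0^+(r)=r^{d/2}(1+O(r^{-1}))$, with the symbol-type remainder inherited from that of $u_1^+$. A direct computation with the reduction-of-order formula gives $W(u_1^+,u_0^+)=d-1$, a nonzero constant, so $u_0^+,u_1^+$ are linearly independent and $W(u_0^\pm,u_1^\pm)$ is constant as required. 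One then extends $u_0^\pm,u_1^\pm$ from the half-lines to $\R$ (respectively $\R\setminus\{0\}$) by solving the ODE, and repeats Steps 1--2 near $-\infty$ with the comparison solutions $|r|^{d/2},|r|^{1-d/2}$ to produce $u_0^-,u_1^-$.

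\textbf{Main obstacle.} The only genuine point of care is that the dominant solution cannot be produced by the same integrate-from-$\infty$ Volterra scheme: the natural integral $\int_r^\infty s^{d-3}\,ds$ diverges for every $d\ge 2$, which is exactly why the argument must be organized as ``subdominant solution first, then reduction of order'' (alternatively one can run a Volterra equation for $u_0^+$ based at a finite point $r_0$, but then the size of the solution on $[r_0,\infty)$ has to be controlled separately by a Gronwall-type estimate). Everything else is the routine but necessary bookkeeping needed to upgrade the $O(|r|^{-1})$ errors to honest symbols, which is where the derivative hypotheses $|U^{(k)}(r)|\lesssim|r|^{-3-k}$ are used.
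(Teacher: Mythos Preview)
The paper does not actually prove this lemma; it is quoted verbatim from \cite{sss2} (Schlag--Soffer--Staubach) as part of a summary of the scattering theory needed later, so there is no in-paper proof to compare against. Your approach --- construct the recessive solution $u_1^{\pm}$ by a Volterra iteration based at $\pm\infty$ using the explicit Euler fundamental system $r^{d/2}$, $r^{1-d/2}$, then obtain the dominant solution $u_0^{\pm}$ by reduction of order --- is exactly the standard argument and matches what the cited reference does. The identification of the ``main obstacle'' (that the dominant solution cannot be built by the same integrate-from-infinity scheme) is correct, and your treatment of the symbol bounds by differentiating the Volterra equation is the right mechanism.

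One small caveat worth recording: in the borderline case $d=2$ the reduction-of-order integral $\int_{r_0}^r u_1^+(s)^{-2}\,ds$ picks up a contribution $\int_{r_0}^r O(s^{-1})\,ds$, which is only $O(\log r)$, so your Step~2 as written yields $u_0^+(r)=r\bigl(1+O(r^{-1}\log r)\bigr)$ rather than the claimed $O(r^{-1})$. This is consistent with the logarithmic losses that appear for $d=2$ in the very next lemma (Lemma~\ref{scattlem2}), and is irrelevant for the paper's application ($d=4$); if you want to remove the log you must either run a separate Volterra scheme for $u_0^+$ based at a finite point (your parenthetical alternative) or track one more term in the expansion of $u_1^+$. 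For $d\ge 3$ your argument goes through exactly as stated.
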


\begin{defn}\label{scattdef}
	We say that the Schr\"odinger operator $H$ has 0 as a resonance if 
	\begin{align*}
	W(u_1^+(\cdot), u_1^-(\cdot)) = 0,
	\end{align*}
	where $u_1^{\pm}(\cdot)$ are the solutions constructed in Lemma \ref{scattlem1}. This condition
	is equivalent to the existence of a nonzero solution $f$ to $-f'' + Vf = 0$ such that
	$f$ is asymptotic to $|r|^{-(d-2)/2}$ at $\pm \infty$.
\end{defn}

The previously mentioned condition on the point spectrum of $H$ is that 0 is not a resonance. Next,  
perturbing in small $\lambda$, for $j = 0,1$ a basis of real--valued solutions $u_{j}^{\pm}(\cdot,\lambda)$ to 
\begin{align*}
- u_{j}^{\pm}(r,\lambda)'' + V(r)u_{j}^{\pm}(r,\lambda) = \lambda^2 u_{j}^{\pm}(r,\lambda), \quad r \in \R,
\end{align*}
was constructed which are well approximated by $u_j^{\pm}$ when $|r\lam| \ll 1$.

\begin{lem}[Corollary 3.5 \cite{sss2}]\label{scattlem2}
	Let $u_j^+(\cdot)$ be as in Lemma \ref{scattlem1}.  There exist solutions
	$u_j^+(\cdot,\lambda)$ of $H f = \lambda^2 f$ with 
	\begin{align*}
	W(u_1^+(\cdot,\lambda),u_0^+(\cdot,\lambda)) = 1,
	\end{align*}
	such that for $j = 0,1$ and $r_0 \leq r \ll \lambda^{-1}$, we have 
	\begin{align*}
	u_j^+(r,\lambda) = u_j^+(r)(1 + a_j^+(r,\lambda)).
	\end{align*}
	The functions $a_j^{+}(\cdot,\lambda)$ satisfy the bounds
	\begin{align*}
	\left | 
	\partial_r^l \partial_\lambda^k a^+_j(r,\lambda)
	\right | \lesssim_{k,l}
	\begin{cases}
	\lambda^{2-k} \la r \ra^{2 - l}\log|\lambda r| &\mbox{if } d = 2, \\
	\lambda^{2-k} \la r \ra^{2 - l}&\mbox{if } d > 2.
	\end{cases}
	\end{align*}
	A similar statement holds with $u^+_0(\cdot,\lam)$ replaced by $u^-_0(\cdot,\lam)$ for $r \leq 0$. 
\end{lem}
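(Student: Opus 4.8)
The plan is a low--energy perturbation of the zero--energy fundamental system $\{u_0^+(\cdot), u_1^+(\cdot)\}$ from Lemma \ref{scattlem1}, building the dominant solution first and deriving the recessive one from it. For the dominant one I would look for $u_0^+(r,\lam) = u_0^+(r)(1 + a_0^+(r,\lam))$; substituting into $Hf = \lam^2 f$ and using $Hu_0^+ = 0$ collapses the equation to the conservative form $\big( (u_0^+)^2 n' \big)' = -\lam^2 (u_0^+)^2 n$ with $n = 1 + a_0^+$. Prescribing initial data at $r_0$ so that $u_0^+(\cdot,\lam)$ agrees with $u_0^+(\cdot)$ to first order there (i.e. $n(r_0) = 1$, $n'(r_0) = 0$) and integrating twice gives the Volterra equation
\begin{align*}
a_0^+(r,\lam) = -\lam^2 \int_{r_0}^r \int_{r_0}^t \frac{u_0^+(s)^2}{u_0^+(t)^2}\, n(s,\lam)\, ds\, dt .
\end{align*}
By the asymptotics $u_0^+(r) = |r|^{d/2}(1 + O(|r|^{-1}))$ the kernel is $\lesssim (s/t)^d$ for $r_0 \le s \le t$, so the double integral is $O(\la r \ra^2)$; this makes the map a contraction on $C([r_0,\de\lam^{-1}])$ for $\de$ small and yields a unique $a_0^+$ with $|a_0^+(r,\lam)| \lesssim \lam^2 \la r \ra^2$ on $r_0 \le r \le \de\lam^{-1}$, with a logarithmic loss in the borderline case $d = 2$. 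Since $a_0^+$ solves a linear ODE, $u_0^+(\cdot,\lam)$ automatically extends to a global solution of $Hf = \lam^2 f$.

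For the recessive solution I would set $u_1^+(r,\lam) = u_0^+(r,\lam)\int_r^{c(\lam)} u_0^+(s,\lam)^{-2}\, ds$, with the sign convention and the base point $c(\lam) \sim \lam^{-1}$ (chosen, e.g., by matching to the oscillatory far--region solution there) selected so that $W(u_1^+(\cdot,\lam), u_0^+(\cdot,\lam)) = 1$ holds by the reduction--of--order Wronskian identity and so that the spurious lower--order terms introduced by the finite base point cancel. Comparing with the exact zero--energy identity $u_1^+(r) = c_1\, u_0^+(r)\int_r^\infty u_0^+(s)^{-2}\, ds$, where $c_1 = W(u_0^+(\cdot),u_1^+(\cdot))$ (legitimate since $u_0^+(s)^{-2}$ is integrable at $\infty$ for $d \ge 2$), the remainder $a_1^+(r,\lam) = u_1^+(r,\lam)/u_1^+(r) - 1$ splits into a truncation tail and an $a_0^+$--induced error; using $|u_0^+(s)| \simeq \la s \ra^{d/2}$, $u_1^+(r) \simeq \la r \ra^{-(d-2)/2}$, and the bound on $a_0^+$, both pieces are $\lesssim \lam^2 \la r \ra^2$ (again with a logarithm when $d = 2$). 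Alternatively one could run a second Volterra iteration directly off $u_1^+(\cdot)$ and impose the Wronskian normalization algebraically at the end. The construction on $r \le 0$ with $u_0^-(\cdot,\lam)$ is identical by the $r \mapsto -r$ symmetry of the hypotheses.

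The symbol--type estimates $|\p_r^l \p_\lam^k a_j^+(r,\lam)| \lesssim \lam^{2-k}\la r \ra^{2-l}$ I would obtain by differentiating the Volterra equations: a $\p_r$ on the kernel or on $u_j^+(\cdot)$ gains a factor $\la r \ra^{-1}$ because the $O(\cdot)$ remainders in Lemma \ref{scattlem1} behave like symbols, while a $\p_\lam$ either lands on the prefactor $\lam^2$, losing a power of $\lam$, or on $n(s,\lam)$, and the estimate closes by induction on $k$. I expect the main obstacle to be precisely this remainder bookkeeping: one must position $r_0$ and $c(\lam)$ and normalize the reduction--of--order integral so that the growth of the dominant solution $u_0^+$ is compensated, the spurious $O(\lam \la r \ra)$ contributions drop out, and the sharp exponent $\la r \ra^2$ --- rather than a cruder $\la r \ra^{d-1}$ --- is extracted. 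This is exactly the smallness ($a_j^+ \to 0$ for $r \ll \lam^{-1}$) that feeds into the low--frequency part of the oscillatory integral estimate in Proposition \ref{oscint}.
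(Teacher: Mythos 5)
First, note that the paper does not prove this lemma at all: it is quoted verbatim from Corollary 3.5 of \cite{sss2}, so there is no in--paper argument to compare against, and your proposal is a from--scratch reconstruction of the source's construction. Your overall architecture --- a Volterra iteration off the zero--energy solution for the dominant branch, then reduction of order for the recessive branch --- is the standard route and is essentially what \cite{sss2} does. The dominant half is correct and complete in outline: the reduction to $\bigl((u_0^+)^2 n'\bigr)' = -\lam^2 (u_0^+)^2 n$, the kernel bound $(s/t)^d$, the resulting $O(\lam^2\la r\ra^2)$ smallness on $r_0 \le r \le \de\lam^{-1}$, and the inductive symbol bounds all go through.

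The genuine gap is in the recessive branch, precisely at the point you yourself flag as ``remainder bookkeeping.'' Truncating the reduction--of--order integral at $c(\lam)\sim\lam^{-1}$ leaves the tail $\int_{c(\lam)}^\infty u_0^+(s)^{-2}\,ds \simeq \lam^{d-1}$, which relative to $\int_r^\infty u_0^+(s)^{-2}\,ds \simeq r^{1-d}$ is an error of size $(\lam r)^{d-1}$. Your claim that this piece is $\lesssim \lam^2\la r\ra^2$ is true only for $d\ge 3$; for $d=2$ it equals $\lam r$, which is \emph{not} dominated by $\lam^2\la r\ra^2\log|\lam r|$ as $\lam r\to 0$. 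Nor can this be repaired by ``positioning $c(\lam)$'': varying the base point inside the region where $u_0^+(\cdot,\lam)$ is controlled only shifts $u_1^+(\cdot,\lam)$ by multiples of $u_0^+(\cdot,\lam)$, and for $d=2$ the multiple needed to cancel the tail is not attainable that way. The correct mechanism is to add $\gamma(\lam)\,u_0^+(r,\lam)$ with $\gamma(\lam)=\int_{c(\lam)}^\infty u_0^+(s)^{-2}\,ds$ explicitly --- computable because the \emph{zero--energy} solution is known on all of $[r_0,\infty)$ even where its $\lam$--perturbation is not --- which leaves the Wronskian unchanged and cancels the tail identically. The same splitting must be applied to the $a_0^+$--induced error $\lam^2\int_r^{c(\lam)} s^{2-d}\,ds$: its $r$--independent part must likewise be absorbed into the $u_0^+(\cdot,\lam)$--multiple, and what remains after that subtraction is where the logarithm in the borderline dimensions actually originates --- your sketch asserts the log but never produces it. For $d\ge 4$, in particular for the case $d=4$ that this paper actually uses in Section 4, all these exponents are strictly subcritical and your argument closes as written.
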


In what follows, $\beta_d = 
\sqrt{\frac{\pi}{2}} e^{id\pi/4}$. The outgoing Jost solution for 
$$H_0 = -\frac{d^2}{dr^2}
+ \frac{d(d-2)}{2r^{2}}$$ is known explicitly.  In particular, we have that the solution to $H_0 f_0(\cdot,\lam) = \lam^2 f_0(\cdot,\lam)$ with 
$f_0(r,\lam) \sim e^{i\lam r}$ as $r \rar \infty$ is given by 
\begin{align*}
f_{0}(r,\lam) = \beta_d \sqrt{r\lam} H^{+}_{(d-1)/2}(r\lam),  
\end{align*}
where $H^+_{(d-1)/2}(z) = J_{(d-1)/2}(z) + i Y_{(d-1)/2}(z)$ is the Hankel function.  Perturbing off of this explicit solution,
we obtain the following asymptotic form for the Jost function $f_+(\cdot,\lam)$.  Similar asymptotics hold for $f_-(\cdot,\lam)$. 

\begin{lem}[Corollary 3.10 \cite{sss2}]\label{jostlem}
	For $\lam \neq 0$, $\lam \ll 1$, and in the range $1 \ll r \ll \lam^{-1}$, we have 
	\begin{align*}
	f_+(r,\lam) &= \beta_d \sqrt{\lam r} \left [ J_{(d-1)/2}(r\lam)(1 + O(\lam))(1 + O(r^{-1})) + 
	Y_{(d-1)/2}(r\lam)O(\lam)(1 + O(r^{-1})) \right ]\\
	&\:+  i \beta_d \sqrt{\lam r} \left [ Y_{(d-1)/2}(r\lam)(1 + O(\lam))(1 + O(r^{-1})) + 
	J_{(d-1)/2}(r\lam)O(\lam)(1 + O(r^{-1})) \right ].
	\end{align*}
	In the range $r\lam \gtrsim 1$, we have 
	\begin{align*}
	f_+(r,\lam) = e^{ir\lam}m_+(r,\lam),
	\end{align*}
	where 
	\begin{align*}
	m_+(r,\lam) = 1 + O_{\C}(r^{-1}\lam^{-1})
	\end{align*}
	The $O(\cdot)$ terms are real--valued, the $O_\C(\cdot)$ term is complex--valued, 
	and all terms obey the natural bounds with respect to differentiation in $\lam$ and $r$.
\end{lem}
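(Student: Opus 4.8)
The plan is to split the range of $r$ into a \emph{far} regime $r\lambda \gtrsim 1$, handled by the standard outgoing Jost construction, and a \emph{near} regime $1 \ll r \ll \lambda^{-1}$, handled by comparison with the explicitly solvable model operator $H_0$ together with the zero--energy solutions of Lemma \ref{scattlem1} and Lemma \ref{scattlem2}; the two representations are then matched in the overlap $r\lambda \sim 1$ via a Wronskian computation.

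In the far regime I would substitute $f_+(r,\lambda) = e^{ir\lambda} m_+(r,\lambda)$, so that $m_+$ must solve the Volterra equation
\begin{align*}
m_+(r,\lambda) = 1 + \int_r^\infty \frac{e^{2i\lambda(s-r)} - 1}{2i\lambda}\, V(s)\, m_+(s,\lambda)\, ds.
\end{align*}
The kernel is bounded pointwise by $\min(s-r,\lambda^{-1})$, and since $|V(s)| \lesssim s^{-2}$ one gets $\int_r^\infty \min(s-r,\lambda^{-1})|V(s)|\,ds \lesssim (r\lambda)^{-1}$. Hence for $r\lambda \gtrsim 1$ the Neumann series converges, yielding a unique solution with $m_+ = 1 + O_\C(r^{-1}\lambda^{-1})$; differentiating the integral equation in $r$ and $\lambda$ and rerunning the same estimates gives the symbol--type bounds, i.e.\ the representation $f_+(r,\lambda) = e^{ir\lambda} m_+(r,\lambda)$ in the stated range.

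For the near regime, Lemma \ref{scattlem2} already supplies a basis $u_0^+(\cdot,\lambda), u_1^+(\cdot,\lambda)$ of $Hf = \lambda^2 f$ valid on $r \ll \lambda^{-1}$, with $W(u_1^+(\cdot,\lambda), u_0^+(\cdot,\lambda)) = 1$ and $u_j^+(r,\lambda) = u_j^+(r)(1 + a_j^+(r,\lambda))$. The corresponding solutions of $H_0 f = \lambda^2 f$ are $\sqrt{\lambda r}\,J_{(d-1)/2}(r\lambda)$ and $\sqrt{\lambda r}\,Y_{(d-1)/2}(r\lambda)$, whose small--argument asymptotics $\sim (r\lambda)^{d/2}$ and $\sim (r\lambda)^{-(d-2)/2}$ match those of $u_0^+(r)$ and $u_1^+(r)$ from Lemma \ref{scattlem1}. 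Thus on $1 \ll r \ll \lambda^{-1}$ I would identify $u_0^+(\cdot,\lambda)$ and $u_1^+(\cdot,\lambda)$ with constant multiples of these Bessel solutions, modulo $(1 + O(\lambda))$ factors coming from the bounds on $a_j^+$ and $(1 + O(r^{-1}))$ factors coming from the $O(|r|^{-1})$ errors in Lemma \ref{scattlem1}. Since the $f_+(\cdot,\lambda)$ constructed in the far regime solves $Hf = \lambda^2 f$, I would write $f_+ = c_0(\lambda) u_0^+(\cdot,\lambda) + c_1(\lambda) u_1^+(\cdot,\lambda)$ with $c_0(\lambda) = W(f_+(\cdot,\lambda), u_1^+(\cdot,\lambda))$ and $c_1(\lambda) = -W(f_+(\cdot,\lambda), u_0^+(\cdot,\lambda))$, and evaluate these ($r$--independent) Wronskians at a radius with $r\lambda \sim 1$, where $f_+ \approx e^{ir\lambda}$ and the $u_j^+$ reduce to Bessel functions at argument of order one. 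This computes $c_0(\lambda)$ and $c_1(\lambda)$ as $\beta_d$ times an explicit constant times $(1 + O(\lambda))$; recombining $H^+_{(d-1)/2} = J_{(d-1)/2} + iY_{(d-1)/2}$ and tracking which corrections are real and which are complex then produces the asserted expansion of $f_+$ on $1 \ll r \ll \lambda^{-1}$. The analogous argument for $r \leq 0$ handles $f_-$.

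\textbf{Main obstacle.} The hard part is uniformity: every error must be controlled uniformly in $r$ and all the way down to $\lambda = 0$, and be compatible with $\partial_r$ and $\partial_\lambda$ acting like symbols, which dictates the precise weights in all of the estimates above. The genuinely delicate step is the matching at $r\lambda \sim 1$ --- showing that the Wronskian coefficients $c_j(\lambda)$ really admit $1 + O(\lambda)$ expansions and that the real/complex structure (the $O(\cdot)$ versus $O_\C(\cdot)$ distinction in the statement) survives the gluing. Moreover the borderline $r^{-2}$ decay of $V$ puts one at the edge of short--range scattering theory, so the Jost iteration must be run against the measure $\min(s-r,\lambda^{-1})\,ds$ exactly as above rather than a naive $L^1$ bound. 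All of these points are carried out in detail in Section 3 of \cite{sss2}, which the proof ultimately invokes.
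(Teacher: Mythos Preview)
The paper does not prove this lemma at all: it is quoted verbatim as Corollary 3.10 of \cite{sss2}, with no argument given beyond the citation. Your sketch correctly outlines the construction carried out in \cite{sss2} --- Volterra iteration for $m_+$ in the oscillatory regime $r\lambda\gtrsim 1$, perturbation off the Bessel basis in the regime $1\ll r\ll\lambda^{-1}$, and Wronskian matching in the overlap --- and your closing remark that ``all of these points are carried out in detail in Section 3 of \cite{sss2}'' is exactly how the paper itself handles it.
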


Using the previous lemmas, the following expansions were obtained. 

\begin{lem}[Corollary 3.6 and Proposition 3.12 \cite{sss2}]\label{scattlem3}
	We have the expansions
	\begin{align*}
	f_{\pm}(r,\lambda) = a_{\pm}(\lambda) u_{0}^{\pm}(r,\lambda) + 
	b_{\pm}(\lambda) u_{1}^{\pm}(r,\lambda),
	\end{align*}
	where the coefficients satisfy with some small $\epsilon > 0$ depending on $d$ and with some real constants $\alpha_0^{\pm}, \beta_0^{\pm}$,
	\begin{align*}
	a_{\pm}(\lambda) &= \lambda^{d/2} \beta_d \left ( \alpha_{0}^{\pm} + O(\lambda^{\epsilon}) + i O(\lambda^{-(d-2)\epsilon}) \right ), \\
	b_{\pm}(\lambda) &=  i \lambda^{-(d-2)/2} \beta_d \left (\beta_0^{\pm} + O(\lambda^{\epsilon}) + i O(\lambda^{d\epsilon}) \right ).
	\end{align*}
	The $O(\cdot)$ terms are real--valued and satisfy the natural derivative bounds.
\end{lem}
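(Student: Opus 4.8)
The plan is to pin down the coefficients $a_{\pm}(\lam), b_{\pm}(\lam)$ by writing them as $r$--independent Wronskians and then evaluating those Wronskians in the overlap window $1 \ll r \ll \lam^{-1}$, where the Bessel--type description of $f_{\pm}(r,\lam)$ from Lemma \ref{jostlem} and the zero--energy approximation $u_j^{\pm}(r,\lam) = u_j^{\pm}(r)(1 + a_j^{\pm}(r,\lam))$ from Lemma \ref{scattlem2} are simultaneously valid. I treat the $+$ case; the $-$ case is identical after $r \mapsto -r$. Since $Hf = \lam^2 f$ has no first--order term, the Wronskian of any two of its solutions is constant in $r$; and since $\{u_0^{+}(\cdot,\lam), u_1^{+}(\cdot,\lam)\}$ is a basis normalized by $W(u_1^{+}(\cdot,\lam), u_0^{+}(\cdot,\lam)) = 1$, one has
\begin{align*}
f_{+}(\cdot,\lam) &= a_{+}(\lam) u_0^{+}(\cdot,\lam) + b_{+}(\lam) u_1^{+}(\cdot,\lam), \\
a_{+}(\lam) &= W(u_1^{+}(\cdot,\lam), f_{+}(\cdot,\lam)), \quad b_{+}(\lam) = W(f_{+}(\cdot,\lam), u_0^{+}(\cdot,\lam)).
\end{align*}
It then remains to extract the $\lam \rar 0$ asymptotics of these two numbers.

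First I would fix an evaluation point $r = r(\lam)$ with $1 \ll r \ll \lam^{-1}$ and substitute. From Lemma \ref{jostlem}, $f_{+}(r,\lam)$ is a real combination of $\sqrt{\lam r}\,J_{(d-1)/2}(r\lam)$ and $\sqrt{\lam r}\,Y_{(d-1)/2}(r\lam)$ with coefficients $1 + O(\lam) + O(r^{-1})$, respectively $O(\lam)(1 + O(r^{-1}))$, plus $i$ times the analogous combination with the roles of $J$ and $Y$ swapped. Using the small--argument asymptotics $J_\nu(z) = c_\nu z^{\nu}(1 + O(z^2))$ and $Y_\nu(z) = c_\nu' z^{-\nu}(1 + o(1))$ with $\nu = (d-1)/2$ and $z = r\lam \rar 0$, one finds $\sqrt{\lam r}\,J_{(d-1)/2}(r\lam) \sim \lam^{d/2} r^{d/2}$ and $\sqrt{\lam r}\,Y_{(d-1)/2}(r\lam) \sim \lam^{-(d-2)/2} r^{-(d-2)/2}$, i.e. the $J$--piece of $f_{+}$ matches the profile $u_0^{+}(r) \sim r^{d/2}$ and the $Y$--piece matches $u_1^{+}(r) \sim r^{-(d-2)/2}$. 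Plugging into the Wronskian formulas above (and using that the Wronskian of the two profiles $r^{d/2}$ and $r^{-(d-2)/2}$ is the nonzero constant $1-d$) pulls out the overall factors $\lam^{d/2}\beta_d$ in $a_{+}$ and $\lam^{-(d-2)/2}\beta_d$ in $b_{+}$, while the $\Gamma$--function values in $c_\nu, c_\nu'$ and the normalizing constants from Lemma \ref{scattlem1} furnish the real constants $\al_0^{+}, \beta_0^{+}$.

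The step I expect to be the main obstacle is the simultaneous control of the two competing error mechanisms together with the bookkeeping of real versus imaginary parts. Taking $r$ large worsens the error $a_j^{+}(r,\lam) = O(\lam^2 \la r \ra^2)$ (with an extra $\log$ when $d = 2$) in the approximation of $u_j^{+}(\cdot,\lam)$; taking $r$ small worsens both the $O(r^{-1})$ corrections in Lemma \ref{jostlem} and the $O(z^2) = O(\lam^2 r^2)$ remainders in the Bessel expansions. Balancing these with the choice $r \sim \lam^{-\e}$ for a suitable $\e = \e(d) \in (0,1)$ produces the real remainders $O(\lam^{\e})$ in both coefficients, while the differing weights $r^{d/2}$ and $r^{-(d-2)/2}$ of $u_0^{+}$ and $u_1^{+}$ (together with the fact that forming a Wronskian shifts a power of $r$) are exactly what turn the $O(\lam)$ cross--coefficients in Lemma \ref{jostlem} — the $Y$--term sitting inside $\Rea f_{+}$ and the $J$--term inside $\Ima f_{+}$ — into the imaginary remainders $iO(\lam^{-(d-2)\e})$ in $a_{+}$ and $iO(\lam^{d\e})$ in $b_{+}$.

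Finally, because every $O(\cdot)$ appearing in Lemmas \ref{scattlem1}, \ref{scattlem2}, and \ref{jostlem} is of symbol type in $r$ and $\lam$, one differentiates the Wronskian formulas for $a_{+}, b_{+}$ in $\lam$ and re--runs the same estimates to obtain the natural derivative bounds. I would also keep in mind the standing hypothesis that $0$ is not a resonance: it is not needed for the bare basis expansion, but it is what ensures the constants $\al_0^{\pm}, \beta_0^{\pm}$ feed into the companion expansion of $W(\lam) = W(f_{-}(\cdot,\lam), f_{+}(\cdot,\lam))$ in a way that keeps $1/W(\lam)$ under control, which is the use to which this lemma is ultimately put in Proposition \ref{oscint}.
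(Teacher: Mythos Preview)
The paper does not prove this lemma at all; it is simply quoted from \cite{sss2} (Corollary 3.6 and Proposition 3.12 there) as part of the summary of scattering theory in Section 3.1, so there is no ``paper's own proof'' to compare against. Your outline is essentially the argument given in \cite{sss2}: write $a_{+}(\lam) = W(u_1^{+}(\cdot,\lam), f_{+}(\cdot,\lam))$ and $b_{+}(\lam) = W(f_{+}(\cdot,\lam), u_0^{+}(\cdot,\lam))$, evaluate both Wronskians at $r \sim \lam^{-\e}$ in the overlap window $1 \ll r \ll \lam^{-1}$, expand $f_{+}$ via the Bessel form of Lemma \ref{jostlem} and $u_j^{+}(\cdot,\lam)$ via Lemma \ref{scattlem2}, and then balance the $O(\lam^2 \la r \ra^2)$ errors against the $O(r^{-1})$ and $O((r\lam)^2)$ Bessel remainders. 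Your identification of where the imaginary $O(\lam^{-(d-2)\e})$ and $O(\lam^{d\e})$ pieces come from --- the ``wrong'' Bessel function sitting in each of $\Rea f_{+}$ and $\Ima f_{+}$, weighted by the mismatched power of $r$ --- is exactly right. One small slip: with the paper's convention $W(a,b) = ab' - a'b$, the Wronskian $W(r^{-(d-2)/2}, r^{d/2})$ equals $d-1$, not $1-d$; this only affects the sign of the constants $\al_0^{+}, \beta_0^{+}$ and is immaterial.
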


Using the expansions in Lemma \ref{scattlem3}, an asymptotic expansion for $W(\lambda)$ for small $\lambda$ under the nonresonant 
condition was obtained.

\begin{lem}[Corollary 3.13]\label{scattlem4}
	If 0 is not a resonance for $H$, then for all $0 < \e < \e_0(d)$, we have 
	\begin{align}
	W(\lambda)
	&= i e^{i\pi(d-1)/2} \lambda^{-(d-2)} ( W_0 + O_{\C}(\lambda^\epsilon)). \label{scatt2}
	\end{align}
	Here $W_0$ is a nonzero real constant and $O_{\C}(\lambda^{\epsilon})$ is complex valued, and all terms
	satisfy the natural derivative bounds.  We remark 
	that the nonresonant condition is what guarantees that the constant $W_0$ is nonzero.  
\end{lem}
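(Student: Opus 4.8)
The plan is to evaluate $W(\lam)=W(f_-(\cdot,\lam),f_+(\cdot,\lam))$ by exploiting two facts: $W(\lam)$ is independent of $r$ (both Jost solutions solve $Hf=\lam^2 f$), and, by Lemma \ref{scattlem3}, each Jost solution splits as $f_\pm(\cdot,\lam)=a_\pm(\lam)\,u_0^\pm(\cdot,\lam)+b_\pm(\lam)\,u_1^\pm(\cdot,\lam)$. Since $\{u_0^+(\cdot,\lam),u_1^+(\cdot,\lam)\}$ is a basis of solutions of $Hf=\lam^2f$ with $W(u_1^+(\cdot,\lam),u_0^+(\cdot,\lam))=1$ (Lemma \ref{scattlem2}), I would re-expand $f_-$ in this basis; the coefficients are the four ``connection Wronskians'' $W(u_j^-(\cdot,\lam),u_k^+(\cdot,\lam))$, $j,k\in\{0,1\}$, and a direct bilinear computation using only bilinearity and antisymmetry of $W$ gives
\begin{align*}
W(\lam)=a_+a_-\,W(u_0^-,u_0^+)+a_+b_-\,W(u_1^-,u_0^+)-a_-b_+\,W(u_1^+,u_0^-)-b_+b_-\,W(u_1^+,u_1^-),
\end{align*}
where every $u_j^\pm$ and every Wronskian on the right is taken at energy $\lam^2$.

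Next I would track sizes as $\lam\to 0$. By Lemma \ref{scattlem3}, $a_\pm(\lam)=\lam^{d/2}\beta_d(\alpha_0^\pm+O(\lam^\e)+iO(\lam^{-(d-2)\e}))$ and $b_\pm(\lam)=i\lam^{-(d-2)/2}\beta_d(\beta_0^\pm+O(\lam^\e)+iO(\lam^{d\e}))$, so the four products $a_+a_-,\ a_+b_-,\ a_-b_+,\ b_+b_-$ are of respective orders $\lam^{d-2(d-2)\e}$, $\lam^{1-(d-2)\e}$, $\lam^{1-(d-2)\e}$, $\lam^{-(d-2)}$; for $\e$ small and every $d\geq 2$ the last strictly dominates, and the other three enter only at relative order $\lam^{d-1-(d-2)\e}$ or smaller. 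For the coefficient of the leading term I would compute the ($r$-independent) Wronskian $W(u_1^+(\cdot,\lam),u_1^-(\cdot,\lam))$ at a fixed reference point: by Lemma \ref{scattlem2}, $u_j^\pm(r,\lam)=u_j^\pm(r)(1+O(\lam^\e))$ with natural derivative bounds uniformly on any fixed compact $r$-set (the $\la r\ra^2$ loss there being harmless), whence $W(u_1^+(\cdot,\lam),u_1^-(\cdot,\lam))=W(u_1^+(\cdot),u_1^-(\cdot))+O(\lam^\e)$, and the three subdominant connection Wronskians likewise stay bounded as $\lam\to 0$. Inserting $-b_+b_-=\lam^{-(d-2)}\beta_d^2(\beta_0^+\beta_0^-+O_\C(\lam^\e))$ and $\beta_d^2=\frac{\pi}{2}e^{id\pi/2}=\frac{\pi}{2}\,i\,e^{i\pi(d-1)/2}$ reproduces \eqref{scatt2}, after possibly shrinking $\e$, with $W_0=\frac{\pi}{2}\beta_0^+\beta_0^-\,W(u_1^+(\cdot),u_1^-(\cdot))$, which is real since $u_1^\pm(\cdot)$ and $\beta_0^\pm$ are real.

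It then remains to see $W_0\neq 0$, and this is where the hypothesis enters: $W(u_1^+(\cdot),u_1^-(\cdot))\neq 0$ is exactly the statement that $0$ is not a resonance for $H$ (Definition \ref{scattdef}), while $\beta_0^\pm\neq 0$ because the coefficient of the slowly decaying zero-energy solution $u_1^\pm\sim|r|^{-(d-2)/2}$ inside the Jost solution is, by the small-argument regime of Lemma \ref{jostlem} and the leading singular term of the Hankel function $H^+_{(d-1)/2}$, a nonzero explicit multiple of $\Gamma((d-1)/2)$. The asserted natural derivative bounds follow by differentiating the displayed formula for $W(\lam)$: $a_\pm,b_\pm$ obey symbol-type bounds by Lemma \ref{scattlem3}, the connection Wronskians are smooth in $\lam^2$ on compact $r$-sets (hence symbols of order $0$), and sums and products of symbols are symbols.

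The main obstacle is the bookkeeping in the middle step: one must verify that the $b_+b_-$ term genuinely dominates — noting that the imaginary part of $a_\pm$ of size $\lam^{-(d-2)\e}$ only shifts the exponent to $d/2-(d-2)\e>0$ and so causes no trouble — and that the connection Wronskians converge to their zero-energy values at a power rate; the conceptual heart of the argument is recognizing that the coefficient of $\lam^{-(d-2)}$ is, up to explicit nonzero constants, precisely the resonance functional $W(u_1^+,u_1^-)$, so that non-resonance is exactly the condition making the leading coefficient survive.
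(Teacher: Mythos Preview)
The paper does not supply its own proof of this lemma: it is quoted verbatim as Corollary 3.13 from \cite{sss2}, as part of the summary of scattering theory in Section 3.1. So there is no in-paper argument to compare against.

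Your reconstruction is correct and is essentially the argument one expects from \cite{sss2}. The bilinear expansion of $W(f_-(\cdot,\lam),f_+(\cdot,\lam))$ via Lemma \ref{scattlem3} is right, the identification of $-b_+b_-\,W(u_1^+(\cdot,\lam),u_1^-(\cdot,\lam))$ as the dominant contribution is correct (your exponent bookkeeping checks out, and ``after possibly shrinking $\e$'' is exactly what is needed to absorb the cross terms of relative size $\lam^{d-1-(d-2)\e}$), and the computation $\beta_d^2=\tfrac{\pi}{2}\,i\,e^{i\pi(d-1)/2}$ is accurate. Your use of Lemma \ref{scattlem2} at a fixed reference point to get $W(u_1^+(\cdot,\lam),u_1^-(\cdot,\lam))=W(u_1^+,u_1^-)+O(\lam^\e)$ is fine; in fact that lemma gives the sharper rate $O(\lam^2)$ (with a logarithm when $d=2$), but your weaker statement suffices. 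The identification $W_0=\tfrac{\pi}{2}\beta_0^+\beta_0^-\,W(u_1^+,u_1^-)$ makes the role of the nonresonance hypothesis transparent, and your justification that $\beta_0^\pm\neq 0$ via the small-argument behavior of the Hankel function in Lemma \ref{jostlem} is the right mechanism.
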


Finally, the following asymptotic expansion for the spectral measure corresponding to $H$ for small $\lambda$ was obtained.    

\begin{lem}[Corollary 5.1 \cite{sss2}] \label{lem1}
	If 0 is not a resonance for $H$, then for $0 < \lambda \ll 1$ and any $r,\rho \in \R$,
	\begin{align*}
	\Im \left [ \frac{f_{+}(r,\lambda) f_{-}(\rho,\lambda)}{W(\lambda)} \right ] = O(\lambda^{d-1}) u^+_0(r, \lambda) u^-_1(\rho,\lambda) \\
	+ O(\lambda^{d-1}) u^+_1(r, \lambda) u^-_0(\rho,\lambda) + O(\lambda^{d-1}) u^+_0(r, \lambda) u^-_0(\rho,\lambda) \\
	+ O(\lambda^{d-1}) u^+_1(r, \lambda) u^-_1(\rho,\lambda),
	\end{align*}
	where the $O(\cdot)$ terms are real--valued and satisfy the natural derivative bounds. 
\end{lem}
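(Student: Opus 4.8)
This is essentially Corollary 5.1 of \cite{sss2}, and the plan is to reproduce that derivation from the expansions already collected above. First I would substitute the basis decomposition $f_{\pm}(r,\lambda)=a_{\pm}(\lambda)u_0^{\pm}(r,\lambda)+b_{\pm}(\lambda)u_1^{\pm}(r,\lambda)$ of Lemma \ref{scattlem3} into the quotient and multiply out, producing
\begin{align*}
\frac{f_+(r,\lambda)f_-(\rho,\lambda)}{W(\lambda)}=\frac{a_+a_-}{W}u_0^+(r,\lambda)u_0^-(\rho,\lambda)+\frac{a_+b_-}{W}u_0^+(r,\lambda)u_1^-(\rho,\lambda)+\frac{b_+a_-}{W}u_1^+(r,\lambda)u_0^-(\rho,\lambda)+\frac{b_+b_-}{W}u_1^+(r,\lambda)u_1^-(\rho,\lambda).
\end{align*}
Since the solutions $u_j^{\pm}(\cdot,\lambda)$ of Lemmas \ref{scattlem1} and \ref{scattlem2} may be taken real-valued, the imaginary part falls entirely onto the four scalar coefficients, so it suffices to show that $\Im[a_+a_-/W]$, $\Im[a_+b_-/W]$, $\Im[b_+a_-/W]$ and $\Im[b_+b_-/W]$ are each $O(\lambda^{d-1})$, with the natural derivative bounds, for $0<\lambda\ll1$.

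For three of these coefficients I expect a direct size count to suffice. By Lemma \ref{scattlem4}, the nonresonant hypothesis yields $W(\lambda)^{-1}=-i\,e^{-i\pi(d-1)/2}\lambda^{d-2}W_0^{-1}(1+O_{\C}(\lambda^{\epsilon}))$ with $W_0\neq0$; in particular $|W(\lambda)^{-1}|\sim\lambda^{d-2}$, and the reciprocal is of symbol type in $\lambda$ precisely because $W_0$ does not vanish. Combining this with $|a_{\pm}(\lambda)|\lesssim\lambda^{d/2}$ and $|b_{\pm}(\lambda)|\lesssim\lambda^{-(d-2)/2}$ from Lemma \ref{scattlem3} gives $|a_+a_-/W|\lesssim\lambda^{2d-2}$ and $|a_+b_-/W|,|b_+a_-/W|\lesssim\lambda^{d-1}$, and since products and reciprocals of the symbol-type quantities in Lemmas \ref{scattlem3} and \ref{scattlem4} are again of symbol type, the derivative bounds follow automatically. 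Tracking leading terms, $a_+a_-/W=O(\lambda^{2d-2})$, while $a_+b_-/W=\frac{\pi i}{2}\lambda^{d-1}\alpha_0^+\beta_0^-W_0^{-1}+(\text{lower order})$ is purely imaginary of exactly the right order (and similarly for $b_+a_-/W$), so nothing is lost in passing to imaginary parts.

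The one genuinely delicate coefficient is $b_+b_-/W$, whose modulus is of order $1$ rather than $O(\lambda^{d-1})$; here a cancellation must be exploited. Writing $b_{\pm}=i\beta_d\lambda^{-(d-2)/2}(\beta_0^{\pm}+O(\lambda^{\epsilon})+iO(\lambda^{d\epsilon}))$, inserting the expansion of $W^{-1}$, and using the phase identity $\beta_d^2 e^{-i\pi(d-1)/2}=\frac{\pi}{2}i$, I would obtain
\begin{align*}
\frac{b_+b_-}{W}=-\frac{\pi}{2W_0}\beta_0^+\beta_0^-+(\text{error}),
\end{align*}
so the order-one part of the coefficient is a \emph{real} constant; since $u_1^+(r,\lambda)u_1^-(\rho,\lambda)$ is real, this part contributes nothing to the imaginary part of the product, and one is reduced to estimating $\Im$ of the error term. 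The hard part of the whole argument is exactly this step: after the real leading part is subtracted, one must verify that the surviving imaginary contribution decays at the claimed rate, which relies on the fine structure of the error terms in Lemma \ref{scattlem3} (the smallness of their imaginary corrections) together with the nonvanishing of $W_0$ — the latter being precisely what keeps $b_+b_-/W$ bounded in the first place. Its derivative bounds follow as before, since subtracting a constant does not affect $\lambda$-derivatives and the error remains symbol-type; assembling the four pieces then gives the stated expansion.
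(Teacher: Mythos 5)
The paper offers no proof of this lemma --- it is quoted as Corollary 5.1 of \cite{sss2} --- so your reconstruction has to stand on its own. Its skeleton is correct: substitute $f_{\pm}=a_{\pm}u_0^{\pm}+b_{\pm}u_1^{\pm}$, use that the $u_j^{\pm}(\cdot,\lambda)$ are real so the imaginary part falls on the four scalar coefficients, and note that $|a_+a_-/W|\lesssim\lambda^{2d-2}$ and $|a_+b_-/W|,\,|b_+a_-/W|\lesssim\lambda^{d-1}$ follow from size counting alone (modulo the $\lambda^{-(d-2)\epsilon}$ loss in the $a_{\pm}$ error of Lemma \ref{scattlem3}, which you silently drop), while $b_+b_-/W=O(1)$ genuinely needs a cancellation. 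You also correctly compute that the order-one part of $b_+b_-/W$ is the real constant $-\tfrac{\pi}{2}\beta_0^+\beta_0^-W_0^{-1}$.

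The gap is in how you finish that last coefficient. Subtracting the real constant and estimating the remainder by the error bounds of Lemmas \ref{scattlem3} and \ref{scattlem4} gives at best $\Im[b_+b_-/W]=O(\lambda^{\epsilon})$: the corrections to $\beta_0^{\pm}$ and to $W_0$ are complex of size $O(\lambda^{\epsilon})$ (the $iO(\lambda^{d\epsilon})$ refinement in $b_{\pm}$ cannot help because the denominator's error is an unstructured $O_{\C}(\lambda^{\epsilon})$), and $\lambda^{\epsilon}\gg\lambda^{d-1}$ for $d\geq 2$ and small $\epsilon$. The ``fine structure of the error terms'' you appeal to is precisely what the quoted expansions do not contain, so the step would fail. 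The actual mechanism is algebraic rather than asymptotic: write $\Im[z/W]=\Im[z\overline{W}]/|W|^2$ and expand $W=W(f_-,f_+)$ bilinearly over the \emph{real} connection Wronskians $w_{jk}=W(u_j^-(\cdot,\lambda),u_k^+(\cdot,\lambda))$. In $\Im(\overline{W}b_+b_-)$ the dangerous term $w_{11}\,\Im\bigl(|b_+|^2|b_-|^2\bigr)$ vanishes identically, and the cross terms are controlled by the exact identity $|\Im(a_{\pm}\overline{b_{\pm}})|=\lambda$, which follows from $W(f_{\pm},\overline{f_{\pm}})=\mp 2i\lambda$ together with $W(u_1^{\pm},u_0^{\pm})=1$; this yields $\Im(\overline{W}b_+b_-)=O(\lambda^{3-d})$, and dividing by $|W|^2\simeq W_0^2\lambda^{-2(d-2)}$ (here the nonresonance enters) gives exactly $O(\lambda^{d-1})$. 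Without these Wronskian identities the bound on the $u_1^+u_1^-$ coefficient is out of reach.
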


We now turn to proving the oscillatory integral estimate 
Proposition \ref{oscint}. 

\subsection{Proof of Proposition \ref{oscint}}

We recall that we wish to prove the oscillatory integral estimate
\begin{align*}
\left | 
\int_0^{\infty} e^{\pm i t \lambda} \varphi(2^{-j}\lambda) \lambda
\Im \left [ \frac{f_{+}(r,\lambda) f_{-}(\rho,\lambda)}{W(\lambda)} \right ] d\lambda 
\right |
\lesssim (\la r \ra \la \rho \ra)^{d/2} 2^{j(d+2)/2}(2^{-j} + |t|)^{-d/2},
\end{align*}
for all $r > \rho$ and $t \in \R$.  Here $H$ is the Schr\"odinger operator on $\R$
\begin{align*}
H = -\frac{d^2}{dr^2} + V, \quad V(r) = \frac{d(d-4)}{4} r^2 \la r \ra^{-4} + \frac{d}{2} \la r \ra^{-2},
\end{align*}
and $f_{\pm}(\cdot,\lambda)$ are the Jost functions associated to $H$.  We distinguish the cases $j \ll 0$ and 
$j \gtrsim 0$.  The case $j \ll 0$ will rely heavily on the scattering theory summarized in the previous subsection.  

We first consider the case $j \ll 0$ so that the integrand in the oscillatory integral is 
localized to small $\lambda$.  We first claim that $H$ is nonresonant so that the results summarized in the previous section apply.  Indeed, if 0 is 
a resonance of $H$, then there exists a nonzero function $f$ such that $Hf = 0$ and $f(r) = O(\la r \ra^{-(d-2)/2})$ as $|r| \rightarrow
\infty$.  This implies by the relation $-\Delta_{g} ( \la r \ra^{-d/2} f ) =  \la r \ra^{-d/2} H f$ that
there exists a nonzero function $u$ such that $\Delta_g u = 0$ and $u(r) = O(\la r \ra^{-(d-1)})$ as $|r| \rightarrow \infty$. 
Since $d \geq 2$, the maximum principle on $\M$ implies that $u \equiv 0$, a contradiction.  Thus, 0 is not
a resonance of the Schrodinger operator $H$.   

The proof of Proposition \ref{oscint} for $j \ll 0$ is split up into several lemmas.  In what follows, we differentiate between the oscillatory regime and the exponential regime for the Jost solutions
$f_{\pm}(\cdot,\lambda)$.  This transition occurs at $|r\lambda| = 1$. Let $\chi \in C^{\infty}_0(\R)$ be even with 
$\chi(r) = 1$ for $|r| \leq 1$ and $\supp \chi \subset \{ |r| < 2 \}$. We denote the smooth cutoff
$\chi(r\lambda)$ by $\chi_{[|r\lambda| < 1]}$ and the smooth cutoff $(1-\chi(r\lambda))$ by $\chi_{[|r\lambda| > 1]}$.

\begin{lem}\label{lem2}
	For all $t \in \R$ and $r,\rho \in \R$,  
	\begin{align}
	\left | 
	\int_0^{\infty} e^{\pm i t \lambda} \lambda \varphi(2^{-j}\lambda) \chi_{[|r\lambda|< 1]} \chi_{[|\rho\lambda|<1]}(\la r \ra \la \rho \ra)^{-d/2} 
	\Im \left [ \frac{f_{+}(r,\lambda) f_{-}(\rho,\lambda)}{W(\lambda)} \right ] d\lambda 
	\right |
	\lesssim 2^{j(d+2)/2}(2^{-j} + |t|)^{-d/2}. \label{lem21} 
	\end{align}
\end{lem}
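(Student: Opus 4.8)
\emph{Proof idea.} The plan is to rescale the frequency so the oscillatory factor becomes linear in the integration variable, use the scattering expansions recorded above to reduce the amplitude to something whose derivatives are uniformly controlled, and then integrate by parts. Throughout, $\lambda \sim 2^j$ on the support of $\varphi(2^{-j}\lambda)$, and $j \ll 0$, so we are in the small--frequency regime where the results of the previous subsection apply (we already checked $H$ is nonresonant).

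First I would substitute $\mu = 2^{-j}\lambda$. The phase turns into $e^{\pm i(2^j t)\mu}$, which is linear in $\mu$ with no critical point, and the resulting amplitude $\tilde A_{j,r,\rho}(\mu)$ is supported in $\mu \sim 1$. The left side of the claimed bound is then $\bigl|\int_0^\infty e^{\pm i(2^j t)\mu}\tilde A_{j,r,\rho}(\mu)\,d\mu\bigr|$, and the heart of the matter is the uniform amplitude estimate $|\partial_\mu^k \tilde A_{j,r,\rho}(\mu)| \lesssim_{k,d} 2^{j(d+1)}$, valid for all $r>\rho$ and all $j\ll0$. To get this, I would apply Lemma \ref{lem1} to write $\Im[f_+(r,\lambda)f_-(\rho,\lambda)/W(\lambda)]$ as a sum of four terms, each of the form $O(\lambda^{d-1})$ (with natural derivative bounds) times a product $u_i^{+}(r,\lambda)u_{i'}^{-}(\rho,\lambda)$. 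On the support of the cutoffs $\chi_{[|r\lambda|<1]}\chi_{[|\rho\lambda|<1]}$ one has $\la r\ra,\la\rho\ra\lesssim \lambda^{-1}\sim 2^{-j}$, and Lemmas \ref{scattlem1}--\ref{scattlem2} give $|u_0^{\pm}(r,\lambda)|\lesssim\la r\ra^{d/2}$, $|u_1^{\pm}(r,\lambda)|\lesssim\la r\ra^{-(d-2)/2}\le\la r\ra^{d/2}$, so each product is $\lesssim\la r\ra^{d/2}\la\rho\ra^{d/2}$, which is exactly cancelled by the weight $(\la r\ra\la\rho\ra)^{-d/2}$ sitting in the integrand. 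Counting the remaining powers of $2^j$ (one from $\lambda\varphi(2^{-j}\lambda)=2^j\mu\varphi(\mu)$, one from the Jacobian $d\lambda=2^j\,d\mu$, and $2^{j(d-1)}$ from the $O(\lambda^{d-1})$ factor) yields $|\tilde A_{j,r,\rho}|\lesssim 2^{j(d+1)}$.

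For the $\mu$--derivatives, each $\partial_\mu$ that lands on $u_i^{\pm}(r,2^j\mu)$, on the $O(\lambda^{d-1})$ coefficient, or on one of the cutoffs costs a factor $2^j$; but the cutoff constraint $|r2^j\mu|\lesssim1$, i.e.\ $\la r\ra^2 2^{2j}\lesssim1$, together with the symbol--type bounds in Lemma \ref{scattlem2} on $a_j^{\pm}$, absorbs this loss and preserves the $2^{j(d+1)}$ bound; the same holds for $\rho$. The one delicate point is $d=2$, where the bounds of Lemma \ref{scattlem2} carry a $\log|\lambda r|$ factor: this is harmless on $|r|\gtrsim r_0$ because there $|\log|2^j r||\lesssim|j|$ and $2^{2j}|j|\lesssim1$, while on the bounded range $|r|\le r_0$ (where $V$ is smooth with no $r^{-2}$ singularity) one instead uses the expansions of Lemmas \ref{scattlem3}--\ref{scattlem4} for $f_{\pm}$ and $W$, which are smooth in $\lambda$ down to $\lambda=0$ for bounded $r$. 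I expect this bookkeeping — ensuring the cutoff--induced decay of $\la r\ra,\la\rho\ra$ always defeats both the growth of the $u_i^{\pm}$ and the derivative losses, and cleanly separating the $r^{-2}$--potential regime from the bounded regime when $d=2$ — to be the main obstacle.

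Finally, since the phase $\pm(2^j t)\mu$ is linear and $\tilde A_{j,r,\rho}$ is compactly supported in $\mu$, integration by parts $N$ times produces no boundary terms and gives
\[
\Bigl|\int_0^\infty e^{\pm i(2^j t)\mu}\tilde A_{j,r,\rho}(\mu)\,d\mu\Bigr|\lesssim (2^j|t|)^{-N}\sup_\mu|\partial_\mu^N\tilde A_{j,r,\rho}(\mu)|\lesssim 2^{j(d+1)}(2^j|t|)^{-N}.
\]
Taking $N=0$ when $2^j|t|\le1$ and $N=\lceil d/2\rceil$ when $2^j|t|\ge1$ gives $\lesssim 2^{j(d+1)}(1+2^j|t|)^{-d/2}=2^{j(d+2)/2}(2^{-j}+|t|)^{-d/2}$, which is the asserted estimate \eqref{lem21}.
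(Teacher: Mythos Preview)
Your proposal is correct and follows essentially the same approach as the paper: both invoke Lemma~\ref{lem1} (the small--$\lambda$ expansion of the spectral measure) to reduce the amplitude to $\varphi(2^{-j}\lambda)\,O(\lambda^{d})$ with symbol--type derivative bounds, then split into the cases $2^{j}|t|\le 1$ (direct estimate) and $2^{j}|t|\ge 1$ (integration by parts). Your rescaling $\mu=2^{-j}\lambda$ and your use of $\lceil d/2\rceil$ rather than $d$ integrations by parts are cosmetic variations; the paper works directly in $\lambda$ and integrates by parts $d$ times, but the content is identical.
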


\begin{proof}
	By Lemma \ref{lem1} we may write
	\begin{align}
	\Im \left [ \frac{f_{+}(r,\lambda) f_{-}(\rho,\lambda)}{W(\lambda)} \right ] = O(\lambda^{d-1})O\left ((\la r \ra \la \rho \ra)^{d/2} \right ), \label{lem22} 
	\end{align}
	where the $O(\cdot)$ terms satisfy natural derivative bounds.  We write \eqref{lem21} as
	\begin{align}
	\left | \int_0^{\infty} e^{\pm it \lambda} a_j(r,\rho,\lambda) d\lambda \right | \lesssim 2^{j(d+2)/2}(2^{-j} + |t|)^{-d/2},
	\end{align}
	where 
	\begin{align*}
	a_j(r,\rho,\lambda) = \lambda \varphi(2^{-j}\lambda)  \chi_{[|r\lambda|< 1]}  \chi_{[|\rho \lambda|< 1]}(\la r \ra \la \rho \ra)^{-d/2} 
	\Im \left [ \frac{f_{+}(r,\lambda) f_{-}(\rho,\lambda)}{W(\lambda)} \right ].
	\end{align*}
	By \eqref{lem22} the function $a_j(r,\rho,\lambda)$ satisfies 
	\begin{align}
	a_j(r,\rho,\lambda) = \varphi(2^{-j}\lambda) O(\lambda^{d}), \label{lem23}
	\end{align}
	with natural derivative bounds.  
	
	First note that if $|t| \leq 2^{-j}$, then by \eqref{lem23} 
	\begin{align*}
	\left | \int_0^{\infty} e^{\pm it \lambda} a_j(r,\rho,\lambda) d\lambda \right | 
	\lesssim \int_{[\lambda \simeq 2^j]} \lambda^d d\lambda 
	\lesssim 2^{j(d+1)} \lesssim 2^{j(d+2)/2} (2^{-j} + |t|)^{-d/2}. 
	\end{align*}
	We now assume that  $|t| \geq 2^{-j}$.  Integration by parts $d$ times and \eqref{lem23} yield 
	\begin{align*}
	\left | \int_0^{\infty} e^{\pm it \lambda} a_j(r,\rho,\lambda) d\lambda \right | &=
	|t|^{-d} \left | \int_0^{\infty} e^{\pm it \lambda} \partial_\lambda^d a_j(r,\rho,\lambda) d\lambda \right | \\
	&\lesssim |t|^{-d} \int_{[\lambda \sim 2^j]} d\lambda \\
	&\lesssim |t|^{-d} 2^j \\
	&\lesssim 2^{j(d+2)/2} (2^{-j} + |t|)^{-d/2}.  
	\end{align*}
	This concludes the proof. 
\end{proof}

We now consider the case when the integrand is supported in $|r\lambda| > 1$ and $|\rho\lambda| > 1$.  With the convention that
$f_{\pm}(\cdot, -\lambda) = \overline{f_{\pm}(\cdot, \lambda)}$, we remove the taking of an imaginary part in the integrand and write 
\begin{align*} 
\int_0^{\infty}& e^{\pm i t \lambda} \lambda \varphi(2^{-j}\lambda) \chi(\lambda r) \chi(\lambda \rho)(\la r \ra \la \rho \ra)^{-d/2} 
\Im \left [ \frac{f_{+}(r,\lambda) f_{-}(\rho,\lambda)}{W(\lambda)} \right ] d\lambda \\
&= \int_{-\infty}^{\infty} e^{\pm i t |\lambda|} \lambda \varphi(2^{-j}\lambda) \chi(\lambda r) \chi(\lambda \rho)(\la r \ra \la \rho \ra)^{-d/2} 
\frac{f_{+}(r,\lambda) f_{-}(\rho,\lambda)}{W(\lambda)} d\lambda.
\end{align*}
We first consider the case $\rho < 0 < r$.  
\begin{lem}\label{lem3}
	For all $t \in \R$ and $\rho < 0 < r$
	\begin{align}
	\left | 
	\int_{-\infty}^{\infty} e^{\pm i t |\lambda|} \lambda \varphi(2^{-j}\lambda) \chi_{[|r\lambda|> 1]} \chi_{[|\rho \lambda|> 1]}(\la r \ra \la \rho \ra)^{-d/2} 
	\frac{f_{+}(r,\lambda) f_{-}(\rho,\lambda)}{W(\lambda)} d\lambda 
	\right |
	\lesssim 2^{j(d+2)/2}(2^{-j} + |t|)^{-d/2}. \label{lem31} 
	\end{align}
\end{lem}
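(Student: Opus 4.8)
\textbf{Proof proposal for Lemma \ref{lem3}.}

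The plan is to exploit the fact that in the regime $|r\lam|>1$ and $|\rho\lam|>1$ the Jost solutions are in their oscillatory (exponential) form, so that by Lemma \ref{jostlem} we may write $f_+(r,\lam)=e^{ir\lam}m_+(r,\lam)$ with $m_+(r,\lam)=1+O_\C(r^{-1}\lam^{-1})$, and similarly $f_-(\rho,\lam)=e^{-i\rho\lam}m_-(\rho,\lam)$ with $m_-(\rho,\lam)=1+O_\C(|\rho|^{-1}\lam^{-1})$ (here $\rho<0$, so $f_-(\rho,\lam)\sim e^{-i\rho\lam}$ is the correct outgoing form at $-\infty$). Combining this with the small-$\lam$ expansion $W(\lam)=ie^{i\pi(d-1)/2}\lam^{-(d-2)}(W_0+O_\C(\lam^\e))$ from Lemma \ref{scattlem4} with $W_0\neq 0$, the integrand in \eqref{lem31} takes the form
\begin{align*}
e^{\pm it|\lam|}e^{i(r-\rho)\lam}\,\lam^{d-1}\varphi(2^{-j}\lam)\chi_{[|r\lam|>1]}\chi_{[|\rho\lam|>1]}(\la r\ra\la\rho\ra)^{-d/2}\,b(r,\rho,\lam),
\end{align*}
where $b$ absorbs $m_+$, $m_-$, and the reciprocal of $(W_0+O_\C(\lam^\e))$. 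Since $j\ll 0$, on the support of $\varphi(2^{-j}\lam)$ we have $\lam\simeq 2^j\ll 1$, and on the support of the cutoffs $r^{-1}\lam^{-1}\lesssim 1$ and $|\rho|^{-1}\lam^{-1}\lesssim 1$, so $b$ is bounded and — crucially — obeys symbol bounds $|\p_\lam^k b|\lesssim_k \lam^{-k}$ by the natural derivative bounds recorded in Lemmas \ref{jostlem} and \ref{scattlem4}. Writing the integral over $\R$ as the sum of the contributions from $\lam>0$ (phase $e^{i(\pm t+r-\rho)\lam}$) and $\lam<0$ (phase $e^{i(\mp t+r-\rho)\lam}$), the problem reduces to bounding a one-dimensional oscillatory integral with phase $e^{is\lam}$, $s\in\R$, amplitude $a_j(\lam)=\lam^{d-1}\varphi(2^{-j}\lam)\chi_{[|r\lam|>1]}\chi_{[|\rho\lam|>1]}b(r,\rho,\lam)$, against the fixed weight $(\la r\ra\la\rho\ra)^{-d/2}$.

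The core estimate is then purely a stationary-/nonstationary-phase bound. The amplitude $a_j$ is supported in $\lam\simeq 2^j$, satisfies $|a_j|\lesssim 2^{j(d-1)}$, and $|\p_\lam^k a_j|\lesssim 2^{j(d-1-k)}$ — the cutoffs $\chi_{[|r\lam|>1]}$, $\chi_{[|\rho\lam|>1]}$ contribute factors $r^k$, $|\rho|^k$ upon differentiation, but these are harmless because $r,|\rho|\lesssim 2^{-j}$ on the relevant overlap of supports (if $|r\lam|>1$ and $\lam\simeq 2^j$ then $r\gtrsim 2^{-j}$ is false — rather $r^{-1}\lesssim 2^j$, i.e. the factor $r^k\lesssim 2^{-jk}$ is actually a gain, or at worst neutral since differentiating the cutoff produces $r\,\chi'(r\lam)$ which is $\lesssim \lam^{-1}$ on the support). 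Thus $a_j$ is a symbol of order $2^{j(d-1)}$ at scale $2^j$, and the standard argument gives, for every integer $N\geq 0$,
\begin{align*}
\left|\int_\R e^{is\lam}a_j(\lam)\,d\lam\right|\lesssim_N 2^{j(d-1)}\cdot 2^j\cdot(1+2^j|s|)^{-N}=2^{jd}(1+2^j|s|)^{-N}.
\end{align*}
Choosing $N=0$ when $|s|\lesssim 2^{-j}$ and $N=d/2$ (or $\lceil d/2\rceil$, interpolating) when $|s|\gtrsim 2^{-j}$ yields in both cases $\lesssim 2^{jd}(1+2^j|s|)^{-d/2}=2^{j(d+2)/2}(2^{-j}+|s|)^{-d/2}$. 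Finally, since $\rho<0<r$ we have $|s|=|\pm t+r-\rho|$ or $|\mp t+r-\rho|$; but $r-\rho=|r|+|\rho|\geq 0$, so in the worst orientation $|s|\geq |t|-(r-\rho)$ could be small even for large $|t|$ — this is where the spatial weight is spent. We pair the factor $(2^{-j}+|s|)^{-d/2}$ against $(\la r\ra\la\rho\ra)^{-d/2}$: on the support of the cutoffs $\la r\ra\la\rho\ra\lesssim 2^{-2j}$, and one checks $(\la r\ra\la\rho\ra)^{-d/2}\lesssim 1\lesssim (2^{-j})^{-d/2}(2^{-j}+|s|)^{d/2}\cdot(\text{target})$; more precisely, when $|s|\ll |t|$ we necessarily have $r-\rho\simeq |t|$, hence $\la r\ra\la\rho\ra\gtrsim$ a quantity comparable to shifting $|s|$ up to $|t|$, and the resulting bound $(\la r\ra\la\rho\ra)^{-d/2}\cdot 2^{j(d+2)/2}(2^{-j}+|s|)^{-d/2}\lesssim 2^{j(d+2)/2}(2^{-j}+|t|)^{-d/2}$ follows after multiplying through by the $(\la r\ra\la\rho\ra)^{-d/2}$ already present on the left-hand side of \eqref{lem31}.

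\textbf{Main obstacle.} The delicate point is the last one: reconciling the oscillatory decay in the \emph{shifted} variable $s=\pm t-(\rho-r)$ with the decay in $t$ demanded by \eqref{lem31}, using only the spatial weight $(\la r\ra\la\rho\ra)^{-d/2}$ available on the support $|r\lam|,|\rho\lam|>1$. One must verify carefully that whenever the phase is nearly stationary (so $|s|\ll|t|$), the spatial separation $r-\rho$ is forced to be comparable to $|t|$, and that the weight $(\la r\ra\la\rho\ra)^{-d/2}$ then supplies exactly the missing $(|t|/(2^{-j}+|s|))^{-d/2}$ factor — here the constraint $r>\rho$ together with $\rho<0<r$ is essential, and the argument is the wormhole analogue of the finite-speed-of-propagation bookkeeping in \cite{sss1,sss2}. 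Tracking the symbol bounds through the cutoffs $\chi_{[|r\lam|>1]}$ and the reciprocal Wronskian is routine but must be done with care to ensure no negative powers of $\lam$ worse than $\lam^{-k}$ appear after $k$ differentiations.
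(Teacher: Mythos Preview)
Your approach is essentially the same as the paper's: write $f_\pm$ in their oscillatory form via Lemma~\ref{jostlem}, use $W(\lam)^{-1}=O(\lam^{d-2})$ from Lemma~\ref{scattlem4} so that the amplitude is a symbol of size $\lam^{d-1}$ at scale $2^j$, and then balance nonstationary phase in $s=\pm t+(r-\rho)$ against the spatial weight $(\la r\ra\la\rho\ra)^{-d/2}$. The one slip is the identity you write, $2^{jd}(1+2^j|s|)^{-d/2}=2^{j(d+2)/2}(2^{-j}+|s|)^{-d/2}$: the left side equals $2^{jd/2}(2^{-j}+|s|)^{-d/2}$, which is \emph{larger} by $2^{-j}$. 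This is harmless, since the weight (which you have not yet used at that stage) supplies $(\la r\ra\la\rho\ra)^{-d/2}\lesssim \lam^{d}\sim 2^{jd}\le 2^{j}$ on the support of the cutoffs, more than enough to close; but as written the intermediate step is off.

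The paper organizes the same computation a bit more transparently by splitting on the relation between $|t|$ and $|r-\rho|$ rather than between $|t|$ and $|s|$. When $|t|\lesssim|r-\rho|$ it uses no integration by parts at all and spends the weight via $(\la r\ra\la\rho\ra)^{-d/2}\lesssim\la r-\rho\ra^{-d/2}\lesssim|t|^{-d/2}$ (valid here because $\rho<0<r$ and both $|r|,|\rho|\gtrsim\lam^{-1}\gg 1$); when $|t|\gg|r-\rho|$ it integrates by parts $d$ times and spends the weight via $(\la r\ra\la\rho\ra)^{-d/2}\lesssim\lam^{d}$. This is exactly the mechanism you describe in your ``main obstacle'' paragraph, just with the two uses of the weight separated cleanly rather than folded into a single $|s|\to|t|$ conversion.
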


\begin{proof}
	We first note that by Lemma \ref{jostlem} and Lemma \ref{scattlem4},  
	\begin{align*}
	\sup_{|r\lambda| > 1, |\rho\lambda| > 1} |\lambda| \left | \frac{f_{+}(r,\lambda) f_{-}(\rho,\lambda)}{W(\lambda)} \right | \lesssim 1.
	\end{align*}
	This implies that 
	\begin{align*}
	\left | 
	\int_{-\infty}^{\infty} \right. & \left. e^{\pm i t |\lambda|} \lambda \varphi(2^{-j}\lambda)  \chi_{[|r\lambda|> 1]} \chi_{[|\rho \lambda|> 1]}(\la r \ra \la \rho \ra)^{-d/2} 
	\frac{f_{+}(r,\lambda) f_{-}(\rho,\lambda)}{W(\lambda)} d\lambda 
	\right | \\
	&\lesssim  
	\int_{-\infty}^{\infty}  \varphi(2^{-j}\lambda)  \chi_{[|r\lambda|> 1]} \chi_{[|\rho\lambda|> 1]}(\la r \ra \la \rho \ra)^{-d/2} d\lambda \\
	&\lesssim 
	\int_{-\infty}^{\infty}  \varphi(2^{-j}\lambda) \lambda^d d\lambda \\
	&\lesssim 
	2^{j(d+1)}. 
	\end{align*}
	Thus, we only need to consider the case $|t| \geq 2^{-j}$.
	
	Assume that $|t| \geq 2^{-j}$.  By Lemma \ref{jostlem}, we write 
	\begin{align*}
	f_+(r, \lambda) = e^{i \lambda r} m_+(r,\lambda), \quad f_-(\rho,\lambda) = e^{-i\lambda \rho} m_-(\rho,\lambda),
	\end{align*}
	where 
	\begin{align}
	m_+(r, \lambda) = 1+ O(\lambda^{-1} r^{-1}), \quad r|\lambda| > 1, \label{lem32}
	\end{align}
	with natural derivative bounds.  A similar expression holds
	for $m_-(\rho,\lambda)$.  We express \eqref{lem31} as 
	\begin{align*}
	\left | \int_{-\infty}^{\infty} e^{i |\lambda| \left (\pm t  +  \frac{\lambda}{|\lambda|}(r - \rho) \right )} a_j(r, \rho, \lambda) d\lambda \right |
	\lesssim 2^{j(d+2)/2} (2^{-j} + |t|)^{-d/2},
	\end{align*}
	where
	\begin{align*}
	a_j(r,\rho,\lambda) = \lambda \varphi(2^{-j}\lambda)  \chi_{[|r\lambda|> 1]} \chi_{[|\rho\lambda|> 1]}(\la r \ra \la \rho \ra)^{-d/2} 
	\frac{m_{+}(r,\lambda) m_{-}(\rho,\lambda)}{W(\lambda)}.
	\end{align*}
	By Lemma \ref{scattlem4}
	\begin{align*}
	\frac{\lambda}{W(\lambda)} = O(\lambda^{d-1}), 
	\end{align*}
	with natural derivative bounds.
	This fact and \eqref{lem32} imply that 
	\begin{align}
	a_j(r, \rho, \lambda) =
	\varphi(2^{-j}\lambda) \chi_{[|r\lambda|> 1]} \chi_{[|\rho\lambda|> 1]} O(\lambda^{d-1}) (\la r \ra \la \rho \ra)^{-d/2}. \label{lem33}
	\end{align}
	
	Note that if $|\lambda|$ is small, $|r \lambda| > 1$, and $|\rho \lambda | > 1$, then we have
	\begin{align*} 
	(\la r \ra \la \rho \ra)^{-d/2} &\lesssim \la r - \rho \ra^{-d/2}, \\
	(\la r \ra \la \rho \ra)^{-d/2} &\leq \lam^d. 
	\end{align*}
	If $|t| \lesssim |r - \rho|$, then since $j \ll 0$, we have 
	\begin{align*} 
	\left | \int_{-\infty}^{\infty} e^{i |\lambda| \left (\pm t  +  \frac{\lambda}{|\lambda|}(r - \rho) \right )} a_j(r, \rho, \lambda) d\lambda \right |
	&\lesssim \int_{[|\lambda| \sim 2^j]} |\lambda|^{d-1} d\lambda \la r - \rho \ra^{-d/2} \\
	&\lesssim 2^{dj} |t|^{-d/2} \\
	&\lesssim 2^{j(d+2)/2} (2^{-j} + |t|)^{-d/2}.
	\end{align*}
	Now suppose $|t| \gg |r - \rho|$.  By \eqref{lem33} and integration by parts
	\begin{align*}
	\left | \int_{0}^{\infty} e^{i \lambda \left (\pm t  +  (r - \rho) \right )}
	a_j(r, \rho, \lambda) d\lambda \right | 
	&= |\pm t + (r - \rho)|^{-d} \left | \int_{0}^{\infty} e^{i \lambda \left (\pm t  +  (r - \rho) \right )}
	\partial_\lambda^d a_j(r, \rho, \lambda) d\lambda \right | \\
	&\lesssim |t|^{-d}\int_{[\lambda \sim 2^j]}  \lambda^{d-1} d \lambda \\
	&\lesssim |t|^{-d} 2^{dj} \\
	&\lesssim 2^{j(d+2)/2} (2^{-j} + |t|)^{-d/2}.  
	\end{align*}
	A similar argument shows that 
	\begin{align*}
	\left | \int_{-\infty}^{0} e^{-i \lambda \left (\pm t  -  (r - \rho) \right )}
	a_j(r, \rho, \lambda) d\lambda \right | \lesssim 2^{j(d+2)/2}(2^{-j} + |t|)^{-d/2}.
	\end{align*}
	This concludes the proof. 
\end{proof}

We now consider the case when $|r\lambda|>1$ and $|\rho\lambda|< 1$ in the integrand.  The case $|r\lambda| < 1$ and $|\rho \lambda| > 1$
can be handled similarly.  

\begin{lem}\label{lem4}
	For all $t,r \in \R$ and $\rho < r$ 
	\begin{align}
	\left | 
	\int_{-\infty}^{\infty} e^{\pm i t |\lambda|} \lambda \varphi(2^{-j}\lambda) \chi_{[|r\lambda|> 1]} \chi_{[|\rho\lambda|< 1]} (\la r \ra \la \rho \ra)^{-2} 
	\frac{f_{+}(r,\lambda) f_{-}(\rho,\lambda)}{W(\lambda)} d\lambda 
	\right |
	\lesssim 2^{j(d+2)/2}(2^{-j} + |t|)^{-d/2}. \label{lem41} 
	\end{align}
\end{lem}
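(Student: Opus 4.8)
The plan is to mimic the proof of Lemma~\ref{lem3}: extract the oscillatory factor from the Jost solution that lies in its oscillatory regime, and treat the rest of the integrand as a non-oscillatory, slowly varying amplitude. First, note that on the support of $\chi_{[|r\lambda|>1]}$ one has $|r|\gtrsim\lambda^{-1}$, so if $r<0$ then $\rho<r$ forces $|\rho|>|r|\gtrsim\lambda^{-1}$, and the cutoff $\chi_{[|\rho\lambda|<1]}$ confines the integrand to the corner $|r\lambda|\simeq|\rho\lambda|\simeq 1$, where $f_-(\rho,\lambda)=e^{-i\rho\lambda}m_-(\rho,\lambda)$ as well and the bound follows exactly as in Lemma~\ref{lem3}; hence one may assume $r>\lambda^{-1}>0$. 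By Lemma~\ref{jostlem}, on $|r\lambda|>1$ we write $f_+(r,\lambda)=e^{ir\lambda}m_+(r,\lambda)$ with $|\partial_\lambda^k m_+(r,\lambda)|\lesssim_k\lambda^{-k}$ (and similarly for $\lambda<0$ via $f_+(\cdot,-\lambda)=\overline{f_+(\cdot,\lambda)}$). On the other hand, the scattering theory recalled in Section~3.1 --- the decomposition $f_-(\rho,\lambda)=a_-(\lambda)u_0^-(\rho,\lambda)+b_-(\lambda)u_1^-(\rho,\lambda)$ of Lemma~\ref{scattlem3}, together with the symbol-type control of $a_\pm,b_\pm$ and of $u_j^\pm(\cdot,\lambda)$ in Lemmas~\ref{scattlem1}--\ref{scattlem4} --- shows that in the range $|\rho\lambda|\le 1$ the Jost solution $f_-(\rho,\lambda)$ carries no oscillatory factor in $\lambda$ and obeys $|\partial_\lambda^k f_-(\rho,\lambda)|\lesssim_k\lambda^{-k}\,\lambda^{-(d-2)/2}\la\rho\ra^{d/2}$ for $0<\lambda\ll 1$, while $\lambda/W(\lambda)=O(\lambda^{d-1})$ with symbol bounds since $H$ is nonresonant.

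Combining these facts and using $\la\rho\ra^{d/2}(\la r\ra\la\rho\ra)^{-d/2}=\la r\ra^{-d/2}$, I would rewrite the integral in Lemma~\ref{lem4} as $\int_{\R} e^{\pm it|\lambda|}e^{ir\lambda}A(r,\rho,\lambda)\,d\lambda$, where the amplitude $A$ is supported in $\{|\lambda|\simeq 2^j\}$, vanishes near $\lambda=0$, and satisfies $|\partial_\lambda^k A|\lesssim_k 2^{-jk}\,2^{jd/2}\la r\ra^{-d/2}$, hence also $\lesssim 2^{-jk}2^{jd}$ because $\la r\ra\gtrsim 2^{-j}$ on the support. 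Splitting into $\lambda>0$ and $\lambda<0$, the total phase is linear in $\lambda$, with slope $r+t$ on one half-line and $r-t$ on the other; there is thus no interior stationary point, and integration by parts in $\lambda$ produces no boundary terms (since $A$ vanishes near $\lambda=0$) and gains a factor $|r\pm t|^{-1}$ per step.

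The estimate then follows from a short case analysis in $t$ and $r$. If $|t|\le 2^{-j}$, then $\int|A|\,d\lambda\lesssim 2^j\cdot 2^{jd}=2^{j(d+1)}$, which dominates the target $2^{j(d+2)/2}(2^{-j}+|t|)^{-d/2}$. If $|t|\ge 2^{-j}$ and $|t|\le 2r$ (so $\la r\ra\gtrsim|t|$), then $\int|A|\,d\lambda\lesssim 2^j\cdot 2^{jd/2}\la r\ra^{-d/2}\lesssim 2^{j(d+2)/2}|t|^{-d/2}$, which is $\lesssim 2^{j(d+2)/2}(2^{-j}+|t|)^{-d/2}$. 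If $|t|\ge 2^{-j}$ and $|t|>2r$, then both phase slopes satisfy $|r\pm t|>|t|/2\gtrsim 2^{-j}$, so integrating by parts $d$ times in each half-line integral gives $|r\pm t|^{-d}\int|\partial_\lambda^d A|\,d\lambda\lesssim |t|^{-d}\,2^j\,2^{-jd}\,2^{jd/2}\la r\ra^{-d/2}\lesssim |t|^{-d}2^j\lesssim 2^{j(d+2)/2}|t|^{-d/2}$, using $\la r\ra\gtrsim 2^{-j}$ and $|t|\ge 2^{-j}$. I expect the only delicate point to be the borderline ``light-cone'' regime $r\simeq|t|$, where the slope $r-|t|$ may be nearly zero so that integration by parts is useless, while the crudest bound $\int|A|\lesssim 2^{j(d+1)}$ is already too lossy once $|t|\gg 2^{-j}$; the resolution is that the amplitude carries the spatial decay $\la r\ra^{-d/2}$, which, since $r\simeq|t|$ there, is precisely the temporal decay $|t|^{-d/2}$ one needs --- and the trichotomy above is arranged so that each regime is covered by one of these two elementary mechanisms.
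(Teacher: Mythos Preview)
Your proof is correct and follows essentially the same approach as the paper: both extract the oscillatory factor $e^{ir\lambda}$ from $f_+$, use the representation $f_-(\rho,\lambda)=a_-(\lambda)u_0^-(\rho,\lambda)+b_-(\lambda)u_1^-(\rho,\lambda)$ together with Lemma~\ref{scattlem4} to obtain an amplitude of size $O(\lambda^{d/2})\la r\ra^{-d/2}$ with symbol bounds, and then run the same trichotomy $|t|\le 2^{-j}$, $|t|\lesssim r$, $|t|\gg r$. Your explicit treatment of the corner case $r<0$ (where the two cutoffs force $|r\lambda|\simeq|\rho\lambda|\simeq 1$) is a small additional detail not spelled out in the paper, but the core argument is identical.
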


\begin{proof}
	We write $f_+(r,\lambda) = e^{ir\lambda} m_+(r,\lambda)$ as before, but since $|\rho \lambda| < 1$, we use the representation
	\begin{align}
	f_-(\rho,\lambda) = a_-(\lambda) u_0^-(\rho,\lambda) + b_-(\lambda) u_1^-(\rho,\lambda). 
	\end{align}
	In particular, we have that 
	\begin{align*}
	f_-(\rho,\lambda) = O (\lambda^{-(d-2)/2} ) O ( \la \rho \ra^{d/2} ).   
	\end{align*}
	Now we write \eqref{lem41} as 
	\begin{align*}
	\left | 
	\int_{-\infty}^{\infty} e^{ i |\lambda| (\pm t + \frac{\lambda}{|\lambda|} r)} a_j(r,\rho,\lambda) d\lambda 
	\right |
	\lesssim 2^{j(d+2)/2}(2^{-j} + |t|)^{-d/2}, 
	\end{align*}
	where 
	\begin{align*}
	a_j(r,\rho,\lambda) &= \lambda \varphi(2^{-j}\lambda) \chi_{[|r\lambda|> 1]} \chi_{[|\rho \lambda|< 1]} (\la r \ra \la \rho \ra)^{-d/2} 
	\frac{m_{+}(r,\lambda) f_{-}(\rho,\lambda)}{W(\lambda)}\\
	&= \varphi(2^{-j}) \chi_{[|r\lambda|> 1]} \chi_{[|\rho \lambda|< 1]} O(\lambda^{d/2}) \la r \ra^{-d/2},
	\end{align*}
	with natural derivative bounds.  As before, in the case $|t| \geq 2^{-j}$ we have   
	\begin{align*}
	\left | 
	\int_{0}^{\infty} e^{ i \lambda (\pm t + r)} a_j(r,\rho,\lambda) d\lambda 
	\right | 
	&\lesssim \int_{[\lambda \sim 2^j]} \lambda^d d\lambda \\
	&\lesssim 2^{j(d+1)} \\
	&\lesssim 2^{j(d+2)/2} (2^{-j} + |t|)^{-d/2}. 
	\end{align*}
	Thus, we need only consider the case that $|t| \geq 2^{-j}$.  
	
	Suppose that $|t| \geq 2^{-j}$.  If $|t| \lesssim |r|$ then  
	\begin{align*}
	\left | 
	\int_{0}^{\infty} e^{ i \lambda (\pm t + r)} a_j(r,\rho,\lambda) d\lambda 
	\right | 
	&\lesssim \int_{[\lambda \simeq 2^j]} \lambda^{d/2} |t|^{-d/2} d\lambda \\
	&\lesssim 2^{j(d+2)/2} (2^{-j} + |t|)^{-d/2}. 
	\end{align*}
	If $|t| \gg |r|$, then by integration by parts
	\begin{align*}
	\left | 
	\int_{0}^{\infty} e^{ i \lambda (\pm t + r)} a_j(r,\rho,\lambda) d\lambda 
	\right | 
	&= |\pm t + r|^{-d} \left | \int_{0}^{\infty} e^{i \lambda (\pm t + r)} \partial_\lambda^d a_j(r,\rho,\lambda) d\lambda \right | \\
	&\lesssim |t|^{-d} \int_{[\lambda \sim 2^j]} dr \\
	&\lesssim |t|^{-d} 2^j \\
	&\lesssim 2^{j(d+2)/2} (2^{-j} + |t|)^{-d/2}, 
	\end{align*}
	as desired.  Similarly, 
	\begin{align*}
	\left | 
	\int_{-\infty}^{0} e^{ i \lambda (\pm t - r)} a_j(r,\rho,\lambda) d\lambda 
	\right | \lesssim 2^{j(d+2)/2} (2^{-j} + |t|)^{-d/2}.
	\end{align*}
	This concludes the proof. 
\end{proof}

To finish proving Proposition \ref{oscint} in the case $j \ll 0$, we need only consider the case when 
the integrand is supported in $|\lambda|^{-1} < \rho < r$.  The case $\rho < r < -|\lambda|^{-1}$ can be dealt with in a similar fashion.
We consider \emph{reflection and transmission coefficients} $\alpha_-(\lambda),\beta_-(\lambda)$ defined by the relation
\begin{align*}
f_-(\rho,\lambda) = \alpha_-(\lambda) f_+(\rho,\lambda) + \beta_-(\lambda) \overline{f_+(\rho,\lambda)}.
\end{align*}
Then 
\begin{align*}
W(\lambda) &= W(f_-(\cdot, \lambda), f_+(\cdot, \lambda)) \\
&= -\beta_-(\lambda) W(f_+(\cdot, \lambda), \overline{f_+(\cdot, \lambda)}) \\
&= -\beta_-(\lambda)\lim_{r \rightarrow \infty} W(f_+(r, \lambda), \overline{f_+(r, \lambda)}) \\
&= -\beta_-(\lambda) \lim_{r \rightarrow \infty} W(e^{i \lambda r}, e^{-i\lambda r}) \\ 
&=2i\lambda \beta_-(\lambda). 
\end{align*}
Let $\widetilde{W}(\lambda) = W(f_-(\cdot, \lambda), \overline{f_+(\cdot, \lambda)})$.  Then similar to 
$W(\lambda)$ we have 
\begin{align*}
\widetilde{W}(\lambda) &= \alpha_-(\lambda)W(f_+(\cdot, \lambda), \overline{f_+(\cdot, \lambda)}) \\
&= -2i\lambda \alpha_-(\lambda).  
\end{align*}
We conclude that 
\begin{align*}
\lambda \frac{\beta_-(\lambda)}{W(\lambda)} &= \frac{1}{2i}, \\
\lambda \frac{\alpha_-(\lambda)}{W(\lambda)} &= -\frac{1}{2i} \frac{\widetilde{W}(\lambda)}{W(\lambda)} = \mbox{constant} + 
O(\lambda^{\epsilon}),   
\end{align*}
where the $O(\lambda^{\epsilon})$ term is complex valued and satisfies natural derivative bounds. The second equality in the 
second line above follows from 
Lemma \ref{scattlem3}. 

\begin{lem}\label{lem6}
	For all $t \in \R$ and $0 < \rho < r$
	\begin{align}
	\left | 
	\int_{-\infty}^{\infty} e^{\pm i t |\lambda|} \lambda \varphi(2^{-j}\lambda) \chi_{[|\rho\lambda| > 1]} (\la r \ra \la \rho \ra)^{-2} 
	\frac{f_{+}(r,\lambda) f_{-}(\rho,\lambda)}{W(\lambda)} d\lambda 
	\right |
	\lesssim 2^{j(d+2)/2}(2^{-j} + |t|)^{-d/2}. \label{lem61} 
	\end{align}
\end{lem}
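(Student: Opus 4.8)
The plan is to run the same stationary-phase and integration-by-parts scheme as in Lemmas \ref{lem2}--\ref{lem4}, now exploiting the reflection and transmission coefficients introduced just above. On the support of $\chi_{[|\rho\lambda|>1]}$ we have $0 < \rho < r$, hence $|r\lambda| \geq |\rho\lambda| > 1$, so both Jost functions lie in the oscillatory regime and Lemma \ref{jostlem} gives $f_+(r,\lambda) = e^{ir\lambda}m_+(r,\lambda)$ and $f_+(\rho,\lambda) = e^{i\rho\lambda}m_+(\rho,\lambda)$ with $m_+(\cdot,\lambda) = 1 + O_{\C}(\lambda^{-1}r^{-1})$ obeying the natural derivative bounds. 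Using $f_-(\rho,\lambda) = \alpha_-(\lambda)f_+(\rho,\lambda) + \beta_-(\lambda)\overline{f_+(\rho,\lambda)}$ together with the identities recorded before the lemma,
\begin{align*}
\lambda\frac{\beta_-(\lambda)}{W(\lambda)} = \frac{1}{2i}, \qquad
\lambda\frac{\alpha_-(\lambda)}{W(\lambda)} = c_0 + O_{\C}(\lambda^{\epsilon}),
\end{align*}
where $c_0$ is constant and the error enjoys the natural derivative bounds, one finds on this region
\begin{align*}
\lambda\frac{f_+(r,\lambda)f_-(\rho,\lambda)}{W(\lambda)} =
\frac{1}{2i}e^{i\lambda(r-\rho)}m_+(r,\lambda)\overline{m_+(\rho,\lambda)}
+ \bigl(c_0 + O_{\C}(\lambda^{\epsilon})\bigr)e^{i\lambda(r+\rho)}m_+(r,\lambda)m_+(\rho,\lambda).
\end{align*}
Thus, after splitting $\int_{-\infty}^{\infty}$ into $\int_0^\infty$ and $\int_{-\infty}^0$ on which the phase is linear in $\lambda$ (exactly as in Lemma \ref{lem3}), estimating \eqref{lem61} reduces to bounding oscillatory integrals with phases $\pm t + (r-\rho)$ and $\pm t + (r+\rho)$ and amplitude $a_j$ of the form $\varphi(2^{-j}\lambda)\chi_{[|\rho\lambda|>1]}(\la r\ra\la\rho\ra)^{-d/2}$ times a factor $b(r,\rho,\lambda)$ obeying $|\partial_\lambda^k b| \lesssim_k \lambda^{-k}$; hence $|\partial_\lambda^k a_j| \lesssim_k \lambda^{-k}(\la r\ra\la\rho\ra)^{-d/2}$.

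The two integrals are handled in the same way, so consider the one with phase $\pm t + (r-\rho)$. I would use two elementary bounds valid on the support of $a_j$, where $0 < \rho < r$ and $|\rho\lambda| > 1$: first $\la r\ra \geq \la\rho\ra \geq |\rho| > |\lambda|^{-1}$, hence $(\la r\ra\la\rho\ra)^{-d/2} \leq \la\rho\ra^{-d} \lesssim |\lambda|^d$; second $\la r-\rho\ra \leq \la r\ra$, hence $(\la r\ra\la\rho\ra)^{-d/2} = \la r\ra^{-d/2}\la\rho\ra^{-d/2} \lesssim |\lambda|^{d/2}\la r-\rho\ra^{-d/2}$. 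If $|t| \leq 2^{-j}$, the first bound yields $|\int| \lesssim \int_{|\lambda|\sim 2^j}|\lambda|^d\,d\lambda \lesssim 2^{j(d+1)} \lesssim 2^{j(d+2)/2}(2^{-j}+|t|)^{-d/2}$. If $|t| \geq 2^{-j}$, split according to the size of the phase: when $|\pm t + (r-\rho)| \geq |t|/2$, integrating by parts $d$ times and using $(\la r\ra\la\rho\ra)^{-d/2} \lesssim |\lambda|^d$ gives $|\int| \lesssim |t|^{-d}\int_{|\lambda|\sim 2^j}d\lambda \lesssim 2^j|t|^{-d} \lesssim 2^{j(d+2)/2}(2^{-j}+|t|)^{-d/2}$; when $|\pm t + (r-\rho)| < |t|/2$, then $|r-\rho| > |t|/2$, so $\la r-\rho\ra \gtrsim |t|$, and the second bound gives $|\int| \lesssim |t|^{-d/2}\int_{|\lambda|\sim 2^j}|\lambda|^{d/2}\,d\lambda \lesssim 2^{j(d+2)/2}|t|^{-d/2}$. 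The phase $\pm t + (r+\rho)$ is treated identically, using $\la r+\rho\ra \leq 2\la r\ra$ in place of $\la r-\rho\ra \leq \la r\ra$; the $\int_{-\infty}^0$ contributions are the same by the convention $f_\pm(\cdot,-\lambda) = \overline{f_\pm(\cdot,\lambda)}$.

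I expect the only genuinely delicate point — and the one I would state carefully — is the algebraic reduction: verifying that $\lambda f_+(r,\lambda)f_-(\rho,\lambda)/W(\lambda)$ splits into bounded symbols times the pure exponentials $e^{i\lambda(r\mp\rho)}$ on the oscillatory region. This rests on the identities for $\lambda\beta_-/W$ and $\lambda\alpha_-/W$ (which in turn use the low-energy expansions of Lemma \ref{scattlem3}) and on the oscillatory-regime asymptotics $f_\pm(\cdot,\lambda) = e^{\pm i(\cdot)\lambda}m_\pm(\cdot,\lambda)$ from Lemma \ref{jostlem}. Once this structural fact and the resulting derivative bounds on $a_j$ are in hand, the remaining work is the same non-stationary-phase bookkeeping performed in Lemmas \ref{lem2}--\ref{lem4}, with $|r-\rho|$ and $|r+\rho|$ playing the role of the spatial phase offset.
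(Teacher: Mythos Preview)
Your proposal is correct and follows essentially the same approach as the paper: the algebraic reduction via the reflection/transmission coefficients, the resulting two phases $\pm t + (r\pm\rho)$, the symbol bounds on $a_j$, and the dichotomy between the trivial $L^1$ bound and $d$-fold integration by parts are all exactly what the paper does. The only cosmetic difference is that the paper splits the analysis according to whether $|t| \lessgtr 2(r+\rho)$ (and treats the $(r+\rho)$ phase explicitly, deferring $(r-\rho)$ as analogous), whereas you split on the size of the full phase $|\pm t + (r\mp\rho)|$ relative to $|t|/2$; both lead to the same estimates.
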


\begin{proof}
	We write $f_+(r,\lambda) = e^{i\lambda r} m_+(r,\lambda)$.  Then 
	\begin{align*}
	\lambda \frac{f_+(r,\lambda)f_-(\rho,\lambda)}{W(\lambda)} &= e^{i(r+\rho)\lambda} \lambda \frac{\alpha_-(\lambda)}{W(\lambda)}
	m_+(r,\lambda)m_+(\rho,\lambda)
	+ e^{i(r-\rho)\lambda} \lambda \frac{\beta_-(\lambda)}{W(\lambda)} m_+(r,\lambda)\overline{m_+(\rho,\lambda)} \\
	&= e^{i(r+\rho)\lambda} O(1) m_+(r,\lambda)m_+(\rho,\lambda) + \frac{1}{2i} e^{i(r-\rho)\lambda} m_+(r,\lambda)\overline{m_+(\rho,\lambda)} 
	\end{align*}
	where the $O(1)$ term is complex valued and satisfies natural derivative bounds. We are thus reduced to 
	proving the following two estimates
	\begin{align}
	\left | 
	\int_{-\infty}^{\infty} e^{\pm i t |\lambda|(\pm t + \frac{\lambda}{|\lambda|} (r + \rho))} 
	\varphi(2^{-j}\lambda)  \chi_{[|\rho\lambda| > 1]}(\la r \ra \la \rho \ra)^{-d/2} O(1) m_+(r,\lambda)m_+(\rho,\lambda)
	d\lambda 
	\right |
	&\lesssim 2^{j(d+2)/2}(2^{-j} + |t|)^{-d/2} \label{lem62}, \\
	\left | 
	\int_{-\infty}^{\infty} e^{\pm i t |\lambda|(\pm t + \frac{\lambda}{|\lambda|} (r - \rho))} 
	\varphi(2^{-j}\lambda)  \chi_{[|\rho\lambda| > 1]}(\la r \ra \la \rho \ra)^{-d/2} m_+(r,\lambda) \overline{m_+(\rho,\lambda)}
	d\lambda 
	\right |
	&\lesssim 2^{j(d+2)/2}(2^{-j} + |t|)^{-d/2}. \label{lem63}
	\end{align}
	We now prove \eqref{lem62}.  
	
	We write \eqref{lem62} as 
	\begin{align*}
	\left | 
	\int_{-\infty}^{\infty} e^{ i |\lambda| (\pm t + \frac{\lambda}{|\lambda|} (r+\rho))} a_j(r,\rho,\lambda) d\lambda 
	\right |
	\lesssim 2^{j(d+2)/2}(2^{-j} + |t|)^{-d/2}. 
	\end{align*}
	where 
	\begin{align*}
	a_j(r,\rho,\lambda) &= \varphi(2^{-j}\lambda)\chi_{[|\rho\lambda| > 1]}) (\la r \ra \la \rho \ra)^{-d/2} O(1)
	m_{+}(r,\lambda) m_{+}(\rho,\lambda) \\
	&= \varphi(2^{-j}\lambda) \chi_{[|\rho\lambda| > 1]} (\la r \ra \la \rho \ra)^{-d/2} O(1), 
	\end{align*}
	with the $O(\cdot)$ term behaving like a symbol under differentiation in $\lambda$. 
	Note that if $|t| \leq 2^{-j}$ then $|r\lambda| > 1$ and $|\rho \lambda| > 1$ imply that  
	\begin{align*}
	\left | 
	\int_{-\infty}^{\infty} e^{ i |\lambda| (\pm t + \frac{\lambda}{|\lambda|} (r+\rho))} a_j(r,\rho,\lambda) d\lambda 
	\right | &\lesssim \int_{[\lambda \sim 2^j]} \lambda^{d} d\lambda \\
	&\lesssim 2^{j(d+1)} \\
	&\lesssim 2^{j(d+2)/2)} (2^{-j} + |t|)^{-d/2}.   
	\end{align*}
	Thus, we need only consider $|t| \geq 2^{-j}$.
	
	Suppose that $|t| \leq 2(r + \rho)$.  Then $0 < \rho < r$ implies that $r \geq |t|/4$ so that 
	\begin{align*}
	\chi_{[|\rho\lambda| > 1]} (\la r \ra \la \rho \ra)^{-d/2}\lesssim \lambda^{d/2}|t|^{-d/2}.  
	\end{align*}
	Thus, 
	\begin{align*}
	\left | 
	\int_{-\infty}^{\infty} e^{ i |\lambda| (\pm t + \frac{\lambda}{|\lambda|} (r+\rho))} a_j(r,\rho,\lambda) d\lambda 
	\right | &\lesssim \int_{[\lambda \simeq 2^j]} \lambda^{d/2} |t|^{-d/2} d\lambda \\
	&\lesssim 2^{j(d+2)/2} |t|^{-d/2}.
	\end{align*}
	as desired. Suppose now that $|t| \geq 2(r+\rho)$.  Integration by parts yields 
	\begin{align*}
	\left | 
	\int_{0}^{\infty} e^{ i \lambda (\pm t + (r+\rho))} a_j(r,\rho,\lambda) d\lambda 
	\right |
	&=  |\pm t + (r + \rho)|^{-d} \left | 
	\int_{0}^{\infty} e^{ i \lambda (\pm t + (r+\rho))} \partial_\lambda^d a_j(r,\rho,\lambda) d\lambda 
	\right | \\
	&\lesssim |t|^{-d} \int_{[\lambda \simeq 2^j]} \lambda^{-d} \lambda^{d} d\lambda\\
	&\lesssim |t|^{-d} 2^j \\
	&\lesssim 2^{j(d+2)/2} ( 2^{-j} + |t| )^{-d/2}. 
	\end{align*}
	In a similar fashion, we obtain 
	\begin{align*}
	\left | 
	\int_{-\infty}^{0} e^{ -i \lambda (\pm t - (r+\rho))} a_j(r,\rho,\lambda) d\lambda 
	\right | 
	&\lesssim 2^{j(d+2)/2} |t|^{-d/2}.
	\end{align*}
	This proves \eqref{lem62}.  The proof of \eqref{lem63} is similar and is omitted. 
\end{proof}

We now prove Proposition \ref{oscint} in the case $j \gtrsim 0$.  This case is considerably simpler than the case $j \ll 0$ since
the Jost functions $f_\pm(\cdot,\lam)$ and their Wronskian $W(\lam)$ are to given by the free case $H = -\frac{d^2}{dr^2}$
to leading order.  Indeed, we write 
\begin{align*}
f_+(r,\lambda) = e^{ir\lambda} m_+(r,\lambda), \quad f_-(\rho,\lambda) = e^{-i\rho\lambda} m_-(\rho,\lambda).
\end{align*}
From \cite{sss1}, we have the estimates 
\begin{align*}
m_+(r, \lambda) &= 1 + O(\lambda^{-1} \la r \ra^{-1}), \\
\left | \partial_\lambda^l \partial_r^k m_+(r,\lambda) \right | &\lesssim_{l,k} \lambda^{-1-l} \la r \ra^{-1-k} 
\end{align*}
for $r \geq 0$ and $l + k > 0$.  Similar estimates hold for $m_-(\rho,\lambda)$ with $\rho \leq 0$. 
It is well known that $|W(\lambda)| \geq |\lambda|$ for all $\lam$.  Using the asymptotics for $m_\pm(\cdot, \lambda)$, we compute the Wronskian 
\begin{align*}
W(\lambda) &= W(f_-(\cdot, \lambda), f_+(\cdot,\lambda) ) \\
&= m_+(0,\lambda)(m_-'(0,\lambda) - i\lambda m_-(0,\lambda)) - m_-(0,\lambda)(m'_+(0,\lambda) + i\lambda m_+(0,\lambda)) \\
&= -2i\lambda + O(\lam^{-1}),
\end{align*}
with natural derivative bounds. We also compute the Wronskian
\begin{align*}
W(f_-(\cdot, \lambda), \overline{f_+(\cdot,\lambda)} )
&= m_-(0,\lambda)(\bar m_+'(0,\lambda) - 2i\lambda \bar m_+(0,\lambda)) - \bar m_+(0,\lambda)(m'_-(0,\lambda) - 2i\lambda m_-(0,\lambda)) \\
&= m_-(0,\lambda)\bar m_+'(0,\lambda) - m_-'(0,\lambda)\bar m_+(0,\lambda) \\
&= O(\lambda^{-1}). 
\end{align*}
with symbol character in $\lam$.  We now prove Proposition \ref{oscint} in the case $j \gtrsim 0$. 

\begin{lem}\label{lem5} For all $\rho < r$ 
	\begin{align}
	\left | 
	\int_{-\infty}^{\infty} e^{\pm i t |\lambda|} \lambda \varphi(2^{-j}\lambda) (\la r \ra \la \rho \ra)^{-d/2} 
	\frac{f_{+}(r,\lambda) f_{-}(\rho,\lambda)}{W(\lambda)} d\lambda 
	\right |
	\lesssim 2^{j(d+2)/2}(2^{-j} + |t|)^{-d/2}. \label{lem51}
	\end{align}
\end{lem}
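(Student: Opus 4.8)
The plan is to exploit that, on the frequency annulus $|\lambda|\sim 2^{j}$ with $j\gtrsim 0$, the Jost solutions and their Wronskian are perturbatively close to their free counterparts, so that after factoring out the oscillation $e^{i\lambda(r-\rho)}$ the integrand in \eqref{lem51} becomes a zeroth--order symbol in $\lambda$, bounded uniformly in $r>\rho$. Concretely, by the same Volterra/contraction argument of \cite{sss1} recalled just above the lemma, for $|\lambda|\gtrsim 1$ one has $f_{+}(r,\lambda)=e^{ir\lambda}m_{+}(r,\lambda)$ and $f_{-}(\rho,\lambda)=e^{-i\rho\lambda}m_{-}(\rho,\lambda)$ with $m_{\pm}=1+O(|\lambda|^{-1})$ and $|\partial_{\lambda}^{k}m_{\pm}|\lesssim_{k}|\lambda|^{-k}$ \emph{uniformly in} $r,\rho\in\R$ (the sharper $\langle r\rangle^{-1}$--type gains are not needed here), together with $W(\lambda)=-2i\lambda+O(\lambda^{-1})$ and $|W(\lambda)|\gtrsim|\lambda|$. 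Consequently
\[
\lambda\,\frac{f_{+}(r,\lambda)f_{-}(\rho,\lambda)}{W(\lambda)}=e^{i\lambda(r-\rho)}\,b_{j}(r,\rho,\lambda),\qquad \bigl|\partial_{\lambda}^{k}b_{j}(r,\rho,\lambda)\bigr|\lesssim_{k}|\lambda|^{-k}\ \text{ on }\ |\lambda|\sim 2^{j},
\]
uniformly in $r>\rho$.

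Next I would split the integral in \eqref{lem51} at $\lambda=0$ and, writing everything in terms of $\mu=|\lambda|$ (substituting $\lambda\mapsto-\lambda$ on the negative half), reduce to showing that for each of the sign choices $\sigma,\tau\in\{+1,-1\}$,
\[
(\langle r\rangle\langle\rho\rangle)^{-d/2}\left|\int_{0}^{\infty}e^{i\mu\,\theta}\,\varphi(2^{-j}\mu)\,b_{j}(r,\rho,\mu)\,d\mu\right|\lesssim 2^{j(d+2)/2}(2^{-j}+|t|)^{-d/2},\qquad \theta:=\sigma t+\tau(r-\rho).
\]
For the inner integral two bounds are available: the trivial one, $|\int_{0}^{\infty}\cdots\,d\mu|\lesssim 2^{j}$; and, since $\varphi(2^{-j}\cdot)$ is compactly supported away from the origin, $d$--fold integration by parts in $\mu$ (no boundary terms), which by the symbol bounds on $b_{j}$ gives $|\int_{0}^{\infty}\cdots\,d\mu|\lesssim|\theta|^{-d}\,2^{j(1-d)}$. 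Set $s:=r-\rho>0$.

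The conclusion then follows by a short case analysis. If $|t|\le 2^{-j}$, the trivial bound and $(\langle r\rangle\langle\rho\rangle)^{-d/2}\le 1$ give $\lesssim 2^{j}\lesssim 2^{j(d+1)}\simeq 2^{j(d+2)/2}(2^{-j}+|t|)^{-d/2}$, using $j\gtrsim 0$ and $d\ge 2$. If $|t|\ge 2^{-j}$ and $s\ge |t|/4$, then $\rho<r$ forces $\max(|r|,|\rho|)\ge s/2\gtrsim|t|$, hence $(\langle r\rangle\langle\rho\rangle)^{-d/2}\lesssim|t|^{-d/2}$, and the trivial bound yields $\lesssim|t|^{-d/2}2^{j}\lesssim 2^{j(d+2)/2}|t|^{-d/2}$. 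Finally, if $|t|\ge 2^{-j}$ and $s<|t|/4$, then $|\theta|\ge|t|-s\ge|t|/2$, so integration by parts together with $(\langle r\rangle\langle\rho\rangle)^{-d/2}\le 1$ gives $\lesssim|t|^{-d}2^{j(1-d)}\lesssim 2^{j(d+2)/2}|t|^{-d/2}$, the last inequality being equivalent to $|t|\gtrsim 2^{-3j}$, which holds since $|t|\ge 2^{-j}\ge 2^{-3j}$ for $j\gtrsim 0$.

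The argument is essentially routine — as the paper already notes, this regime is much simpler than $j\ll 0$ — so I do not expect a genuine obstacle; the two points that need care are the uniformity in $(r,\rho)$ of the symbol estimates on $m_{\pm}$ and $b_{j}$ (which is precisely why the plain $|\lambda|^{-1}$ bounds, valid on all of $\R$, suffice and the sharper decaying bounds of \cite{sss1} are irrelevant here), and the observation that the linear phase $\sigma t+\tau(r-\rho)$ can fail to be non-stationary only when $|r-\rho|\sim|t|$, exactly the regime in which the weight $(\langle r\rangle\langle\rho\rangle)^{-d/2}$ itself produces the needed $|t|^{-d/2}$ decay. For sign patterns of $(r,\rho)$ other than $\rho\le 0\le r$ one either invokes the same uniform bounds for $m_{\pm}$ on all of $\R$ or inserts the reflection/transmission representation of $f_{-}$ as in Lemma \ref{lem6}; since the relevant coefficients are $O(1)$ symbols on $|\lambda|\sim 2^{j}$, the estimate is unchanged.
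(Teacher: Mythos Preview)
Your overall strategy coincides with the paper's: the trivial bound plus the weight handle $|t|\le 2^{-j}$ and the near--light--cone regime $|r-\rho|\gtrsim|t|$, while $d$--fold integration by parts handles the remaining regime. The case split and the final inequalities are essentially identical to what the paper does.

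There is, however, one incorrect claim that you should not leave standing. The assertion that $|\partial_\lambda^{k}m_{\pm}(r,\lambda)|\lesssim_k|\lambda|^{-k}$ \emph{uniformly in $r,\rho\in\R$} is false. The Volterra bounds quoted just above the lemma are stated only for $m_{+}$ with $r\ge 0$ (and $m_{-}$ with $\rho\le 0$); if $\rho>0$, then $m_{-}(\rho,\lambda)=e^{i\rho\lambda}f_{-}(\rho,\lambda)=\alpha_{-}(\lambda)e^{2i\rho\lambda}m_{+}(\rho,\lambda)+\beta_{-}(\lambda)\overline{m_{+}(\rho,\lambda)}$, and each $\lambda$--derivative of the first term picks up a factor of $\rho$. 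Hence your amplitude $b_{j}$ is \emph{not} a zeroth--order symbol uniformly in $(r,\rho)$ when $0<\rho<r$ (or $\rho<r<0$), and the integration--by--parts step as written would fail there.

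The fix is exactly the reflection/transmission representation you mention at the end, but it is not optional: for $0<\rho<r$ you must write $f_{-}$ as a combination of $f_{+}$ and $\overline{f_{+}}$, which produces the two phases $e^{i\lambda(r+\rho)}$ and $e^{i\lambda(r-\rho)}$. The paper carries this out explicitly, using the high--energy expansions $W(\lambda)=-2i\lambda+O(\lambda^{-1})$ and $W(f_{-},\overline{f_{+}})=O(\lambda^{-1})$ to show $\lambda\alpha_{-}/W$ and $\lambda\beta_{-}/W$ are $O(1)$ symbols; the amplitude is then a genuine symbol (now in the $r\ge 0$ domain for both $m_{+}$ factors) and your case analysis goes through unchanged, with $\theta=\sigma t+\tau(r+\rho)$ added to the list. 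So: keep your argument for $\rho<0<r$, drop the uniform--in--$\R$ claim, and make the reflection/transmission step mandatory rather than an alternative for the same--sign cases.
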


\begin{proof} 
	We first note that the fact that 
	\begin{align*}
	\sup_{r, \rho} |\lambda| \left | \frac{f_{+}(r,\lambda) f_{-}(\rho,\lambda)}{W(\lambda)} \right | \lesssim 1, 
	\end{align*}
	implies that 
	\begin{align*}
	\left | 
	\int_{-\infty}^{\infty}  e^{\pm i t |\lambda|} \lambda \varphi(2^{-j}\lambda) (\la r \ra \la \rho \ra)^{-d} 
	\frac{f_{+}(r,\lambda) f_{-}(\rho,\lambda)}{W(\lambda)} d\lambda 
	\right | 
	&\lesssim  
	\int_{-\infty}^{\infty}  \varphi(2^{-j}\lambda) (\la r \ra \la \rho \ra)^{-d} d\lambda \\
	&\lesssim 2^j \\
	&\lesssim 
	2^{j(d+2)/2}. 
	\end{align*}
	In the last line we used $j \gtrsim 0$.  Thus, we only need to consider the case $|t| \geq 2^{-j}$.  We split the remainder of the proof into cases: $\rho < 0 < r$, $0 < \rho < r$, and $\rho < r < 0$.  By symmetry we consider only the first two. 
	
	Assume $\rho < 0 < r$.  We write 
	\begin{align*}
	f_+(r, \lambda) = e^{i \lambda r} m_+(r,\lambda), \quad f_-(\rho,\lambda) = e^{-i\lambda \rho} m_-(\rho,\lambda),
	\end{align*}
	where we have for all $r \geq 0$
	\begin{align}
	\left | m_+(r,\lambda) \right | &\lesssim 1, \label{lem53} \\
	\left | \partial_\lambda^l m_+(r, \lambda) \right | &\lesssim \lambda^{-1 - l}, \quad l > 0 \label{lem52}, 
	\end{align} 
	with similar estimates holding for $m_-(\rho,\lambda)$ for $\rho \leq 0$. We express \eqref{lem51} as 
	\begin{align*}
	\left | \int_{-\infty}^{\infty} e^{i |\lambda| \left (\pm t  +  \frac{\lambda}{|\lambda|}(r - \rho) \right )} a_j
	(r, \rho, \lambda) d\lambda \right | \lesssim 2^{j(d+2)/2}(2^{-j} + |t|)^{-d/2},
	\end{align*}
	where
	\begin{align*}
	a_j(r,\rho,\lambda) = \lambda \varphi(2^{-j}\lambda) (\la r \ra \la \rho \ra)^{-d/2} 
	\frac{m_{+}(r,\lambda) m_{-}(\rho,\lambda)}{W(\lambda)}.
	\end{align*}  
	By \eqref{lem53} and \eqref{lem52} we have 
	\begin{align}
	a_j(r,\rho,\lam) = \varphi(2^{-j} \lam) (\la r \ra \la \rho \ra)^{-d/2} O(1), \label{lem54}
	\end{align}
	with natural derivative bounds. 
	
	Suppose that $|t| \leq 2 |r - \rho|$. Then either $|r| \geq |t|/4$ or $|\rho| \geq |t|/4$.  Suppose, 
	without loss of generality,  
	$|r| \geq |t|/4$. Then by \eqref{lem54} we have
	\begin{align*}
	\left | \int_{-\infty}^{\infty} e^{i |\lambda| \left (\pm t  +  \frac{\lambda}{|\lambda|}(r - \rho) \right )} a_j
	(r, \rho, \lambda) d\lambda \right | &\lesssim \int_{[\lambda \sim 2^j]} d\lambda  (\la r \ra \la \rho \ra)^{-d/2} \\
	&\lesssim 2^j |t|^{-d/2} \\
	&\lesssim 2^{j(d+2)/2} (2^{-j} + |t|)^{-d/2}.
	\end{align*} 
	
	Suppose now that $|t| \geq 2 |r - \rho|$.  Then by \eqref{lem54} and integration by parts
	\begin{align*}
	\left | \int_{0}^{\infty} e^{i \lambda \left (\pm t  +  (r - \rho) \right )} a_j
	(r, \rho, \lambda) d\lambda \right |
	&= |\pm t + (r-\rho)|^{-d} \left | 
	\int_{0}^{\infty} e^{i \lambda \left (\pm t  +  (r - \rho) \right )} \partial_\lambda^d a_j
	(r, \rho, \lambda) d\lambda
	\right | \\
	&\lesssim |t|^{-d} \int_{[\lambda \sim 2^j]} d\lambda \\
	&\lesssim |t|^{-d} 2^j \\
	&\lesssim 2^{j(d+2)/2} (2^{-j} + |t|)^{-d/2}, 
	\end{align*}
	as desired.  Similarly, 
	\begin{align*}
	\left | \int_{-\infty}^{0} e^{i |\lambda| \left (\pm t  - (r - \rho) \right )} a_j
	(r, \rho, \lambda) d\lambda \right | \lesssim 2^{j(d+2)/2}(2^{-j} + |t|)^{-d/2}.
	\end{align*}
	This concludes the case $\rho < 0 < r$. 
	
	We now consider the case $0 < \rho < r$.  In this case, we use transmission and reflection coefficients and write
	\begin{align*}
	f_-(\rho,\lambda) = \alpha_-(\lambda) f_+(\rho,\lambda) + \beta_-(\lambda) \overline{f_+(\rho,\lambda)},
	\end{align*}
	where
	\begin{align*}
	\alpha_-(\lambda) &= \frac{W(f_-(\cdot, \lambda) \overline{f_+(\cdot, \lambda)})}{-2i\lambda}, \\
	\beta_-(\lambda) &= \frac{W(\lambda)}{2i\lambda}.
	\end{align*}
	Then using our high energy asymptotics for $W(\lambda)$ and $W(f_-(\cdot,\lambda),\overline{f_+(\cdot,\lambda)})$, we have for
	$\lambda \gtrsim 1$ 
	\begin{align*}
	\lambda \frac{\alpha_-(\lambda)}{W(\lambda)} = O(\lambda^{-2}) = O(1), \quad  
	\lambda \frac{\beta_-(\lambda)}{W(\lambda)} = O(1),
	\end{align*}
	with natural derivative bounds.  Thus, to prove \eqref{lem51} for the case $0 < \rho < r$, we are reduced to proving the bounds
	\begin{align}
	\left | 
	\int_{-\infty}^{\infty} e^{\pm i t |\lambda|(\pm t + \frac{\lambda}{|\lambda|} (r + \rho))} 
	\varphi(2^{-j}\lambda) (\la r \ra \la \rho \ra)^{-d/2} O(1) m_+(r,\lambda)m_+(\rho,\lambda)
	d\lambda 
	\right |
	&\lesssim 2^{j(d+2)/2}(2^{-j} + |t|)^{-d/2} \label{lem55}, \\
	\left | 
	\int_{-\infty}^{\infty} e^{\pm i t |\lambda|(\pm t + \frac{\lambda}{|\lambda|} (r - \rho))} 
	\varphi(2^{-j}\lambda) (\la r \ra \la \rho \ra)^{-d/2} O(1) m_+(r,\lambda)\overline{m_+(\rho,\lambda)}
	d\lambda 
	\right |
	&\lesssim 2^{j(d+2)/2}(2^{-j} + |t|)^{-d/2} \label{lem56}. 
	\end{align}
	We write \eqref{lem55} as 
	\begin{align*}
	\left | 
	\int_{-\infty}^{\infty} e^{ i |\lambda| (\pm t + \frac{\lambda}{|\lambda|} (r+\rho))} a_j(r,\rho,\lambda) d\lambda 
	\right |
	\lesssim 2^{j(d+2)/2}(2^{-j} + |t|)^{-d/2}.
	\end{align*}
	where 
	\begin{align*}
	a_j(r,\rho,\lambda) = \varphi(2^{-j}\lambda) (\la r \ra \la \rho \ra)^{-d/2} O(1)
	m_{+}(r,\lambda) m_{+}(\rho,\lambda).
	\end{align*}
	Then $a_j(r,\rho,\lambda)$ satisfies 
	\begin{align}
	a_j(r,\rho,\lam) = \varphi(2^{-j} \lam) ( \la r \ra \la \rho \ra )^{-d/2} O(1), \label{lem57}
	\end{align}
	with natural derivative bounds.
	But now we are in the same situation as in the case $\rho < 0 < r$ with \eqref{lem57} replacing \eqref{lem54} and we obtain
	\eqref{lem55} in a similar fashion.  The estimate \eqref{lem56} is obtained similarly and we omit the details.  This 
	concludes the proof of Lemma \ref{lem5} and also Proposition \ref{oscint}.  
\end{proof}




\section{Reduction to Higher Dimensions and the Linearized Equation}

In this section, we initiate the study of the evolution \eqref{s04}.  In the first subsection, we linearize degree $n$ 
solutions to \eqref{s04}
around the harmonic map $Q_n$ and make a 
reduction that incorporates the extra dispersion inherent in \eqref{s04}.  Our main result, Theorem \ref{t01}, is then restated in an equivalent 
form which we devote the rest of this work to proving.  The remaining subsections establish Strichartz 
estimates for the linear part of the new equation which will be used in Section 5.  In what follows we use 
the notation from the previous section and denote the $d$--dimensional
wormhole by  
$\M^d$.

\subsection{Reduction to a Wave Equation on a $5d$ Wormhole}

We recall from the introduction that a corotational wave map on a wormhole $U : \R \times \M^3 \rar \s^3$ with topological degree $n$
is a map $U(t,r,\theta,\vphi) = (\psi(t,r), \theta,\vphi)$ such the azimuth angle $\psi = 
\psi(t,r)$ satisfies the Cauchy problem
\begin{align}\label{s41}
\begin{split}
&\p_t^2 \psi - \p_r^2 \psi - \frac{2r}{r^2 + 1} \p_r \psi + \frac{\sin 2 \psi}{r^2 + 1} = 0, \\
&\psi(t,-\infty) = 0, \quad \psi(t,\infty) = n \pi, \quad \forall t, \\
&\vec \psi(0) = (\psi_0,\psi_1). 
\end{split}
\end{align}
The following energy is conserved along the flow
\begin{align*}
\mathcal E(\psi) = \frac{1}{2} \int \left[ |\p_t \psi|^2 + |\p_r \psi|^2 + \frac{2\sin^2 \psi }{r^2 + 1} \right] (r^2 + 1)dr,
\end{align*}
and so, it is natural to take initial data $(\psi_0,\psi_1)$ in the metric space
\begin{align*}
\mathcal E_n = \left \{ 
(\psi_0,\psi_1) : \mathcal E(\psi_0,\psi_1) < \infty, \quad \psi_0(-\infty) = 0, \quad \psi_0(\infty) = n\pi
\right \}.
\end{align*}
For the remainder of this work, we fix the topological degree $n \in \N \cup \{0\}$. We now reduce the study of the large
data solutions to \eqref{s41} to the study of large data solutions to a semilinear wave equation on a $5d$ wormhole.  

By Proposition \ref{harm}, there exists a unique finite energy static solution $Q_n$ to \eqref{s41}, i.e. a solution $Q_n \in 
\mathcal E_n$ such that 
\begin{align}
\p_r^2 Q_n + \frac{2r}{r^2 + 1} \p_r Q_n - \frac{\sin 2 Q_n}{r^2 + 1} = 0. \label{s42}
\end{align}
To simplify notation, we write $Q$ instead of $Q_n$.  For a solution $\psi$ to \eqref{s41}, define $\vphi$ by 
\begin{align*}
\psi(t,r) = Q(r) + \vphi(t,r).
\end{align*}
Then \eqref{s41} and \eqref{s42} imply that $\vphi$ satisfies
\begin{align}\label{s43} 
\begin{split}
&\p_t^2 \vphi - \p_r^2 \vphi - \frac{2r}{r^2 + 1} \p_r \vphi + \frac{2\cos 2 Q}{r^2 + 1} \vphi = Z(r,\vphi), \\
&\vphi(t,-\infty) = \vphi(t,\infty) = 0, \quad \forall t, \\
&\vec \vphi(0) = (\psi_0-Q,\psi_1), 
\end{split}
\end{align}
where 
\begin{align*}
Z(r,\phi) = \frac{1}{r^2 + 1} \left [ 2 \vphi - \sin 2\vphi \right ] \cos 2 Q + ( 1 - \cos 2 \vphi) \sin 2 Q.
\end{align*}
The left--hand side of \eqref{s43} has more dispersion than a free wave on $\M^3$ due to the repulsive potential 
\begin{align*}
\frac{2 \cos 2 Q}{r^2 + 1} = \frac{2}{r^2 + 1} + O(\la r \ra^{-6} )
\end{align*}
as $r \rar \pm \infty$. The $O(\la r \ra^{-6})$ term is due to the asymptotics of $Q$ at $\pm \infty$ (see 
Proposition \ref{harm}).  We now make a standard reduction that incorporates this extra dispersion.  Set $\vphi = \la r \ra u$.  Then $u$ satisfies
the radial semilinear wave equation
\begin{align}\label{s44} 
\begin{split}
&\p_t^2 u - \Delta_g u + V(r) u = N(r,u), \\
&u(t,-\infty) = u(t,\infty) = 0, \quad \forall t, \\
&\vec u(0) = (u_0,u_1),
\end{split}
\end{align}
where $-\Delta_g$ is the (radial) Laplace operator on $\M^5$
\begin{align*}
-\Delta_g u = - \p_r^2 u - \frac{4r}{r^2 + 1} \p_r u,
\end{align*}
the potential is 
\begin{align}\label{s45}
V(r) = \la r \ra^{-4} + 2 \la r \ra^{-2} ( \cos 2 Q - 1 ), 
\end{align}
and $N(r,u) = F(r,u) + G(r,u)$ with 
\begin{align}
\begin{split}\label{s46}
F(r,u) &= 2 \la r \ra^{-3} \sin^2 (\la r \ra u) \sin 2 Q, \\
G(r,u) &= \la r \ra^{-3} \left [ 2 \la r \ra u - \sin (2 \la r \ra u) \right ] \cos 2 Q.
\end{split}
\end{align}
By Proposition \ref{harm}, the potential $V$ is smooth and satisfies
\begin{align}\label{s47}
V(r) = \la r \ra^{-4} + O ( \la r \ra^{-6} ).
\end{align}
Moreover, since $Q(-r) + Q(r) = n\pi$, $V(r)$ is an even function.  The nonlinearities $F$ and $G$ satisfy 
\begin{align}
F(r,u) &= \left ( 2 \sin 2 Q \la r \ra^{-1} \right ) u^2 + F_0(r,u), \label{s48a}\\
|F_0(r,u)| &\lesssim \la r \ra^{-1} u^4, \label{s48b}\\
|G(r,u)| &\lesssim |u|^3, \label{s49}
\end{align}
where the implied constants are absolute.  Based on our definition of $u$ in terms of the original 
azimuth function $\psi$, we consider radial initial data $(u_0,u_1) 
\in \mathcal H(\R; (r^2 + 1)^2 dr)$ in \eqref{s44}.  For the remainder of this section, we denote $
\h_0 := \h(\R; (r^2 + 1) dr)$ and $\h:= \h(\R; (r^2 + 1)^2 dr)$ by $\h$.  We note that 
$\h_0$ is simply the space of radial functions in $\dot H^1 \times L^2(\M^3)$ and $\h$ is the space 
of radial functions in $\dot H^1 \times L^2(\M^5)$. 

In the remainder of the paper, we work only with the \lq $u$--formulation' rather than with the original azimuth angle 
$\psi$.  The reason that a solution $\vec \psi(t) \in C(\R; \cl \h_n)$ to \eqref{s41} with initial data $(\psi_0,\psi_1) \in 
\cl E_n$ yields a solution $\vec u(t) \in C(\R; \h)$ with initial data $(u_0,u_1) = \la r \ra^{-1}( \psi_0 - Q, \psi_1)
\in \h$ and vice versa is as follows.  The only fact that needs to be checked is that 
\begin{align}\label{s410}
\| \vec u \|_{\h} \simeq \| \vec \psi - (Q,0) \|_{\h_0}.
\end{align}
Set $\vphi = \psi - Q = \la r \ra u$.  Then 
\begin{align}
\p_r \vphi = \la r \ra \p_r u + \frac{r}{\la r \ra^2} u. \label{s411}
\end{align}
We note that we have the following Hardy's inequalities
\begin{align*}
\int |\vphi|^2 dr &\lesssim \int |\p_r \vphi|^2 (r^2 + 1) dr, \\
\int |u|^2 (r^2 + 1) dr &\lesssim \int |\p_r u|^2 (r^2 + 1)^2 dr.
\end{align*}
These estimates follow easily from integration by parts and the Strauss estimates
\begin{align}
|\vphi(r)| &\lesssim \la r \ra^{-1/2} \left ( \int |\p_r \vphi|^2 (r^2 + 1) dr \right )^{1/2}, \notag \\
|u(r)| &\lesssim \la r \ra^{-3/2} \left ( \int |\p_r u|^2 (r^2 + 1)^2 dr \right )^{1/2}. \label{s411s}
\end{align}
The Strauss estimates are a simple consequence of the fundamental theorem of calculus.  The two Hardy's inequalities
and \eqref{s411} imply \eqref{s410}.  Hence, the two Cauchy problems \eqref{s41} and \eqref{s44} are equivalent.  

The equivalent $u$--formulation of our main result, Theorem \ref{t01}, is the following. 

\begin{thm}\label{t41}
	For any initial data $(u_0,u_1) \in \h$, there exists a unique global solution $\vec u(t) \in C(\R; \h)$ to \eqref{s44}
	which scatters to free waves on $\M^5$, i.e. there exist solutions $v_L^{\pm}$ to 
	\begin{align*}
	\p_t^2 v - \p_r^2 v - \frac{4r}{r^2 + 1} \p_r v = 0, \quad (t,r) \in \R \times \R, 
	\end{align*}
	such that 
	\begin{align*}
	\lim_{t \rightarrow \pm \infty} \| \vec u(t) - \vec v_L^{\pm}(t) \|_{\h} = 0. 
	\end{align*}
\end{thm}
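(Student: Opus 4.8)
The plan is to prove Theorem \ref{t41} by the concentration--compactness/rigidity method of Kenig and Merle, adapted to the operator $\p_t^2 - \Delta_g + V$ on the $5$--dimensional wormhole $\M^5$. The argument proceeds in three stages: a small data Cauchy and scattering theory, the extraction of a compact critical element under the assumption that the theorem fails, and a rigidity theorem that rules out such an element.

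First I would develop the linear and perturbative theory. Specializing Proposition \ref{strm} to $d = 4$ gives Strichartz estimates for the free wave equation $\p_t^2 - \Delta_g$ on $\M^5 = \R \times \s^4$; to pass these to the perturbed propagator $e^{\pm it \sqrt{-\Delta_g + V}}$ one uses the spectral properties of the Schr\"odinger operator $-\Delta_g + V$ established in Section 5 --- namely that it is nonnegative, its spectrum is purely absolutely continuous and equal to $[0,\infty)$, and $0$ is neither an eigenvalue nor a resonance --- which follow from the linear stability of $Q_n$ together with the decay \eqref{s47} of $V$; a standard wave operator/perturbation argument then transfers the Strichartz bounds. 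Since the Strauss estimate \eqref{s411s} gives $\|u\|_{L^\infty} \lesssim \|u\|_{\dot H^1(\M^5)}$ and the nonlinearities satisfy the quadratic/cubic bounds \eqref{s48a}--\eqref{s49} (with the quadratic term carrying an extra $\la r\ra^{-1}$ weight), a contraction mapping argument in the natural Strichartz space yields local well-posedness in $\h$, scattering for small data, and a stability/perturbation lemma for \eqref{s44}; combined with energy conservation and the subcritical nature of the equation (the Strauss bound again), global existence for arbitrary $\h$ data is immediate, so the content of the theorem is the scattering claim.

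Next, assuming Theorem \ref{t41} fails, I would run the concentration--compactness machinery of Section 5. Because $\M^5$ and the potential $V$ both fix a length scale, the equation has no scaling symmetry and only the time--translation symmetry acts, which considerably simplifies the linear profile decomposition for $\p_t^2 - \Delta_g + V$: profiles are distinguished only by whether their time parameters remain bounded or diverge to $\pm\infty$. Feeding this decomposition into the Strichartz estimates, using orthogonality of the nonlinear evolutions of the profiles together with the perturbation lemma, one extracts at the critical threshold a nonzero solution $\vec u_*$ of \eqref{s44} whose trajectory $\{\vec u_*(t) : t \in \R\}$ is precompact in $\h$.

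Finally, the rigidity theorem states that any solution $\vec u$ of \eqref{s44} with precompact trajectory in $\h$ vanishes identically. I would work in the two exterior regions $|r| \gtrsim 1$, where a change of variables recasts \eqref{s44} as an exterior wave--map-type equation on the complement of a ball in flat $\R^{1+5}$, with a perturbation potential of size $O(\la r\ra^{-4})$; since the spatial dimension is odd, the exterior energy (``channels of energy'') estimates for radial free waves of \cite{cks}, \cite{klls1} apply, and combined with the fact that precompactness forces the energy of $\vec u(t)$ in the exterior regions to vanish as $t \to \pm\infty$, they force $\p_t \vec u \equiv 0$. Thus $u$ is a finite-energy static solution of \eqref{s44}, so $\psi := Q_n + \la r\ra u$ is a finite-energy static solution of \eqref{s41} of degree $n$, and the uniqueness part of Proposition \ref{harm} gives $\psi = Q_n$, i.e. $u \equiv 0$; this contradicts the previous stage and completes the proof. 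The main obstacle is this last step: the trapping at the throat $r = 0$ prevents the channels-of-energy estimates from being applied globally, so the argument must be localized to the two exterior ends, with the resulting boundary terms and the potential $V$ handled carefully, and both ends exploited simultaneously --- this is where most of the work in Section 6 lies.
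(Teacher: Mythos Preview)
Your proposal is correct and follows essentially the same route as the paper: small data theory via Strichartz estimates for $\p_t^2 - \Delta_g + V$ (Propositions \ref{p42} and \ref{p91}), extraction of a precompact critical element via a profile decomposition with only time translations (Proposition \ref{p43}), and rigidity by passing on each half--line to the flat $5d$ exterior problem via $u_e = \la r\ra^2 r^{-2} u$ and applying the odd--dimensional channels of energy estimate (Proposition \ref{p54}) to force $u$ to be a finite--energy static solution, which Proposition \ref{harm} then identifies as $0$. Your identification of the main obstacle --- localizing the channels argument to the two asymptotic ends and controlling the potential and boundary contributions there --- is precisely where the paper spends most of Section 6.
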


The remainder of this work is devoted to proving Theorem \ref{t41}.  
In order to study the nonlinear evolution \eqref{s44}, we will need Strichartz estimates for the linear operator $\p_t^2 - 
\Delta_g + V$ where $V$ is as in \eqref{s45}.  

\subsection{Strichartz Estimates for the Linearized Operator}

The goal of this subsection is to prove Strichartz estimates for radial solutions to the free wave equation on $\M^5$ perturbed
by a radial potential $V = V(r)$
\begin{align}
\begin{split}\label{s71}
&\p_t^2 u - \Delta_g u + V u = F, \quad (t,r) \in I \times \R, \\
&\vec u(0) = (u_0,u_1).
\end{split}
\end{align}
The particular case we are interested in is the case that the potential $V$ is given by
\begin{align*}
V(r) = \la r \ra^{-4} + 2 \la r \ra^{-2} ( \cos 2 Q - 1 ), 
\end{align*}
where $Q$ is the unique harmonic map of degree $n$. The Strichartz estimates we establish will be used in the next section
to study the nonlinear evolution \eqref{s44}.  We recall from Section 3 that we say that a triple $(a,b,\gamma)$ is admissible for $M^5$
if 
\begin{align*}
p > 2, q \geq 2, \quad 
\frac{1}{p} + \frac{5}{q} = \frac{5}{2} - \gamma, \quad 
\frac{1}{p} \leq 1 - \frac{2}{q}.
\end{align*} 
The main result of this subsection is the following. 

\begin{ppn}\label{p71}
	Let $V \in C^\infty(\R)$ be even such that 
	\begin{align}\label{s72}
	|V^{(j)}(r)| \lesssim_j \la r \ra^{-4 - j}
	\end{align}
	for all $r \in \R$.  Assume that $-\Delta_g + V$ has no point spectrum (when restricted to radial functions) and that 0
	is not a resonance of the Schr\"odinger operator on the line given by $-\frac{d^2}{dr^2} + 2 \la r \ra^{-2} + V(r)$.  Let
	$(p,q,\gamma)$ and $(a,b,\rho)$ be two admissible triples for $\M^5$.  Then any radial solution $u$ to \eqref{s71}
	satisfies
	\begin{align}\label{s72a}
	\| |\nabla|^{1-\gamma} u \|_{L^p_tL^q_x(I)} + 
	\| |\nabla|^{-\gamma} \p_t u \|_{L^p_tL^q_x(I)} \lesssim \| \vec u(0) \|_{\h} + \| |\nabla |^{\rho} F \|_{L^{a'}_t
		L^{b'}_x(I)}.
	\end{align}
\end{ppn}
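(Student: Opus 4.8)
The plan is to transfer the free-wave Strichartz estimates of Proposition~\ref{strm} to the perturbed operator by recognizing $-\Delta_g + V$ on $\M^5$ as a one-dimensional Schr\"odinger operator of exactly the type covered by the scattering theory of Subsection~3.1. Since $\M^5 = \M^{4+1}$ we are in the case $d = 4$, for which the conjugation potential of the wormhole Laplacian degenerates: $-\Delta_g = \la r\ra^{-2} H_0 \la r\ra^{2}$ with $H_0 = -\frac{d^2}{dr^2} + 2\la r\ra^{-2}$ (this is $\frac{d(d-4)}{4}r^2\la r\ra^{-4} + \frac{d}{2}\la r\ra^{-2}$ evaluated at $d = 4$). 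The map $f\mapsto \la r\ra^2 f$ is then a unitary equivalence from the radial part of $L^2(\M^5)$, i.e.\ $L^2((r^2+1)^2\,dr)$, onto $L^2(\R,dr)$ carrying $-\Delta_g + V$ to $\tilde H := -\frac{d^2}{dr^2} + \tilde V$, where $\tilde V(r) := 2\la r\ra^{-2} + V(r)$. By \eqref{s72}, $\tilde V$ is even, smooth on $\R$, and satisfies $\tilde V(r) = \frac{d(d-2)}{4}r^{-2} + U(r)$ with $|U^{(k)}(r)| \lesssim |r|^{-3-k}$ for $|r|\ge 1$; thus $\tilde V$ falls within the class of Subsection~3.1 at $d = 4$. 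The hypothesis that $-\Delta_g + V$ has no radial point spectrum gives $\sigma(\tilde H) = [0,\infty)$ with $\tilde H \ge 0$, and the hypothesis that $0$ is not a resonance of $-\frac{d^2}{dr^2} + 2\la r\ra^{-2} + V = \tilde H$ is precisely Definition~\ref{scattdef}.

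Consequently Lemmas~\ref{scattlem1}--\ref{scattlem4} and Lemma~\ref{lem1} apply to $\tilde H$ with $d = 4$, and the nonresonance assumption is exactly what forces the constant $W_0$ of Lemma~\ref{scattlem4} to be nonzero. The proof of the oscillatory integral bound Proposition~\ref{oscint} in Subsection~3.2 uses only these scattering inputs together with the large-frequency asymptotics of the Jost solutions and their Wronskian; the latter follow for $\tilde V$ from the usual Volterra iteration since $\tilde V$ is bounded near the throat and $O(r^{-2})$ (hence $L^1$) at $\pm\infty$, just as in Section~3, the smooth rapidly decaying physical contribution $V$ being a harmless perturbation. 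Hence Proposition~\ref{oscint}, and therefore the frequency-localized dispersive estimate \eqref{disp}, holds for $e^{\pm it\sqrt{-\Delta_g + V}}$ with $d = 4$. By the same Littlewood--Paley (for $\M^5$, via \cite{zhang}), $TT^*$, and Christ--Kiselev arguments invoked after Proposition~\ref{strm}, this dispersive bound upgrades to the full family \eqref{s72a} for admissible triples $(p,q,\gamma)$ and $(a,b,\rho)$. One bookkeeping point: the derivatives $|\nabla|^{\pm s}$ appearing in \eqref{s72a} are $(-\Delta_g)^{\pm s/2}$, and since $-\Delta_g + V\ge 0$ is a short-range perturbation of $-\Delta_g$ with neither a zero eigenvalue nor a zero resonance, the Sobolev norms built from $(-\Delta_g)^{1/2}$ and $(-\Delta_g + V)^{1/2}$ are equivalent in the relevant range of exponents, so one may run the Littlewood--Paley decomposition using $\vphi(2^{-j}\sqrt{-\Delta_g + V})$ and still read off estimates weighted by powers of $-\Delta_g$.

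I expect the main obstacle to be the second step: verifying that the proof of Proposition~\ref{oscint} is genuinely \emph{potential-agnostic} once the scattering inputs are in hand --- in particular that including $V$ does not spoil the Jost function asymptotics in the transition region $|r\lambda|\sim 1$ or near $r = 0$, and that the symbol-type derivative bounds on the integrands $a_j$ all survive the perturbation. An alternative that isolates more cleanly where the hypotheses enter is to handle high frequencies ($j\gtrsim 0$) perturbatively off Proposition~\ref{strm} via Duhamel together with a local energy decay (local smoothing) estimate for $-\Delta_g$, exploiting the $\la r\ra^{-4}$ decay of $V$ to absorb the perturbation, and to run the spectral-measure analysis above only for low frequencies ($j\ll 0$), where the absence of a zero resonance is essential.
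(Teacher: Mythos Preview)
Your approach is sound in outline but differs substantially from the paper's, and the point you label ``bookkeeping'' is where the real work would lie.

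The paper does \emph{not} redo the dispersive estimate for $-\Delta_g + V$. Instead it treats $V$ perturbatively off the free evolution: writing $w = \sqrt{-\Delta_g}\,u + i\partial_t u$, Duhamel gives $w(t) = e^{-it\sqrt{-\Delta_g}}w(0) - i\int_0^t e^{-i(t-s)\sqrt{-\Delta_g}}Vu(s)\,ds$, and after Christ--Kiselev the Strichartz bound reduces to two \emph{local energy} estimates, $\|V_1 e^{-it\sqrt{-\Delta_g}}\varphi\|_{L^2_{t,x}} \lesssim \|\varphi\|_{L^2}$ and $\|V_2 u\|_{L^2_{t,x}} \lesssim \|\vec u(0)\|_{\h}$, where $V = V_1V_2$ with $|V_j|\lesssim\la r\ra^{-2}$. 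These are proved not by oscillatory integrals but by a distorted Fourier transform for the one-dimensional operators $H_0 = -\partial_r^2 + 2\la r\ra^{-2}$ and $H = H_0 + V$ (Proposition~\ref{p72}): one constructs an even/odd Fourier basis $\{\theta(r,\lambda^2),\phi(r,\lambda^2)\}$ with spectral weights $\omega_j(\lambda)$ and proves the pointwise bound $\sup_{r,\lambda}\frac{1+\lambda^2\la r\ra^2}{\lambda^2\la r\ra^2}\bigl[|\theta|^2\omega_1 + |\phi|^2\omega_2\bigr] < \infty$, which is exactly what converts the Plancherel identity into the weighted $L^2_{t,x}$ bound. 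The spectral hypotheses (no point spectrum, $0$ not a resonance) enter precisely in showing $\omega_j(\lambda) = O(\lambda^4)$ as $\lambda\to 0$.

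The advantage of the paper's route is that all Littlewood--Paley projectors and fractional powers $|\nabla|^s$ remain those of the \emph{free} operator $-\Delta_g$ throughout, so the equivalence of $(-\Delta_g)^{s/2}$ and $(-\Delta_g + V)^{s/2}$ on $L^q$ is never needed. Your direct approach would produce Strichartz estimates with $(-\Delta_g + V)^{s/2}$ in place of $|\nabla|^s$, and converting back requires $L^q$-boundedness of $(-\Delta_g)^{s/2}(-\Delta_g + V)^{-s/2}$ for the relevant $(s,q)$ --- a Riesz-transform-type statement that is not free and goes well beyond the $L^2$ equivalence the paper later establishes in \eqref{equ norms}. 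Your closing ``alternative'' (Duhamel plus local smoothing) is in fact much closer to what the paper actually does, and is the cleaner path.
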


\begin{proof}
	The proof is based on arguments in Section 5 of \cite{ls}.  By standard $TT^*$ arguments and Minkowski's inequality (c.f. \cite{sogge} or \cite{tao}), we only need to consider
	the case $F = 0$.  As we will see, the proof of Proposition \ref{p71} reduces to proving certain local energy estimates.  
	Indeed, define
	\begin{align*}
	A = \sqrt{-\Delta_g}.
	\end{align*}
	Note that 
	\begin{align}\label{s73}
	\| A f \|_{L^2}^2 = (A^2 f , f )_{L^2} = (-\Delta_g f , f)_{L^2} = \| \nabla f \|^2_{L^2}.  
	\end{align}
	For a solution $u$ to \eqref{s71}, define 
	\begin{align}\label{s74}
	w(t) = A u(t) + i \p_t u(t). 
	\end{align}
	Then by \eqref{s73}, 
	\begin{align}\label{s75}
	\| w(t) \|_{L^2} = \| \vec u(t) \|_{\h},
	\end{align} 
	and $w$ satisfies
	\begin{align}
	\begin{split}\label{s76}
	&i \p_t w = A w + V u, \quad (t,r) \in I \times \R, \\
	&w(0) = A u_0 + i u_1. 
	\end{split}
	\end{align}
	By Duhamel's principle, \eqref{s76} implies that 
	\begin{align*}
	w(t) = e^{-itA} w(0) - i \int_0^t e^{-i(t-s)A} Vu(s) ds.
	\end{align*}
	The Strichartz estimates \eqref{s72a} can be restated as 
	\begin{align}\label{s77}
	\| P w \|_{X} \leq \| w(0) \|_{L^2},
	\end{align}
	where $P := A^{-1} \Re$ and $\| \cdot \|_X := \| |\nabla|^{-\gamma} \nabla_{t,x} \cdot \|_{L^p_t L^q_x(I)}$. By Proposition \ref{strm}, 
	\begin{align}\label{s77b}
	\| P e^{-itA} w(0) \|_X \lesssim \| w(0) \|_{L^2}.
	\end{align}
	Thus, 
	\begin{align*}
	\| P w \|_X \lesssim \| w(0) \|_{L^2} + \left \| P \int_0^t e^{-i(t-s)A} Vu(s) ds \right \|_X. 
	\end{align*}
	By the Christ--Kiselev lemma, to bound the second term above, it suffices to show that 
	\begin{align}\label{s78}
	\left \| P \int_{-\infty}^\infty e^{-i(t-s)A} Vu(s) ds \right \|_X \lesssim \| w(0) \|_{L^2}. 
	\end{align}
	To prove \eqref{s78}, we write $V = V_1V_2$ were each factor $V_j$ is even and satisfies $|V_j(r)|\lesssim \la r \ra^{-2}$.
	Then 
	\begin{align}\label{s79}
	\left \| P \int_{-\infty}^\infty e^{-i(t-s)A} Vu(s) ds \right \|_X \lesssim \| K \|_{L^2_{t,x} \rar X} 
	\| V_2 u \|_X
	\end{align}
	where
	\begin{align*}
	K F(t) := P \int_{-\infty}^\infty e^{-i(t-s)A} V_1 F(s) ds.
	\end{align*}
	If $F \in L^2_{t,x}$, then by \eqref{s77b} 
	\begin{align*}
	\| K F \|_X &\leq \| P e^{-itA} \|_{L^2_{x} \rar X} \left \| 
	\int_{-\infty}^\infty e^{isA} V_1 F(s) ds \right \|_{L^2_x} \\
	&\lesssim \left \| 
	\int_{-\infty}^\infty e^{isA} V_1 F(s) ds \right \|_{L^2_x}.
	\end{align*}
	We now wish to show that 
	\begin{align*}
	\left \| 
	\int_{-\infty}^\infty e^{isA} V_1 F(s) ds \right \|_{L^2_x}  \lesssim \| F \|_{L^2_{t,x}}.
	\end{align*}
	By duality, this estimate is equivalent to the local energy estimate
	\begin{align*}
	\| V_1 e^{-itA} \vphi \|_{L^2_{t,x}} \lesssim \| \vphi \|_{L^2_x}.
	\end{align*}
	Thus, by \eqref{s79}, the proof of Proposition \ref{p71} is reduced to proving the local energy estimates
	\begin{align}
	\| V_1 e^{-itA} \vphi \|_{L^2_{t,x}} &\lesssim \| \vphi \|_{L^2_x}, \label{s711}\\
	\| V_2 u \|_{L^2_{t,x}} &\lesssim \| \vec u(0) \|_{\h}. \label{s712}
	\end{align}
	
	To prove \eqref{s711} and \eqref{s712}, we first eliminate the weight $\la r \ra^4$ inherent in them.  Consider
	the isomorphism $\phi \mapsto f := \la r \ra^2 \vphi$ from $L^2(\M^5)$ (restricted to radial functions) to $L^2(\R)$. Define
	the following Schr\"odinger operators on $\R$ by
	\begin{align}
	\begin{split}\label{s713}
	H_0 &:= -\frac{d^2}{dr^2} + \frac{2}{r^2 + 1}, \\
	H := H_0 + V &:= -\frac{d^2}{dr^2} + \frac{2}{r^2 + 1} + V(r).
	\end{split}
	\end{align}
	Then 
	\begin{align}
	\begin{split}\label{s714}
	H_0 &= \la r \ra^{2} (-\Delta_g ) \la r \ra^{-2}, \\
	H &= \la r \ra^2 (-\Delta_g + V) \la r \ra^{-2}.
	\end{split}
	\end{align}
	Thus, from \eqref{s714}, we see that \eqref{s711} is equivalent to the estimate
	\begin{align}\label{s715}
	\| V_1 e^{-it\sqrt{H_0}} f \|_{L^2_{t,r}(\R \times \R)} \lesssim \| f \|_{L^2(\R)}.
	\end{align}
	We claim that there exist a distorted Fourier basis $\{ \tht_0(r,\lam^2), \phi_0(r,\lam^2) \}$ that satisfies
	\begin{align}
	\begin{split}\label{s716}
	H_0 \tht_0(r,\lam^2) = \lam^2 \tht_0(r,\lam^2),& \quad H_0 \phi_0(r,\lam^2) = \lam^2 \phi_0(r,\lam^2), \\
	\tht_0(0,\lam^2) = 1,& \quad \phi_0(0,\lam^2) = 0, \\
	\tht_0'(0,\lam^2) = 0,& \quad \phi_0'(0,\lam^2) = 1,
	\end{split}
	\end{align}
	and positive measures $\rho_{0,1}(d\lam) = \om_{0,1}(\lam)d\lam$
	and $\rho_{0,2}(d\lam) = \om_{0,2}(\lam)d\lam$ such that 
	if we define 
	\begin{align*}
	\hat f_{0,1}(\lam) := \int \tht_0(r,\lam^2) f(r) dr, \quad \hat f_{0,2}(\lam) := \int \phi_0(r,\lam^2) f(r) dr, 
	\quad f \in L^2(\R),
	\end{align*}
	then 
	\begin{align}
	f(r) &= \int_0^\infty \tht_0(r,\lam^2) \hat f_{0,1}(\lam) \rho_{0,1}(d\lam) +
	\int_0^\infty \phi_0(r,\lam^2) \hat f_{0,2}(\lam) \rho_{0,2}(d\lam), \label{s718} \\
	\| f \|^2_{L^2(\R)} &=
	\int_0^\infty |\hat f_{0,1}(\lam)|^2 \rho_{0,1}(d\lam) + 
	\int_0^\infty |\hat f_{0,2}(\lam)|^2 \rho_{0,2}(d\lam), \label{s719} \\
	\sup_{r \in \R, \lam > 0}&
	\left ( \frac{1 + \lam^2 \la r \ra^2}{\lam^2 \la r \ra^2} \right )
	\left [ 
	|\tht_0(r,\lam^2)|^2\om_{0,1}(\lam) + 
	|\phi_0(r,\lam^2)|^2\om_{0,2}(\lam)
	\right ] < \infty. \label{s720}
	\end{align}
	The proof of this claim is postponed until the next subsection.  Assuming the claim, we can easily establish 
	\eqref{s715}.  Indeed, since $H_0 \mapsto \lam^2$ on the Fourier side, \eqref{s715} can be rewritten as 
	\begin{align}\label{s721}
	\int \left \| 
	V_1(r) \left [
	\int_0^\infty e^{-it\lam} \tht_0(r,\lam^2) \hat f_{0,1}(\lam) \rho_{0,1}(d\lam) +
	\int_0^\infty e^{-it\lam} \phi_0(r,\lam^2) \hat f_{0,2}(\lam) \rho_{0,2}(d\lam)
	\right ] \right  \|_{L^2(\R)}^2 dt \lesssim \| f \|_{L^2(\R)}^2.
	\end{align}
	Expanding and carrying out the $t$--integration, the left hand side of \eqref{s721} becomes 
	\begin{align}
	\int V_1^2(r) & \Bigl [ 
	\int_0^\infty \int_0^\infty \de(\lam-\mu) \tht_0(r,\lam^2) \tht_0(r,\mu^2) \hat f_{0,1}(\lam) \overline
	{\hat f_{0,1}(\mu)} \rho_{0,1}(d\lam) \rho_{0,1}(d\mu) \notag \\
	&\: +\int_0^\infty \int_0^\infty \de(\lam-\mu) \phi_0(r,\lam^2) \phi_0(r,\mu^2) \hat f_{0,2}(\lam) \overline
	{\hat f_{0,2}(\mu)} \rho_{0,2}(d\lam) \rho_{0,2}(d\mu) \Bigr ] dr \notag \\
	&= \int V_1^2(r) \Bigl [ 
	\int_0^\infty |\hat f_{0,1}(\lam)|^2 |\tht_0(r,\lam^2)|^2 \om^2_{0,1}(\lam) \:d\lam + 
	\int_0^\infty |\hat f_{0,2}(\lam)|^2 |\phi_0(r,\lam^2)|^2 \om^2_{0,2}(\lam) \:d\lam
	\Bigr ] dr. \label{s722}
	\end{align}
	We remark here that no cross terms involving $\tht_0(r,\lam^2)\phi_0(r,\mu^2)$ appeared when expanding since \newline 
	$V_1^2(r) \tht_0(r,\lam^2)\phi_0(r,\mu^2)$ is an odd function of $r$ by \eqref{s716} and our assumption that $V(r)$ is even.
	By \eqref{s720} and \eqref{s719}, we conclude that 
	\begin{align*}
	\eqref{s722} &\lesssim  \int V_1^2(r) \Bigl [ 
	\int_0^\infty |\hat f_{0,1}(\lam)|^2  \om_{0,1}(\lam) \:d\lam + 
	\int_0^\infty |\hat f_{0,2}(\lam)|^2 \om_{0,2}(\lam) \:d\lam
	\Bigr ] dr \\
	&= \| f \|_{L^2(\R)}^2 \int V^2_1(r) dr \\
	&\lesssim \| f \|_{L^2(\R)}^2. 
	\end{align*}
	This proves \eqref{s721} which proves \eqref{s715} as desired. 
	
	The proof of \eqref{s712} is very similar and we sketch the details.  As in the case for $H_0$, we claim that there 
	exist a distorted Fourier basis $\{ \tht(r,\lam^2), \phi(r,\lam^2) \}$ that satisfies
	\begin{align*}
	\begin{split}
	H \tht(r,\lam^2) = \lam^2 \tht(r,\lam^2),& \quad H \phi(r,\lam^2) = \lam^2 \phi(r,\lam^2), \\
	\tht(0,\lam^2) = 1,& \quad \phi(0,\lam^2) = 0, \\
	\tht'(0,\lam^2) = 0,& \quad \phi'(0,\lam^2) = 1,
	\end{split}
	\end{align*}
	and positive measures $\rho_{1}(d\lam) = \om_{1}(\lam)d\lam$
	and $\rho_{2}(d\lam) = \om_{2}(\lam)d\lam$ such that 
	if we define 
	\begin{align*}
	\hat f_{1}(\lam) := \int \tht(r,\lam^2) f(r) dr, \quad \hat f_{2}(\lam) := \int \phi(r,\lam^2) f(r) dr, 
	\quad f \in L^2(\R),
	\end{align*}
	then 
	\begin{align}
	f(r) &= \int_0^\infty \tht(r,\lam^2) \hat f_{1}(\lam) \rho_{1}(d\lam) +
	\int_0^\infty \phi(r,\lam^2) \hat f_{2}(\lam) \rho_{2}(d\lam), \label{s723} \\
	\| f \|^2_{L^2(\R)} &=
	\int_0^\infty |\hat f_{1}(\lam)|^2 \rho_{1}(d\lam) + 
	\int_0^\infty |\hat f_{2}(\lam)|^2 \rho_{2}(d\lam), \label{s724} \\
	\sup_{r \in \R, \lam > 0}&
	\left ( \frac{1 + \lam^2 \la r \ra^2}{\lam^2 \la r \ra^2} \right )
	\left [ 
	|\tht(r,\lam^2)|^2\om_{1}(\lam) + 
	|\phi(r,\lam^2)|^2\om_{2}(\lam)
	\right ] < \infty. \label{s725}
	\end{align}
	Again, the proof of this claim is postponed until the next subsection.  We remark that it is in proving \eqref{s723}, \eqref{s724}, and especially \eqref{s725} that 
	the spectral assumptions are crucial.  By \eqref{s714} and \eqref{s723}, we see that \eqref{s712} follows from showing 
	\begin{align}
	\begin{split}\label{s726}
	\int \Bigl \| 
	V_2(r) \Bigl [
	&\int_0^\infty \bigl (\cos(t\lam) \hat f_1(\lam) + \lam^{-1} \sin(t\lam) \hat g_1(\lam) \bigr) \tht(r,\lam^2) 
	\rho_{0,1}(d\lam) \\
	&+ \int_0^\infty \bigl (\cos(t\lam) \hat f_2(\lam) + \lam^{-1} \sin(t\lam) \hat g_2(\lam) \bigr) \phi(r,\lam^2) 
	\rho_{2}(d\lam) 
	\Bigr ] \Bigr  \|_{L^2(\R)}^2 dt \lesssim \| (\sqrt{H}f,g) \|_{L^2(\R)}^2.
	\end{split}
	\end{align}
	Assume that $g = 0$.  Then, as in the case for $H_0$, the left side of \eqref{s726} becomes after expanding and integrating in $t$
	\begin{align*}
	\int &V_2^2(r) \Bigl [ 
	\int_0^\infty |\hat f_{1}(\lam)|^2  \cos^2(t\lam) |\tht(r,\lam^2)|^2 \om^2_{1}(\lam) \:d\lam + 
	\int_0^\infty |\hat f_{2}(\lam)|^2 \cos^2(t\lam) |\phi(r,\lam^2)|^2 \om^2_{2}(\lam) \:d\lam
	\Bigr ] dr \\
	&\lesssim \int V_2^2(r) \la r \ra^2 \Bigl [ 
	\int_0^\infty \lam^2 |\hat f_{1}(\lam)|^2  \om_{1}(\lam) \:d\lam + 
	\int_0^\infty \lam^2 |\hat f_{2}(\lam)|^2 \om_{2}(\lam) \:d\lam
	\Bigr ] dr \\
	&= \| \sqrt{H} f \|_{L^2(\R)}^2 \int V^2_2(r) \la r \ra^2 dr \\
	&\lesssim \| \sqrt(H) f \|_{L^2(\R)}^2. 
	\end{align*}
	The case $g = 0$ is handled similarly.  This establishes \eqref{s726} which proves \eqref{s712}.  This completes the proof
	of Proposition \ref{p71} modulo the proofs of the claims about the distorted Fourier bases.  We address this in the next subsection. 
\end{proof}

\subsection{The Distorted Fourier Transform} 

In this subsection, we prove the technical statements about the distorted Fourier bases for $H_0$ and $H$ used in the 
previous section.

\begin{ppn}\label{p72}
	Let $H = -\frac{d^2}{dr^2} + V(r)$ be a Schr\"odinger operator on the line where $V \in C^\infty(\R)$ is even and 
	\begin{align}\label{s728}
	V(r) = \frac{2}{r^2} + O(r^{-3}),
	\end{align}
	as $r \rightarrow \pm \infty$ with natural derivative bounds.  Assume that $H$ has no point spectrum and that $0$ is not a resonance of $H$.  Then 
	there exist a distorted Fourier basis $\{ \tht(r,\lam^2), \phi(r,\lam^2) \}$ that satisfies
	\begin{align}
	\begin{split}\label{s728b}
	H \tht(r,\lam^2) = \lam^2 \tht(r,\lam^2),& \quad H \phi(r,\lam^2) = \lam^2 \phi(r,\lam^2), \\
	\tht(0,\lam^2) = 1,& \quad \phi(0,\lam^2) = 0, \\
	\tht'(0,\lam^2) = 0,& \quad \phi'(0,\lam^2) = 1,
	\end{split}
	\end{align}
	and positive measures $\rho_{1}(d\lam) = \om_{1}(\lam)d\lam$
	and $\rho_{2}(d\lam) = \om_{2}(\lam)d\lam$ such that 
	if we define 
	\begin{align*}
	\hat f_{1}(\lam) := \int \tht(r,\lam^2) f(r) dr, \quad \hat f_{2}(\lam) := \int \phi(r,\lam^2) f(r) dr, 
	\quad f \in L^2(\R),
	\end{align*}
	then 
	\begin{align}
	f(r) &= \int_0^\infty \tht(r,\lam^2) \hat f_{1}(\lam) \rho_{1}(d\lam) +
	\int_0^\infty \phi(r,\lam^2) \hat f_{2}(\lam) \rho_{2}(d\lam), \label{s729} \\
	\| f \|^2_{L^2(\R)} &=
	\int_0^\infty |\hat f_{1}(\lam)|^2 \rho_{1}(d\lam) + 
	\int_0^\infty |\hat f_{2}(\lam)|^2 \rho_{2}(d\lam), \label{s730} \\
	\sup_{r \in \R, \lam > 0}&
	\left ( \frac{1 + \lam^2 \la r \ra^2}{\lam^2 \la r \ra^2} \right )
	\left [ 
	|\tht(r,\lam^2)|^2\om_{1}(\lam) + 
	|\phi(r,\lam^2)|^2\om_{2}(\lam)
	\right ] < \infty. \label{s731}
	\end{align}
\end{ppn}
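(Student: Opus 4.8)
The plan is to obtain \eqref{s729}--\eqref{s730} from the classical Weyl--Titchmarsh--Kodaira eigenfunction expansion, and then to read off the weighted bound \eqref{s731} from the Jost--function asymptotics already recorded in Section 3. First I would note that $H$ is regular at $r = 0$ (in our application $V = \frac{2}{r^2+1} + O(\la r \ra^{-4})$ near $0$, which is smooth there) and is in the limit point case at each end $r = \pm\infty$, since $V(r) \to 2 r^{-2}$ with $2 > \frac34$; hence $H$ is essentially self--adjoint on $C_c^\infty(\R)$ and the Weyl--Kodaira theorem provides a spectral representation relative to the fundamental system $\{\tht(\cdot,\lam^2),\phi(\cdot,\lam^2)\}$ normalized by \eqref{s728b}, with an \emph{a priori} $2 \times 2$ matrix--valued spectral measure. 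Because $V$ is even, $H$ commutes with the parity operator $Pf(r) = f(-r)$; the solution $\tht$ is then even and $\phi$ is odd, so the matrix measure is diagonal and its entries are scalar measures $\rho_1(d\lam),\rho_2(d\lam)$ with no off--diagonal term. To see that these are purely absolutely continuous on $(0,\infty)$ --- which is exactly the content of \eqref{s729}--\eqref{s730} --- I would use that $V - 2r^{-2} = O(r^{-3})$ is short range, so that $\sigma_{ac}(H) = [0,\infty)$ with no singular continuous part (the Jost solutions $f_\pm(\cdot,\lam)$ exist by the usual Volterra construction, since $V - 2r^{-2} \in L^1$ near $\pm\infty$, and positive eigenvalues are excluded by Kato's theorem); the hypothesis that $H$ has no point spectrum removes negative eigenvalues, and the hypothesis that $0$ is not a resonance (Definition \ref{scattdef}) removes a possible zero--energy bound state. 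Thus $\rho_j(d\lam) = \om_j(\lam)\,d\lam$ for positive densities $\om_j$.

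Next I would make the densities and eigenfunctions explicit via the Jost solutions. Evenness of $V$ gives $f_-(r,\lam) = f_+(-r,\lam)$, hence, for $\lam > 0$ (where $f_+(0,\lam) \neq 0$ and $f_+'(0,\lam) \neq 0$, since $\lam^2$ is neither a Dirichlet nor a Neumann eigenvalue of the half--line problem),
\begin{align*}
\tht(r,\lam^2) = \frac{f_+(r,\lam) + f_-(r,\lam)}{2 f_+(0,\lam)}, \quad
\phi(r,\lam^2) = \frac{f_+(r,\lam) - f_-(r,\lam)}{2 f_+'(0,\lam)}, \quad
W(\lam) = 2 f_+(0,\lam) f_+'(0,\lam).
\end{align*}
Feeding the resolvent kernel $R_H(\lam^2 + i0)(r,\rho) = f_+(r_>,\lam) f_-(r_<,\lam)/W(\lam)$ into Stone's formula as in Section 3 and expanding in the $\{\tht,\phi\}$ basis, the $\tht$--$\phi$ cross terms contribute nothing (their coefficient $f_+(0,\lam) f_+'(0,\lam)/W(\lam) = \tfrac12$ is real and $\tht,\phi$ are real for real $\lam$), leaving the diagonal identity
\begin{align*}
|\tht(r,\lam^2)|^2 \om_1(\lam) + |\phi(r,\lam^2)|^2 \om_2(\lam) = \frac{2\lam}{\pi} \left| \Im \left[ \frac{f_+(r,\lam) f_-(r,\lam)}{W(\lam)} \right] \right|
\end{align*}
for all $r \in \R$, $\lam > 0$, together with $\om_1(\lam) \simeq \lam \, |\Im[ f_+(0,\lam)/f_+'(0,\lam)]|$ and $\om_2(\lam) \simeq \lam \, |\Im[ f_+'(0,\lam)/f_+(0,\lam)]|$; positivity of $\om_1,\om_2$ is the Herglotz property of the half--line Weyl $m$--functions (Neumann for $\tht$, Dirichlet for $\phi$).

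The remaining --- and main --- point is the bound \eqref{s731}, which by the displayed identity is equivalent to
\begin{align*}
\lam \left| \Im \left[ \frac{f_+(r,\lam) f_-(r,\lam)}{W(\lam)} \right] \right| \lesssim \min(1, \lam^2 \la r \ra^2), \quad r \in \R, \: \lam > 0.
\end{align*}
Here I would invoke the scattering theory of Section 3.1: writing $V(r) = \frac{2}{r^2} + U(r)$ with $U \in C^\infty(\R \setminus\{0\})$ and $|U^{(k)}(r)| \lesssim |r|^{-3-k}$ for $|r| \geq 1$, all of Lemma \ref{scattlem1}--Lemma \ref{lem1} applies with $d = 4$ (so $\frac{d(d-2)}{4} = 2$), and the non--resonance hypothesis is exactly what makes the constant $W_0$ in Lemma \ref{scattlem4} nonzero. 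For $\lam \la r \ra \lesssim 1$ (hence $\lam \lesssim 1$) one combines Lemma \ref{lem1} with $|u_0^{\pm}(r,\lam)| \lesssim \la r \ra^2$, $|u_1^{\pm}(r,\lam)| \lesssim \la r \ra^{-1}$ from Lemma \ref{scattlem1}--Lemma \ref{scattlem2} to get $\lam|\Im[\cdots]| \lesssim \lam^4 \la r \ra^4 = (\lam\la r\ra)^4 \lesssim \min(1,\lam^2\la r\ra^2)$; for $\lam\la r\ra \gtrsim 1$ one uses Lemma \ref{jostlem} to write $f_\pm(r,\lam) = e^{\pm i r \lam} m_\pm(r,\lam)$ with $|m_\pm(r,\lam)| \lesssim 1$, so that $|f_+(r,\lam) f_-(r,\lam)| \lesssim 1$, while $\lam/|W(\lam)| \lesssim 1$ for all $\lam > 0$ (from $|W(\lam)| \gtrsim \lam$ at high energy and $|W(\lam)| \sim \lam^{-2}$ at low energy by Lemma \ref{scattlem4}), yielding $\lam|\Im[\cdots]| \lesssim 1 = \min(1,\lam^2\la r\ra^2)$. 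I expect the real obstacle to lie not in either of these two estimates --- which are routine once Section 3 is available --- but in the bookkeeping that underpins them: verifying that the Weyl--Kodaira measure has no discrete part and that the asymptotic machinery of Section 3.1 transfers verbatim to this particular $H$ with $d = 4$, i.e.\ pinning down precisely how the two spectral hypotheses (no point spectrum; $0$ not a resonance) are used, both to exclude atoms in $\rho_1,\rho_2$ and to guarantee $W_0 \neq 0$.
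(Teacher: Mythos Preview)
Your strategy is sound and in fact more direct than the paper's. The paper does not pass through your diagonal identity. Instead, it works out $\omega_1,\omega_2$ explicitly as imaginary parts of ratios of Wronskians of $f_+$ with $\theta,\phi$ (via the Weyl--Titchmarsh matrix, which is diagonal by evenness), expands $\theta(\cdot,\lam^2),\phi(\cdot,\lam^2)$ for small $\lam$ in the basis $u_0^+(\cdot,\lam),u_1^+(\cdot,\lam)$ by a Volterra argument started at $r=0$, and then proves $\omega_j(\lam)=O(\lam^4)$ from Lemma~\ref{scattlem3}; the bound \eqref{s731} is obtained separately for $0\le r\le\lam^{-1}$ (combining $\omega_j=O(\lam^4)$ with $|\theta|,|\phi|\lesssim\la r\ra^2$) and for $r\ge\lam^{-1}$ (writing $\phi=c(\lam)f_++\overline{c(\lam)}\,\overline{f_+}$ with $c(\lam)=O(\lam^{-2})$). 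Your reduction --- using $f_\pm=f_+(0,\lam)\theta\pm f_+'(0,\lam)\phi$ and $W(\lam)=2f_+(0,\lam)f_+'(0,\lam)$ to collapse $\omega_1\theta^2+\omega_2\phi^2$ into a constant multiple of $\lam\,\Im[f_+(r,\lam)f_-(r,\lam)/W(\lam)]$, and then invoking Lemma~\ref{lem1} with $d=4$ --- bypasses all of that and is cleaner.

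There is, however, one genuine gap in the regime $\lam\la r\ra\gtrsim 1$ with $\lam$ small. Your claim that Lemma~\ref{jostlem} gives $|m_\pm(r,\lam)|\lesssim 1$ is only valid on the ``correct'' half-line: for $r>0$ one controls $f_+$ but \emph{not} $f_-$, and in fact $|f_-(r,\lam)|\sim\lam^{-3}$ there. The reason is that $f_-=\alpha_-(\lam)f_++\beta_-(\lam)\overline{f_+}$ with $|\alpha_-|,|\beta_-|\sim|W(\lam)|/\lam\sim\lam^{-3}$ for small $\lam$, so $|f_+f_-|\lesssim 1$ is simply false in this regime. The repair is immediate and uses exactly this expansion: since $\lam\alpha_-/W=O(1)$ and $\lam\beta_-/W=(2i)^{-1}$ uniformly in $\lam>0$ (see the computation preceding Lemma~\ref{lem6}), one has for $r>0$, $r\lam\gtrsim 1$,
\[
\lam\,\frac{f_+(r,\lam)f_-(r,\lam)}{W(\lam)}
= \frac{\lam\alpha_-}{W}\,e^{2ir\lam}m_+(r,\lam)^2+\frac{1}{2i}\,|m_+(r,\lam)|^2
= O(1),
\]
which is the bound you need. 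With this correction your argument goes through.
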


Many of the statements made in Proposition \ref{p72} follow from basic Weyl--Titchmarsch theory for \newline Schr\"odinger operators
on the line.  We recall these basic facts now (see Section 2 of \cite{ges} for a thorough discussion).  Let 
$H = -\frac{d}{dr^2} + V$ with $V \in L^\infty(\R)$ (much less is needed) such that $H$ is in the limit point 
case at $\pm \infty$.  We define 
$\tht(r,z), \phi(r,z)$ to be the fundamental system of solutions to 
\begin{align*}
H f(r) = z f(r), \quad z \in \C,
\end{align*}
such that 
\begin{align}
\begin{split}\label{s731b}
\tht(0,z) = 1,& \quad \phi(0,z) = 0, \\
\tht'(0,z) = 0,& \quad \phi'(0,z) = 1.
\end{split}
\end{align}
By \eqref{s731b}, the Wronskian is computed
\begin{align*}
W(\tht(\cdot,z), \phi(\cdot, z) ) = 1. 
\end{align*}
The condition that $H$ is in the limit point case at $\pm \infty$ implies that for $z \in \C \backslash \R$ there exist unique solutions 
$\psi_{\pm}(r,z)$ to $H f = z f$ that satisfy 
\begin{align*}
\psi_{\pm}(\cdot,z) &\in L^2([0,\pm \infty)), \\
\psi_{\pm}(0,z) &= 1.  
\end{align*}
The condition at $r = 0$ implies that 
\begin{align}\label{s732}
\psi_{\pm}(r,z) = \tht(r,z) + m_{\pm}(z) \phi(r,z)
\end{align}
where $m_{\pm}(z) = W(\tht(\cdot,z), \psi_{\pm}(\cdot,z))$ and  
\begin{align*}
W(\psi_+(\cdot,z), \psi_-(\cdot,z)) = m_-(z) - m_+(z).  
\end{align*}
The functions $m_{\pm}(z)$ can be shown to be Herglotz functions ($\Im z > 0 \implies \Im m_{\pm}(z) > 0$) and are referred
to as the \emph{Weyl--Titchmarsch functions}.  The associated \emph{Weyl--Titchmarsch matrix}
\begin{align}
M(z) :=
\begin{bmatrix}
\frac{1}{m_-(z) - m_+(z)} & \frac{1}{2} \frac{m_-(z) + m_+(z)}{m_-(z) - m_+(z)} \\
\frac{1}{2} \frac{m_-(z) + m_+(z)}{m_-(z) - m_+(z)} & \frac{m_-(z)m_+(z)}{m_-(z) - m_+(z)}
\end{bmatrix} \label{s733}
\end{align}
is a Herglotz matrix.  Thus, there exists a nonnegative $2 \times 2$ matrix--valued measure $\Omega(d\lam)$ such that 
\begin{align*}
M(z) = C + \int_{\R} \left [ \frac{1}{\lam - z} - \frac{\lam}{1 + \lam^2} \right ] \Omega(d\lam),
\end{align*}
where 
\begin{align*}
C^* = C, \quad \int \frac{ \| \Omega(d\lam) \| }{1 + \lam^2} < \infty. 
\end{align*}
The measure $\Omega(d\lam)$ is computed via 
\begin{align*}
\Omega((\lam_1,\lam_2]) = \frac{1}{\pi} \lim_{\de \rightarrow 0^+} \lim_{\e \rar 0^+}
\int_{\lam_1 + \de}^{\lam_2 + \de} \Im M(\lam + i \e) \: d\lam. 
\end{align*}
A consequence of Weyl--Titchmarsch theory is that we have the following distorted Fourier representation for $H$.  

\begin{ppn}\label{p73}
	Let $f,g \in C^\infty_0(\R), F \in C(\R) \cap L^\infty(\R)$.  Let $E(d\lam)$ denote the spectral measure for $H$.  
	Define 
	\begin{align*}
	\hat f_{1}(\lam) := \int \tht(r,\lam) f(r) dr, \quad \hat f_{2}(\lam) := \int \phi(r,\lam) f(r) dr, 
	\end{align*}
	and 
	\begin{align*}
	\hat f(\lam) = (\hat f_1(\lam), \hat f_2(\lam) )^T. 
	\end{align*}
	Then 
	\begin{align*}
	\left (f, F(H) E((\lam_1,\lam_2]) g \right )_{L^2(\R)} = \int_{(\lam_1,\lam_2]} {\hat f}(\lam)^T \Omega(d\lam) \overline{\hat g(\lam)} F(\lam).
	\end{align*}
\end{ppn}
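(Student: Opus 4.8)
The plan is to obtain Proposition~\ref{p73} from Stone's formula together with the explicit form of the resolvent kernel of $H-z$ expressed through the fundamental system $\tht(\cdot,z),\phi(\cdot,z)$ and the Weyl solutions $\psi_{\pm}(\cdot,z)$ of~\eqref{s732}. Since $H$ is in the limit point case at $\pm\infty$, Stone's formula states that for $f,g\in C^\infty_0(\R)$ and $F\in C(\R)\cap L^\infty(\R)$,
\begin{align*}
\left(f, F(H)E((\lambda_1,\lambda_2])g\right)_{L^2(\R)} = \lim_{\de\to 0^+}\lim_{\e\to 0^+}\frac{1}{2\pi i}\int_{\lambda_1+\de}^{\lambda_2+\de} F(\lambda)\,\left(f,\bigl[R_H(\lambda+i\e)-R_H(\lambda-i\e)\bigr]g\right)_{L^2(\R)}\,d\lambda ,
\end{align*}
so that everything reduces to identifying the boundary values of the Green's function $G(\cdot,\cdot;z)$ on the spectrum $[0,\infty)$. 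Writing $R_H(z)$ as the integral operator with kernel
\begin{align*}
G(r,r';z) = \frac{\psi_-(\min(r,r'),z)\,\psi_+(\max(r,r'),z)}{m_-(z)-m_+(z)} ,
\end{align*}
and inserting $\psi_{\pm}(r,z)=\tht(r,z)+m_{\pm}(z)\phi(r,z)$ reduces the problem to elementary algebra.

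The key step is the identity, valid for $z\notin\R$ and $r\neq r'$,
\begin{align*}
G(r,r';z) = \Theta(r,z)^{T}\,M(z)\,\Theta(r',z) + \tfrac12\,\sgn(r'-r)\,\bigl[\phi(r,z)\tht(r',z)-\tht(r,z)\phi(r',z)\bigr] ,
\end{align*}
where $\Theta(r,z):=(\tht(r,z),\phi(r,z))^{T}$ and $M(z)$ is the Weyl--Titchmarsch matrix~\eqref{s733}. One checks this by expanding $\psi_{\pm}=\tht+m_{\pm}\phi$ in the formula for $G$ and collecting the coefficients of $\tht(r,z)\tht(r',z)$, of $\tht(r,z)\phi(r',z)+\phi(r,z)\tht(r',z)$, and of $\phi(r,z)\phi(r',z)$, using the normalization $W(\tht(\cdot,z),\phi(\cdot,z))\equiv 1$; these match exactly the entries of $M(z)$ and the leftover is the displayed remainder. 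The point of this rewriting is that the remainder term is, for each fixed pair $r\neq r'$, an \emph{entire} function of $z$, because $\tht(\cdot,z)$ and $\phi(\cdot,z)$ are entire in $z$; moreover it is real whenever $z$ is real. Thus the entire non-analyticity of $G(r,r';\cdot)$ across $[0,\infty)$ is carried by $M(z)$ alone.

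Taking boundary values $z=\lambda\pm i\e$ with $\lambda>0$ and letting $\e\to 0^+$, the entire remainder contributes nothing to $G(r,r';\lambda+i\e)-G(r,r';\lambda-i\e)$, while $\Theta(r,\lambda\pm i\e)\to\Theta(r,\lambda)$ by analyticity and $M(\lambda+i\e)-M(\lambda-i\e)=2i\,\Im M(\lambda+i\e)$, since the entries of $M$ are built from the scalar functions $m_{\pm}$ and $V$ is real, so $M(\bar z)=\overline{M(z)}$. Hence, as a matrix-valued measure in $\lambda$,
\begin{align*}
\frac{1}{2\pi i}\bigl[G(r,r';\lambda+i\e)-G(r,r';\lambda-i\e)\bigr]\,d\lambda \;\longrightarrow\; \Theta(r,\lambda)^{T}\,\Omega(d\lambda)\,\Theta(r',\lambda)
\end{align*}
weakly on $(\lambda_1,\lambda_2]$, which is precisely the Stieltjes inversion defining $\Omega$, sandwiched between $\Theta(r,\lambda)$ and $\Theta(r',\lambda)$. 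Pairing with $f$ and $g$, using Fubini (legitimate because $f,g\in C^\infty_0(\R)$, $F$ is bounded, the kernels $\psi_{\pm}(\cdot,\lambda\pm i\e)$ are locally bounded uniformly for $\lambda$ in a compact subset of $(0,\infty)$ and $\e$ small, and $\int\|\Omega(d\lambda)\|(1+\lambda^2)^{-1}<\infty$), inserting the weight $F(\lambda)$, and recognizing $\hat f_1(\lambda)=\int\tht(r,\lambda)f(r)\,dr$, $\hat f_2(\lambda)=\int\phi(r,\lambda)f(r)\,dr$, gives
\begin{align*}
\left(f, F(H)E((\lambda_1,\lambda_2])g\right)_{L^2(\R)} = \int_{(\lambda_1,\lambda_2]} \hat f(\lambda)^{T}\,\Omega(d\lambda)\,\overline{\hat g(\lambda)}\,F(\lambda) .
\end{align*}

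The substance here is the algebraic identity for $G$ and the observation that its non-analytic part is exactly $\Theta^{T}M\Theta$; the main obstacle is then purely the bookkeeping — justifying the interchange of the $\e\to 0^+$ limit with the $r,r'$-integrations in Stone's formula, the passage to the weak limit of the matrix-valued boundary values against the test functions, and the correct placement of complex conjugates consistent with the conventions of~\cite{ges}. We note that the hypotheses that $H$ have no point spectrum and that $0$ not be a resonance are not needed for Proposition~\ref{p73} itself; they enter only in the quantitative Proposition~\ref{p72}, where they guarantee that $\Omega$ is purely absolutely continuous, that $\rho_1,\rho_2$ can be taken as in~\eqref{s730}, and that the pointwise bound~\eqref{s731} holds.
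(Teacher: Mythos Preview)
The paper does not actually prove Proposition~\ref{p73}; it is stated as a known consequence of Weyl--Titchmarsch theory, with \cite{ges} given as the reference for the underlying spectral theory. Your argument is the standard derivation of this eigenfunction expansion: Stone's formula reduces the question to boundary values of the resolvent kernel, and the algebraic identity
\[
G(r,r';z) = \Theta(r,z)^{T} M(z)\,\Theta(r',z) + \tfrac{1}{2}\,\sgn(r'-r)\bigl[\phi(r,z)\tht(r',z)-\tht(r,z)\phi(r',z)\bigr]
\]
(which you correctly verify by expanding $\psi_{\pm}=\tht+m_{\pm}\phi$) isolates the jump across the real axis in the matrix $M(z)$ alone, since the remainder is entire and real on the real axis. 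This is exactly the mechanism behind the result as presented in \cite{ges}, so your proof is correct and matches what the paper is invoking without spelling out.
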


For the free case $V = 0$, we have the following explicit expressions: 
\begin{align}
\begin{split}\label{s734}
\tht(r,z) = \cos (r z^{1/2}),& \quad \phi(r,z) = \frac{\sin (r z^{1/2})}{z^{1/2}}, \\
\psi_{\pm}(r,z) = e^{\pm i r z^{1/2}},& \quad m_{\pm}(z) = \pm i z^{1/2} \\
\Omega(d\lam) = \frac{1}{2\pi} \chi_{(0,\infty)}(\lam)& 
\begin{bmatrix}
\lam^{-1/2} & 0 \\
0 & \lam^{1/2}
\end{bmatrix}.
\end{split}
\end{align}
This leads to the usual Fourier transform on the line.  

\begin{proof}[Proof of Proposition \ref{p72}]
	The decay of $V$ at $\pm \infty$ implies that $H = -\frac{d^2}{dr^2} + V$ is in the limit point case at $\pm \infty$ (see \cite{ges}).  The decay
	of $V$ 
	and the assumption that $H$ has no point spectrum imply that $\sigma(H) = [0,\infty)$ and that the spectrum 
	is purely absolutely continuous.  By Proposition \ref{p73}, this implies that the matrix valued measure $\Omega(d\lam)$ is supported in 
	$[0,\infty)$.   
	Since $V$ is even, we have by \eqref{s728b} 
	\begin{align*}
	\tht(-r,\lam) = \tht(r,\lam), \quad \phi(-r,\lam) = -\phi(r,\lam), \quad \psi_-(r,\lam) = \psi_+(-r,\lam),
	\end{align*}
	so that $m_-(\lam) = -m_+(\lam)$.  We recall from the previous section that the Jost solutions $f_{\pm}(r,\lam)$ are the 
	unique solutions to $H f = \lam^2 f$ such that $f_{\pm}(r,\lam) \sim e^{\pm i r \lam}$ as $r \rar \pm \infty$, and that 
	for $\lam \neq 0$, $W(f_+(\cdot,\lam),f_-(\cdot,\lam)) \neq 0$.  Then 
	\begin{align*}
	f_{-}(r,\lam) &= f_+(-r,\lam), \\
	W(f_+(\cdot,\lam),f_-(\cdot,\lam)) &= -2f_+(0,\lam)f_+'(0,\lam), \\
	\psi_+(r,\lam^2) &= \frac{f_+(r,\lam)}{f_+(0,\lam)}, \\
	m_+(\lam^2) &= \frac{f_+'(0,\lam)}{f_+(0,\lam)}.
	\end{align*}
	The matrix \eqref{s733} satisfies
	\begin{align*}
	M(\lam^2) &= 
	\begin{bmatrix}
	-\frac{1}{2} \frac{f_+(0,\lam)}{f_+'(0,\lam)} & 0 \\
	0 & -\frac{1}{2} \frac{f_+'(0,\lam)}{f_+(0,\lam)}
	\end{bmatrix} \\
	&= 
	\begin{bmatrix}
	\frac{1}{2} \frac{W(f_+(\cdot,\lam), \phi(\cdot,\lam^2))}{W(f_+(\cdot,\lam), \tht(\cdot,\lam^2))} & 0 \\
	0 & -\frac{1}{2} \frac{W(f_+(\cdot,\lam), \tht(\cdot,\lam^2))}{W(f_+(\cdot,\lam), \phi(\cdot,\lam^2))}.
	\end{bmatrix} 
	\end{align*}
	Thus, 
	\begin{align}
	\Omega(d\lam^2) &= 
	\begin{bmatrix}
	\rho_1(d\lam) & 0 \\
	0 & \rho_2(d\lam)
	\end{bmatrix} \label{s735}
	\end{align}
	where 
	\begin{align}
	\begin{split}\label{s736}
	\rho_1(d\lam) &:= 
	\frac{1}{\pi} \lam \Im \left [ \frac{W(f_+(\cdot,\lam), \phi(\cdot,\lam^2))}
	{W(f_+(\cdot,\lam), \tht(\cdot,\lam^2))} \right ] d\lam, \\
	\rho_2(d\lam) &:= 
	-\frac{1}{\pi} \lam \Im \left [ 
	\frac{W(f_+(\cdot,\lam), \tht(\cdot,\lam^2))}{W(f_+(\cdot,\lam), \phi(\cdot,\lam^2))}
	\right ]d \lam.
	\end{split}
	\end{align}
	By Proposition \ref{p73}, \eqref{s735} and \eqref{s736} imply \eqref{s729} and \eqref{s730}.  It remains to prove
	\eqref{s731}.  As in Section 3, the main difficulty is encountered when considering $0 < \lam \ll 1$.  Indeed, 
	it is not hard to show that if $\lam$ is bounded away from 0, $\lam \gtrsim 1$, then the distorted Fourier basis 
	$\theta(r,\lam^2),\phi(r,\lam^2)$ and measure $\Omega(d\lam^2)$ in \eqref{s735} are approximated to leading order by 
	the free case \eqref{s734}.  For the free case, \eqref{s731} holds (for $\lam \gtrsim 1$). Thus, \eqref{s731} holds in the 
	perturbed case for $\lam \gtrsim 1$.  We omit the details, and instead focus on establishing 
	\eqref{s731} in the case $0 < \lam \ll 1$.  To establish \eqref{s731} in the small $\lam$ regime, we use the scattering theory summarized in Section 3 to derive 
	asymptotic expansions for $\theta(r,\lam^2), \phi(r,\lam^2), \rho_1(d\lam)$, and $\rho_2(d\lam)$.  The upcoming calculations
	will freely use the notation from Section 3.
	
	We first consider the zero energy equation.  Let $\tht_0(r), \phi_0(r)$ be the fundamental system for $H f = 0$ such that 
	\begin{align}
	\begin{split}\label{s737}
	\tht_0(0) = 1,& \quad \phi_0(0) = 0, \\
	\tht_0'(0) = 0,& \quad \phi'_0(0) = 1. 
	\end{split}
	\end{align}
	Then 
	\begin{align}
	\begin{split}\label{s738}
	\phi_0(r) &= a_0 u^+_0(r) + a_1 u^+_1(r), \\
	\tht_0(r) &= b_0 u^+_0(r) + b_1 u^+_1(r), \\
	\end{split}
	\end{align}
	where, we recall that, the solutions $u^+_j(r)$ satisfy $H u^+_j(r) = 0$ and 
	\begin{align}
	\begin{split}\label{s738b}
	u^+_0(r) &= \frac{1}{3} r^2 + O(r), \\
	u^+_1(r) &= r^{-1} + O(r^{-2}),
	\end{split}
	\end{align}
	as $r \rar \infty$ (see Lemma \ref{scattlem1}). Since $W(\tht_0,\phi_0) = 1 = W(u_1^+,u_0^+)$, we conclude that 
	\begin{align}\label{s739}
	a_0 b_1 - a_1 b_0 = 1.
	\end{align}
	Since $\phi_0$ and $\tht_0$ are odd and even respectively, the assumption that 0 is not a resonance implies the crucial
	condition that 
	\begin{align}\label{s740}
	a_0 \neq 0 \quad \mbox{and} \quad b_0 \neq 0.
	\end{align}
	We now perturb in small $\lam$.  We claim that the smooth fundamental system $\tht(r,\lam^2), \phi(r,\lam^2)$ that satisfies
	\eqref{s728b} also satisfies
	\begin{align}
	\begin{split}\label{s741}
	\phi(r,\lam^2) = \phi_0(r) + O(\lam^2 \la r \ra^2 r^2),\\
	\tht(r,\lam^2) = \tht_0(r) + O(\lam^2 \la r \ra^2 r^2),    
	\end{split}
	\end{align}
	for $0 \leq r \leq \lam^{-1}$. The $O(\cdot)$ terms are real--valued and satisfy natural derivative bounds.  Indeed, by variation of constants, we can write $\phi(r,\lam^2)$ as a solution to 
	\begin{align}\label{s742}
	\phi(r,\lam^2) = \phi_0(r) + \lam^2 \int_0^r \left [ u^+_0(r) u^+_1(\rho) -  u^+_0(\rho) u^+_1(r)\right ] \phi(\rho,\lam^2) d\rho.
	\end{align}
	If we define $\tilde \phi(r,\lam^2) = \la r \ra^{-2} \phi(r,\lam^2)$ and $K(r,\rho,\lam) = \lam^2 \la \rho \ra^2 \la r \ra
	^{-2} \left [ u^+_0(r) u^+_1(\rho) -  u^+_0(\rho) u^+_1(r)\right ]$, then \eqref{s742} takes the form of the 
	Volterra equation 
	\begin{align}\label{s743}
	\tilde \phi(r,\lam^2) = \la r \ra^{-2} \phi_0(r) + \int_0^r K(r,\rho,\lam) \tilde \phi(\rho,\lam^2) d\rho.
	\end{align}
	By \eqref{s738b}, if $0 < \rho < r$, then the kernel satisfies
	\begin{align*}
	|K(r,\rho,\lam)| \lesssim \lam^2 \la \rho \ra.
	\end{align*}
	Thus, 
	\begin{align*}
	\int_0^{\lam^{-1}} \sup_{r > \rho} |K(r,\rho,\lam)| d\rho \lesssim 1,
	\end{align*}
	which implies that the Volterra iterates for \eqref{s743} converge on $[0,\lam^{-1}]$ to a unique solution 
	$\tilde \phi(r,\lam^2)$ satisfying 
	\begin{align*}
	\tilde \phi(r,\lam^2) = \la r \ra^{-2} \phi_0(r) + O(\lam^2 r^2). 
	\end{align*}
	This proves \eqref{s741} for $\phi(r,\lam^2)$.  An identical argument proves \eqref{s741} for $\tht(r,\lam^2)$ as well. 
	By Lemma \ref{scattlem2} there exists a fundamental system $u^+_0(r,\lam), u^+_1(r,\lam)$ for $Hf = \lam^2 f$ such that 
	$W(u^+_1(\cdot,\lam), u^+_0(\cdot,\lam) ) = 1$ and for $j = 0,1$
	\begin{align}\label{s744}
	u^+_j(r,\lam) = u^+_j(r)(1 + O(\la r \ra^2 \lam^2)), \quad r \in [r_0,\e_0 \lam^{-1}],
	\end{align}
	for some fixed $r_0,\e_0 > 0$.  Similar to \eqref{s738}, we can write 
	\begin{align}
	\begin{split}\label{s745}
	\phi(r,\lam^2) &= a_0(\lam) u^+_0(r,\lam) + a_1(\lam) u^+_1(r,\lam), \\
	\tht(r,\lam^2) &= b_0(\lam) u^+_0(r,\lam) + b_1(\lam) u^+_1(r,\lam),
	\end{split}
	\end{align}
	with $a_0(\lam)b_1(\lam) - a_1(\lam)b_0(\lam) = 1$.  We claim that 
	\begin{align}
	\begin{split}\label{s746}
	a_0(\lam) = a_0 + O(\lam^2),& \quad a_1(\lam) = a_1 + O(\lam^2), \\ 
	b_0(\lam) = b_0 + O(\lam^2),& \quad b_1(\lam) = b_1 + O(\lam^2), 
	\end{split}
	\end{align}
	as $\lam \rar 0$ where $a_0,a_1,b_0,$ and $b_1$ are as in \eqref{s738}.  Indeed, using \eqref{s741} and \eqref{s744},
	we evaluate the Wronskian at $r = r_0$ and deduce that 
	\begin{align*}
	a_0(\lam) &= W(u^+_1(r,\lam), \phi(r,\lam^2) \\
	&= W\left (u^+_1(r)(1 + O(\lam^2 \la r \ra^2), a_0 u^+_0(r) + a_1 u^1_+(r) + O(\lam^2 \la r \ra^2 r^2) \right ) \\
	&= a_0 + O(\lam^2). 
	\end{align*}
	The computation for $a_1,b_0,$ and $b_1$ are similar so that \eqref{s746} follows. We are now in a position to 
	derive asymptotics for 
	\begin{align*}
	\om_1(\lam) &:= 
	\frac{1}{\pi} \lam \Im \left [ \frac{W(f_+(\cdot,\lam), \phi(\cdot,\lam^2))}
	{W(f_+(\cdot,\lam), \tht(\cdot,\lam^2))} \right ], \\
	\om_2(\lam) &:= 
	-\frac{1}{\pi} \lam \Im \left [ 
	\frac{W(f_+(\cdot,\lam), \tht(\cdot,\lam^2))}{W(f_+(\cdot,\lam), \phi(\cdot,\lam^2))}
	\right ].
	\end{align*}
	By Lemma \ref{scattlem3}, we have 
	\begin{align}
	\begin{split}\label{s747}
	W(f_+(\cdot,\lam),u^+_1(\cdot,\lam) ) &= \al_0^+(\lam) + i \al_1^+(\lam), \\ 
	W(f_+(\cdot,\lam),u^+_0(\cdot,\lam) ) &= \beta_0^+(\lam) + i \beta_1^+(\lam), \\ 
	\end{split}
	\end{align}
	where 
	\begin{align}
	\begin{split}\label{s748}
	\al_0^+(\lam) = \lam^2( \al_0 + O(\lam^\e) ),& \quad \al_1^+(\lam) = O(\lam^{2 - 2\e}), \\  
	\beta_0^+(\lam) = O(\lam^{-1+4\e}),& \quad \beta_1^+(\lam) = \lam^{-1}(\beta_0 + O(\lam^\e)),  
	\end{split}
	\end{align}
	for all $0 < \e < \e_0$.  The constants $\al_0$ and $\beta_0$ in \eqref{s748} are positive.  From \eqref{s745} and 
	\eqref{s747}, we conclude that 
	\begin{align*}
	W(f_+(\cdot,\lam), \phi(\cdot,\lam^2)) &= A_0(\lam) + i A_1(\lam), \\
	W(f_+(\cdot,\lam), \tht(\cdot,\lam^2)) &= B_0(\lam) + i B_1(\lam), 
	\end{align*}
	where 
	\begin{align}
	\begin{split}\label{s749}
	A_0(\lam) &= a_0(\lam) \beta^+_0(\lam) + a_1(\lam) \al^+_0(\lam), \\ 
	A_1(\lam) &= a_0(\lam) \beta^+_1(\lam) + a_1(\lam) \al^+_1(\lam), \\ 
	B_0(\lam) &= b_0(\lam) \beta^+_0(\lam) + b_1(\lam) \al^+_0(\lam), \\ 
	B_1(\lam) &= b_0(\lam) \beta^+_1(\lam) + b_1(\lam) \al^+_1(\lam). \\ 
	\end{split}
	\end{align}
	Then 
	\begin{align}
	\Im \left [ \frac{W(f_+(\cdot,\lam), \phi(\cdot,\lam^2))}
	{W(f_+(\cdot,\lam), \tht(\cdot,\lam^2))} \right ] = \frac{A_1 B_0 - A_0 B_1}{B_0^2 + B_1^2}. \label{s750}
	\end{align}
	By \eqref{s748} and the condition that $a_0(\lam)b_1(\lam) - a_1(\lam)b_0(\lam) = 1$, we conclude that  
	\begin{align}
	A_1 B_0 - A_0 B_1 &= \beta^+_1(\lam)\al_0^+(\lam) - \al_1^+(\lam)\beta_0^+(\lam) \notag \\
	&= \lam \left (\al_0 \beta_0 + O(\lam^\e) \right ). \label{s751}
	\end{align}
	By \eqref{s748} and \eqref{s746}
	\begin{align}
	B_0^2 + B_1^2 = \lam^{-2} \left ( b_0^2 \beta_0^2 + O(\lam^\e) \right ). \label{s752} 
	\end{align}
	Thus, \eqref{s750}, \eqref{s751}, and \eqref{s752} yield
	\begin{align}\label{s753}
	\Im \left [ \frac{W(f_+(\cdot,\lam), \phi(\cdot,\lam^2))}
	{W(f_+(\cdot,\lam), \tht(\cdot,\lam^2))} \right ] = \lam^3 \frac{\al_0 \beta_0 + O(\lam^\e)}{b_0^2 \beta_0^2 + O(\lam^\e)}, 
	\end{align}
	as $\lam \rar 0^+$.  Similarly, 
	\begin{align}\label{s754}
	- \Im \left [ \frac{W(f_+(\cdot,\lam), \tht(\cdot,\lam^2))}
	{W(f_+(\cdot,\lam), \phi(\cdot,\lam^2))} \right ] = \lam^3 \frac{\al_0 \beta_0 + O(\lam^\e)}{a_0^2 \beta_0^2 + O(\lam^\e)}. 
	\end{align}
	The crucial nonresonant condition \eqref{s740} implies that \eqref{s753} and \eqref{s754} are both $O(\lam^3)$.  In summary, 
	we have shown that the measures $\rho_1(d\lam) = \om_1(\lam)d\lam$ and $\rho_2(d\lam) = \om_2(\lam)d\lam$
	in \eqref{s736} have weights
	that satisfy 
	\begin{align}\label{s755}
	\om_1(\lam) = O(\lam^4), \quad \om_2(\lam) = O(\lam^4). 
	\end{align}

	We now prove \eqref{s731} using the asymptotics from the previous paragraph.  The expressions \eqref{s741}, \eqref{s738}, \eqref{s738b}, and \eqref{s755} imply that 
	\begin{align}\label{s755b}
	\left ( \frac{1 + \lam^2 \la r \ra^2}{\lam^2 \la r \ra^2} \right )
	\left [ 
	|\tht(r,\lam^2)|^2\om_{1}(\lam) + 
	|\phi(r,\lam^2)|^2\om_{2}(\lam)
	\right ] \lesssim 1, \quad r \in [0,\lam^{-1}].  
	\end{align}
	We now consider the case $r \geq \lam^{-1}$.  We first recall that 
	\begin{align*}
	W(f_+(\cdot,\lam), \overline{f_+(\cdot,\lam)}) = -2i\lam \neq 0, 
	\end{align*}
	for $\lam > 0$.  Thus, we can write 
	\begin{align}\label{s756}
	\phi(r,\lam^2) = c(\lam)f_+(r,\lam) + d(\lam) \overline{f_+(r,\lam)}.
	\end{align}
	Since $\phi(r,\lam^2)$ is real--valued, $d(\lam) = \overline{c(\lam)}$.  Note that 
	\begin{align*}
	W(\phi(\cdot,\lam^2),\overline{f_+(\cdot,\lam)}) = -2i\lam c(\lam), 
	\end{align*}
	so that by \eqref{s747}--\eqref{s749}, we conclude that 
	\begin{align}\label{s757}
	c(\lam) = \frac{1}{2i\lam} W(f_+(\cdot,\lam), \phi(\cdot,\lam^2)) = O_{\C}(\lam^{-2}).
	\end{align}
	By Lemma \ref{jostlem} we have 
	\begin{align}\label{s758}
	f_+(r,\lam) = e^{ir\lam}(1 + O(\lam^{-1} \la r \ra^{-1})), \quad r \geq \lam^{-1}.
	\end{align}
	From \eqref{s755}--\eqref{s758}, we conclude that 
	\begin{align}\label{s759}
	\left ( \frac{1 + \lam^2 \la r \ra^2}{\lam^2 \la r \ra^2} \right ) 
	|\phi(r,\lam^2)|^2\om_{2}(\lam) \lesssim 1, \quad r \geq \lam^{-1}.
	\end{align}
	By the exact same arguments, 
	\begin{align}\label{s760}
	\left ( \frac{1 + \lam^2 \la r \ra^2}{\lam^2 \la r \ra^2} \right ) 
	|\tht(r,\lam^2)|^2\om_{1}(\lam) \lesssim 1, \quad r \geq \lam^{-1}.
	\end{align} 
	In summary, we have shown that for $0 < \lam \ll 1$,
	\begin{align*}
	\left ( \frac{1 + \lam^2 \la r \ra^2}{\lam^2 \la r \ra^2} \right )
	\left [ 
	|\tht(r,\lam^2)|^2\om_{1}(\lam) + 
	|\phi(r,\lam^2)|^2\om_{2}(\lam)
	\right ] \lesssim 1, \quad r \in \R.
	\end{align*}
	This proves \eqref{s731} and concludes the proof of Proposition \ref{p72}.
\end{proof}




\section{Small Data Theory and Concentration--Compactness}

In this section we use the tools developed in the previous sections to initiate the study of the nonlinear evolution introduced in the previous section:
\begin{align}
\begin{split}\label{s81}
&\p_t^2 u - \Delta_g u + V(r) u = N(r,u), \quad (t,r) \in \R \times \R, \\
&\vec u(0) = (u_0,u_1) \in \h,
\end{split}
\end{align}
where $\h := \h(\R; (r^2 + 1)^2 dr)$, $-\Delta_g$ is the (radial) Laplace operator on the $5d$ wormhole $\M^5$, and $V(r)$ and 
$N(r,u)$ are given in \eqref{s45} and \eqref{s46}.  In particular, we begin our proof of 
Theorem \ref{t41}, i.e. every solution to \eqref{s81} is global and 
scatters to free waves on $\M^5$. 

\subsection{Small Data Theory}

As summarized in the introduction, the proof of Theorem \ref{t41} (which we have shown in Section 4 is equivalent to Theorem \eqref{t01}) uses the powerful concentration--compactness/rigidity 
methodology introduced by Kenig and Merle in their study of energy--critical dispersive equations \cite{km06} \cite{km08}.  The methodology
is split up into three main steps and proceeds by contradiction.  In the first step, we establish small data global well--posedness 
and scattering for \eqref{s44}.  In particular, we establish Theorem \ref{t41} for small data $(u_0,u_1)$.  In the 
second step, the first step and a concentration--compactness argument shows that the \emph{failure} of Theorem \ref{t41} implies
that that there exists a nonzero \lq critical element' $u_*$;  a minimal non--scattering global solution to \eqref{s44}. 
The minimality of $u_*$ imposes the following compactness property on $u_*$: the trajectory 
\begin{align*}
K = \left \{ \vec u_*(t) : t \in \R \right \} 
\end{align*}
is precompact in $\h$. In the third and final step, we establish  the following rigidity theorem: every solution $u$ with $\{ \vec u(t) : t \in \R \}$
precompact in $\h$ must identically 0.  This contradicts the second step which implies that Theorem \ref{t41} holds.  In this section 
we complete the first two steps in the program: small data theory and concentration--compactness.  These steps follow from, by now, standard 
arguments using the Strichartz estimates for $\p_t^2 - \Delta_g + V$ established in Section 4. 

We first establish a global well--posedness and small data theory for \eqref{s44}.  This follows from a contraction mapping argument
using Strichartz estimates established in Proposition \ref{p71} for the inhomogeneous wave equation with potential
\begin{align}
\begin{split}\label{s412}
&\p_t^2 u - \Delta_g u + V(r) u = h(t,r), \quad (t,r) \in \R \times \R, \\
&\vec u(0) = (u_0,u_1) \in \h.
\end{split}
\end{align}
Here, as in the previous section, the potential $V$ is given by 
\begin{align*}
V(r) = \la r \ra^{-4} + 2 \la r \ra^{-2} \left ( \cos 2Q - 1 \right ),
\end{align*}
where $Q$ is the unique harmonic map of degree $n$.  To see that $V$ satisfies the hypotheses in Proposition \ref{p71}, we note that
by Proposition \ref{harm}, we only need to verify the spectral assumptions are satisfied.  
This was shown in \cite{biz2}, and we recall the argument.  We have the relation 
\begin{align}\label{conj relation}
\la r \ra^2 (-\Delta_g + V) \la r \ra^{-2} = H, 
\end{align}
where $H$ is the Schr\"odinger operator on $L^2(\R)$ given by  
\begin{align*}
H = -\frac{d^2}{dr^2} + \frac{2}{r^2 + 1} + V(r).
\end{align*}
We need to check that $H$ has no point spectrum and that 0 is not a resonance for $H$. 
First, we note that the decay of the potential $\frac{2}{r^2 + 1} + V(r)$ implies that $\sigma_{ac}(H) =  [0,\infty)$ and there 
are no embedded eigenvalues.  If $Q \equiv 0$ (the $n = 0$ case), the fact that $H$ has no eigenvalues in $(-\infty,0]$ follows from 
the fact that the potential term $2 \la r \ra^{-2} + V(r)$ is nonnegative.  For the case $n \in \N$, 
multiply the equation
\begin{align*}
\p_r^2 Q + \frac{2r}{r^2 + 1} \p_r Q - \frac{\sin 2Q}{r^2 + 1} = 0 
\end{align*}
by $r^2 + 1$ and differentiate to conclude that 
\begin{align*}
\tilde H ( \la r \ra^2 Q'(r) ) = 0,  
\end{align*}
where $\tilde H = H - \la r \ra^4$.  By Proposition \ref{harm} the harmonic map $Q$ is strictly increasing on $\R$ so that $\la r \ra^2 Q'(r) > 0$
for all $r \in \R$.  By Sturm oscillation theory we conclude that $\tilde H$ has no negative eigenvalues and that 
$\sigma(\tilde H) = [0,\infty)$.  In particular, we have for all $h \in C^\infty_0(\R)$ 
\begin{align}\label{coercivity}
( H h, h )_{L^2(\R)} = (\tilde H h , h )_{L^2(\R)} + \int |h|^2 \la r \ra^{-4} dr \geq \int |h|^2 \la r \ra^{-4} dr.
\end{align}
By a variational principle, the previous implies that $H$ has no eigenvalues in $(-\infty,0]$, and thus, $H$ has no 
point spectrum.  We now check that 0 is not a resonance of $H$.  
The asymptotics of the potential $2 \la r \ra^{-2} + V(r)$ imply that $0$ is a resonance if and only 
if $0$ is an eigenvalue (see Lemma \ref{scattlem1} and Definition \ref{scattdef}).  Thus, $0$ is not a resonance of $H$.  We conclude that 
$V$ satisfies the hypotheses of Proposition \ref{p71}. 

For $I \subseteq \R$, we denote the following spacetime norms
\begin{align*}
\| u \|_{S(I)} := \| u \|_{L^3_t L^6_x(I)}, \quad \| u \|_{W(I)} := \| u \|_{L^3_t \dot W^{1/2,L^3}_x(I)}, \quad
\| h \|_{N(I)} := \| F \|_{L^1_tL^2_x(I) + L^{3/2}_t \dot W^{1/2,3/2}_x(I)}.
\end{align*}
By the previous discussion and Proposition \ref{p71}, a solution $u$ to \eqref{s412} satisfies the estimate
\begin{align}\label{s413}
\| u \|_{W(I)} \lesssim \| \vec u(0) \|_{\h} + 
\| h \|_{N(I)}.
\end{align}
We claim that if $f \in C^\infty_0(\M^5)$ is radial, then 
\begin{align*}
\| f \|_{L_x^6} \lesssim \| |\nabla|^{1/2} f \|_{L_x^3}.  
\end{align*}
Indeed, by the fundamental theorem of calculus, we have 
\begin{align*}
|f(r)| \lesssim \la r \ra^{-2/3} \left ( \int |f'(r)|^3 (r^2 + 1) dr \right )^{1/3} = \la r \ra^{-2/3} \| \nabla f \|_{L_x^3}. 
\end{align*}
Thus, $\| f \|_{L_x^\infty} \lesssim \| \nabla f \|_{L_x^3}$.  Interpolating this estimate with the trivial embedding 
$L_x^3 \hookrightarrow L_x^3$ yields the desired bound $\| f \|_{L_x^6} \lesssim \| |\nabla|^{1/2} f \|_{L_x^3}.$  Thus, we have 
that the `scattering norm' $\| \cdot \|_{S(I)}$ is weaker than the norm $\| \cdot \|_{W(I)}$.  This 
fact and \eqref{s413} imply that a solution to \eqref{s412} satisfies the Strichartz estimate 
\begin{align}\label{s414}
\| u \|_{S(\R)} + \| u \|_{W(\R)} \lesssim \| \vec u(0) \|_{\h} + \| h \|_{N(\R)}.
\end{align}
We now use \eqref{s414} and standard contraction mapping arguments to establish the following global well--posedness
and small data theory. We remark here that it will be important in later applications to use the weaker norm $\| \cdot \|_{S(I)}$ 
along with the norm $\| \cdot \|_{W(\R)}$ when establishing the small data scattering.

\begin{ppn}\label{p42}
	For every $(u_0,u_1) \in \h$, there exists a unique global solution $u$ to \eqref{s81}
	such that $\vec u(t) \in C(\R; \h) \cap L^\infty(\R; \h)$.  A solution $u$ scatters to a free wave on $\M^5$ as $t \rar \infty$
	if and only if 
	\begin{align*}
	\| u \|_{S(\R)} < \infty. 
	\end{align*}
	Here, scattering to a free wave on $\M^5$ as $t \rar \infty$ means that there exists a solution $v_L$ to \eqref{s412} with $V 
	\equiv h \equiv 0$ such that 
	\begin{align*}
	\lim_{t \rar \infty} \| \vec u(t) - \vec v_L(t) \|_{\h} = 0.
	\end{align*}
	A similar characterization of $u$ scattering to a free wave on $\M^5$ as $t \rar -\infty$ also holds.  Moreover, there 
	exists $\delta > 0$ such that if $\| \vec u(0) \|_{\h} < \delta$, then 
	\begin{align}\label{s415a}
	\| \vec u \|_{L^\infty_t \h} + \| u \|_{S(\R)} + \| u \|_{W(\R)} \lesssim \| \vec u(0) \|_{\h} < \delta. 
	\end{align}
\end{ppn}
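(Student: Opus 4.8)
The plan is to establish Proposition \ref{p42} by a standard fixed-point argument in the spirit of Kenig--Merle, using as the fundamental input the Strichartz estimate \eqref{s414} for the inhomogeneous linearized equation \eqref{s412}. The key point is that the nonlinearity $N(r,u) = F(r,u) + G(r,u)$ is, after the reduction in Section 4, essentially a ``subcritical'' perturbation: from \eqref{s48a}--\eqref{s49} and the Strauss estimate \eqref{s411s} one has pointwise bounds $|N(r,u)| \lesssim \la r \ra^{-1} |u|^2 + |u|^3$ which, together with the weighted $L^6_x \hookleftarrow \dot W^{1/2,3}_x$ embedding established just above the proposition statement, allow one to control $\|N(r,u)\|_{N(\R)}$ by a superlinear function of $\|u\|_{S(\R)} + \|u\|_{W(\R)}$.

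First I would set up the solution map
\[
\Phi(u)(t) = \cos(t\sqrt{-\Delta_g+V})\,u_0 + \frac{\sin(t\sqrt{-\Delta_g+V})}{\sqrt{-\Delta_g+V}}\,u_1 + \int_0^t \frac{\sin((t-s)\sqrt{-\Delta_g+V})}{\sqrt{-\Delta_g+V}} N(s,u(s))\,ds
\]
on the complete metric space $B = \{ u : \|u\|_{S(\R)} + \|u\|_{W(\R)} \le R \}$ with $R$ small, with the metric induced by $\|\cdot\|_{S(\R)} + \|\cdot\|_{W(\R)}$. The linear estimate \eqref{s414} gives $\|\text{linear part}\| \lesssim \|\vec u(0)\|_{\h}$. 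For the Duhamel term I would prove a nonlinear estimate of the form $\|N(r,u)\|_{N(\R)} \lesssim \|u\|_{S(\R) \cap W(\R)}^2 + \|u\|_{S(\R) \cap W(\R)}^3$ and a corresponding difference estimate $\|N(r,u) - N(r,v)\|_{N(\R)} \lesssim (\|u\|+\|v\|+\|u\|^2+\|v\|^2)\|u-v\|$; here one splits $N = F_{\text{quad}} + F_0 + G$, places the quadratic-in-$u$ piece of $F$ (which carries the favorable weight $\la r \ra^{-1}$) in the $L^1_t L^2_x$ component of $N(\R)$ using Hölder in $x$ against the $\la r \ra^{-1}$ weight and the dispersive/Strauss decay, and places the cubic pieces $F_0$, $G$ in $L^{3/2}_t \dot W^{1/2,3/2}_x$ via the fractional chain rule and the embedding $\|u\|_{L^6_x} \lesssim \||\nabla|^{1/2} u\|_{L^3_x}$. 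Choosing $\delta$, hence $R$, small enough makes $\Phi$ a contraction on $B$, yielding a unique small solution with \eqref{s415a}; persistence of regularity and the $C(\R;\h) \cap L^\infty(\R;\h)$ membership follow from the same estimates applied on subintervals together with energy conservation for the linearized flow.

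For the large-data global existence and the scattering criterion, I would argue as usual: local well-posedness on a short interval (with length depending on a suitable profile of the data, not just its norm) follows from the same contraction run on an interval $I$ where $\|\cos(t\sqrt{-\Delta_g+V})u_0 + \cdots\|_{S(I) \cap W(I)}$ is small; a standard bootstrap/continuity argument then shows that if $\|u\|_{S(I)}$ stays finite on a maximal interval of existence then the solution extends, and conversely finiteness of $\|u\|_{S(\R)}$ upgrades (via \eqref{s414} applied on $[T,\infty)$ and the nonlinear estimates) to finiteness of $\|u\|_{W(\R)}$ and to the existence of asymptotic linear states, i.e.\ scattering; the free limits $v_L^\pm$ are obtained by showing $\{\vec u(t)\}$ is Cauchy in $\h$ along the free flow as $t \to \pm\infty$. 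Since the geometry makes the equation energy-subcritical (no finite-time blowup at the throat, and the potential $V \ge 0$ is benign by the coercivity \eqref{coercivity}), global existence for all data is automatic once local existence and the blowup criterion are in hand.

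The main obstacle I anticipate is the nonlinear estimate for the quadratic term $2\sin 2Q\,\la r \ra^{-1} u^2$: it is genuinely the slowest-decaying and lowest-power piece, so one cannot simply treat it like the cubic terms. The resolution is to exploit that it comes with the extra spatial weight $\la r \ra^{-1}$ (from $F(r,u) = 2\la r\ra^{-3}\sin^2(\la r\ra u)\sin 2Q$, which is $\la r\ra^{-1}u^2$ to leading order) plus the rapid decay $\sin 2Q = O(\la r \ra^{-2})$ coming from the harmonic-map asymptotics of Proposition \ref{harm}; this combined weight $\la r \ra^{-3}$ against $u^2$, together with the Strauss bound $|u(r)| \lesssim \la r \ra^{-3/2}\|\vec u\|_{\h}$, gives more than enough integrability in $x$ to land the term in $L^1_t L^2_x$ after a Hölder estimate in $t$ using the $L^3_t$ control of $\|u\|_{L^6_x}$. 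A secondary technical point is verifying the fractional-derivative (Besov/Sobolev) estimates on the wormhole $\M^5$ needed for the $\dot W^{1/2,3/2}_x$ component — here one invokes the Littlewood--Paley theory for this geometry (cited as \cite{zhang}) together with the product and chain rules, which go through exactly as in the flat radial setting because $\sqrt{-\Delta_g}$ behaves like an elliptic operator of order one with the expected symbol bounds.
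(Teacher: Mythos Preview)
Your overall architecture (contraction in $S\cap W$ via \eqref{s414}, then scattering criterion via Duhamel) is correct and matches the paper. But your placement of the nonlinear pieces is exactly reversed from what actually works, and as written the quadratic estimate fails.

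The problem is the time integrability. You propose to put the quadratic piece $(\la r\ra^{-1}\sin 2Q)\,u^2$ in the $L^1_tL^2_x$ component of $N(\R)$. No amount of spatial weight can produce $L^1_t(\R)$ integrability: the only time decay available is $\|u(t)\|_{L^6_x}\in L^3_t$, so with two factors of $u$ you get at best $L^{3/2}_t$, and using the Strauss bound for one factor only gives $L^\infty_t$, which is worse. Thus $\|(\la r\ra^{-1}\sin 2Q)\,u^2\|_{L^1_tL^2_x(\R)}$ cannot be closed in terms of $\|u\|_{S(\R)}$ and $\|u\|_{W(\R)}$. Conversely, three factors of $L^3_t$ give exactly $L^1_t$, so the cubic term $|u|^3$ lands naturally in $L^1_tL^2_x$ (and not in $L^{3/2}_t\dot W^{1/2,3/2}_x$, where three $L^3_t$ factors overshoot).

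The paper does the opposite of what you propose: the quadratic term goes into $L^{3/2}_t\dot W^{1/2,3/2}_x$ via the Leibniz rule on $\M^5$ (this is the estimate \eqref{u2est}, using $\la r\ra^{-1}\sin 2Q\in \dot W^{1/2,3}_x\cap L^6_x$), and the cubic and quartic remainders $G$ and $F_0$ go into $L^1_tL^2_x$ as $\|u\|_{S}^3$ and $\|\vec u\|_{L^\infty_t\h}\|u\|_{S}^3$. This is the reason the paper introduces the dual Strichartz space $L^{3/2}_t\dot W^{1/2,3/2}_x$ at all; it is tailored to the quadratic term, not the cubic.

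A secondary remark: for large-data \emph{global} existence the paper does not use Strichartz. It runs a contraction in $C([0,T];\h)$ using the crude energy estimate \eqref{s416} (which costs a factor of $T$ but is harmless locally), and then globalizes via conservation of the original energy $\mathcal E(\vec\psi)$ for \eqref{s41}, which a priori bounds $\|\vec u(t)\|_\h$. Your sketch (``energy-subcritical, no blowup at the throat, coercivity of $V$'') is morally right but does not by itself supply the needed a priori $L^\infty_t\h$ bound; the link back to the conserved $\mathcal E$ is what closes it.
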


\begin{proof}
	We first show that for every $(u_0,u_1) \in \h$, there exists a unique global solution $\vec u(t) \in C(\R; \h) \cap 
	L^\infty(\R; \h)$ to \eqref{s81} with $\vec u(0) = (u_0,u_1)$.  Denote the propagator for the free wave equation on $\M^5$
	by $S(t)$, i.e. $S(t)(u_0,u_1)$ solves \eqref{s412} with $V \equiv h \equiv 0$. Denote the propagator for the free wave equation on 
	$\M^5$ with potential $V$ by $S_V(t)$, i.e. $S_V(t)(u_0,u_1)$ solves \eqref{s412} with $h \equiv 0$.  Let 
	\begin{align}
	\mathcal E_V(f,g) := \frac{1}{2} \int \left ( |g|^2 + |\p_r f|^2 + V |f|^2 dr \right ) (r^2 + 1)^2 dr
	\end{align}
	denote the conserved energy associated to $S_V$.  Using the coercivity bound
	\eqref{coercivity} it is not hard to conclude that
	\begin{align}\label{equ norms}
	\| \partial_r f \|_{L^2(\R; (r^2 + 1)^2)} \simeq \left \| \sqrt{-\Delta_g + V} f \right \|_{L^2(\R; (r^2 + 1)^2)}
	\end{align} 
	for all radial $f$ so that 
	\begin{align}\label{s415} 
	\| (f,g) \|_{\h}^2 \simeq \mathcal E_V(f,g)
	\end{align}
	for all radial $f,g$.  Indeed, by the decay of $V$ and the Strauss estimate \eqref{s411s} we have 
	\begin{align*}
	\left \| \sqrt{-\Delta_g + V} f \right \|_{L^2(\R; (r^2 + 1)^2)}^2 &= \int \bigr ((-\Delta_g f) f + V |f|^2 \bigr ) (r^2 + 1)^2 dr \\
	&= \| \p_r f \|_{L^2(\R; (r^2 +1)^2 dr)}^2 + \int V |f|^2 (r^2 +1)^2 dr \\
	&\lesssim \|\p_r f \|_{L^2(\R; (r^2 + 1)^2 dr)}^2.
	\end{align*}
	We now note that by the second equality above and the decay of $V$ we have
	\begin{align*}
	\| \p_r f \|_{L^2(\R; (r^2 +1)^2 dr)}^2 \lesssim 
	\left \| \sqrt{-\Delta_g + V} f \right \|_{L^2(\R; (r^2 + 1)^2)}^2 
	+ \int |f|^2 dr. 
	\end{align*}
	By \eqref{conj relation} and \eqref{coercivity} (applied to $h = (r^2 + 1)f$) we see that 
	\begin{align*}
	\int |f|^2 dr \lesssim \left \| \sqrt{-\Delta_g + V} f \right \|_{L^2(\R; (r^2 + 1)^2)}^2
	\end{align*}
	whence 
	\begin{align*}
	\| \p_r f \|_{L^2(\R; (r^2 +1)^2 dr)}^2 \lesssim 
	\left \| \sqrt{-\Delta_g + V} f \right \|_{L^2(\R; (r^2 + 1)^2)}^2.
	\end{align*}
	This proves \eqref{equ norms}.
	
	We write the nonlinear equation \eqref{s81} in Duhamel form as 
	\begin{align*}
	u(t) = S_V(t)(u_0,u_1) + \int_0^t S_V(t-s)\left ( 0, F(\cdot, u(s)) + G(\cdot, u(s)) \right ) ds. 
	\end{align*}
	Using a simple energy estimate, \eqref{s48a}, \eqref{s48b}, \eqref{s49}, \eqref{s411s}, and \eqref{s415}, we obtain the following
	a--priori estimate for a solution $\vec u(t) \in C([0,T]; \h)$ to \eqref{s81}: for $t \in [0,T]$
	\begin{align}
	\begin{split}\label{s416}
	\| \vec u(t) \|_{\h} &\lesssim \| \vec u(0) \|_{\h} + \int_0^T \| F(\cdot, u(s)) + G(\cdot, u(s) \|_{L^2} ds \\
	&\lesssim \| \vec u(0) \|_{\h} + T \left ( \| \vec u \|_{L^\infty_t([0,T]; \h)}^2 +
	\| \vec u \|_{L^\infty_t([0,T]; \h)}^3 \right ).
	\end{split}
	\end{align}
	By a contraction mapping argument based on \eqref{s416} and the conservation of energy for \eqref{s41}, we conclude that 
	there exists a unique global solution $\vec u(t) \in C(\R; \h) \cap L^\infty(\R; \h)$ to \eqref{s81}. 
	
	We now prove the scattering criterion and small data scattering.  Note that every solution $\vec u(t) 
	\in C(\R; \h)$ to \eqref{s81} satisfies $\| u \|_{S(I)} + \| u \|_{W(I)} < \infty$ for all $I \Subset \R$.
	Indeed, by \eqref{s411s} we have $\| u \|_{L^6_x} \lesssim \| \nabla u \|_{L^2_x}$ whence by interpolation we have 
	$\| u \|_{\dot W_x^{1/2,3}} \lesssim \| \nabla u \|_{L^2_x}$.  We first prove the 
	small data scattering estimate \eqref{s415a} as this will also illustrate the validity of the scattering criterion.  Let 
	$u$ be a solution to \eqref{s81} and let $I \subset \R$.  We first note that by the Leibniz rule for 
	Sobolev spaces (see \cite{coul} for asymptotically conic manifolds), we have 
	\begin{align*}
	\| (\la r \ra^{-1} \sin 2Q) u^2 \|_{\dot W^{1/2,3/2}_x} 
	\lesssim \| \la r \ra^{-1} \sin 2Q \|_{\dot W^{1/2,3}_x} \| u^2 \|_{L^3_x} + \| \la r \ra^{-1} \sin 2Q \|_{L^6_x}
	\| u^2 \|_{\cdot W^{1/2,2}_x} \lesssim \| u \|_{L^6_x}^2 + \| u \|_{L^6_x} \| u \|_{\cdot W^{1/2,3}_x},
	\end{align*}
	whence by H\"older's inequality in time we have 
	\begin{align}\label{u2est}
	\| (\la r \ra^{-1} \sin 2Q) u^2 \|_{L^{3/2}_t \dot W^{1/2,3/2}_x(I)} \lesssim \| u \|_{S(I)}^2 + 
	\| u \|_{S(I)} \| u \|_{W(I)}.
	\end{align}
	Then by \eqref{s48a}, \eqref{s48b}, \eqref{s49}, the Strichartz estimate 
	\eqref{s414}, and \eqref{u2est} we have  
	\begin{align*}
	\| u \|_{S(I)} + \| u \|_{W(I)} &\lesssim \| \vec u(0) \|_{\h} + \| N(\cdot, u) \|_{N(I)} \\
	&\lesssim \| \vec u(0) \|_{\h} + \| F(\cdot, u ) \|_{N(I)} + \| G(\cdot, u) \|_{N(I)} \\
	&\lesssim \| \vec u(0) \|_{\h} + + \| (\la r \ra^{-1} \sin 2Q ) u^2 \|_{L^{3/2}_t \dot W^{1/2,3/2}_x(I)} + 
	\| \la r \ra^{-1} u^4 \|_{L^1_t L^2_x(I)} + 
	\| |u|^3 \|_{L^1_t L^2_x(I)}  \\
	&\lesssim \| \vec u(0) \|_{\h} + \| u \|_{S(I)} \| u \|_{W(I)} + \| u \|^2_{S(I)} + 
	\| \vec u \|_{L^\infty_t \h} \| u \|^3_{S(I)} + \| u \|_{S(I)}^3.
	\end{align*}
	By a standard continuity argument, there exists $\delta > 0$ such that if $\| \vec u(0) \|_{\h} < \de$ then 
	$\| \vec u \|_{L^\infty_t \h} + \| u \|_{S(\R)} + \| u \|_{W(\R)} \lesssim \| \vec u(0) \|_{\h}$ as desired.  A simple variant of the above argument also shows that 
	if $\| u \|_{S(0,\infty)} < \infty$, then 
	\begin{align*}
	w_L(0) = \vec u(0) + \int_0^\infty S_V(-s)(0,N(\cdot, u(s))) ds 
	\end{align*}
	converges in $\h$.  Thus, by Duhamel we conclude that 
	\begin{align}
	\vec u(t) = \vec S_V(t)w_L(0) + o_{\h}(1), \label{s417}
	\end{align}
	as $t \rar \infty$.  To extract a free wave $v_L(t) = S(t)\vec v_L(0)$ from the perturbed wave $w_L(t) = S_V(t)\vec w_L(0)$,
	we write, via Duhamel, 
	\begin{align*}
	w_L(t) &= S(t) \vec w_L(0) + \int_0^t S(t-s)(0,Vw_L(s)) ds \\
	&= S(t)\left [ \vec w_L(0) + \int_0^t S(-s)(0, Vw_L(s)) ds \right ]. 
	\end{align*}
	We then take 
	\begin{align*}
	\vec v_L(0) = \vec w_L(0) + \int_0^\infty S(-s)(0, Vw_L(s)) ds
	\end{align*}
	which converges in $\h$ by \eqref{s78} with $X = L^\infty_t \h$.  Then $\vec w_L(t) = \vec v_L(t) + o_{\h}(1)$ as $t \rar \infty$.  This along with \eqref{s417}
	allow us to conclude that if $\| u \|_{S(0,\infty)} < \infty$, then $u$ scatters to a free wave on $\M^5$ as 
	$t \rar \infty$.  The fact that the finiteness of $\| u \|_{S(0,\infty)}$ is necessary if $u$ scatters
	as $t \rar \infty$ follows from similar arguments using the fact that $\| v_L \|_{S(0,\infty)} < \infty$ holds
	for any free wave $v_L$ on $\M^5$. This concludes the proof. 
\end{proof}

A tool that will be essential in establishing the second step of the concentration--compactness/rigidity theorem method is 
the following long--time perturbation theory for \eqref{s81}. 

\begin{ppn}[Long--time perturbation theory]\label{p91}
	Let $A > 0$.  Then there exists $\e_0 = \e_0(A) > 0$ and $C = C(A) > 0$ such that the following holds.  Let $0 < \e
	< \e_0$, $(u_0,u_1) \in \h$, and $I \subseteq \R$ with $0 \in I$.  Assume that $\vec U(t) \in C(I; \h)$ satisfies on $I$
	\begin{align}\label{e91}
	\p_t^2 U - \Delta_g U + V U = N(\cdot, U) + e, 
	\end{align}
	such that 
	\begin{align}
	\sup_{t \in I} \| \vec U(t) \|_{\h} + \| U \|_{S(I)} &\leq A, \label{e92} \\
	\| \vec U(0) - (u_0,u_1) \|_{\h} + \| e \|_{N(I)} &\leq \e.  \label{e93}
	\end{align}
	Then the unique global solution $u$ to \eqref{s81} with initial data $\vec u(0) = (u_0,u_1)$ satisfies
	\begin{align}\label{e93b}
	\sup_{t \in I} \| \vec u(t) - \vec U(t) \|_{\h} + \| u - U \|_{S(I)} \leq C(A) \e. 
	\end{align}
\end{ppn}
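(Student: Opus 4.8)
\emph{Proof proposal.} The plan is to run the standard stability argument for the concentration--compactness method: partition $I$ into a bounded number of subintervals on which the scattering norm of $U$ is small, and bootstrap the difference $w := u - U$ on each piece using the Strichartz estimate \eqref{s414}, the structural bounds \eqref{s48a}--\eqref{s49}, and the product estimate behind \eqref{u2est}. We may assume $I \subseteq [0,\infty)$ (treating $\{t\ge 0\}$ and $\{t\le 0\}$ separately), and by Proposition \ref{p42} the solution $u$ with $\vec u(0) = (u_0,u_1)$ is already known to be global in $\h$, so only \eqref{e93b} is at issue. First I would record that $\|U\|_{W(I)} \le C_1(A)$: since $\|U\|_{S(I)} \le A < \infty$ one splits $I$ into finitely many subintervals on each of which $\|U\|_S$ is arbitrarily small, writes $U$ in Duhamel form based at the left endpoint, applies \eqref{s414}, estimates $\|N(\cdot,U)\|_N$ by \eqref{s48a}--\eqref{s49} and \eqref{u2est}, and absorbs the resulting $\|U\|_S\|U\|_W$ term into the left side (using $\sup_I\|\vec U\|_\h \le A$ and $\|e\|_{N(I)} \le \e < \e_0$). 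Consequently there are $\eta = \eta(A) > 0$ (to be fixed small), $N = N(A)$, and a partition $I = \bigcup_{j=0}^{N-1} I_j$ with $I_j = [t_j,t_{j+1}]$, $t_0 = 0$, such that $\|U\|_{S(I_j)} + \|U\|_{W(I_j)} \le \eta$ for every $j$.

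Now $w := u - U$ solves $\p_t^2 w - \Delta_g w + V w = \big(N(\cdot,U+w) - N(\cdot,U)\big) - e$ with $\|\vec w(0)\|_\h \le \e$ by \eqref{e93}. On $[t_j,s] \subseteq I_j$, writing $w$ in Duhamel form based at $t_j$ and applying \eqref{s414}, I would estimate the nonlinear difference: expanding $N = F + G$, using the mean value theorem together with \eqref{s48a}--\eqref{s49} and the fractional Leibniz rule on $\M^5$ exactly as in the derivation of \eqref{u2est}, every term of $N(\cdot,U+w)-N(\cdot,U)$ carries at least one factor of $w$, and its $N([t_j,s])$-norm is bounded by a finite sum of products of $\|U\|_{S(I_j)}+\|U\|_{W(I_j)} \le \eta$, $\sup_{I_j}\|\vec U\|_\h \le A$, and $\gamma_j(s) := \|w\|_{S([t_j,s])} + \|w\|_{W([t_j,s])} + \sup_{[t_j,s]}\|\vec w\|_\h$. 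This yields
\[
\gamma_j(s) \le C_0\big(\|\vec w(t_j)\|_\h + \|e\|_{N(I)}\big) + C_0(A)\big(\eta\,\gamma_j(s) + \gamma_j(s)^2 + \gamma_j(s)^3\big).
\]
Fixing $\eta$ with $C_0(A)\eta \le \tfrac12$ and using continuity of $\gamma_j(\cdot)$ together with $\gamma_j(t_j) = \|\vec w(t_j)\|_\h$, a standard continuity argument gives $\gamma_j := \gamma_j(t_{j+1}) \le 4C_0\big(\|\vec w(t_j)\|_\h + \e\big)$ as long as $\|\vec w(t_j)\|_\h + \e$ stays below a threshold depending only on $A$.

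Finally I would iterate: from $\|\vec w(t_{j+1})\|_\h \le \gamma_j \le 4C_0\big(\|\vec w(t_j)\|_\h + \e\big)$ and $\|\vec w(t_0)\|_\h \le \e$, an induction gives $\|\vec w(t_j)\|_\h \le C(A)\e$ and $\gamma_j \le C(A)\e$ for all $j \le N(A)$; choosing $\e_0 = \e_0(A)$ small enough ensures the threshold above is never violated at any of these finitely many steps. Summing over $j$ (using $\|w\|_{S(I)}^3 = \sum_j \|w\|_{S(I_j)}^3$ and the analogous identity for $W$) then produces $\sup_{t\in I}\|\vec w(t)\|_\h + \|w\|_{S(I)} + \|w\|_{W(I)} \le C(A)\e$, which gives \eqref{e93b}.

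The main obstacle I anticipate is the nonlinear difference estimate, and within it the quadratic piece $2\la r\ra^{-1}(\sin 2Q)(2Uw + w^2)$ of $F(\cdot,U+w) - F(\cdot,U)$, which must be placed in the $L^{3/2}_t \dot W^{1/2,3/2}_x$ component of the $N$-norm. This is what forces the full $W$-norm (rather than just the scattering norm $S$) to be carried throughout, requires the fractional Leibniz rule on the asymptotically conic manifold $\M^5$, and is also the reason the preliminary bound $\|U\|_{W(I)} \le C_1(A)$ is needed even though it is not among the hypotheses; the remaining work is routine tracking of how constants grow over the fixed number $N(A)$ of subintervals.
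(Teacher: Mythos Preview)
Your proposal is correct and follows essentially the same route as the paper: first upgrade the hypothesis to $\|U\|_{W(I)}\le C_1(A)$, partition $I$ into $N(A)$ pieces with $\|U\|_{S(I_j)}+\|U\|_{W(I_j)}\le\eta$, run a Strichartz--Duhamel bootstrap for $w=u-U$ on each piece using \eqref{s48a}--\eqref{s49} and the fractional Leibniz estimate behind \eqref{u2est}, and iterate over the $N(A)$ intervals. The only cosmetic difference is bookkeeping: the paper uses the a~priori energy bound $\sup_I\|\vec w\|_\h\le A+C_0(A)$ (from conservation for $u$) to strip the $\|\vec w\|_{L^\infty_t\h}$ factors out of the cubic terms and then iterates the free--flow quantity $\gamma_j=\|S_V(\cdot-t_j)\vec w(t_j)\|_{S\cap W}+C\e$ via $\gamma_{j+1}\le 10\gamma_j$, recovering the energy estimate a~posteriori; you instead fold $\sup_{[t_j,s]}\|\vec w\|_\h$ into the continuity quantity and iterate $\|\vec w(t_j)\|_\h$ directly, which is equivalent after one application of Strichartz.
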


\begin{proof}
	We establish the estimate \eqref{e93b} with $I_+ := I \cap [0,\infty)$ in place of $I$.  Establishing \eqref{e93b} with 
	$I_- := I \cap (-\infty,0]$ in place of $I$ is similar, and these two estimates yield \eqref{e93b}.  
	We first make some preliminary observations.  The bounds \eqref{e92} and \eqref{e93} along with conservation of energy imply that 
	\begin{align}\label{e94}
	\| \vec u(t) \|_{\h} \leq C_0(A).
	\end{align}
	Also, by interpolation and \eqref{e92}, $\| U \|_{W(J)} < \infty$ for all $J \Subset I$.  We claim that 
	\begin{align}\label{e95}
	\| U \|_{W(I)} \leq C_1(A). 
	\end{align}
	To see this, let $\eta > 0$ to be chosen later, and partition $I_+$ into subintervals $I_+ = \cup_{j = 1}^{J_0(A)} I_j$
	such that $\forall j$, $\| U \|_{S(I_j)} < \eta$.  Then via \eqref{e91} and Duhamel, we have on $I_j := [t_j,t_{j+1}]$
	\begin{align*}
	U(t) = S_V(t-t_j)\vec U(t_j) + \int_{t_j}^t S_V(t-s)\left ( 0, N(\cdot, U(s)) + e \right ) ds. 
	\end{align*}
	By arguing as in the proof of Proposition \ref{p42} and Strichartz estimates, we have 
	\begin{align*}
	\| U \|_{W(I_j)} &\leq C \| \vec U(t_j) \|_{\h} + C \| N(\cdot, U) \|_{N(I_j)} + C \| e \|_{N(I_j)} \\
	&\leq C A + C \| U \|_{W(I_j)} \| U \|_{S(I_j)} + C \| U \|_{S(I_j)}^2 + C \| \vec U \|_{L^\infty_t \h (I_j)} \| U \|_{S(I_j)}^3 
	+ C \| U \|_{S(I_j)}^3 + C \e \\
	&\leq C \eta \| U \|_{W(I_j)} + C \e + C\cdot (A + 1)^4.  
	\end{align*}
	If we choose $\eta = (2C)^{-1}$, then we obtain \eqref{e95}.
	
	We now establish \eqref{e93b}.  Define $w = u - U$.  Then $w$ solves on $I$ 
	\begin{align}
	\begin{split}\label{e97}
	&\p_t^2 w - \Delta_g w + V w = N(\cdot, U + w) - N(\cdot, U) - e, \\
	&\vec w(0) = (u_0,u_1) - \vec U(0).
	\end{split}
	\end{align}
	By \eqref{e92} and \eqref{e94}, $w$ satisfies 
	\begin{align}\label{e98}
	\sup_{t \in I} \| \vec w(t) \|_{\h} \leq A + C_0(A). 
	\end{align}
	Let $\eta > 0$ to be chosen later.  Partition $I_+$ into subintervals $I = \cup_{j = 1}^{J_1(A)} I_j$ such that 
	\begin{align}\label{e96}
	\forall j, \quad \| U \|_{S(I_j)} + \| U \|_{W(I_j)} \leq \eta. 
	\end{align}
	On $I_j := [t_j,t_{j+1}]$, we have via \eqref{e97} and Duhamel
	\begin{align}\label{e99}
	w(t) = S_V(t - t_j) \vec w(t_j) + \int_{t_j}^t S_V(t-s)\left ( 0, N(\cdot, U(s) + w(s)) - N(\cdot, U(s)) - e \right ) ds. 
	\end{align}
	By arguing as in the proof of Proposition \ref{p42} and Strichartz estimates, we have
	\begin{align*}
	\| w \|_{S(I_j)} + \| w \|_{W(I_j)} &\leq \| S_V(t - t_j) \vec w(t_j) \|_{S(\R)} + \| S_V(t - t_j) \vec w(t_j) \|_{W(\R)}
	+ C \| e \|_{N(I_j)} \\ 
	&\:+ C \| N(\cdot, U + w) - N(\cdot, U) \|_{N(I_j)}  \\
	&\leq \| S_V(t - t_j) \vec w(t_j) \|_{S(\R)} + \| S_V(t - t_j) \vec w(t_j) \|_{W(\R)}
	+ C \| e \|_{N(I_j)} \\ 
	&\:+ C \Bigl [
	\| w \|_{W(I_j)} \| U \|_{S(I_j)} + \| w \|_{S(I_j)} \| U \|_{W(I_j)} + \| w \|_{S(I_j)} \| w \|_{W(I_j)} + 
	\| w \|_{S(I_j)}^2   \\
	&\:+ \| w \|_{S(I_j)} \| U \|_{S(I_j)} + \| w \|_{S(I_j)} \| U \|_{S(I_j)}^2 ( \| U \|_{L^\infty_t\h(I_j)} + 1 ) 
	+ \| w \|_{S(I_j)}^2 \| U \|_{S(I_j)} ( \| U \|_{L^\infty_t\h(I_j)} + 1 ) \\
	&\:+ \| w \|_{S(I_j)}^3 ( \| U \|_{L^\infty_t \h} + \| w \|_{L^\infty_t\h(I_j)} + 1 ) 
	\Bigr ]  \\ 
	&\leq \| S_V(t - t_j) \vec w(t_j) \|_{S(\R)} + \| S_V(t - t_j) \vec w(t_j) \|_{W(\R)}
	+ C \e \\
	&\:+ (\eta + \eta^2) (A + 1) C \Bigl [ \| w \|_{S(I_j)} + \| w \|_{W(I_j)} \Bigr ] \\
	&\:+ C_2(A) \Bigl [ ( \| w \|_{S(I_j)} + \| w \|_{W(I_j)}  )^2 + ( \| w \|_{S(I_j)} + \| w \|_{W(I_j)} )^3  
	\Bigr ]. 
	\end{align*}
	Here $C_2 = C_2(A)$ is a constant which depends only $A$. Define 
	\begin{align*}
	\gamma_j := \| S_V(t - t_j) \vec w(t_j) \|_{S(\R)} + \| S_V(t - t_j) \vec w(t_j) \|_{W(\R)}
	+ C \e.
	\end{align*}
	If we fix $\eta$ so small so that $\eta + \eta^2 < (2(A + 1)C)^{-1}$, then we obtain 
	\begin{align}\label{e910}
	\| w \|_{S(I_j)} + \| w \|_{W(I_j)} \leq 2 \gamma_j + 2 C_1(A) 
	\Bigl [ ( \| w \|_{S(I_j)} + \| w \|_{W(I_j)}  )^2 + ( \| w \|_{S(I_j)} + \| w \|_{W(I_j)} )^3 \Bigr ].
	\end{align}
	In particular, by a standard continuity argument there exists $\delta_0 = \delta_0(C_1(A))$ such that if $\gamma_j 
	< \delta_0$, then 
	\begin{align}
	\| w \|_{S(I_j)} + \| w \|_{W(I_j)} &\leq 4 \gamma_j, \label{e911} \\
	2 C_2(A) \Bigl [ ( \| w \|_{S(I_j)} + \| w \|_{W(I_j)}  )^2 + ( \| w \|_{S(I_j)} + \| w \|_{W(I_j)} )^3 \Bigr ] &\leq
	4 \gamma_j. \label{e911b}
	\end{align}
	We now iterate, and insert $t_{j+1}$ into \eqref{e99}.  Applying $S_V(t-t_{j+1})$ to both sides, we obtain 
	\begin{align*}
	S(t-t_{j+1})\vec w(t_{j+1}) = S(t - t_j) \vec w(t_j) + \int_{t_j}^{t_{j+1}} 
	S(t - s) \left ( 0 , N(\cdot, U(s) + w(s)) - N(\cdot, U(s)) - e \right ) ds.
	\end{align*}
	By \eqref{e911} and \eqref{e911b} and the previous arguments, we deduce that 
	\begin{align*}
	\gamma_{j+1} \leq 10 \gamma_j,
	\end{align*}
	provided that $\gamma_j < \delta_0$.  By Strichartz estimates and \eqref{e93}, we have for some absolute constant $C_3$ 
	\begin{align*}
	\gamma_1 := \| S_V(t) \vec w(0) \|_{S(\R)} + \| S_V(t) \vec w(0) \|_{S(\R)}
	+ C \e \leq C_3 \e < C_3 \e_0.
	\end{align*}
	Iterating, we have that $\gamma_{j+1} \leq 10^{j} C_3 \e$ as long as $\gamma_j < \delta_0$.  If we choose 
	$\e_0 = \e_0(A)$ so small so that $10^J C_3 \e_0 < \delta_0$, then the condition $\gamma_j < \delta_0$ is always 
	satisfied.  This along with \eqref{e911} imply that 
	\begin{align*}
	\| w \|_{S(I_+)} + \| w \|_{W(I_+)} \leq C(A) \e 
	\end{align*}
	as desired. The estimate for $\| w \|_{L^\infty_t \h(I_+)}$ follows a posteriori from
	\eqref{e97}, \eqref{e92}, \eqref{e93}, the estimate for $\| w \|_{S(I_+)} + \| w \|_{W(I_+)}$, and 
	Strichartz estimates.  This completes the proof.  
\end{proof}


\subsection{Concentration--compactness}

In this, the second step of the concentration--compactness methodology, we show that if our main result Theorem \ref{t41} (or 
equivalently Theorem \ref{t01}) fails,  then there exists a nonzero `critical element.'  More precisely, we prove the following.

\begin{ppn}\label{p43}
	Suppose that Theorem \ref{t41} fails.  Then there exists a nonzero global solution $u_*$ to \eqref{s81} such that the 
	set 
	\begin{align*}
	K = \left \{ \vec u_*(t) : t \in \R \right \}
	\end{align*}
	is precompact in $\h$.  
\end{ppn}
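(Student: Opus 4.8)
The approach is the concentration--compactness scheme of Kenig and Merle, simplified by the fact that \eqref{s81} has no scaling symmetry and no translation symmetry in $r$ (the throat and the conformal factor single out the point $r=0$), so that the only modulation parameter that can appear in a profile decomposition is a shift in time. First I would fix a critical size: for $A>0$ say that $\mathrm{SC}(A)$ holds if every solution $u$ of \eqref{s81} with $\|\vec u(0)\|_{\h}\le A$ satisfies $\|u\|_{S(\R)}<\infty$ (and hence scatters both ways, by Proposition \ref{p42}), and set
\[
A_* := \sup\{A>0 : \mathrm{SC}(A)\ \text{holds}\}.
\]
Proposition \ref{p42} gives $A_*\ge\delta>0$, while the failure of Theorem \ref{t41} forces $A_*<\infty$. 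By definition of $A_*$ there exist data $(u_{0,n},u_{1,n})\in\h$ with $\|(u_{0,n},u_{1,n})\|_{\h}\to A_*$ whose solutions $u_n$ satisfy $\|u_n\|_{S(\R)}=\infty$; reversing time if necessary, I may assume each $u_n$ fails to scatter forward, i.e.\ $\|u_n\|_{S([0,\infty))}=\infty$ for all $n$.

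The key analytic input is a linear profile decomposition for the bounded sequence $(u_{0,n},u_{1,n})$ in $\h$, adapted to the propagator $S_V$ of the perturbed free equation on $\M^5$. Since the only available concentration is a time translation, after passing to a subsequence it should take the form
\[
(u_{0,n},u_{1,n}) = \sum_{j=1}^{J} S_V(-t_n^j)(V_0^j,V_1^j) + (w_{0,n}^J,w_{1,n}^J),
\]
where each $\{t_n^j\}_n$ is either identically $0$ or tends to $\pm\infty$, with $|t_n^j-t_n^k|\to\infty$ for $j\ne k$, with the splitting $\|(u_{0,n},u_{1,n})\|_{\h}^2 = \sum_{j\le J}\|(V_0^j,V_1^j)\|_{\h}^2 + \|(w_{0,n}^J,w_{1,n}^J)\|_{\h}^2 + o_n(1)$, and with $\limsup_{n}\|S_V(t)(w_{0,n}^J,w_{1,n}^J)\|_{S(\R)}\to 0$ as $J\to\infty$. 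I would obtain it by the usual iterative extraction of weak limits of time translates $S_V(\tau_n)(u_{0,n},u_{1,n})$, using the Strichartz estimates of Section 4 and a local--in--time (Rellich--type) compactness on $\M^5$ to control the remainders.

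To each profile I would attach its nonlinear profile $U^j$, the solution of \eqref{s81} with $\|\vec U^j(-t_n^j)-S_V(-t_n^j)(V_0^j,V_1^j)\|_{\h}\to 0$ (which exists by Proposition \ref{p42}, using the wave operator when $t_n^j\to\pm\infty$), and then run the minimality dichotomy. If more than one profile were nonzero, or if there were a single profile of $\h$--norm strictly below $A_*$, then every $\|(V_0^j,V_1^j)\|_{\h}<A_*$, so each $U^j$ is global with $\|U^j\|_{S(\R)}<\infty$; the superposition $\sum_j U^j(\cdot - t_n^j) + S_V(\cdot)(w_{0,n}^J,w_{1,n}^J)$ is then, by orthogonality of the time shifts and the smallness of the remainders, an approximate solution of \eqref{s81} with error small in $N(\R)$ and bounded in $L^\infty_t\h$ and $S(\R)$, so Proposition \ref{p91} gives $\|u_n\|_{S(\R)}<\infty$ for $n$ large, a contradiction. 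Hence there is exactly one profile, the remainder tends to $0$ in $\h$, $\|(V_0^1,V_1^1)\|_{\h}=A_*$, and, passing to the associated nonlinear profile and reversing time if needed, one obtains a nonzero global solution $u_*$ of \eqref{s81} with $\|u_*\|_{S([0,\infty))}=\|u_*\|_{S((-\infty,0])}=\infty$.

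It remains to show $K=\{\vec u_*(t):t\in\R\}$ is precompact in $\h$, which I would do by rerunning the same argument on arbitrary time sequences. Given $\{s_n\}\subset\R$, the data $\vec u_*(s_n)$ again have $\h$--norm $A_*$ and give solutions that do not scatter forward (since $u_*$ does not), so the profile decomposition of $\vec u_*(s_n)$ must collapse to a single profile of full mass with vanishing remainder, $\vec u_*(s_n) = S_V(-\sigma_n)(\tilde V_0,\tilde V_1) + o_{\h}(1)$; the shifts $\sigma_n$ must stay bounded, since an unbounded subsequence would make $u_*$ scatter in one of the two time directions, contrary to the previous step. Passing to a subsequence with $\sigma_n\to\sigma$ and using continuity of the flow gives $\vec u_*(s_n)\to S_V(-\sigma)(\tilde V_0,\tilde V_1)$ in $\h$, so $K$ is precompact. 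The step I expect to be the real obstacle is establishing the linear profile decomposition adapted to $S_V$ on the $5d$ wormhole, together with the nonlinear--superposition estimate feeding into Proposition \ref{p91}; once these are in place, the threshold extraction, the single--profile dichotomy, and the precompactness of the critical trajectory are routine given the small--data and long--time perturbation theory already in hand.
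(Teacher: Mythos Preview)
Your overall strategy is the same as the paper's (Kenig--Merle concentration--compactness with a time--shift--only profile decomposition adapted to $S_V$, nonlinear profiles, and the perturbation lemma), and you correctly identify that the lack of scaling and spatial translation symmetry leaves time shifts as the only modulation. The linear and nonlinear profile decompositions you describe are exactly Lemmas~\ref{l45} and~\ref{l92}.

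There is, however, a genuine gap in your choice of threshold functional. You define $A_*$ via $\|\vec u(0)\|_{\h}$, but $\|\vec u(t)\|_{\h}$ (equivalently $\mathcal E_V(\vec u(t))$) is \emph{not} conserved along the nonlinear flow \eqref{s81}: $\tfrac{d}{dt}\mathcal E_V(\vec u(t))=\int N(r,u)\,\partial_t u$. Consequently your claim in the precompactness step that ``the data $\vec u_*(s_n)$ again have $\h$--norm $A_*$'' is unjustified; one only knows $\|\vec u_*(s_n)\|_{\h}\ge A_*$ (by minimality), and if strict inequality occurs the Pythagorean splitting no longer forces each profile below $A_*$, so the single--profile dichotomy breaks down. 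The paper avoids this by taking as threshold the quantity $\sup_{t\ge 0}\mathcal E_V(\vec u(t))^{1/2}$, which is tautologically nonincreasing under forward time translation; the minimizing sequence is then chosen with $\sup_{t\in(T_n,\infty)}\mathcal E_V(\vec u_n(t))^{1/2}\le A_n\downarrow A_C$ \emph{and} $\|u_n\|_{S(T_n,0)}\to\infty$ with $T_n\to-\infty$, which simultaneously delivers two--sided nonscattering of $u_*$ (your ``reversing time if needed'' does not) and guarantees that at every later time the trajectory still sits at the critical level, so the profile argument can be rerun verbatim on $\vec u_*(s_n)$. An alternative repair would be to run the whole argument with the conserved nonlinear energy inherited from the wave--map energy $\mathcal E(\vec\psi)$ in place of $\|\cdot\|_{\h}$.
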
 

Essential tools for proving Proposition \ref{p43} are the following linear and nonlinear \emph{profile decompositions}. 

\begin{lem}[Linear Profile Decomposition]\label{l45}
	Let $\{ (u_{0,n}, u_{1,n}) \}_n$ be a bounded sequence in $\h$.  Then 
	after extraction of subsequences and relabeling, there exist a sequence of solutions $\left \{ U_L^j \right \}_{j \geq 1}$ to 
	\eqref{s412} with $h \equiv 0$ which are bounded in $\h$ and a sequence of times  $\{ t_{j,n} \}_n$ for $j \geq 1$ that 
	satisfy the orthogonality condition 
	\begin{align*}
	\forall j \neq k, \quad \lim_{n \rar \infty} |t_{j,n} - t_{k,n}| = \infty, 
	\end{align*}
	such that for all $J \geq 1$, 
	\begin{align*}
	(u_{0,n},u_{1,n}) = \sum_{j = 1}^J \vec U^j_L(-t_{j,n}) + (w^J_{0,n},w^J_{1,n}), 
	\end{align*}
	where the error $w_n^J(t) := S_V(t)(w^J_{0,n},w^J_{1,n})$ satisfies
	\begin{align}
	\lim_{J \rar \infty} \lims_{n \rar \infty} \| w^J_n \|_{L^\infty_t L^p_x(\R) \cap S(\R)} = 0, 
	\quad \forall \: \frac{10}{3} < p \leq \infty. \label{e913}
	\end{align}
	
	Moreover, we have the following Pythagorean expansion of the energy 
	\begin{align}\label{e914e}
	\cl E_V( \vec u_n) = \sum_{j = 1}^J \cl E_V( \vec U^j_L ) + \cl E_V ( \vec w^J_n ) + o(1),
	\end{align}
	as $n \rar \infty$. 
\end{lem}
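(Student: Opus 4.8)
The proof follows the concentration--compactness scheme of Bahouri--G\'erard and Keraani, adapted to the present geometric setting exactly as in Section~5 of \cite{ls}. The essential simplification compared with the Euclidean case is that neither scaling nor (since we work with radial data on $\M^5$) spatial translation is available, so the only source of noncompactness in $\h$ under the linear flow $S_V(t)$ is time translation; consequently the profiles $U^j_L$ are extracted purely by shifting the time variable. The plan is to reduce the statement to an iterated ``bubble extraction'' lemma, run that iteration, and then read off the Pythagorean expansion \eqref{e914e} and the vanishing \eqref{e913} of the remainder from the almost--orthogonality of the extracted pieces.

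The heart of the matter is the following extraction lemma: if $\{\vec v_n\}$ is bounded in $\h$ by $A$ and $\lims_n \|S_V(t)\vec v_n\|_{L^\infty_t L^p_x(\R)} \ge \nu$ for some $p \in (10/3,\infty)$, then, after passing to a subsequence, there exist times $\{\tau_n\} \subset \R$ and a solution $V_L$ of \eqref{s412} with $h \equiv 0$ such that $S_V(\tau_n)\vec v_n \rightharpoonup \vec V_L(0)$ weakly in $\h$, with $\|\vec V_L(0)\|_\h \gtrsim \eta(\nu/A)$ for a function $\eta$ that vanishes only as its argument $\rar 0$. I would prove this by contradiction: if no time translate of $\vec v_n$ has a nonzero weak limit in $\h$, then, combining the compactness of the embedding $\h \hookrightarrow L^q_{\mathrm{loc}}(\M^5)$ for $q < 10/3$, the radial Strauss bound \eqref{s411s} to control the two asymptotically Euclidean ends $r \rar \pm\infty$, the Littlewood--Paley calculus on the wormhole (\cite{zhang}), and the Strichartz estimates of Proposition \ref{p71}, one shows that $\|S_V(t)\vec v_n\|_{L^\infty_t L^p_x} \rar 0$ for every $p > 10/3$; the quantitative lower bound on $\|\vec V_L(0)\|_\h$ then comes from interpolating the dispersive estimate for a single free--with--potential wave between $\h$ and $L^\infty_t L^p_x$. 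The spectral hypotheses on $-\Delta_g + V$ (no point spectrum, $0$ not a resonance of the associated line operator), verified for our potential in Section~5, enter precisely through Proposition \ref{p71} and are what make $S_V(t)$ disperse like the free flow.

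With the extraction lemma in hand I would run the standard induction. Set $\vec v_n^{\,0} := (u_{0,n},u_{1,n})$ and $\nu_1 := \lims_n \|S_V(t)\vec v_n^{\,0}\|_{L^\infty_t L^p_x}$; if $\nu_1 = 0$ there are no profiles and the assertion is trivial. Otherwise apply the lemma to get $U^1_L$ and $\{t_{1,n}\}$ with $S_V(t_{1,n})(u_{0,n},u_{1,n}) \rightharpoonup \vec U^1_L(0)$, set $(w^1_{0,n},w^1_{1,n}) := (u_{0,n},u_{1,n}) - \vec U^1_L(-t_{1,n})$, and repeat with $\vec v_n^{\,1} := (w^1_{0,n},w^1_{1,n})$ to produce $U^2_L$, $\{t_{2,n}\}$, and so on, always extracting (up to a fixed multiplicative constant) the largest remaining bubble. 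The orthogonality $|t_{j,n}-t_{k,n}| \rar \infty$ for $j\neq k$ is built into the construction: were two shifts to remain at bounded distance along a subsequence, the corresponding weak limits could be recombined into a single profile, contradicting that the $k$--th step removed the maximal bubble from $\vec v_n^{\,k-1}$. This same orthogonality, the weak convergences $S_V(t_{j,n})\vec u_n \rightharpoonup \vec U^j_L(0)$, the conservation of the quadratic form $\cl E_V$ under $S_V$, and the fact that each $U^j_L$ disperses weakly to $0$ as time $\rar \pm\infty$, together force the cross terms of $\cl E_V$ among the $\vec U^j_L(-t_{j,n})$ and between these and $\vec w^J_n$ to tend to $0$ as $n\rar\infty$; expanding $\cl E_V$ of $\sum_{j\le J}\vec U^j_L(-t_{j,n}) + \vec w^J_n$ and invoking $\|\cdot\|_\h^2 \simeq \cl E_V$, a genuine positive-definite form by the coercivity \eqref{coercivity}, \eqref{s415}, then yields \eqref{e914e}.

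Finally, \eqref{e914e} bounds $\sum_{j\ge 1}\|\vec U^j_L(0)\|_\h^2$ by $\lims_n \cl E_V(\vec u_n) \lesssim \lims_n \|\vec u_n\|_\h^2 < \infty$, so $\|\vec U^j_L(0)\|_\h \rar 0$; by the quantitative lower bound in the extraction lemma this forces the bubble sizes $\nu_j \rar 0$ as $j \rar \infty$, i.e.\ $\lims_J \lims_n \|w^J_n\|_{L^\infty_t L^p_x} = 0$ for every $p > 10/3$, which is half of \eqref{e913}. The scattering norm $S(\R) = L^3_t L^6_x$ is then disposed of by interpolating this vanishing $L^\infty_t L^p_x$ bound (for a $p$ slightly above $10/3$) against the \emph{uniform} Strichartz bounds $\|w^J_n\|_{S(\R)} + \|w^J_n\|_{W(\R)} \lesssim \|\vec w^J_n\|_\h \lesssim 1$ from Proposition \ref{p71}. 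The main obstacle is the extraction lemma, and in particular the two features that distinguish it from the classical flat argument: establishing the quantitative inverse estimate with no scaling parameter to exploit, so that a ``profile'' is detected purely by a time translation and a nonzero weak limit on the fixed curved background $\M^5$ with potential $V$; and ruling out escape of mass into the two Euclidean ends $r \rar \pm\infty$. Both are handled by the distorted Fourier / Strichartz machinery of Section~4 together with the radial decay estimate \eqref{s411s}.
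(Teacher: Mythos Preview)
Your proposal is correct and follows exactly the approach the paper intends: the paper omits the proof entirely, stating only that it ``is identical to the proof of Lemma 3.2 in \cite{ls},'' and the Bahouri--G\'erard scheme you outline (time-translation-only extraction, iterated bubble removal via an inverse Strichartz estimate, Pythagorean expansion from weak orthogonality, and interpolation to kill the $S(\R)$ norm of the remainder) is precisely that argument.
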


The proof of of Lemma \ref{l45} is identical to the proof of Lemma 3.2 in \cite{ls} and we omit it.  The sequence $\{(u_{0,n}, u_{1,n})\}_n$ in Lemma \ref{l45} is said to have a \emph{profile decomposition} with profiles 
$\{ U_L^j \}_j$ and parameters $\{ t_{j,n} \}_{j,n}$. We note that after passing to a further subsequence if necessary, we may 
assume that for all $j \geq 1$, either $t_{j,n} = 0$ $\forall n$ or $\lim_n t_{j,n} = \pm \infty$. 

In order to apply Lemma \ref{l45} in the context of the nonlinear problem \eqref{s81}, we will need the notion of \emph{nonlinear profiles}. 
For each profile $U^j_L$ with time parameters $\{ t_{j,n} \}_n$, we define its associated nonlinear profile $U^j$ to be the unique
global solution to \eqref{s81} such that 
\begin{align*}
\lim_{n \rar \infty} \| \vec U^j(-t_{j,n}) - \vec U^j_L(-t_{j,n}) \|_{\h} = 0. 
\end{align*}
It is easy to see that a nonlinear profile always exists.  Indeed, if $t_{j,n} = 0$ for all $n$, then we set $U^j$ to be the 
solution to \eqref{s81} with initial data $\vec U^j(0) = \vec U^j_L(0)$.  If $\lim_n -t_{j,n} = \infty$, say, then we set 
$U^j$ to be the unique globally defined solution to the integral equation 
\begin{align}
U^j(t) = \vec U^j_L(-t_{j,n}) - \int_t^\infty S_V(t-s)(0,N(\cdot, U^j(s)) ) ds. \label{e912}
\end{align}
A unique global solution to \eqref{e912} can be shown to exist using contraction mapping arguments in the spirit of those used in Proposition 
\ref{p42} and Proposition \ref{p91}.  

For each nonlinear profile $U^j$, we denote 
\begin{align*}
U^j_n(t) := U^j(t - t_{j,n}).
\end{align*}
Using Proposition \ref{p91}, we obtain the following nonlinear profile decomposition from the linear profile decomposition
in Lemma \ref{l45}.

\begin{lem}[Nonlinear Profile Decomposition]\label{l92}
	Let $\{(u_{0,n},u_{1,n})\}_n$ be a bounded sequence in $\h$ admitting a profile decomposition with profiles 
	$\{U^j_L\}_j$ and parameters $\{t_{j,n}\}_{j,n}$.
	Let $T_n \in [0,+\infty)$.  Assume
	\begin{align}
	\forall j \geq 1, \quad 
	\limsup_{n \rightarrow \infty} \| U^j \|_{S( -t_{j,n}, T_n-t_{j,n} )}
	< \infty. \label{e92a} 
	\end{align}
	Let $u_n$ be the unique global solution to \eqref{s81} with initial data $\vec u(0) = (u_{0,n},u_{1,n})$.  Then 
	\begin{align*}
	\limsup_{n \rightarrow \infty} \|u_n\|_{S(0,T_n) } < \infty,
	\end{align*}
	and for all $t \in [0,T_n]$
	\begin{align*}
	\vec{u}_n(t) = \sum_{j = 1}^J  \vec{U}^j_n(t) +  \vec{w}^J_n(t) +  \vec{r}^J_n(t),
	\end{align*}
	with
	\begin{align*}
	\lim_{J \rightarrow \infty} \lims_{n \rightarrow \infty} \left [ \| r^J_n \|_{S \left ( 0,T_n \right )}
	+ \sup_{t \in [0,T_n]} \left \| \vec r^J_n(t) \right \|_{\h} \right ]  = 0.
	\end{align*}
	An analogous statement holds if $T_n < 0$.
\end{lem}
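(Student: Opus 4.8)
The plan is to realize $u_n$, for $n$ large, as a small perturbation of the approximate solution
\begin{align*}
U_n^J(t) := \sum_{j=1}^J U_n^j(t) + w_n^J(t),
\end{align*}
and to conclude by invoking the long--time perturbation theory of Proposition \ref{p91}. Concretely, one must verify the three hypotheses of that proposition uniformly in $n$ for $n$ large (and then let $J \rar \infty$): \textbf{(i)} a uniform bound $\sup_{t \in [0,T_n]} \| \vec U_n^J(t) \|_{\h} + \| U_n^J \|_{S(0,T_n)} \leq A$ with $A$ independent of $J$; \textbf{(ii)} $\| \vec U_n^J(0) - (u_{0,n},u_{1,n}) \|_{\h} \rar 0$ as $n \rar \infty$; and \textbf{(iii)} that the error $e_n^J := (\p_t^2 - \Delta_g + V) U_n^J - N(\cdot, U_n^J)$ satisfies $\lim_{J \rar \infty} \lims_{n \rar \infty} \| e_n^J \|_{N(0,T_n)} = 0$. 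Once these hold, Proposition \ref{p91} yields $\| u_n - U_n^J \|_{S(0,T_n)} + \sup_{t \in [0,T_n]} \| \vec u_n(t) - \vec U_n^J(t) \|_{\h} \leq C(A) \e_n^J$ with $\e_n^J \rar 0$; setting $\vec r_n^J := \vec u_n - \vec U_n^J$ then gives the claimed decomposition, and the finiteness of $\lims_n \| u_n \|_{S(0,T_n)}$ follows from the $S$--bound on $U_n^J$ together with the smallness of $r_n^J$.

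For \textbf{(i)}: the Pythagorean expansion \eqref{e914e} combined with $\| \vec U_L^j \|_{\h}^2 \simeq \cl E_V(\vec U_L^j)$ (see \eqref{s415}) shows $\sum_{j \geq 1} \| \vec U_L^j \|_{\h}^2 < \infty$, so there is $J_0$ such that for $j > J_0$ the datum $\vec U_L^j(0)$ is small in $\h$ and the small--data theory (Proposition \ref{p42}, estimate \eqref{s415a}) gives $\| U^j \|_{S(\R)} + \| U^j \|_{W(\R)} \lesssim \| \vec U_L^j \|_{\h}$, whence $\sum_{j > J_0} ( \| U^j \|_{S(\R)}^2 + \| U^j \|_{W(\R)}^2 )$ is small. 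For $j \leq J_0$ the hypothesis \eqref{e92a} supplies the bound on $\| U^j \|_{S(-t_{j,n},T_n - t_{j,n})}$. The almost--orthogonality $\lim_n |t_{j,n} - t_{k,n}| = \infty$ forces the time--supports of distinct translated profiles $U_n^j$ to separate as $n \rar \infty$, so that the scattering norms of the $U_n^j$ decouple modulo an $o_n(1)$ error; summing the two regimes bounds $\lims_n \| \sum_{j=1}^J U_n^j \|_{S(0,T_n)}$ and $\lims_n \| \sum_{j=1}^J U_n^j \|_{W(0,T_n)}$ uniformly in $J$. Adding $\| w_n^J \|_{S(\R)}$, which is $o_J(1)$ by \eqref{e913}, gives the $S$--bound; the $\h$--bound is obtained the same way from \eqref{e914e} and conservation of energy.

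For \textbf{(ii)}: with $J$ fixed, the defining property of the nonlinear profiles, $\| \vec U^j(-t_{j,n}) - \vec U_L^j(-t_{j,n}) \|_{\h} \rar 0$, gives $\sum_{j=1}^J \vec U_n^j(0) + \vec w_n^J(0) = \sum_{j=1}^J \vec U_L^j(-t_{j,n}) + (w_{0,n}^J, w_{1,n}^J) + o_n(1) = (u_{0,n},u_{1,n}) + o_n(1)$. For \textbf{(iii)}: since each $U_n^j$ solves \eqref{s81} exactly and $w_n^J$ solves the linear equation with potential, $e_n^J = \sum_{j=1}^J N(\cdot, U_n^j) - N\bigl( \cdot, \sum_{j=1}^J U_n^j + w_n^J \bigr)$. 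Expanding, and using that $N = F + G$ with $F$ quadratic and $G$ cubic in the output to leading order (see \eqref{s48a}--\eqref{s49}), the difference splits into genuine cross terms among distinct profiles --- which tend to $0$ in $N(0,T_n)$ by the time--support separation --- and terms carrying at least one factor of $w_n^J$ --- controlled via the smallness of $\| w_n^J \|_{S(\R)}$ and $\| w_n^J \|_{L^\infty_t L^p_x(\R)}$ from \eqref{e913}, the uniform $S$-- and $W$--bounds from \textbf{(i)}, and the fractional Leibniz estimates already employed in Proposition \ref{p42}. Taking $\lims_n$ and then $\lim_J$ annihilates both families.

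The main obstacle is step \textbf{(i)}: producing the bound on $\| \sum_{j=1}^J U_n^j \|_{S(0,T_n)}$ uniformly in $J$. This forces one to combine the per--profile hypothesis \eqref{e92a}, the energy summability from \eqref{e914e}, the small--data scattering theory, and the almost--orthogonality of the time translations; the decoupling of the scattering norms of the translated profiles requires a careful partition of the time axis and is the technical heart of the argument. Steps \textbf{(ii)} and \textbf{(iii)}, though notationally heavy, are routine once the Strichartz and nonlinear estimates of the previous subsection are in hand.
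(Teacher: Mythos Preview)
Your proposal is correct and follows essentially the same approach as the paper: define the approximate solution $U_n^J = \sum_{j=1}^J U_n^j + w_n^J$, verify the hypotheses of the long--time perturbation theory (Proposition \ref{p91}) via the Pythagorean energy expansion, small--data scattering for the tail profiles, and orthogonality of the time shifts, and then control the nonlinear error $e_n^J = \sum_j N(\cdot,U_n^j) - N(\cdot,U_n^J)$ by splitting into profile--profile cross terms and terms carrying a $w_n^J$ factor. The paper carries out exactly these steps, with the only cosmetic difference that it organizes the error estimate for the quadratic part of $N$ into three explicit pieces $A_n^J + B_n^J + C_n^J$ and uses an interpolation $\|w_n^J\|_{\dot W^{1/2,3}_x} \lesssim \|\nabla w_n^J\|_{L^2_x}^{1/2} \|w_n^J\|_{L^6_x}^{1/2}$ to handle the mixed $w_n^J \cdot U_n^j$ term.
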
 

\begin{proof}
	For $J \geq 1$, $n \geq 1$, define
	\begin{align*}
	U^J_n(t) := \sum_{j = 1}^J U^j_n(t) + w^J_n(t).
	\end{align*}
	We will apply Proposition \ref{p91} with $U = U^J_n$ and $u = u_n$ for $n$ and $J$ large. We first show that 
	\begin{align}\label{e914s}
	\lims_{J} \lims_{n} \| U^J_n \|_{S(0,T_n)} < \infty. 
	\end{align}
	By assumption, there exists $M > 0$ such that $\forall n$, $\|(u_{0,n}, u_{1,n}) \|_{\h}^2 \simeq \cl E_V(u_{0,n}, u_{1,n})  \leq M$.  The Pythagorean expansion 
	of the energy \eqref{e914e} implies that 
	\begin{align}\label{e914}
	\lims_J \lims_n \cl E_V (w^J_n) + \sum_{j = 1}^\infty \cl E_V (U^j_L) \leq M. 
	\end{align}
	Hence, there exists $J_0 \geq 1$ such that 
	\begin{align*}
	\sum_{j > J_0} \cl E_V(\vec U^j_L) \ll \delta^2, 
	\end{align*}
	where $\delta$ is from Proposition \ref{p42}.  In particular, this implies by Proposition \ref{p42} that the nonlinear profiles
	satisfy for all $j > J_0$ 
	\begin{align*}
	\| \vec U^j \|_{L^\infty_t \h} + \| U^j \|_{S(\R)} + \| U^j \|_{W(\R)} \lesssim \cl E_V( \vec U^j_L )^{1/2}. 
	\end{align*}
	Let $J \geq 1$.  Then 
	\begin{align*}
	\| U^J_n \|_{S(0,T_n)} \leq \left \| \sum_{j = 1}^J U^j_n \right \|_{S(0,T_n)} + \| w^J_n \|_{S(\R)}.   
	\end{align*}
	Now 
	\begin{align}
	\begin{split}\label{e115}
	\left \| \sum_{j = 1}^J U^j_n \right \|_{S(0,T_n)}^3 &\leq 
	\left \| \sum_{j = 1}^J \| U^j_n \|_{L^6_x} \right \|_{L^3_t(0,T_n)}^3 \\
	&= \sum_{j = 1}^J \| U^j_n \|_{S(0,T_n)}^3 + \e^J_n, 
	\end{split}
	\end{align}
	where the error $\e^J_n$ is a sum of terms of the form 
	\begin{align*}
	\int_0^T \| U^j_n(t) \|_{L^6_x} \| U^k_n(t) \|_{L^6_x} \| U^l_n(t) \|_{L^6_x} dt,
	\end{align*}
	with $1 \leq j, k, l \leq J$ and $j \neq k$.  We claim that  
	\begin{align}\label{e114}
	\lim_{n \rar \infty} \int_0^{T_n} \| U^j_n(t) \|_{L^6_x} \| U^k_n(t) \|_{L^6_x} \| U^l_n(t) \|_{L^6_x} dt = 0. 
	\end{align}
	Indeed, by the assumption \eqref{e92a} and an approximation argument, we may assume that the functions $U^j, U^k$ are compactly supported in $t$.  Now 
	\begin{align*}
	\int_0^{T_n} \| U^j_n(t) \|_{L^6_x} \| U^k_n(t) \|_{L^6_x} \| U^l_n(t) \|_{L^6_x} dt &\lesssim 
	\Bigl ( \int_0^{T_n} \| U^j_n(t) \|_{L^6_x}^{3/2} \| U^k_n(t) \|_{L^6_x}^{3/2}dt\Bigr )^{2/3} \| U^l_n \|_{S(0,T_n)} \\
	&\lesssim \Bigl ( \int_0^{T_n} \| U^j_n(t) \|_{L^6_x}^{3/2} \| U^k_n(t) \|_{L^6_x}^{3/2}dt \Bigr )^{2/3}.
	\end{align*}
	Extending the integration over all of $\R$ and changing variables implies that 
	\begin{align*}
	\int_0^{T_n} \| U^j_n(t) \|_{L^6_x}^{3/2} \| U^k_n(t) \|_{L^6_x}^{3/2}dt \leq \int \| U^j(t) \|_{L^6_x}^{3/2} \| U^k(t + t_{j,n} - t_{k,n}) \|_{L^6_x}^{3/2}dt
	\end{align*}
	The orthogonality of the parameters implies that $|t_{j,n} - t_{k,n}| \rar_n \infty$.  Thus, the support of $U^j(\cdot)$ and
	$U^k(\cdot + t_{j,n} - t_{k,n})$ are eventually disjoint whence 
	\begin{align*}
	\lim_{n \rar \infty} \int \| U^j(t) \|_{L^6_x}^{3/2} \| U^k(t + t_{j,n} - t_{k,n}) \|_{L^6_x}^{3/2}dt = 0. 
	\end{align*}
	This proves \eqref{e114}.  Returning to \eqref{e115} and recalling our choice of $J_0$, we see that 
	\begin{align*}
	\lims_n \left \| \sum_{j = 1}^J U^j_n \right \|_{S(0,T_n)}^3 &\leq \lims_n 
	\sum_{j = 1}^J \| U^j_n \|_{S(0,T_n)}^3 \\
	&\lesssim \sum_{j = 1}^{J_0} \lims_n \| U^j_n \|_{S(0,T_n)}^3 + \sum_{j > J_0}  \| U^j \|_{S(\R)}^3 \\
	&\lesssim 1 + \sum_{j > J_0} \cl E_V (\vec U^j_L )^{3/2} \\
	&\lesssim 1 + M,
	\end{align*}
	where the implied constant is independent of $J$.  Thus, 
	\begin{align*}
	\lims_J \lims_n \| U^J_n \|_{S(0,T_n)} \leq \lims_J \lims_n \left \| \sum_{j = 1}^J U^j_n \right \|_{S(0,T_n)}
	+ \lims_J \lims_n \| w^J_n \|_{S(\R)} < \infty. 
	\end{align*}
	Using similar arguments, we also conclude that 
	\begin{align*}
	\lims_J \lims_n \| U^J_n \|_{L^\infty_t \h(0,T_n)} < \infty. 
	\end{align*}
	
	We now verify that the following error 
	\begin{align*}
	e^J_n &:= \p_t^2 U^J_n - \Delta_g U^J_n + V U^J_n - N(\cdot, U^J_n) \\
	&= \sum_{j = 1}^J N(\cdot, U^j_n) - N \left ( \cdot, \sum_{j = 1}^J U^j_n + w^J_n \right ),  
	\end{align*}
	satisfies 
	\begin{align}\label{e116}
	\lims_J \lims_n \| e^J_n \|_{N(0,T_n)} = 0. 
	\end{align}
	We focus only on the quadratic part of $N(\cdot,u)$ since the other parts can be handled similarly.  More precisely, we 
	show that 
	\begin{align}\label{e915}
	\lims_J \lims_n 
	\left \| (2 \la r \ra^{-1} \sin 2Q ) \left ( \sum_{j = 1}^J (U^j_n)^2 - \left ( \sum_{j = 1} U^j_n + w^J_n \right ) 
	^2 \right ) \right \|_{L^{3/2}_t \dot W^{1/2,3/2}_x (0,T_n)} = 0. 
	\end{align}
	To lessen the notation, for $I \subseteq \R$, we denote $W'(I) := L^{3/2}_t \dot W^{1/2,3/2}_x (I)$.  We observe that 
	\begin{align*}
	&\left \| (2 \la r \ra^{-1} \sin 2Q ) \left ( \sum_{j = 1}^J (U^j_n)^2 - \left ( \sum_{j = 1} U^j_n + w^J_n \right ) 
	^2 \right ) \right \|_{W'(0,T_n)} \\&\:\: \lesssim 
	\left \| (2 \la r \ra^{-1} \sin 2Q ) w^J_n \sum_{j = 1}^J U^j_n \right \|_{W'(0,T_n)} + 
	\sum_{j \neq k} \left \| (2 \la r \ra^{-1} \sin 2Q ) U^j_n \sum_{j = 1}^J U^k_n \right \|_{W'(0,T_n)} \\&\:\:\:+ 
	\| (2 \la r \ra^{-1} \sin 2Q ) (w^J_n)^2 \|_{W'(0,T_n)} \\
	&\:\: =: A^J_n + B^J_n + C^J_n. 
	\end{align*}
	Using the orthogonality of the parameters and arguments as in the previous paragraph, it is straightforward to show that 
	\begin{align*}
	\lim_n B^J_n = 0. 
	\end{align*}
	To estimate $C^J_n$, we recall that $\lims_j \lims_n \| w^J_n \|_{S(\R)} = 0$ and $\vec w^J_n(0)$ is bounded in $\h$.  
	Thus, by the product rule (see the proof of Proposition \ref{p42}) and Strichartz estimates, we have 
	\begin{align*}
	C^J_n \lesssim \| w^J_n \|_{S(\R)} \| w^J_n \|_{W(\R)} + \| w^J_n \|_{S(\R)}^2 \lesssim
	\| w^J_n \|_{S(\R)} + \| w^J_n \|_{S(\R)}^2 ,
	\end{align*}
	whence $\lims_J \lims_n C^J_n = 0$.  We now show that $\lims_J \lims_n A^J_n = 0$. Let $\e > 0$.  By the arguments used to show that 
	$\lims_J \lims_n \| U^J_n \|_{S(0,T_n)} < \infty$,  there exists $J_1 = J_1(\e) > J_0$ such that for all $J > J_1$
	\begin{align}\label{e117}
	\lims_n \left ( \left \| \sum_{j = J_1 + 1}^J U^j_n \right \|_{S(0,T_n)} + \left \| \sum_{j = J_1 + 1}^J U^j_n 
	\right \|_{S(0,T_n)} \right )
	< \e.
	\end{align}
	Thus, by the product rule, we obtain 
	\begin{align}
	&\lims_n \left \| (2 \la r \ra^{-1} \sin 2Q ) w^J_n \sum_{j = J_1 + 1}^J U^j_n \right \|_{W'(0,T_n)} \notag\\
	&\:\lesssim \lims_n \| w^J_n \|_{W(\R)} \left \| \sum_{j = J_1 + 1}^J U^j_n \right \|_{S(0,T_n)} 
	+ \lims_n \| w^J_n \|_{S(\R)} \left \| \sum_{j = J_1 + 1}^J U^j_n \right \|_{S(0,T_n)} \\
	&\:\:+ \lims_n \| w^J_n \|_{S(\R)} \left \| \sum_{j = J_1 + 1}^J U^j_n \right \|_{W(0,T_n)} \notag \\
	&\: \lesssim \e, \notag
	\end{align}
	where the implied constant is independent of $J$.  Thus, 
	\begin{align}\label{e118}
	\lims_J \lims_n A^J_n \lesssim \e + \lims_J \lims_n \sum_{j = 1}^{J_1} 
	\left \| (2 \la r \ra^{-1} \sin 2Q ) w^J_n U^j_n \right \|_{W'(0,T_n)}. 
	\end{align}
	Fix $j \in \{ 1, \ldots, J_1 \}$.  We wish to show that 
	\begin{align}\label{e122}
	\lims_J \lims_n \sum_{j = 1}^{J_1} 
	\left \| (2 \la r \ra^{-1} \sin 2Q ) w^J_n U^j_n \right \|_{W'(0,T_n)} = 0. 
	\end{align}
	By the product rule, 
	\begin{align}\label{e120a}
	\left \| (2 \la r \ra^{-1} \sin 2Q ) w^J_n U^j_n \right \|_{L^{3/2}_x}
	\lesssim \| w^J_n \|_{L^6_x} \| U^j_n \|_{L^6_x}
	+ \| w^J_n \|_{L^6_x} \| U^j_n \|_{\dot W^{1/2,3}_x} + \| w^J_n \|_{\dot W^{1/2,3}_x} \| U^j_n \|_{L^6_x}
	\end{align}
	Arguing as in the proof of Proposition \ref{p91}, the assumption \eqref{e92a} also implies that for all $j \geq 1$,
	\begin{align}\label{e119}
	\lims_n \| U^j \|_{W(0,T_n)} < \infty.
	\end{align}
	This fact, \eqref{e120a}, H\"older's inequality, and the fact that $\lims_J \lims_n \| w^J_n \|_{S(0,T_n)} = 0$ imply that  
	\begin{align}\label{e120}
	\lims_J \lims_n \left \| (2 \la r \ra^{-1} \sin 2Q ) w^J_n U^j_n \right \|_{W'(0,T_n)} \lesssim 
	\lims_J \lims_n \left \| \| w^J_n(t) \|_{\dot W^{1/2,3}_x} \| U^j_n(t) \|_{L^6_x} \right \|_{L^{3/2}_t(0,T_n)}. 
	\end{align}
	We now show that 
	\begin{align}\label{e121}
	\lims_J \lims_n \int_0^{T_n} \| w^J_n(t) \|_{\dot W^{1/2,3}_x}^{3/2} \| U^j_n(t) \|_{L^6_x}^{3/2} dt = 0.
	\end{align}
	By the assumption \eqref{e92a} and an approximation argument, we can assume that $U^j$ is compactly supported in $t$. By 
	interpolation, we have the estimate
	\begin{align}
	\forall t, \quad \| w^J(t) \|_{\dot W^{1/2,3}_x } \lesssim \| \nabla w^J_n(t) \|_{L^2_x}^{1/2} \| w^J_n(t) \|_{L^6_x}^{1/2}.  
	\end{align}
	Thus, by H\"older's inequality 
	\begin{align*}
	\int_0^{T_n} \| w^J_n(t) \|_{\dot W^{1/2,3}_x}^{3/2} \| U^j_n(t) \|_{L^6_x}^{3/2} dt
	&\lesssim \int \| w^J_n(t+t_{j,n}) \|_{\dot W^{1/2,3}_x}^{3/2} \| U^j(t) \|_{L^6_x}^{3/2} dt \\
	&\lesssim \int \| \vec w^J_n \|_{L^\infty_t \h}^{3/4} \| w^J_n(t + t_{j,n}) \|_{L^6_x}^{3/4} \| U^j(t) \|_{L^6_x}^{3/2} dt \\
	&\lesssim \| w^J_n \|_{S(\R)}^{3/4},
	\end{align*}
	where the implied constant depends on $U^j$.  Thus,
	\begin{align*}
	\lims_J \lims_n \int_0^{T_n} \| w^J_n(t) \|_{\dot W^{1/2,3}_x}^{3/2} \| U^j_n(t) \|_{L^6_x}^{3/2} dt
	\lesssim \lims_J \lims_n \| w^J_n \|_{S(\R)}^{3/4} = 0. 
	\end{align*} 
	This proves \eqref{e121}.  By \eqref{e120}, this also proves \eqref{e122}.  By \eqref{e118}, this proves 
	\begin{align*}
	\lims_J \lims_n A^J_n \lesssim \e,
	\end{align*}
	which proves \eqref{e915}.  
	
	We have now demonstrated that the function $U^J_n$ satisfies the hypotheses stated in Proposition \ref{p91} uniformly in 
	$J,n$ large and 
	\begin{align*}
	\lims_J \lims_n \| e^J_n \|_{N(0,T_n)} = 0. 
	\end{align*}
	Since 
	$\vec U^J_n(0) = u_n(0) + o_{\h}(1)$ as $n \rar \infty$, we have by Proposition \ref{p91}, for $t \in [0,T_n]$,
	\begin{align*}
	\vec u_n(t) = \vec U^J_n(t) + \vec r^J_n(t),
	\end{align*}
	with 
	\begin{align*}
	\lim_{J \rightarrow \infty} \lims_{n \rightarrow \infty} \left [ \| r^J_n \|_{S \left ( 0,T_n \right )}
	+ \sup_{t \in [0,T_n]} \left \| \vec r^J_n(t) \right \|_{\h} \right ]  = 0.
	\end{align*}
	This completes the proof. 
\end{proof}

We now prove Proposition \ref{p43}. 

\begin{proof}[Proof of Proposition \ref{p43}]
	For $A > 0$, define 
	\begin{align*}
	\cl B(A) := 
	\left \{ (u_0,u_1) \in \h : \mbox{if $u$ solves \eqref{s81} with $\vec u(0) = (u_0,u_1)$ then } \sup_{t \in [0,\infty)} 
	\cl E_V (\vec u_n(t) )^{1/2} \leq A \right \}.
	\end{align*}
	We say that the property $\mathcal{SC}(A)$ holds if for all $(u_0,u_1) \in \cl B(A)$, the solution $u$ to \eqref{s81} satisfies 
	$ \| u \|_{S(0,\infty)} < \infty$. Note that by Proposition \ref{p42} and \eqref{s415}, every solution $u$ to \eqref{s81} is in $\cl B(A)$ 
	for some $A$ and if $0 < A < \delta$, where $\delta$ is as in Proposition \ref{p42}, then $\cl{SC}(A)$ holds.  Define 
	\begin{align*}
	A_C := \sup \left \{ A > 0 : \cl{SC}(A) \mbox{ holds.} \right \} > 0.
	\end{align*}
	By the temporal symmetry of \eqref{s81} and Proposition \ref{p42}, we see that Theorem \ref{t41} is equivalent to the statement 
	\begin{align*}
	A_C = \infty. 
	\end{align*}
	
	Suppose not, i.e. $0 < A_C < \infty$.  Then there exists a sequence of real numbers $A_n \downarrow A$ and a sequence 
	$\{ (u_{0,n}, u_{1,n}) \}_n$ in $\h$ such that the corresponding solutions $u_n$ to \eqref{s81} with initial data 
	$\vec u_n(0) = (u_{0,n}, u_{1,n})$ satisfy
	\begin{align}
	\begin{split}\label{e111}
	\exists T_n < 0, T_n \rar -\infty,\quad \sup_{t \in (T_n,\infty)} \cl E_V (\vec u_n(t) )^{1/2} \leq A_n, \\
	\| u_n \|_{S(0,\infty)} = \infty, \\
	\lim_{n \rar \infty} \| u_n \|_{S(-T_n,0)} = \infty.
	\end{split}
	\end{align}
	Note that \eqref{e111} and \eqref{s415} imply that the sequence $\{ \vec u_n(0) = (u_{0,n}, u_{1,n}) \}_n$ is bounded in  $\h$.  After
	passing to a subsequence if necessary,
	$\vec u_n(0)$ admits a profile decomposition 
	\begin{align}\label{e112}
	\vec u_n(0) = \sum_{j = 1}^J \vec U^j_L(-t_{j,n}) + \vec w^J_n(0)
	\end{align}
	with profiles $\{ U^j_L \}_j$ and time parameters $\{ t_{j,n} \}_{j,n}$ by Lemma \ref{l45}.  As before we assume, 
	without loss of generality, that for all $j$ either $t_{j,n} = 0$ $\forall n$ or $\lim_n t_{j,n} = \pm \infty$. Let $\{U^j\}_j$
	be the sequence of associated nonlinear profiles.  By the Pythagorean expansion of the 
	energy, there exists $J_0 > 1$ such that 
	\begin{align*}
	\sum_{j > J_0} \cl E_V ( \vec U^j_L ) \ll \delta^2, 
	\end{align*}
	where $\delta$ is as in the small data theory, Proposition \ref{p42}.  Thus, the associated nonlinear profiles satisfy 
	\begin{align*}
	\| U^j \|_{S(\R)} \lesssim \cl E_V( \vec U^j_L )^{1/2}.
	\end{align*}
	Define 
	\begin{align*}
	\cl J = \left \{ j \in \{ 1, \ldots, J_0 \} : \| U^j \|_{S(0,\infty)} = \infty \right \}. 
	\end{align*}
	First, we note that $\cl J \neq \varnothing$.  Otherwise, by the definition of nonlinear profiles and our choice of $J_0$, we have
	\begin{align*}
	\forall j \geq 1, \quad \| U^j \|_{S(0,\infty)} < \infty.
	\end{align*}
	By Lemma \ref{l92}, this would imply that $\| u_n \|_{S(0,\infty)} < \infty$ for large $n$, a contradiction to \eqref{e111}. Thus, 
	$\cl J \neq \varnothing$.  Note that if $j \in \cl J$ and $-t_{j,n} \rar_n \infty$, then $U^j$ scatters forward in time, 
	i.e. $\| U^j \|_{S(0,\infty)} < \infty$, a contradiction to our definition of $\cl J$.  Thus, for all $j \in \cl J$, 
	we have that $-t_{j,n} \rar_n -\infty$. By the orthogonality of the parameters and after rearranging the first $J_0$ profiles if necessary, we 
	may assume that if
	$j > 1$, then  
	\begin{align*}
	\lim_{n \rar \infty} t_{1,n} - t_{j,n} = -\infty
	\end{align*}
	
	We now claim that $\cl J = \{ 1 \}$ and that for all $j \geq 2$, $\vec U^j_L = 0$.  Suppose not and, say, $\vec U^2_L \neq 0$.  Then for $T \geq 0$ and 
	for all $j \geq 1$, 
	\begin{align*}
	\lims_n \| U^j \|_{S(-t_{j,n}, T + t_{1,n} - t_{j,n})} < \infty. 
	\end{align*}
	By Proposition \ref{l92}, the Pythagorean expansion of the energy, and conservation of the energy $\cl E_V(\cdot)$ we conclude that 
	\begin{align*}
	\cl E_V ( \vec u_n(T + t_{1,n}) ) &= \cl E_V (\vec U^1(T)) + \sum_{j = 2}^J \cl E_V( \vec U^j(T + t_{1,n} - t_{j,n}) ) + 
	\cl E_V ( \vec w^J_n ) + \cl E_V (\vec r^J_n(T+t_{1,n})) + o_n(1) \\
	&= \cl E_V (\vec U^1(T)) + \sum_{j = 2}^J \cl E_V(\vec U^j_L) + 
	\cl E_V ( \vec w^J_n ) + \cl E_V (\vec r^J_n(T+t_{1,n})) + o_n(1) \\
	&\geq \cl E_V (\vec U^1(T) ) + \cl E_V( \vec U^2_L ) + o_n(1)
	\end{align*}
	as $n \rar \infty$.  In particular, 
	\begin{align*}
	\cl E_V (\vec U^1(T) ) \leq A_0^2 < A_C^2.
	\end{align*}
	Since $T \geq 0$ was arbitrary, we conclude that $\sup_{t \in [0,\infty)} \cl E_V (\vec U^1(t))^{1/2} \leq A_0 < A_C$.  By the minimality of $A_C$, 
	it follows that $\| U^1 \|_{S(0,\infty)} < \infty$, a contradiction to the fact that $1 \in \cl J$.  Thus, $\vec U^j_L = 0$ for all $j 
	\geq 2$.  By a similar argument, we also deduce that 
	\begin{align*}
	\lim_{n \rar \infty} \cl E_V( \vec w^1_n ) = 0,
	\end{align*}
	or equivalently $\lim_n \| \vec w^J_n(0) \|_{\h} = 0$. 
	
	We have now shown that 
	\begin{align*}
	\vec u_n(0) = \vec U^1(-t_{1,n}) + o_{\h}(1), 
	\end{align*}
	as $n \rar \infty$.  We claim that $t_{1,n} = 0$ for all $n$.  If not, then by our initial assumptions on the 
	parameters we have $-t_{1,n} \rar_n -\infty$.  This implies that $\| U^1 \|_{S(-\infty,0)} < \infty$.  By 
	Proposition \ref{p91}, we deduce that $\lims_n \| u_n \|_{S(-\infty,0)} < \infty$, a contradiction to
	\eqref{e111}.  Thus, 
	\begin{align*}
	\vec u_n(0) = \vec U^1(0) + o_{\h}(1), 
	\end{align*}
	as $n \rar \infty$.  Define $u_* = U^1$.  Then by Proposition \ref{p91} and \eqref{e111}, $u_*$ satisfies 
	\begin{align}
	\label{e113} 
	\begin{split}
	\sup_{t \in (-\infty,\infty)} \cl E_V( \vec u_*(t) ) \leq A_C, \\
	\| u_* \|_{S(-\infty, 0)} = \| u_* \|_{S(0,\infty)} = \infty.  
	\end{split}
	\end{align}
	
	Finally, we show that $\{ \vec u_*(t) : t \in \R \}$ is precompact in $\h$.  By continuity of the flow, it suffices to show
	that if $\{ t_n \}_n$ is a sequence in $\R$, with $\lim_{n \rar \infty} t_n = \pm \infty$, then there exists a subsequence 
	(still denoted by $t_n$) such that $\vec u_*(t_n)$ converges in $\h$.  Suppose that $t_n \rar_n \infty$.  Define 
	$(u_{0,n}, u_{1,n}) := \vec u_*(t_n)$.  Then the solution $u_n$ to \eqref{s81} with initial data $\vec u_n(0) = (u_{0,n},
	u_{1.n})$ is given by $u_n(t) = u_*(t + t_n)$ whence by \eqref{e113}, the solutions $u_n$ satisfy the conditions 
	given in \eqref{e111}.  Thus, we may repeat the previous argument to conclude that there exists
	a subsequence (still indexed by $n$) and $\vec U^1(0) \in \h$ such that 
	\begin{align*}
	u_*(t_n) = \vec u_n(0) = \vec U^1(0) + o_{\h}(1),
	\end{align*}
	as $n \rar \infty$.  If $t_n \rar -\infty$, then we apply the previous argument to 
	$u_*(-t)$ to conclude.  Thus, the set 
	\begin{align*}
	K := \{ \vec u_*(t) : t \in \R \}, 
	\end{align*}
	is precompact in $\h$.  This completes the proof.

\end{proof}




\section{Rigidity Theorem}

In this section, we show that the critical element from Proposition \ref{p43} does not exist and conclude the proof of our main 
result Theorem \ref{t41} (equivalently Theorem \ref{t01}).  In particular, we prove the following.

\begin{ppn}\label{p51} 
	Let $u$ be a global solution of \eqref{s81} such that the trajectory
	\begin{align*}
	K = \{ \vec u(t) : t \in \R \},
	\end{align*}
	is precompact in $\mathcal H := \h((-\infty,\infty); (r^2 + 1)^2 dr)$.  Then $\vec u = (0,0)$.  
\end{ppn}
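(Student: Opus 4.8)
The plan is to show that a solution $u$ whose trajectory $K=\{\vec u(t):t\in\R\}$ is precompact in $\h$ must be a finite-energy static solution to \eqref{s81}, and then to invoke the uniqueness of harmonic maps (Proposition \ref{harm}) to conclude $u\equiv 0$. The first step is to extract decay from precompactness: by the standard concentration--compactness argument (no translation symmetry is available here, so there is no scaling/translation modulation to track), precompactness of $K$ in $\h$ forces $\vec u$ to be uniformly localized, i.e.
\begin{align*}
\forall \e>0\ \exists R(\e)\ \text{s.t.}\ \sup_{t\in\R}\int_{|r|>R(\e)}\left(|\p_r u|^2+|u|^2+\tfrac{u^2}{\la r\ra^2}\right)\la r\ra^4\,dr<\e,
\end{align*}
and moreover $\|u\|_{S(I)}=\infty$ on any half-line (otherwise $u$ scatters and, being precompact, must be zero). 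The second step is to transfer the problem to the exterior regions $|r|\gtrsim 1$, where, as indicated in the introduction, a change of variables recasts \eqref{s81} as an `exterior wave map equation' on $\R\times(\R^5\setminus B(0,1))$ (or the appropriate model); there the channels-of-energy machinery of \cite{kls1}, \cite{klls2} applies. The exterior energy estimates for free waves on odd-dimensional Euclidean space (here effectively dimension $5$) give a nontrivial lower bound on the exterior energy flux of any free wave, and by the perturbative/compactness argument this forces the exterior part of $\p_t u$ to vanish: the Cauchy data $\vec u(0)$ must be, outside a ball, the data of a static solution, and by a virial/monotonicity identity combined with the uniform localization, $\p_t u\equiv 0$ globally.

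More concretely, the key steps in order are: (i) prove the uniform spatial localization of $K$ via the standard compactness argument, and record that $t\mapsto\vec u(t)$ cannot scatter; (ii) establish a virial-type (Morawetz/equipartition) identity for \eqref{s81} — multiply the equation by a suitable weighted version of $r\p_r u$ or $u$ against the volume form $\la r\ra^4 dr$, integrate, and use localization to kill boundary terms — to deduce that the conserved energy flux forces $\int\int |\p_t u|^2\,\la r\ra^4\,dr\,dt$ to be controlled, hence (after a standard averaging argument using precompactness) that $\p_t u(0)=0$, i.e. $u$ is static, OR alternatively run the channels-of-energy dichotomy directly; (iii) in the exterior region, change variables to put the equation in exterior-wave-map form and apply the exterior energy lower bounds (from \cite{cks}, \cite{klls1}) to conclude the exterior profile of the data is stationary; (iv) patch the interior: since $u$ is now a static finite-energy solution of \eqref{s81}, unwind the reduction to see that $\psi:=Q_n+\la r\ra u$ solves the static equation \eqref{s04b} with the correct boundary conditions at $\pm\infty$; (v) apply the uniqueness part of Proposition \ref{harm} to get $\psi=Q_n$, hence $u\equiv 0$.

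The main obstacle I expect is step (iii)--(iv): the channels-of-energy argument. Unlike the exterior-wave-map papers \cite{kls1}, \cite{klls2}, here the background geometry is genuinely curved, so the change of variables that produces an exterior-wave-map equation introduces error terms (from the curvature and from the potential $V$ and nonlinearity $N$) that must be shown to be negligible for the exterior energy estimate — one needs the asymptotics $V(r)=\la r\ra^{-4}+O(\la r\ra^{-6})$ and the cubic-type decay of $N(r,u)$ from \eqref{s48a}--\eqref{s49}, together with the Strauss/Hardy bounds, to absorb these errors into the free exterior flux. A secondary subtlety is that the potential $V$ is not identically zero even in the exterior, so one works with $-\Delta_g+V$ rather than a pure free wave; here the spectral hypotheses (no point spectrum, no zero resonance) verified for $H$ in Section 5 guarantee that $-\Delta_g+V$ behaves like the free operator at the level needed for the channels-of-energy dichotomy. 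Once the exterior rigidity is in place, the passage to the static equation and the appeal to uniqueness of $Q_n$ are routine given Proposition \ref{harm}.
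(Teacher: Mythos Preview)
Your overall strategy is correct and matches the paper: reduce to showing $u$ is a finite-energy static solution, then invoke the uniqueness of harmonic maps (Proposition~\ref{harm}) to conclude $u\equiv 0$. The use of a change of variables in the exterior region to an ``exterior wave map'' type equation on $\R\times\R^5_*$, followed by the $5d$ exterior energy estimate (Proposition~\ref{p54}), is exactly the route the paper takes.

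However, two points deserve comment. First, your step (ii) --- the virial/Morawetz route to $\partial_t u\equiv 0$ --- is \emph{not} what the paper does, and you should not expect it to work cleanly here: the nonlinearity $N(r,u)$ has a genuine quadratic piece $2\la r\ra^{-1}\sin 2Q\,u^2$ (see \eqref{s48a}) which does not have a definite sign, and the potential $V$ is not monotone, so the usual Morawetz multiplier $r\partial_r u$ will produce uncontrolled error terms. The paper commits entirely to the channels-of-energy alternative you mention.

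Second, and more substantively, your sketch of the channels-of-energy step is missing the main technical content. The paper does \emph{not} directly conclude ``the exterior profile is stationary''; rather, after the transformation $u_e=\la r\ra^2 r^{-2}u$ it introduces the functions $\lambda(t,r)=r^3 u_e(t,r)$ and $\mu(t,r)=r\int_r^\infty\partial_t u_e(t,\rho)\rho\,d\rho$, and through a delicate iteration of the exterior energy inequality proves that $\lambda(r)\to\alpha$ and $\mu(r)\to 0$ as $r\to\infty$ (Lemmas~\ref{l58}--\ref{l514}). The case $\alpha=0$ gives compactly supported data, which is then shown to vanish by a separate continuation argument (Claim~\ref{finish clm}). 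The case $\alpha\neq 0$ requires an additional idea you do not mention: one invokes Proposition~\ref{prescribe} to produce a harmonic map $Q_{\alpha-\alpha_n}$ with the prescribed leading asymptotic coefficient, subtracts it off to reduce to the $\alpha=0$ case, and only then concludes that $u$ equals a specific static solution on $r>0$. The ``patching'' of $r>0$ and $r<0$ is then done by a weak-formulation argument (Section~6.2), not by a separate interior analysis. Your proposal would benefit from making these steps explicit, as they carry essentially all of the work.
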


We first note that for a solution $u$ as in Proposition \ref{p51}, we have the following uniform control of the energy 
in exterior regions. 

\begin{lem}\label{l52} 
	Let $u$ be as in Proposition \ref{p51}.  Then we have 
	\begin{align}
	\begin{split}\label{s51}
	\forall R \geq 0, \quad \lim_{|t| \rar \infty} \| \vec u(t) \|_{\h(|r| \geq R + |t|; (r^2 +1)^2 dr )} &=  0, \\
	\lim_{R \rar \infty} \left [ \sup_{t \in \R} \| \vec u(t) \|_{\h(|r| \geq R + |t|; (r^2 + 1)^2 dr)} \right ] &= 0.
	\end{split}
	\end{align}
\end{lem}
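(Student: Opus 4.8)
The plan is to exploit the precompactness of $K = \{\vec u(t) : t \in \R\}$ together with the finite speed of propagation for the equation \eqref{s81}. First I would observe that precompactness in $\h$ immediately gives the following uniform small-tail statement: for every $\e > 0$ there exists $R_0 = R_0(\e)$ such that
\begin{align}\label{plan1}
\sup_{t \in \R} \| \vec u(t) \|_{\h(|r| \geq R_0; (r^2+1)^2 dr)} < \e.
\end{align}
Indeed, if this failed there would be a sequence $t_n$ and radii $R_n \to \infty$ with $\| \vec u(t_n) \|_{\h(|r| \geq R_n)} \geq \e$; passing to a subsequence along which $\vec u(t_n) \to (v_0, v_1)$ in $\h$ (by precompactness) gives a contradiction since the tails of the fixed limit $(v_0,v_1)$ must go to zero and the tails converge uniformly.

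Next I would promote \eqref{plan1} to the statement with $R + |t|$ in place of $R_0$ by using finite speed of propagation. The point is that $\p_t^2 - \Delta_g$ has propagation speed one (the principal part is $\p_t^2 - \p_r^2$ modulo the first-order term $-\frac{4r}{r^2+1}\p_r$, which does not affect the characteristic cones), and the potential $V$ and nonlinearity $N(r,u)$ are local in space. More precisely, I would compare $\vec u(t)$ on the region $|r| \geq R + |t|$ with the solution $\vec v(t)$ to \eqref{s81} whose initial data $\vec v(0)$ agrees with $\vec u(0)$ for $|r| \geq R$ and is suitably cut off for $|r| < R$; by finite speed of propagation $\vec u(t) = \vec v(t)$ on $|r| \geq R + |t|$. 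Then a local energy / Gronwall estimate on the exterior cone $\{|r| \geq R + |t|\}$, using the coercivity bound \eqref{coercivity} (equivalently \eqref{equ norms}, \eqref{s415}) to control the potential term and the estimates \eqref{s48a}--\eqref{s49} together with the Strauss bound \eqref{s411s} to control the nonlinearity, shows that the exterior energy at time $t$ is bounded by the exterior energy of the data at time $0$ on $\{|r| \geq R\}$, up to a multiplicative constant. Since $\vec u(0)$ has finite $\h$ norm, choosing $R$ large makes this initial exterior energy as small as we like, uniformly in $t$; this gives the second line of \eqref{s51}. For the first line of \eqref{s51}, I would fix $R \geq 0$ and argue again by precompactness: the family $\{\vec u(t)\}$ being precompact, for any sequence $|t_n| \to \infty$ we extract a convergent subsequence $\vec u(t_n) \to (v_0,v_1)$ in $\h$; since $\| (v_0,v_1) \|_{\h(|r| \geq R + |t_n|)} \to 0$ as $n \to \infty$ and the $\h$-convergence is uniform over the region $|r| \geq R + |t_n|$, we get $\| \vec u(t_n) \|_{\h(|r| \geq R + |t_n|)} \to 0$; since the sequence was arbitrary the full limit holds.

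I expect the main technical obstacle to be making the finite-speed-of-propagation comparison rigorous in the weighted setting of $\M^5$: one must verify that the truncation at $|r| = R$ of $\h$-data produces admissible data (which is immediate since smooth compactly supported truncations stay in $\h$), and that the local energy estimate on the exterior region closes despite the weight $(r^2+1)^2$ and the first-order damping term $-\frac{4r}{r^2+1}\p_r$. The damping term, however, actually has a favorable sign on the relevant regions or at worst contributes a harmless lower-order term after integration by parts, and the potential $V \geq 0$-type coercivity \eqref{coercivity} absorbs the potential contribution, so the Gronwall argument on the truncated cone should go through with only routine care. The cut-off at $r = 0$ (the throat) is not an issue here because we are working in exterior regions $|r| \geq R$ with $R$ large, well away from the throat.
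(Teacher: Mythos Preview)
Your proof is correct, and the paper actually omits the proof of this lemma entirely (treating it as a standard consequence of precompactness), so there is nothing to compare against directly.

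That said, you are working harder than necessary for the second line of \eqref{s51}. Once you have established the uniform small-tails statement \eqref{plan1}, both conclusions follow immediately without any appeal to finite speed of propagation or Gronwall estimates. Indeed, for the second line: given $\e>0$ choose $R_0=R_0(\e)$ as in \eqref{plan1}; then for every $R\ge R_0$ and every $t\in\R$ one has the trivial set inclusion $\{|r|\ge R+|t|\}\subseteq\{|r|\ge R_0\}$, whence
\[
\sup_{t\in\R}\|\vec u(t)\|_{\h(|r|\ge R+|t|)}\le \sup_{t\in\R}\|\vec u(t)\|_{\h(|r|\ge R_0)}<\e.
\]
For the first line the same inclusion works: given $R\ge 0$ and $\e>0$, pick $R_0(\e)$ as above; then for $|t|\ge R_0-R$ one has $R+|t|\ge R_0$, so $\|\vec u(t)\|_{\h(|r|\ge R+|t|)}\le\|\vec u(t)\|_{\h(|r|\ge R_0)}<\e$. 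Your subsequence argument for the first line is also fine, but this direct route is shorter. The truncation and exterior-cone energy estimate you outline would work (and is the right idea in settings where the trajectory is only precompact modulo translations, as in the flat Kenig--Merle setup), but here the absence of spatial translation symmetry means precompactness alone already gives \eqref{plan1}, and that is all you need.
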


To prove that $\vec u = (0,0)$, we will show that $u$ is a finite energy static solution to \eqref{s81}.  

\begin{ppn}\label{p53}
	Let $u$ be as in Proposition \ref{p51}.  Then there exists 
	a static solution $U$ to \eqref{s81} such that $\vec u = (U,0)$. 
\end{ppn}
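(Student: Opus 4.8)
The plan is to exploit the precompactness of the trajectory $K$ to first establish a uniform-in-time decay of the energy in exterior light cones (Lemma \ref{l52}), then transfer the problem to an exterior wave map formulation where channel-of-energy estimates are available. First I would prove Lemma \ref{l52}: precompactness in $\h$ gives that $K$ is bounded and that the energy cannot escape to spatial infinity uniformly in $t$, while the finite speed of propagation for $\p_t^2 - \Delta_g + V$ (the potential $V$ is bounded and smooth, so this is standard) controls how the exterior energy can move. The second limit in \eqref{s51} follows from compactness: if it failed there would be a sequence $t_n$, $R_n \to \infty$ with energy $\gtrsim \e_0$ outside $|r| \ge R_n + |t_n|$, and extracting a limit point of $\vec u(t_n)$ in $\h$ contradicts the fact that a single $\h$-function has vanishing tails. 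The first limit then follows from the second together with the finite propagation speed argument applied to the equation satisfied by $u$ truncated to the exterior region.

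Next, for the main reduction (Proposition \ref{p53}) I would work in the exterior regions $|r| \gtrsim 1$, where the change of variables indicated in the introduction transforms \eqref{s81} into an equation of exterior wave map type on (a region of) $\R^{1+d}$ for suitable odd $d$; concretely one undoes the substitution $u = \la r \ra^{-1}(\psi - Q)$ partially and rescales the radial variable so that the principal part becomes the standard radial d'Alembertian on $\R^{1+d}$ minus the flat origin, with a short-range potential perturbation. On this exterior region the channels-of-energy method of \cite{kls1} and \cite{klls2} applies: a nonzero finite-energy solution whose exterior energy is uniformly small in the sense of \eqref{s51} must emit a definite fraction of its exterior energy into the forward or backward light cone unless it is static there. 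Combining this with Lemma \ref{l52}, which says precisely that the exterior energy flux vanishes as $|t| \to \infty$, forces $u$ to be static in each exterior region $\{ r \ge R_0 \}$ and $\{ r \le -R_0 \}$ for $R_0$ large.

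Then I would upgrade exterior staticity to global staticity. Having $\p_t u \equiv 0$ on $|r| \ge R_0$, one uses the equation \eqref{s81} on the compact region $|r| \le R_0$ together with the precompactness of $K$ and a virial/Morawetz-type identity (or directly the monotonicity of a suitable energy functional) to conclude $\p_t u \equiv 0$ everywhere; the compact region contributes no obstruction because there the metric and potential are smooth and the local energy is under uniform control. Thus $u = U$ is a static solution of \eqref{s81} with finite energy, i.e. $-\Delta_g U + V U = N(r,U)$. Translating back via $\psi = Q_n + \la r \ra U$, this says $\psi$ solves the harmonic map equation \eqref{s04b} with $\psi \in \mathcal E_n$, and by the uniqueness in Proposition \ref{harm} we get $\psi = Q_n$, hence $U \equiv 0$ and $\vec u = (0,0)$.

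The main obstacle I expect is the channels-of-energy step in the exterior region: making the change of variables precise enough that the exterior equation genuinely falls under the outer energy estimates of \cite{kls1,klls2} requires checking that the potential and nonlinearity become short-range perturbations of the flat radial wave equation in an odd spatial dimension, and that the relevant free exterior energy lower bound is not destroyed by these perturbations (this is where the specific asymptotics of $Q_n$ and $V$ from Proposition \ref{harm} and \eqref{s47} enter). A secondary subtlety is handling both ends $r \to +\infty$ and $r \to -\infty$ symmetrically and then gluing the exterior staticity statements across the throat; but the evenness of $V$ and the symmetry $Q_n(r) + Q_n(-r) = n\pi$ should make the two ends parallel, so the real work is concentrated in the single exterior channels-of-energy estimate.
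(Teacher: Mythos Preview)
Your overall strategy---pass to an exterior formulation via a change of variables and apply the channels-of-energy estimates of \cite{kls1,klls2}---matches the paper's, and you correctly anticipate that the main technical work lies there. However, there are two genuine gaps in the mechanism you describe.

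First, the channels-of-energy estimate in $\R^{1+5}$ (Proposition \ref{p54}) does not directly yield ``staticity in the exterior.'' The exceptional set for the exterior energy lower bound is the two-dimensional plane $P(R)=\mathrm{span}\{(r^{-3},0),(0,r^{-3})\}$, so what the channels argument actually gives, after the change of variables $u_e = \la r\ra^2 r^{-2} u$, is that $\vec u_e(0)$ asymptotically lies in $P(R)$: concretely $r^3 u_{e,0}(r)\to\alpha$ and $r\int_r^\infty u_{e,1}(\rho)\rho\,d\rho \to \beta$. The second coefficient $\beta$ is a genuine obstruction---it corresponds to data of the form $(0,r^{-3})$, which is \emph{not} static---and killing it requires a separate argument (a time-averaging computation exploiting that $\beta(t)$ is constant and the pointwise decay $|u_e|\lesssim r^{-17/6}$; see Lemma \ref{l513}). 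Your sentence ``unless it is static there'' elides this.

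Second, and more seriously, your plan to pass from ``static on $|r|\ge R_0$'' to ``static everywhere'' via a virial/Morawetz identity on the compact throat is not what the paper does and is not obviously workable. The paper never establishes exterior staticity for $u$ directly. Instead, once the asymptotic coefficient $\alpha$ is known, it invokes Proposition \ref{prescribe} to produce the \emph{specific} static solution $U_+ = \la r\ra^{-1}(Q_{\alpha-\alpha_n}-Q)$ whose leading asymptotics match $\alpha$, subtracts it off, and reruns the entire channels machinery on the difference $u_\alpha = u - U_+$, which now has $\alpha=0$. The $\alpha=0$ case is then pushed down to \emph{any} $\eta>0$ (not just large $R_0$) by an iterative argument on the functions $\lambda(r)=r^3 u_{e,0}(r)$, $\mu(r)=r\int_r^\infty u_{e,1}\rho\,d\rho$ (Lemmas \ref{l55}--\ref{l515}); since $\eta>0$ is arbitrary, this already gives $\vec u(t,r)=(U_+(r),0)$ on all of $r>0$, for all $t$. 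The gluing across $r=0$ is then a soft weak-formulation argument, not a virial estimate. The identification step via Proposition \ref{prescribe} is the key idea you are missing: without knowing \emph{which} static solution to subtract, the channels argument alone cannot close.
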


We will first show that $\vec u$ is equal to static solutions $(U_{\pm},0)$ to \eqref{s81} on $\pm r > 0$.  Since the proof for $r < 0$ is nearly identical, we only consider the case $r > 0$. 

\begin{ppn}\label{positive lemma}
	Let $u$ be as in Lemma \ref{p51}.  Then there exists a static solution $U_+$ to \eqref{s81} such that $\vec u(t,r) = (U_+(r),0)$ for all $t \in \R$
	and all $r > 0$.
\end{ppn}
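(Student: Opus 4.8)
The plan is to follow the channels-of-energy strategy used in \cite{kls1} and \cite{klls2}, adapted to the exterior region $r>0$ of the wormhole $\M^5$. First I would reduce the problem to the exterior wave map picture: for $r \gtrsim 1$ the equation \eqref{s81} is, after undoing the substitution $u = \la r \ra^{-1}(\psi - Q)$ and using the asymptotics of $Q$ from Proposition \ref{harm}, a small perturbation of a radial wave equation on the exterior of a ball in an odd-dimensional Minkowski space. More precisely, one introduces a change of variables valid on $\{r \ge 1\}$ (a conformal-type change, as indicated in the introduction) that converts \eqref{s81} into an equation of the form $\p_t^2 w - \p_\sigma^2 w - \frac{N-1}{\sigma}\p_\sigma w + (\text{lower order}) = (\text{nonlinearity})$ on $\sigma \ge \sigma_0$, for a suitable odd spatial dimension $N$, with the free part being the genuine free radial wave equation on $\R^{1+N}$ and the potential and nonlinear terms enjoying the decay from \eqref{s45}--\eqref{s49} and Proposition \ref{harm}. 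The compactness of $K$ in $\h$ together with Lemma \ref{l52} transfers to uniform-in-time smallness of the exterior energy of $w$ outside light cones.

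The heart of the argument is then the exterior energy lower bound for free radial waves in odd dimensions from \cite{cks}, \cite{klls1}: for a free radial wave $v_L$ on $\R^{1+N}$ with $N$ odd and data $(v_0,v_1)$, one has
\begin{align*}
\max_{\pm} \lim_{t \to \pm \infty} \int_{r \ge R + |t|} \left( |\p_t v_L|^2 + |\p_r v_L|^2 \right) r^{N-1}\, dr \gtrsim \| (v_0,v_1) \|_{\h(r \ge R)}^2,
\end{align*}
possibly with the caveat that for even $N$ one works on a fixed spherical harmonic sector or replaces the full lower bound by its partial (projected) version; in our case the relevant dimension should be chosen odd so that the clean estimate applies. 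I would first establish this for the linearized flow $\p_t^2 - \Delta_g + V$ on the exterior region using Proposition \ref{p71}'s spectral information (no point spectrum, $0$ not a resonance) to show the perturbed exterior energy is comparable to the free one — this is where the distorted Fourier transform from Proposition \ref{p72} and the exterior-region change of variables do their work. Then, using the Strichartz estimates from Proposition \ref{p71} and the long-time perturbation theory Proposition \ref{p91}, I would upgrade the linear exterior energy bound to the nonlinear solution $u$: the nonlinear terms $F, G$ in \eqref{s46}, being at least quadratic and spatially decaying, contribute a negligible amount of exterior energy on light cones, so the free lower bound survives.

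The contradiction is then extracted in the usual way: if $\vec u(0)$ restricted to $\{r \ge R\}$ were nonzero in $\h$, the exterior energy lower bound would force $\liminf_{|t|\to\infty}$ of the exterior energy of $\vec u(t)$ on $\{r \ge R + |t|\}$ to be bounded below by a positive constant, directly contradicting the first line of \eqref{s51} in Lemma \ref{l52}. Hence $\vec u(0)$ vanishes on $\{r \ge R\}$ for every $R > 0$ in a way that, combined with time-translation and the equation, forces $\p_t u \equiv 0$ on $r > 0$ and $u(t,\cdot)$ to solve the static equation there; setting $U_+ := u(0,\cdot)|_{r>0}$ gives the claim. I expect the main obstacle to be the careful bookkeeping of the change of variables on $\{r \ge 1\}$ and the verification that the exterior energy of the perturbed evolution is genuinely comparable to that of the free $\R^{1+N}$ evolution uniformly on all outgoing light cones — in particular handling the transition region $r \sim 1$, where the wormhole geometry and the throat are not negligible, and ensuring the spectral hypotheses of Proposition \ref{p71} are exactly what is needed to rule out a resonance or eigenvalue that would destroy the lower bound. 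A secondary technical point is choosing the auxiliary dimension $N$ so that the odd-dimensional channels-of-energy estimate of \cite{klls1} applies without the even-dimensional loss, and checking that the nonlinearity's contribution along light cones is controlled by the already-established Strichartz and perturbation estimates rather than requiring new ones.
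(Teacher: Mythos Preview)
Your proposal has a genuine gap at its core: the exterior energy estimate you invoke is stated without the projection, and that version is false in the relevant dimension. After the change of variables $u_e(t,r) = \frac{r^2+1}{r^2}u(t,r)$, the equation is a perturbation of the free radial wave equation on $\R^{1+5}$ (so $N=5$, odd). But the exterior channels-of-energy estimate for $\R^{1+5}$ (Proposition~\ref{p54} in the paper, from \cite{kls1}) reads
\[
\max_{\pm}\inf_{\pm t\ge 0}\int_{r\ge R+|t|}|\nabla_{t,x}v(t,r)|^2 r^4\,dr \ge \tfrac{1}{2}\|\pi_R^\perp(f,g)\|_{\h(r\ge R)}^2,
\]
where $\pi_R$ is the orthogonal projection onto the two-dimensional plane $P(R)=\mathrm{span}\{(r^{-3},0),(0,r^{-3})\}$, and the left side \emph{vanishes identically} for data in $P(R)$. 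The kernel is not an even-dimensional artifact; it is present in every odd dimension $d\ge 5$ (see \cite{klls1}). So from the compactness property \eqref{s51} you can only conclude that $\pi_R^\perp\vec u_e(0)$ is small, not that $\vec u_e(0)$ itself vanishes on $\{r\ge R\}$. Your deduction ``$\vec u(0)$ vanishes on $\{r\ge R\}$ for every $R>0$'' is therefore unjustified, and indeed it would contradict the statement you are proving: the proposition allows $U_+\not\equiv 0$.

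What the paper does to fill this gap is substantial and cannot be absorbed into ``bookkeeping.'' It first shows (Lemma~\ref{l55}/\ref{l58}) that $\|\pi_R^\perp\vec u_e(t)\|$ is controlled by powers of $\|\pi_R\vec u_e(t)\|$, then studies the $P(R)$-coordinates $\lambda(t,r)=r^3u_e(t,r)$ and $\mu(t,r)=r\int_r^\infty \partial_t u_e(t,\rho)\rho\,d\rho$ directly. Through difference and growth estimates (Lemmas~\ref{l59}--\ref{l514}) it proves that $\mu(t,r)\to 0$ and $\lambda(0,r)\to\alpha$ as $r\to\infty$ for some $\alpha\in\R$. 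Only then does it invoke the ODE result Proposition~\ref{prescribe} to produce a static solution with exactly this leading coefficient $\alpha$, subtract it off, and reduce to the case $\alpha=0$, where a further continuation argument (Claims~\ref{comp clm}--\ref{finish clm}) finally gives vanishing. None of this structure---the asymptotic identification of $\alpha$, the role of Proposition~\ref{prescribe}, or the $\alpha=0$ endgame---appears in your sketch, and it is precisely what compensates for the kernel of the $5d$ channels estimate.
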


\subsection{Proof of Proposition \ref{positive lemma}} 

Let $\eta > 0$ and let $u$ be as in Proposition \ref{p51}.  We will 
first show that $\vec u(0,r) = (U_+(r),0)$ on $r \geq \eta$ for some static solution $U_+$ to \eqref{s81}.   

We now introduce a function that will be integral in the proof.  Define 
\begin{align*}
u_e(t,r) := \frac{r^2 + 1}{r^2} u(t,r), \quad (t,r) \in \R \times (0,\infty).
\end{align*}
If $u$ solves \eqref{s81}, then $u_e$ solves 
\begin{align}\label{s52e} 
\p_t^2 u_e - \p^2_r u_e - \frac{4}{r} \p_r u_e + V_e(r) u_e = N_e (r,u_e), \quad t \in \R, r > 0,
\end{align}
where 
\begin{align}
V_e(r) = V(r) - \frac{2}{r^2 (r^2 + 1)}, \label{s52}
\end{align}
and $N_e(r,u_e) = F_e(r,u_e) + G_e(r,u_e)$ where 
\begin{align}
F_e(r,u_e) &= \frac{r^2 + 1}{r^2} F \left (r, \frac{r^2}{r^2 + 1} u_e \right ), \label{s53}\\
G_e(r,u_e) &= \frac{r^2 + 1}{r^2} G \left (r, \frac{r^2}{r^2 + 1} u_e \right ). \label{s54}
\end{align}
Note that for all $R > 0$, we have 
\begin{align}\label{s55}
\| \vec u_e(t) \|_{\h( r \geq R; r^4 dr)} \leq C(R) \| \vec u \|_{\h( r \geq R; (r^2 + 1)^2 dr)},
\end{align}
so that by Lemma \ref{l52}, $u_e$ inherits the compactness properties
\begin{align}
\begin{split}\label{s56}
\forall R > 0, \quad \lim_{|t| \rar \infty} \| \vec u_e(t) \|_{\h( r \geq R + |t|; r^4 dr)} = 0, \\
\lim_{R \rar \infty} \left [ \sup_{t \in \R} \| \vec u_e(t) \|_{\h( r \geq R + |t|; r^4 dr)} \right ] = 0.
\end{split}
\end{align}
We also note that due to \eqref{s47}--\eqref{s49} and the definition of $V_e, F_e,$ and $G_e$, we have for all $r > 0$,
\begin{align} 
| V_e(r) | &\lesssim r^{-4}, \label{s57} \\
|F_e(u_e,r)| &\lesssim r^{-3} |u_e|^{2}, \label{s58} \\
|G_e(u_e,r)| &\lesssim |u_e|^3, \label{s59}
\end{align} 
where the implied constants depend on the harmonic map $Q$. 

The proof that $\vec u = (U_+,0)$ on $r \geq \eta$ for some 
$U_+$ is split into three main steps.  In the first two steps, we determine the precise asymptotics of the associated ``Euclidean" solution $u_{e,0}(r)
:= u_e(0,r), u_{e,1}(r) := \p_t u_e(0,r)$, as $r \rar \infty$.  In particular, we show that there exists $\al \in \R$ such that 
\begin{align}
r^3 u_{e,0}(r) &= \al + O(r^{-1}), \label{s510} \\
r \int_r^\infty u_{e,1}(\rho) \rho d\rho &= O(r^{-1}), \label{s511}
\end{align}  
as $r \rar \infty$.  In the final step, we use this information to conclude 
the argument.   
For the remainder of this subsection we denote $\h (r \geq R; r^4 dr)$ simply by $\h(r \geq R)$ and the exterior region $\R^5 \backslash B(0,\eta)$ by $\R^5_*$.  

The key tool for 
establishing \eqref{s510} and \eqref{s511} is the following exterior energy estimate for radial free waves on Minkowski space $\R^{1+5}$.  

\begin{ppn}[Proposition 4.1 \cite{kls1}] \label{p54}
	Let $v$ be a radial solution to the free wave equation in $\R^{1 + 5}$ 
	\begin{align*}
	&\p_t ^2 v - \Delta v = 0, \quad (t,x) \in \R^{1+5}, \\
	&\vec v(0) = (f,g) \in \dot H^1 \times L^2 ( \R^5).
	\end{align*}
	Then for any $R > 0$,
	\begin{align}\label{s512}
	\max_{\pm} \inf_{\pm t \geq 0} \int_{r \geq R + |t|} |\nabla_{t,x} v(t,r)|^2 r^4 dr \geq \frac{1}{2} \| \pi^{\perp}_R (f,g) \|_{\h(r \geq R; r^4 dr)},
	\end{align}
	where $\pi_R = I - \pi_R^{\perp}$ is the orthogonal projection onto the plane 
	\begin{align*}
	P(R) = \mbox{span} \{ (r^{-3},0), (0,r^{-3}) \} 
	\end{align*}
	in $\h( r \geq R)$. The left--hand side of \eqref{s512} is identically 0 for data satisfying $(f,g) = (\al r^{-3}, \beta r^{-3})$ for 
	on $r \geq R$.
\end{ppn}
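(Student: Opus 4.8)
This is Proposition 4.1 of \cite{kls1} and we invoke it as a black box; were one to reprove it, the plan would be as follows. The starting point is the conjugation that trivializes the radial d'Alembertian in five dimensions: if $v(t,r)$ is a radial solution of $\p_t^2 v - \Delta v = 0$ on $\R^{1+5}$, then $V(t,r) := r^2 v(t,r)$ solves the half-line equation $\p_t^2 V - \p_r^2 V + 2 r^{-2} V = 0$ on $\{r>0\}$, and $\|(v,\p_t v)\|_{\h(r \geq R; r^4 dr)}$ is equivalent — up to a boundary term at $r=R$ that only helps — to the $\dot H^1 \times L^2$ norm on $[R,\infty)$ of $(V,\p_t V)$ for the operator $H_0 := -\p_r^2 + 2 r^{-2}$. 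Under $v \mapsto r^2 v$ the plane $P(R) = \mathrm{span}\{(r^{-3},0),(0,r^{-3})\}$ corresponds to $\mathrm{span}\{(r^{-1},0),(0,r^{-1})\}$, where $r^{-1}$ is precisely the slowly decaying zero energy solution of $H_0$ (its companion being the growing solution $r^2$). Since $H_0$ has $\nu = 3/2$ Bessel structure, its functional calculus is entirely explicit — the kernels of $\cos(t\sqrt{H_0})$ and $\sin(t\sqrt{H_0})/\sqrt{H_0}$ are built from elementary half-integer Hankel functions — and, $d=5$ being odd, the associated radial fundamental solution obeys the strong Huygens principle.

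The argument then runs in three steps. First, by finite speed of propagation, inside the exterior cone $\{r \geq R + |t|\}$ the solution depends only on the data restricted to $\{r \geq R\}$, so it suffices to treat data supported (in the $V$-picture) on $[R,\infty)$, which by density may be taken smooth and compactly supported provided one checks that the exterior-cone energy is lower semicontinuous in the data. Second, decompose such data into outgoing and incoming parts for the explicit propagator and track the energy: the outgoing part of $(r^2 f, r^2 g)$ carries, as $t \to +\infty$, asymptotically all of its energy into $\{r \geq R + t\}$, while the incoming part reflects off the $2r^{-2}$ barrier and, as $t \to -\infty$, likewise repopulates $\{r \geq R + |t|\}$, save for the component killed by the reflection. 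Third, identify the non-radiating subspace — the data whose exterior-cone energy tends to $0$ in both time directions — as exactly $\mathrm{span}\{(r^{-1},0),(0,r^{-1})\}$ for $V$: the $H_0$-wave with data $(r^{-1},0)$ is the static $r^{-1}$, and with data $(0,r^{-1})$ it is $t\,r^{-1}$ (because $H_0 r^{-1} = 0$), and in either case one computes directly that $\int_{r \geq R+|t|}|\nabla_{t,x}v|^2 r^4 dr \to 0$ as $|t|\to\infty$, whereas any other data radiates a definite fraction of its energy into at least one of the two cones. Combining the second and third steps yields $\max_\pm \inf_{\pm t \geq 0}\int_{r\geq R+|t|}|\nabla_{t,x}v|^2 r^4 dr \geq \tfrac12 \|\pi_R^\perp(f,g)\|_{\h(r\geq R; r^4 dr)}^2$, i.e. \eqref{s512}, and the final assertion of the Proposition is exactly the first half of the third step.

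The hard part — and the reason the constant is $\tfrac12$ and the statement reads $\max_\pm\inf_{\pm t}$ rather than an identity — is the second step: quantifying how much incoming energy survives the reflection off the inverse-square barrier and is radiated back out into the receding cone, uniformly for data merely in $\dot H^1 \times L^2$. Here the explicit Bessel/Hankel kernels are indispensable; one expands the reflected wave, isolates the piece proportional to the singular zero energy mode $r^{-1}$ (which does not contribute to the exterior flux as $|t|\to\infty$), and shows its orthogonal complement keeps at least half its energy in $\{r \geq R+|t|\}$ in the limit. A secondary technicality is the density and lower-semicontinuity step needed to reach general finite-energy data.
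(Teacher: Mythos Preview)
Your treatment matches the paper's: Proposition \ref{p54} is quoted directly from \cite{kls1} and used as a black box, with no proof given in the present paper beyond a remark on its dimensional sensitivity and the generalization in \cite{klls1}. Your supplementary sketch of the conjugation $V = r^2 v$, the identification of $P(R)$ with the non-radiating zero-energy modes of $-\p_r^2 + 2r^{-2}$, and the outgoing/incoming energy tracking is a reasonable outline of how the result is actually proved in \cite{kls1}, though strictly more than what is required here.
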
  

We remark here that Proposition \ref{p54} states, quantitatively, that generic solutions to the free wave equation
on $\R^{1+5}$ emit 
a fixed amount of energy into regions exterior to light cones.  However, this property is very sensitive to dimension and 
in fact fails in the case $R = 0$ for general data $(f,g)$ in even dimensions (see \cite{cks}).  Proposition \ref{p54} has been generalized to all 
odd dimensions $d \geq 3$ in the work \cite{klls1}.  We note that the orthogonal projections $\pi_R$, $\pi_R^{\perp}$ are given by 
\begin{align}
\begin{split}\label{proj formulas}
&\pi_R(f, 0) = R^3r^{-3} f(R), \quad 
\pi_R(0, g) = R r^{-3} \int_R^{\infty} g( \rho)  \rho \, d \rho,\\
&\pi_R^{\perp}(f, 0) = f(r) - R^3r^{-3} f(R), \quad \pi_R^{\perp}(0, g) = g(r) - R r^{-3} \int_R^{\infty} g( \rho)  \rho \, d \rho,
\end{split}
\end{align}
and thus we have 
\begin{align}
\|\pi_R(f,g)\|_{\dot{H}^1 \times L^2(r>R)}^2 &= 3R^3 f^2(R) + R\left(\int_R^{\infty} rg(r) \, dr\right)^2, \label{proj norm 1} \\
\begin{split}\label{proj norm 2}
\|\pi_R^{\perp}(f, g)\|_{\dot{H}^1 \times L^2(r>R)}^2&= \int^{\infty}_R f_r^2(r) \, r^4 \, dr - 3R^3f^2(R)\\
& \quad + \int_R^{\infty} g^2(r) \, r^4 \, dr - a\left(\int_R^{\infty} rg(r) \, dr\right)^2.
\end{split}
\end{align}

We now proceed to the first step in proving Proposition \ref{p53}. 

\subsubsection{Step 1: Estimate for $\pi^{\perp}_R \vec u_e$ in $\h(r \geq R)$}
The first step in proving Proposition \ref{p53} is the following decay estimate for $\pi_R^{\perp} \vec u_e(t)$.  

\begin{lem}\label{l55}
	There exists $R_0 > 1$ such that for all $R \geq R_0$ and for all $t \in \R$ we have 
	\begin{align}\label{s513a}
	\begin{split}
	\| \pi_R^{\perp} \vec u_e(t) \|_{\h(r \geq R)}^2 \lesssim R^{-10/3} \| \pi_R \vec u_e(t) \|^2_{\h(r \geq R)} + 
	R^{-22/6} \| \pi_R \vec u_e(t) \|_{\h(r \geq R)}^4 + \| \pi_R \vec u_e(t) \|_{\h(r \geq R)}^6. 
	\end{split}
	\end{align}
\end{lem}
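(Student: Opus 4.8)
The plan is to use the exterior energy estimate of Proposition \ref{p54} as the main engine. The key point is that for $|r| \gtrsim 1$ the equation \eqref{s52e} for $u_e$ looks like a free radial wave equation on $\R^{1+5}$ perturbed by the potential term $V_e u_e$ and the nonlinearity $N_e(r,u_e)$, both of which are short range: by \eqref{s57}--\eqref{s59} we have $|V_e(r)| \lesssim r^{-4}$, $|F_e(r,u_e)| \lesssim r^{-3}|u_e|^2$, and $|G_e(r,u_e)| \lesssim |u_e|^3$. So the first step is to set up, for fixed $R \geq R_0$ with $R_0$ large, a ``truncated'' Cauchy problem: let $v_R$ solve the free wave equation on $\R^{1+5}$ with data $\pi_R^\perp \vec u_e(t_0)$ (for an arbitrary reference time $t_0$, which by time translation we may as well take to be $0$) restricted appropriately to $r \geq R$ — more precisely one compares $\vec u_e$ restricted to the exterior cone $\{|r| \geq R + |t|\}$ with the free evolution of its own data, using finite speed of propagation so that the precise extension of the data inside $r < R$ does not matter inside the cone.

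First I would apply Proposition \ref{p54} to the free solution $v$ with data $\vec u_e(0)$ (extended arbitrarily to $r < R$ and then modified away from $r \geq R$, using finite speed of propagation) to get
\begin{align*}
\max_\pm \inf_{\pm t \geq 0} \int_{r \geq R + |t|} |\nabla_{t,x} v(t,r)|^2 r^4 \, dr \geq \tfrac12 \| \pi_R^\perp \vec u_e(0) \|_{\h(r \geq R)}^2.
\end{align*}
Then I would estimate the left-hand side by writing $u_e = v + (u_e - v)$ inside the exterior cone, where $u_e - v$ solves the wave equation with source $-V_e u_e + N_e(r,u_e)$ and zero data at $t = 0$. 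By the Duhamel formula, energy estimates, and Hardy/Strauss-type bounds (as in \eqref{s411s}), the exterior energy of $u_e - v$ at time $t$ is controlled by the spacetime integral of the source over the truncated cone, which in turn is bounded by powers of $\sup_t \|\vec u_e(t)\|_{\h(r \geq R + |t|)}$ together with the decay of $V_e$, $F_e$: schematically one gets a bound like $R^{-5/3}\|\pi_R \vec u_e(0)\|_{\h(r\geq R)} + R^{-11/6}\|\pi_R \vec u_e(0)\|^2 + \|\pi_R \vec u_e(0)\|^3$ after using the Strauss estimate $|u_e(r)| \lesssim r^{-3/2}\|\partial_r u_e\|_{L^2(r^4 dr)}$ to convert the weighted $L^\infty$ smallness into the powers of $R$. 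Also, by Lemma \ref{l52} (via \eqref{s56}) the exterior energy of $u_e$ outside the full light cone $\{|r|\geq R + |t|\}$ tends to $0$ as $|t| \to \infty$, so the genuinely ``free part'' $v$ carries essentially all the exterior energy of $\pi_R^\perp \vec u_e(0)$; combining these and squaring produces exactly the claimed inequality \eqref{s513a}.

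The main obstacle I expect is the bookkeeping of the nonlinear and potential perturbations inside the truncated cone: one must carefully track how the weight $r^4$, the decay rates $r^{-4}$ and $r^{-3}$, and the Strauss pointwise bound interact so that the perturbative terms really produce the gains $R^{-10/3}$, $R^{-22/6}$ (and the unweighted cubic term), rather than something weaker. In particular the cubic nonlinearity $G_e$ has no extra spatial decay, so its contribution must be absorbed entirely through smallness of $\|\pi_R \vec u_e\|_{\h(r\geq R)}$ (which is itself small for $R$ large by the second line of \eqref{s56}), giving the $\|\pi_R \vec u_e(t)\|^6$ term with no power of $R$. A secondary technical point is justifying the use of Proposition \ref{p54} and finite speed of propagation for $u_e$ on the punctured space $\R^5 \setminus B(0,\eta)$: since we only ever look at $r \geq R \gg \eta$ this is harmless, but it should be stated cleanly. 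Once these estimates are assembled, \eqref{s513a} follows, uniformly in $t$ after noting that $t = 0$ was an arbitrary reference time by the time-translation invariance of the hypotheses on $K$.
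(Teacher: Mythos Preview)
Your strategy coincides with the paper's: apply Proposition \ref{p54} to the free evolution of (a suitable extension of) $\vec u_e(0)$, control the difference $u_e - v$ on the exterior cone by Duhamel using the source bounds \eqref{s57}--\eqref{s59}, let $|t| \to \infty$ via \eqref{s56}, and then split $\|\vec u_e(0)\|_{\h(r\geq R)}^2 = \|\pi_R^\perp \vec u_e(0)\|^2 + \|\pi_R \vec u_e(0)\|^2$ and absorb the $\pi_R^\perp$ part into the left. The only notable difference in execution is that the paper makes the perturbation step precise by setting up a modified Cauchy problem on all of $\R^5_* = \R^5\setminus B(0,\eta)$ in which $V_e$ and $F_e$ are replaced by their values at $r = R+|t|$ inside the cone (Definition \ref{d56}) and then invoking small-data Strichartz theory for this problem (Lemma \ref{l57}) to obtain the clean bound \eqref{s518} directly in terms of $\|\vec h(0)\|_{\h(r\geq\eta)}$, whereas you propose to work with energy/Strauss estimates directly in the exterior cone; either implementation closes, the paper's choice just avoids a separate bootstrap relating $\sup_t \|\vec u_e(t)\|_{\h(r\geq R+|t|)}$ back to $\|\vec u_e(0)\|_{\h(r\geq R)}$.
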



Since we are only interested in the behavior of $u$ in exterior regions $\{ r \geq R + |t| \}$, we first consider a modified Cauchy problem.  
In particular, we can, by finite speed of propagation, alter $V_e$, $F_e$, and $G_e$ in \eqref{s52e} on the interior region 
$\{ 1 \leq r \leq R + |t| \}$ without affecting the behavior of $\vec u_e$ on the exterior region $\{r \geq R + |t|\}$.

\begin{defn}\label{d56}
	For a function $f = f(r,u) : \R^5_* \times \R \rar \R$, we define for $R \geq \eta$,
	\begin{align*}
	f_R(t,r,u) := 
	\begin{cases}
	f(R + |t|, u) \quad &\mbox{if } \eta \leq r \leq R + |t|, \\
	f(r,u) \quad &\mbox{if } r \geq R + |t|.
	\end{cases}.
	\end{align*}
\end{defn}

We now consider solutions to a modified version of \eqref{s52e}: 
\begin{align}\label{s513}
\begin{split}
&\p_t^2 h - \p_r^2 h - \frac{4}{r} \p_r h = N_R (t,r,h), \quad (t, r) \in \R \times \R^5_*, \\
&\vec h(0) = (h_0,h_1) \in \h_0(r \geq \eta), 
\end{split}
\end{align}
where $\h_0(r \geq \eta) = \{ (u_0,u_1) \in \h(r \geq \eta) : u_0(\eta) = 0 \}$ and 
\begin{align*}
N_R(t,r,h) = - V_{e,R}(t,r) h + F_{e,R}(t,r,h) + G_e(t,r,h). 
\end{align*}
We note that from Definition \ref{d56} and \eqref{s57}, \eqref{s58}, and \eqref{s59}, we have that 
\begin{align}
| V_{e,R}(t,r) | &\lesssim 
\begin{cases}\label{s514}
(R + |t|)^{-4} \quad &\mbox{if } \eta \leq r \leq R + |t|, \\
r^{-4} \quad &\mbox{if } r \geq R + |t|, 
\end{cases}\\
|F_{e,R}(t,r,h)| &\lesssim 
\begin{cases}\label{s515} 
(R + |t|)^{-3}|h|^2 \quad &\mbox{if } \eta \leq r \leq R + |t|, \\
r^{-3}|h|^3 \quad &\mbox{if } r \geq R + |t|, 
\end{cases}\\
|G_e(r,h)| &\lesssim |h|^3, \label{s516} 
\end{align}

\begin{lem}\label{l57}
	There exist $R_0 > 0$ large and $\de_0 > 0$ small such that for all $R \geq R_0$ and all $(h_0,h_1) \in \h_0(r \geq \eta)$ with 
	\begin{align*}
	\| (h_0,h_1) \|_{\h_0(r \geq \eta)} \leq \de_0, 
	\end{align*}
	there exists a unique globally defined solution $h$ to \eqref{s513} such that 
	\begin{align}\label{s517}
	\| h \|_{L^3_tL^6_x(\R \times \R_*^5)} \lesssim \| \vec h(0) \|_{\h(r \geq \eta)}.
	\end{align}
	Moreover, if we define $h_L$ to be the solution to the free equation $\p_t^2 h_L - \Delta h_L = 0$, $(t,x) \in \R\times \R_*^5$, $\vec h_L(0) = (h_0,h_1)$, then 
	\begin{align}\label{s518}
	\begin{split}
	\sup_{t \in \R} \| \vec h(t) - \vec h_L(t) \|_{\h(r \geq \eta)} \lesssim R^{-5/3} \| \vec h(0) \|_{\h(r \geq \eta)} + 
	R^{-11/6} \| \vec h(0) \|_{\h(r \geq \eta)}^2 + \| \vec h(0) \|_{\h(r \geq \eta)}^3. 
	\end{split}
	\end{align}
\end{lem}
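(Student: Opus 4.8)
The plan is to run a standard small-data fixed point argument for the modified equation \eqref{s513}, but in spacetime norms adapted to the \emph{exterior} Strichartz estimates for the free wave equation on $\R^{1+5}$, and then to track the dependence on $R$ through the decay of the coefficients $V_{e,R},F_{e,R}$ on the interior part of the cone. First I would recall the radial exterior Strichartz/energy estimates for $\p_t^2-\Delta$ on $\R^{1+5}$: a radial free wave $h_L$ with data in $\h_0(r\geq\eta)$ satisfies $\|h_L\|_{L^3_tL^6_x(\R\times\R^5_*)}\lesssim\|\vec h_L(0)\|_{\h(r\geq\eta)}$, together with the dual inhomogeneous bound and the uniform energy bound $\sup_t\|\vec h_L(t)\|_{\h(r\geq\eta)}\lesssim\|\vec h_L(0)\|_{\h(r\geq\eta)}$. (These are the flat, Euclidean analogues of Proposition \ref{strm} with $d=5$; the restriction to the exterior cone and the vanishing boundary condition at $r=\eta$ are handled exactly as in \cite{kls1}.) Writing \eqref{s513} in Duhamel form,
\begin{align*}
h(t)=h_L(t)+\int_0^t S_0(t-s)\big(0,N_R(s,\cdot,h(s))\big)\,ds,
\end{align*}
where $S_0$ is the free radial propagator on $\R^{1+5}$, I would set up the contraction in the ball
\begin{align*}
\mathcal B=\Big\{h:\ \|h\|_{L^3_tL^6_x}+\sup_t\|\vec h(t)\|_{\h(r\geq\eta)}\leq C\,\|\vec h(0)\|_{\h(r\geq\eta)}\Big\}.
\end{align*}

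\textbf{Estimating the nonlinearity.} The heart of the matter is to bound $\|N_R(t,\cdot,h)\|_{N(\R)}$ (the dual Strichartz norm, $L^1_tL^2_x + L^{3/2}_t\dot W^{1/2,3/2}_x$ on the exterior region) with the correct powers of $R$. I would split $N_R=-V_{e,R}h+F_{e,R}+G_e$ and treat the three pieces separately, using the pointwise bounds \eqref{s514}, \eqref{s515}, \eqref{s516} together with the radial Strauss estimate $|h(r)|\lesssim r^{-3/2}\|\nabla h\|_{L^2(r^4dr)}$ on $\R^5_*$. For the potential term: on the interior $\{\eta\leq r\leq R+|t|\}$ one has $|V_{e,R}|\lesssim (R+|t|)^{-4}$, and combined with the $L^6_x$-bound on $h$ and Hölder in $r$ this produces a factor $R^{-5/3}$ (the exponent coming from $\int_\eta^{R+|t|} r^{4}\,dr\cdot(R+|t|)^{-8}$ type computations balanced against the time integrability), while on the exterior $\{r\geq R+|t|\}$ the bound $|V_{e,R}|\lesssim r^{-4}\lesssim (R+|t|)^{-4}$ gives at least as much decay; integrating in $t$ then gives $\|V_{e,R}h\|_{L^1_tL^2_x}\lesssim R^{-5/3}\|h\|_{L^3_tL^6_x}$. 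For $F_{e,R}$: the interior bound $|F_{e,R}|\lesssim (R+|t|)^{-3}|h|^2$ together with $\|h\|_{L^3_tL^6_x}$ control and the weight $r^{-3}$ on the exterior yields a factor $R^{-11/6}$ times $\|h\|_{L^3_tL^6_x}^2$ (again one loses a bit relative to $R^{-2}$ from the interpolation between the two regimes). For $G_e$: from $|G_e|\lesssim|h|^3$ one gets $\|G_e\|_{L^1_tL^2_x}\lesssim\|h\|_{L^3_tL^6_x}^3$ directly with \emph{no} gain in $R$, which is exactly why the cubic term appears without an $R$-power in \eqref{s518}. Combining,
\begin{align*}
\|N_R(t,\cdot,h)\|_{N(\R)}\lesssim R^{-5/3}\|h\|_{L^3_tL^6_x}+R^{-11/6}\|h\|_{L^3_tL^6_x}^2+\|h\|_{L^3_tL^6_x}^3,
\end{align*}
and the same type of bound holds for differences $N_R(h)-N_R(\tilde h)$.

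\textbf{Conclusion of the fixed point and the final estimate.} Feeding this into the Duhamel formula and the Strichartz estimates, for $R\geq R_0$ large and $\|\vec h(0)\|_{\h(r\geq\eta)}\leq\delta_0$ small the map is a contraction on $\mathcal B$, giving a unique global solution with \eqref{s517}. For \eqref{s518} I would subtract the free evolution: $\vec h(t)-\vec h_L(t)=\int_0^tS_0(t-s)(0,N_R(s,\cdot,h(s)))\,ds$, so by the (energy-level) Strichartz estimate $\sup_t\|\vec h(t)-\vec h_L(t)\|_{\h(r\geq\eta)}\lesssim\|N_R\|_{N(\R)}$, and then inserting the nonlinearity bound together with $\|h\|_{L^3_tL^6_x}\lesssim\|\vec h(0)\|_{\h(r\geq\eta)}$ gives precisely
\begin{align*}
\sup_{t\in\R}\|\vec h(t)-\vec h_L(t)\|_{\h(r\geq\eta)}\lesssim R^{-5/3}\|\vec h(0)\|_{\h(r\geq\eta)}+R^{-11/6}\|\vec h(0)\|_{\h(r\geq\eta)}^2+\|\vec h(0)\|_{\h(r\geq\eta)}^3.
\end{align*}
The main obstacle I expect is bookkeeping the $R$-dependence cleanly in the potential and quadratic terms: one must carefully interpolate between the interior regime (where the coefficient is frozen at $(R+|t|)^{-4}$ or $(R+|t|)^{-3}$ but one integrates over a growing region $\eta\leq r\leq R+|t|$) and the exterior regime (where the coefficient genuinely decays like $r^{-4}$ or $r^{-3}$), and extract the sharp exponents $R^{-5/3}$ and $R^{-11/6}$ rather than something weaker — the number $5/3$ in particular reflects the $5$-dimensional volume weight $r^4\,dr$ and must come out exactly for the later steps (the iteration producing \eqref{s510}–\eqref{s511}) to close. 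The rest — existence, uniqueness, and the scattering-type comparison — is routine once the nonlinear estimate is in hand.
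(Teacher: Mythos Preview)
Your proposal is correct and follows essentially the same route as the paper's proof: contraction mapping for \eqref{s513} based on exterior Strichartz estimates for the free wave on $\R^{1+5}$ (with Dirichlet condition at $r=\eta$, as in \cite{kls1}), then Duhamel plus the nonlinear bound to obtain \eqref{s518}. The only minor simplification in the paper relative to your outline is that the dual norm $L^{3/2}_t\dot W^{1/2,3/2}_x$ is never needed here --- all three pieces $V_{e,R}h$, $F_{e,R}$, $G_e$ go directly into $L^1_tL^2_x$, and the $R$--powers are extracted cleanly by H\"older: $\|V_{e,R}h\|_{L^1_tL^2_x}\le \|V_{e,R}\|_{L^{3/2}_tL^3_x}\|h\|_{L^3_tL^6_x}$ with $\|V_{e,R}(t,\cdot)\|_{L^3_x}^3\lesssim (R+|t|)^{-7}$ giving $\|V_{e,R}\|_{L^{3/2}_tL^3_x}\lesssim R^{-5/3}$, and analogously $\|F_{e,R}\|_{L^1_tL^2_x}\le \|w\|_{L^3_tL^6_x}\|h\|_{L^3_tL^6_x}^2$ with the weight $w$ (equal to $(R+|t|)^{-3}$ inside, $r^{-3}$ outside) satisfying $\|w\|_{L^3_tL^6_x}\lesssim R^{-11/6}$.
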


\begin{proof}
	The small data global well--posedness and spacetime estimate \eqref{s517} follow from standard contraction mapping and continuity 
	arguments using Strichartz estimates for free waves on $\R \times \R^5_*$ with Dirichlet boundary condition (see
	\cite{kls1}).  We now 
	prove \eqref{s518}.  By the Duhamel formula and Strichartz estimates we have
	\begin{align*}
	\sup_{t \in \R} \| \vec h(t) - \vec h_L(t) \|_{\h(r \geq \eta)} &\lesssim \| N_R(\cdot, \cdot, h) \|_{L^1_t L^2_x(\R \times \R^5_*)} \\
	&\lesssim \| V_{e,R} h \|_{L^1_t L^2_x(\R \times \R^5_*)} + \| F_{e,R}(\cdot, \cdot, h) \|_{L^1_t L^2_x(\R \times \R^5_*)} \\ &\: + 
	\| G_e(\cdot, h) \|_{L^1_t L^2_x(\R \times \R^5_*)}.
	\end{align*}
	The third term is readily estimated by \eqref{s517} and \eqref{s518}
	\begin{align*}
	\| G_e(\cdot, h) \|_{L^1_t L^2_x(\R \times \R^5_*)} \lesssim \| h^3 \|_{L^1_t L^2_x(\R \times \R^5_*)} 
	\lesssim \| \vec h(0) \|_{\h(r \geq \eta)}^3.  
	\end{align*}
	For the first term we have 
	\begin{align*}
	\| V_{e,R} h \|_{L^1_t L^2_x(\R \times \R^5_*)} \leq \| V_{e,R} \|_{L^{3/2}_t L^3_x(\R \times \R^5_*)}
	\| h \|_{L^3_t L^6_x(\R \times \R^5_*)} \lesssim \| V_{e,R} \|_{L^{3/2}_t L^3_x(\R \times \R^5_*)} \| \vec h(0) \|_{\h(r \geq \eta)}.
	\end{align*}
	By \eqref{s514} 
	\begin{align*}
	\| V_{e,R} \|^3_{L^3_x(\R^5_*)} \lesssim \int_0^{R + |t|} (R + |t|)^{-12} r^4 dr +  \int_{R + |t|}^\infty r^{-12} r^4 dr
	\lesssim (R + |t|)^{-7}.
	\end{align*}
	Hence, 
	\begin{align*}
	\| V_{e,R} \|_{L^{3/2}_t L^3_x(\R \times \R^5_*)}^{3/2} \lesssim \int (R + |t|)^{-7/2} dt \lesssim R^{-5/2}.   
	\end{align*}
	Thus, 
	\begin{align*}
	\| V_{e,R} h \|_{L^1_t L^2_x(\R \times \R^5_*)} \lesssim R^{-5/3} \| \vec h(0) \|_{\h(r \geq 1)}. 
	\end{align*}
	Similarly, using \eqref{s515} and \eqref{s517} we conclude that $\| F_{e,R}(\cdot, \cdot,h) \|_{L^1_t L^2_x(\R \times \R^5_*)} \lesssim R^{-11/6} \| h(0) \|^2_{\h(r \geq 1)}$ which proves \eqref{s518}. 
\end{proof}

\begin{proof}[Proof of Lemma \ref{l55}]
	We first prove Lemma \ref{l55} for $t = 0$.  For $R > \eta$, define the truncated initial data $\vec u_R(0) = (u_{0,R}, 
	u_{1,R}) \in \h_0(r \geq \eta)$ via 
	\begin{align}
	u_{0,R} &= 
	\begin{cases}\label{s519}
	u_e(0,r) \quad &\mbox{if } r \geq R, \\
	\frac{r - \eta}{R-\eta} u_e(0,R) \quad &\mbox{if } r < R,
	\end{cases} \\
	u_{1,R} &= 
	\begin{cases}\label{s520}
	\p_t u_e(0,r) \quad &\mbox{if } r \geq R, \\
	0 \quad &\mbox{if } r < R.
	\end{cases}
	\end{align}
	Note that for $R$ large, 
	\begin{align}\label{s521}
	\| \vec u_R (0) \|_{\h(r \geq \eta)} \lesssim \| \vec u_e(0) \|_{\h(r \geq R)}.
	\end{align}
	In particular, by Lemma \ref{l52}, there exists $R_0 > \eta$ such that for all $R \geq R_0$, $\| \vec u_R(0) \| \leq \de_0$
	where $\de_0$ is from Lemma \ref{l57}.  Let $u_R(t)$ be the solution to \eqref{s513} with initial data $(u_{0,R}, u_{1,R})$, and let
	$\vec u_{R,L}(t) \in \h_0(r \geq \eta)$ be the solution to the free wave equation $\p_t^2 u_{R,L} - \Delta u_{R,L}
	= 0,$ $(t,x) \in \R \times \R^5_*$, $\vec u_{R,L}(0) = (u_{0,R}, u_{1,R})$. By finite speed of propagation 
	\begin{align*}
	r \geq R + |t| \implies \vec u_R(t,r) = \vec u_e(t,r).
	\end{align*}  
	By Proposition \ref{p54}, for all $t \geq 0$ or for all $t \leq 0$, 
	\begin{align*}
	\| \pi_R^{\perp} \vec u_{R,L}(0) \|_{\h(r \geq R)} \lesssim \| \vec u_{R,L}(t) \|_{\h(r \geq R + |t|)}. 
	\end{align*}
	Suppose, without loss of generality, that the above bound holds for all $t \geq 0$.  By \eqref{s518} we conclude that for all $t \geq 0$
	\begin{align*}
	\| \vec u_e(t) \|_{\h(r \geq R + |t|)} &\geq \| \vec u_{R,L}(t) \|_{\h(r \geq R + |t|)} - \| \vec u_R(t) - \vec u_{R,L}(t) \|_{\h(r \geq \eta)} \\
	&\geq c \| \pi^{\perp}_R \vec u_{R,L}(0) \|_{\h(r \geq R)} - C \Bigl [ R^{-5/3} \| u_R(0) \|_{\h(r \geq \eta)} + 
	R^{-11/6} \| \vec u_R(0) \|_{\h(r \geq \eta)}^2 + \| \vec u_R(0) \|^3_{\h(r \geq \eta)} \Bigr ]. 
	\end{align*}
	Letting $t \rar \infty$ and using the decay property \eqref{s51} and the definition of $(u_{0,R}, u_{1,R})$, we conclude that 
	\begin{align*}
	\| \pi^{\perp}_R \vec u_e(0) \|_{\h(r \geq R)} \lesssim 
	R^{-5/3} \| u_e(0) \|_{\h(r \geq R)} + 
	R^{-11/6} \| \vec u_e(0) \|_{\h(r \geq R)}^2 + \| \vec u_e(0) \|^3_{\h(r \geq R)}.
	\end{align*}
	Note that $\| \vec u_e(0) \|_{\h(r \geq R)}^2 = \| \pi^{\perp}_R \vec u_e(0) \|_{\h(r \geq R)}^2 + 
	\| \pi_R \vec u_e(0) \|_{\h(r \geq R)}^2$.  Thus, if we take $R_0$ large enough to absorb terms involving 
	$\| \pi^{\perp}_R \vec u_e(0) \|_{\h(r \geq R)}$ into the left hand side in the previous estimate, we obtain for all
	$R \geq R_0$ 
	\begin{align*}
	\| \pi^{\perp}_R \vec u_e(0) \|_{\h(r \geq R)} \lesssim 
	R^{-5/3} \| \pi_R u_e(0) \|_{\h(r \geq R)} + 
	R^{-11/6} \| \pi_R \vec u_e(0) \|_{\h(r \geq R)}^2 + \| \pi_R \vec u_e(0) \|^3_{\h(r \geq R)},
	\end{align*}
	as desired.  This proves Lemma \ref{l55} for $t = 0$. 
	
	For general $t = t_0$ in \eqref{s513}, we first set 
	\begin{align*}
	u_{0,R,t_0} &= 
	\begin{cases}
	u_e(t_0,r) \quad &\mbox{if } r \geq R, \\
	\frac{r - \eta}{R-\eta} u(t_0,R) \quad &\mbox{if } r < R,
	\end{cases} \\
	u_{1,R,t_0} &= 
	\begin{cases}
	\p_t u_e(t_0,r) \quad &\mbox{if } r \geq R, \\
	0 \quad &\mbox{if } r < R.
	\end{cases}
	\end{align*}
	By \eqref{s56} 
	we can find $R_0 = R_0(\de_0,\eta)$ independent of $t_0$ such that for all 
	$R \geq R_0$
	\begin{align*}
	\| (u_{0,R,t_0}, u_{1,R,t_0}) \|_{\h(r \geq \eta)} \lesssim \| \vec u_e(t_0) \|_{\h(r \geq R)} \leq \de_0.
	\end{align*}
	The previous argument for $t_0 = 0$ repeated with obvious modifications yield \eqref{s513a} for $t = t_0$. 
\end{proof}

Before proceeding to the next step, it will be useful to reformulate the conclusion of Lemma \ref{l55}.  Define 
\begin{align}
\lam(t,r) &:= r^3 u_e(t,r), \label{s522}\\
\mu(t,r) &:= r \int_r^\infty \p_t u_e(t,\rho) \rho d\rho. \label{s523}
\end{align}
We denote $\lam(r) := \lam(0,r)$ and $\mu(r) := \mu(0,r)$. By \eqref{proj norm 1} and \eqref{proj norm 2} the functions $\lam$ and $\mu$ arise in the explicit
computation of $\pi_R^{\perp} \vec u(t)$ and $\pi_R \vec u(t)$ as follows:
\begin{align}
\| \pi^{\perp}_R \vec u_e(t) \|^2_{\h(r \geq R)} &= 
\int_R^\infty \left ( \frac{1}{r} \p_r \lam(t,r) \right )^2 dr + \int_R^\infty (\p_r \mu(t,r))^2 dr, \label{s524} \\
\| \pi_R \vec u_e(t) \|^2_{\h(r \geq R)} &= 3 R^{-3} \lam^2(t,R) + R^{-1} \mu^2(t,R). \label{s525} 
\end{align}  
Thus, Lemma \ref{l55} can be restated using $\lam,\mu$ in the following way. 

\begin{lem}\label{l58}
	Let $\mu,\lam$ be as in \eqref{s522} and \eqref{s523}.  Then there exists $R_0 \geq \eta$ such that for all $R > R_0$ and 
	for all $t \in \R$
	\begin{align*}
	\int_R^\infty \left ( \frac{1}{r} \p_r \lam(t,r) \right )^2 dr + \int_R^\infty (\p_r \mu(t,r))^2 dr
	\lesssim R^{-19/3}&  \lam^2(t,R) + R^{-29/3} \lam^4(t,R) + R^{-9} \lam^6(t,R) \\
	+ R^{-13/3}&  \mu^2(t,R) + R^{-17/3} \mu^4(t,R) + R^{-3} \mu^6(t,R). 
	\end{align*} 
\end{lem}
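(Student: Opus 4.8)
Lemma~\ref{l58} is simply the conclusion \eqref{s513a} of Lemma~\ref{l55} transcribed into the variables $\lambda$ and $\mu$, so the plan is to substitute the explicit expressions \eqref{s524} and \eqref{s525} for the two projection norms into \eqref{s513a} and collect terms. Those two identities carry the only analytic content, and I would establish them first from the abstract formulas \eqref{proj norm 1}, \eqref{proj norm 2} applied with $(f,g)=\vec u_e(t)$ restricted to $r\ge R$. For \eqref{s525} this is a direct substitution, using $\lambda(t,R)=R^3u_e(t,R)$ and $\mu(t,R)=R\int_R^\infty\p_t u_e(t,\rho)\rho\,d\rho$. For \eqref{s524} one computes $\tfrac{1}{r}\p_r\lambda=3ru_e+r^2\p_r u_e$ and $\p_r\mu=\int_r^\infty\p_t u_e(t,\rho)\rho\,d\rho-r^2\p_t u_e$, squares, integrates over $[R,\infty)$, and integrates the cross terms by parts; the boundary contributions produce exactly the $-3R^3f^2(R)$ and $-R\big(\int_R^\infty rg\,dr\big)^2$ corrections in \eqref{proj norm 2}, while the terms at $r=\infty$ vanish. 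For the latter point one needs $r^3u_e^2(t,r)\to0$ and $r\big(\int_r^\infty\p_t u_e(t,\rho)\rho\,d\rho\big)^2\to0$ as $r\to\infty$; both follow from $\vec u_e(t)\in\h(r\ge R;r^4\,dr)$ (Lemma~\ref{l52} together with \eqref{s55}) via the Strauss-type bounds $r^3u_e^2(t,r)\lesssim\int_r^\infty|\p_\rho u_e|^2\rho^4\,d\rho$ and $r\big(\int_r^\infty\p_t u_e(t,\rho)\rho\,d\rho\big)^2\lesssim\int_r^\infty|\p_t u_e|^2\rho^4\,d\rho$.

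With \eqref{s524} and \eqref{s525} in hand the rest is bookkeeping. Set $a:=\|\pi_R\vec u_e(t)\|_{\h(r\ge R)}^2=3R^{-3}\lambda^2(t,R)+R^{-1}\mu^2(t,R)$, so that Lemma~\ref{l55} (noting $R^{-22/6}=R^{-11/3}$) reads $\|\pi_R^\perp\vec u_e(t)\|_{\h(r\ge R)}^2\lesssim R^{-10/3}a+R^{-11/3}a^2+a^3$. Expanding each power of $a$ with $(x+y)^k\le 2^k(x^k+y^k)$ for $k=1,2,3$, the linear term contributes $\lesssim R^{-19/3}\lambda^2(t,R)+R^{-13/3}\mu^2(t,R)$, the quadratic term $\lesssim R^{-29/3}\lambda^4(t,R)+R^{-17/3}\mu^4(t,R)$, and the cubic term $\lesssim R^{-9}\lambda^6(t,R)+R^{-3}\mu^6(t,R)$, all with absolute implied constants. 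Rewriting the left-hand side via \eqref{s524} gives precisely the claimed inequality, for every $t\in\R$ and every $R\ge R_0$, where $R_0$ is the threshold already provided by Lemma~\ref{l55}.

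I do not expect any genuine obstacle here: once the identification of $\pi_R^\perp,\pi_R$ with the $\lambda,\mu$ expressions is recorded (the integration by parts, together with the decay of $u_e$ and $\p_t u_e$ at spatial infinity that kills the boundary terms), everything reduces to elementary algebra. The only thing to keep straight is the numerology, namely that the exponents $-19/3,-29/3,-9$ and $-13/3,-17/3,-3$ emerge exactly as stated.
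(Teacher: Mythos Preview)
Your proposal is correct and matches the paper's approach exactly: the paper presents Lemma~\ref{l58} as a direct restatement of Lemma~\ref{l55} via the identities \eqref{s524} and \eqref{s525}, without a separate proof. Your additional verification of those two identities (via the integration-by-parts computation and the decay of boundary terms) supplies detail the paper leaves implicit, and your exponent bookkeeping is accurate.
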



\subsubsection{Step 2: Asymptotics for $\vec u_e(0)$}  

In this step, we prove that $\vec u_e(0)$ has the asymptotic expansions \eqref{s510}, \eqref{s511} which we now formulate as 
a lemma.


\begin{lem}\label{p58}  
	Let $u_e$ be a solution to \eqref{s52e} which satisfies \eqref{s56}.   Let $\vec u_e(0) 
	= (u_{e,0},u_{e,1})$.  Then there exists $\alpha \in \R$ such that 
	\begin{align}
	r^3 u_{e,0}(r) &= \al + O(r^{-1}), \notag \\
	r \int_r^\infty u_{e,1}(\rho) \rho d\rho &= O(r^{-1}), \notag
	\end{align}  
	as $r \rar \infty$.  Equivalently, with $\lam$ and $\mu$ defined as in \eqref{s522} and \eqref{s523}, there exists $\alpha \in \R$ such that 
	\begin{align}
	\lam(r) &= \al + O ( r^{-1} ), \label{s526} \\
	\mu(r) &= O( r^{-1}). \label{s527}
	\end{align}
\end{lem}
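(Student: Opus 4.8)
The plan is to pass to the Euclidean profiles $\lambda(r):=\lambda(0,r)=r^3u_{e,0}(r)$ and $\mu(r):=\mu(0,r)=r\int_r^\infty u_{e,1}(\rho)\rho\,d\rho$ and to read off \eqref{s526}--\eqref{s527} from the channels--of--energy estimate of Lemma \ref{l58} together with the compactness of the trajectory. Recall from \eqref{s524}--\eqref{s525} that, writing $g(R):=\|\pi_R\vec u_e(0)\|_{\h(r\ge R)}$,
\begin{align*}
g(R)^2=3R^{-3}\lambda^2(R)+R^{-1}\mu^2(R),\qquad \|\pi_R^\perp\vec u_e(0)\|_{\h(r\ge R)}^2=\int_R^\infty\!\Bigl(\tfrac1r\lambda'(r)\Bigr)^2dr+\int_R^\infty\!(\mu'(r))^2dr,
\end{align*}
and that $g(R)^2+\|\pi_R^\perp\vec u_e(0)\|_{\h(r\ge R)}^2=\|\vec u_e(0)\|_{\h(r\ge R)}^2\to0$ as $R\to\infty$ by Lemma \ref{l52}. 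In particular $g(R)\to0$, so $\lambda(R)=o(R^{3/2})$ and $\mu(R)=o(R^{1/2})$; this is the weak input for a bootstrap.

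The engine is a dyadic iteration. For $R$ large, Cauchy--Schwarz on the shell $[R,2R]$ gives $|\lambda(2R)-\lambda(R)|\lesssim R^{3/2}\|\pi_R^\perp\vec u_e(0)\|_{\h(r\ge R)}$ and $|\mu(2R)-\mu(R)|\lesssim R^{1/2}\|\pi_R^\perp\vec u_e(0)\|_{\h(r\ge R)}$. Substituting Lemma \ref{l58} and the bounds $\lambda^2(R)\le R^3g(R)^2$, $\mu^2(R)\le R\,g(R)^2$ yields $\|\pi_R^\perp\vec u_e(0)\|_{\h(r\ge R)}^2\lesssim R^{-10/3}g(R)^2+g(R)^6$, hence $|\lambda(2R)-\lambda(R)|\lesssim R^{-1/6}g(R)+R^{3/2}g(R)^3$ and $|\mu(2R)-\mu(R)|\lesssim R^{-7/6}g(R)+R^{1/2}g(R)^3$. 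Combining these with $g(2R)^2=3(2R)^{-3}\lambda^2(2R)+(2R)^{-1}\mu^2(2R)$ produces a self--improving recursion $g(2R)\le(\kappa+o(1))g(R)$ with an absolute constant $\kappa<1$, and therefore $g(R)\lesssim R^{-\sigma}$ for some $\sigma>0$.

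Once $\sigma$ is pushed past $1/2$, the two oscillation bounds are summable over dyadic scales (using also $g(R)^3\lesssim R^{-9/2}|\lambda(R)|^3+R^{-3/2}|\mu(R)|^3$ to trade the weight $R^{3/2}$ against the decay of $g$), so $\lambda(r)$ and $\mu(r)$ converge, say to $\alpha$ and $\beta$; summing from $r$ out to $+\infty$ then gives $\lambda(r)=\alpha+O(r^{-1})$ and $\mu(r)=\beta+O(r^{-1})$, and in particular $\lambda,\mu$ are bounded. Finally, since $\sigma>1/2$ forces $R^{-1}\mu^2(R)\le g(R)^2$ to decay faster than $R^{-1}$, we get $\mu(R)\to0$, i.e.\ $\beta=0$ and $\mu(r)=O(r^{-1})$; this is consistent with the crude a priori bound $|\mu(r)|\le r^{1/2}\bigl(\int_r^\infty u_{e,1}^2\rho^4\,d\rho\bigr)^{1/2}$ coming from $u_{e,1}\in L^2(r^4dr)$. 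Unwinding the definitions of $\lambda,\mu$ then gives \eqref{s510}--\eqref{s511}, which is the statement of the lemma.

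I expect the main obstacle to be precisely the exponent bookkeeping in the bootstrap: the cubic nonlinearity $G$ in \eqref{s46} is scaling--supercritical for the free wave equation on $\R^{1+5}$, so the term $g(R)^6$ in $\|\pi_R^\perp\vec u_e(0)\|_{\h(r\ge R)}^2$ (it comes from $R^{-3}\mu^6(R)$ in Lemma \ref{l58}, ultimately from the factor $\|\vec h(0)\|^3$ in Lemma \ref{l57}) carries no power of $R$ and produces the contribution $R^{3/2}g(R)^3$, which against the best possible $g(R)\sim R^{-1/2}$ is \emph{exactly borderline}; the crude iteration only gives $\sigma\le-\log_2\kappa\le 1/2$. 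Extracting a genuine power gain is the crux: the natural remedies are (i) to iterate over slowly shrinking annuli $[R,(1+\delta_R)R]$ with $\delta_R$ tuned to $g(R)$, and/or (ii) to re--run the proofs of Lemma \ref{l57} and Lemma \ref{l58} with the interpolated bound $|G_e(r,u_e)|\lesssim r^{-2/3-\epsilon}|u_e|^{7/3-\epsilon}$ (valid for $r\gtrsim1$ since $|G_e(r,u_e)|\lesssim\min(|u_e|^3,r^{-2}|u_e|)$, with $7/3-\epsilon$ now energy--subcritical for $\R^{1+5}$), which replaces the offending term by one carrying a negative power of $R$. Throughout, one must invoke the \emph{uniform in $t$} exterior decay in Lemma \ref{l52} so that the truncated Cauchy data $(u_{0,R,t_0},u_{1,R,t_0})$ are uniformly small and Lemma \ref{l57} applies at every base time $t_0$.
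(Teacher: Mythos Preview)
Your general framework---pass to $\lambda,\mu$, use Lemma \ref{l58} to bound $\pi_R^\perp$, and run a dyadic iteration---is the same as the paper's. The gap is in how you conclude $\beta=0$. Your argument hinges on pushing the decay exponent of $g(R)$ strictly past $1/2$, but as you yourself note, the $R^{-3}\mu^6$ term in Lemma \ref{l58} makes this \emph{exactly} borderline: if $\mu\to\beta\neq0$ then $g(R)\sim|\beta|R^{-1/2}$, and the recursion $g(2R)\le(\kappa+o(1))g(R)$ can at best give $\kappa=2^{-1/2}$ (the $\mu$--dominated case), i.e.\ $\sigma=1/2$. Your proposed fixes are not carried out: remedy (i) is vague, and remedy (ii)---interpolating $|G_e|\lesssim\min(|u|^3,r^{-2}|u|)$---would require redoing Lemmas \ref{l57} and \ref{l58} with a fractional nonlinearity, which is nontrivial and lands you at the energy--critical exponent $7/3$ in $\R^{1+5}$ anyway. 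There is also a logical ordering issue: with only $\mu$ bounded, the $R^{-3}\mu^6$ term dominates $\|\pi_R^\perp\|^2$ by $R^{-3}$, so $|\lambda(2R)-\lambda(R)|\lesssim R^{3/2}\cdot R^{-3/2}=O(1)$ is \emph{not} summable. Hence $\lambda$ cannot be shown to converge until \emph{after} $\mu=O(r^{-1})$ is established; your simultaneous convergence claim is premature.

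The paper does \emph{not} try to beat the borderline spatially. Instead it uses the equation in the time direction. First, the weak growth bounds $|\lambda(t,r)|\lesssim r^{1/6}$, $|\mu(t,r)|\lesssim r^{1/18}$ (uniformly in $t$) are established by your dyadic iteration. These suffice to make the $\mu$--difference summable, giving $\mu(t,r)\to\beta(t)$ with $|\mu(t,r)-\beta(t)|\lesssim r^{-1}$. Then one integrates $\partial_t^2 u_e$ over $[t_1,t_2]\times\{r\ge s\}$ using the PDE \eqref{s52e} and the pointwise bound $|u_e(t,r)|\lesssim r^{-17/6}$ to show $\beta(t_2)=\beta(t_1)$, so $\beta$ is constant. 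Finally, writing $\beta=\frac{R}{T}\int_R^\infty[u_e(T,r)-u_e(0,r)]r\,dr+O(R^{-1})=O(R^{1/6}/T)+O(R^{-1})$ and taking $R=T\to\infty$ gives $\beta=0$. Only then does the dyadic argument yield $\lambda\to\alpha$ with rate $O(r^{-1})$. The missing idea in your proposal is precisely this use of the time variable to kill $\beta$; the spatial channels estimate alone cannot do it.
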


The proof of Lemma \ref{p58} is split up into a few further lemmas.  First, we use Lemma \ref{l58} to prove the following difference 
estimate for $\lam$ and $\mu$. 

\begin{lem}\label{l59}
	Let $\de_1 \leq \de_0$ where $\de_0$ is from Lemma \ref{l57}.  Let $R_1 \geq R_0 > 1$ be large enough so that 
	for all $R \geq R_1$ and for all $t \in \R$
	\begin{align*}
	\| \vec u_e(t) \|_{\h(r \geq R)} &\leq \de_1, \\
	R^{-5/3} &\leq \de_1. 
	\end{align*}
	Then for all $r, r'$ with $R_1 \leq r \leq r' \leq 2r$ and for all $t \in \R$
	\begin{align} 
	\begin{split} \label{s528}
	|\lam(t,r) - \lam(t,r')| \lesssim r^{-5/3} &|\lam(t,r)| + r^{-10/3} |\lam(t,r)|^2 + r^{-3} |\lam(t,r)|^3 \\
	+ r^{-2/3} &|\mu(t,r)| + r^{-4/3} |\mu(t,r)|^2 + |\mu(t,r)|^3, \\
	\end{split} \\
	\begin{split} \label{s529}
	|\mu(t,r) - \mu(t,r')| \lesssim r^{-8/3} &|\lam(t,r)| + r^{-13/3} |\lam(t,r)|^2 + r^{-4} |\lam(t,r)|^3 \\
	+ r^{-5/3} &|\mu(t,r)| + r^{-7/3} |\mu(t,r)|^2 + r^{-1} |\mu(t,r)|^3. \\
	\end{split}
	\end{align}
\end{lem}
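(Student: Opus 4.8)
The idea is to reduce the difference estimates \eqref{s528}--\eqref{s529} to the Cauchy--Schwarz consequences of the local energy bounds in Lemma \ref{l58}. Recall $\lam(t,r) = r^3 u_e(t,r)$ and $\mu(t,r) = r\int_r^\infty \p_t u_e(t,\rho)\rho\,d\rho$. For the $\lam$ estimate, I would write
\begin{align*}
\lam(t,r) - \lam(t,r') = -\int_r^{r'} \p_\sigma \lam(t,\sigma)\,d\sigma
= -\int_r^{r'} \sigma \cdot \frac{1}{\sigma}\p_\sigma\lam(t,\sigma)\,d\sigma,
\end{align*}
and apply Cauchy--Schwarz in the measure $d\sigma$ on $[r,r']$ to get $|\lam(t,r)-\lam(t,r')| \lesssim r^{3/2}\big(\int_r^\infty (\tfrac{1}{\sigma}\p_\sigma\lam)^2\,d\sigma\big)^{1/2}$, using $r' \leq 2r$ to bound $\int_r^{r'}\sigma^2\,d\sigma \lesssim r^3$. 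For $\mu$ one has directly $|\mu(t,r)-\mu(t,r')| \leq \int_r^{r'}|\p_\sigma\mu(t,\sigma)|\,d\sigma \lesssim r^{1/2}\big(\int_r^\infty(\p_\sigma\mu)^2\,d\sigma\big)^{1/2}$.

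Next I would insert the bound from Lemma \ref{l58} (applied at radius $R = r \geq R_1 \geq R_0$) for the two integrals on the right:
\begin{align*}
\int_r^\infty \Big(\tfrac{1}{\sigma}\p_\sigma\lam(t,\sigma)\Big)^2 d\sigma + \int_r^\infty (\p_\sigma\mu(t,\sigma))^2 d\sigma
\lesssim r^{-19/3}\lam^2 + r^{-29/3}\lam^4 + r^{-9}\lam^6 + r^{-13/3}\mu^2 + r^{-17/3}\mu^4 + r^{-3}\mu^6,
\end{align*}
where $\lam = \lam(t,r)$, $\mu = \mu(t,r)$. Taking square roots (using $\sqrt{a+b} \lesssim \sqrt a + \sqrt b$ termwise) and multiplying by $r^{3/2}$ for the $\lam$-difference gives exactly the six terms in \eqref{s528}: $r^{3/2}\cdot r^{-19/6}|\lam| = r^{-5/3}|\lam|$, $r^{3/2}\cdot r^{-29/6}|\lam|^2 = r^{-10/3}|\lam|^2$, $r^{3/2}\cdot r^{-9/2}|\lam|^3 = r^{-3}|\lam|^3$, and similarly $r^{3/2}\cdot r^{-13/6}|\mu| = r^{-2/3}|\mu|$, $r^{3/2}\cdot r^{-17/6}|\mu|^2 = r^{-4/3}|\mu|^2$, $r^{3/2}\cdot r^{-3/2}|\mu|^3 = |\mu|^3$. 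Multiplying instead by $r^{1/2}$ for the $\mu$-difference yields the exponents in \eqref{s529}: $r^{1/2}\cdot r^{-19/6}|\lam| = r^{-8/3}|\lam|$, $r^{1/2}\cdot r^{-29/6}|\lam|^2 = r^{-13/3}|\lam|^2$, $r^{1/2}\cdot r^{-9/2}|\lam|^3 = r^{-4}|\lam|^3$, $r^{1/2}\cdot r^{-13/6}|\mu| = r^{-5/3}|\mu|$, $r^{1/2}\cdot r^{-17/6}|\mu|^2 = r^{-7/3}|\mu|^2$, $r^{1/2}\cdot r^{-3/2}|\mu|^3 = r^{-1}|\mu|^3$.

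The only genuinely nontrivial point is verifying that Lemma \ref{l58} is actually \emph{applicable} here, i.e.\ that the hypothesis $R > R_0$ of that lemma is met and that the compactness/decay conditions entering its proof (via Lemma \ref{l57}, which needs the smallness $\|\vec u_e(t)\|_{\h(r\geq R)} \leq \de_0$ and $R^{-5/3} \leq \de_0$) hold uniformly in $t$. This is exactly why the statement of Lemma \ref{l59} fixes $\de_1 \leq \de_0$ and chooses $R_1 \geq R_0$ so large that both $\|\vec u_e(t)\|_{\h(r\geq R)} \leq \de_1$ (possible by the second line of \eqref{s56}, the uniform exterior decay) and $R^{-5/3} \leq \de_1$ hold for all $R \geq R_1$ and all $t$. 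So I would begin the proof by invoking \eqref{s56} to produce such an $R_1$, then for $R_1 \leq r \leq r' \leq 2r$ run the Cauchy--Schwarz argument above and substitute Lemma \ref{l58}. No step is expected to be an obstacle; the main thing to be careful about is bookkeeping the fractional exponents correctly, and making sure the integration range $[r,r']$ can be harmlessly extended to $[r,\infty)$ before applying Lemma \ref{l58} (which is immediate since the integrands are nonnegative).
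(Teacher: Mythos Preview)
Your proposal is correct and follows essentially the same approach as the paper: apply the fundamental theorem of calculus, then Cauchy--Schwarz (using $r'\le 2r$ to bound $\int_r^{r'}\sigma^2\,d\sigma\lesssim r^3$ and $\int_r^{r'}d\sigma\lesssim r$), and finally insert the bound from Lemma~\ref{l58} at $R=r$. The paper's proof is slightly terser and works directly with squares rather than taking square roots, but the argument is the same.
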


\begin{proof}
	By the fundamental theorem of calculus and Lemma \ref{l58} we have, for $r, r'$ such that $R_1 \leq r \leq r' \leq 2r$, 
	\begin{align*}
	|\lam(t,r) - \lam(t,r')|^2 &=
	\left ( \int_r^{r'} \p_\rho \lam(t,\rho) d\rho \right )^2 \\
	&\leq \left ( \int_r^{r'} \rho^2 d\rho \right ) \left ( \int_r^{r'} \left ( \frac{1}{\rho} \p_{\rho} \lam(t,\rho) \right )^2 d\rho \right ) \\
	&\lesssim r^3 \Bigl ( 
	r^{-19/3} \lam^2(t,r) + r^{-29/3} \lam^4(t,r) + r^{-9} \lam^6(t,r) \\
	&\:+ r^{-13/3}  \mu^2(t,r) + r^{-17/3} \mu^4(t,r) + r^{-3} \mu^6(t,r) 
	\Bigr )
	\end{align*}
	which proves \eqref{s528}. 
	
	Similarly, we have 
	\begin{align*}
	|\mu(t,r) - \mu(t,r')|^2
	&\leq r \left ( \int_r^{r'} ( \mu(t,\rho) )^2 d\rho \right ) \\
	&\lesssim r \Bigl ( 
	r^{-19/3} \lam^2(t,r) + r^{-29/3} \lam^4(t,r) + r^{-9} \lam^6(t,r) \\
	&\:+ r^{-13/3}  \mu^2(t,r) + r^{-17/3} \mu^4(t,r) + r^{-3} \mu^6(t,r) 
	\Bigr )
	\end{align*}
	which proves \eqref{s529}. 
\end{proof}

A simple consequence of Lemma \ref{l59} is the following.  

\begin{cor}\label{c510}
	Let $R_1$ be as in Lemma \ref{l59}.  Then for all $r,r'$ with $R_1 \leq r \leq r' \leq 2r$ and for all $t \in \R$ 
	\begin{align}
	|\lam(t,r) - \lam(t,r')| &\lesssim \de_1 |\lam(t,r)| + r \de_1 |\mu(t,r)|, \label{s530} \\
	|\mu(t,r) - \mu(t,r')| &\lesssim r^{-1} \de_1 |\lam(t,r)| + \de_1 |\mu(t,r)|. \label{s531}
	\end{align}
\end{cor}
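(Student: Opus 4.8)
The plan is to derive Corollary \ref{c510} directly from Lemma \ref{l59} by simply observing that, in the regime $r \geq R_1$, the higher powers of $\lam$ and $\mu$ in the estimates \eqref{s528} and \eqref{s529} are dominated by the linear terms. First I would fix $r, r'$ with $R_1 \leq r \leq r' \leq 2r$ and $t \in \R$, and recall from \eqref{s525} that
\begin{align*}
3 r^{-3} \lam^2(t,r) + r^{-1} \mu^2(t,r) = \| \pi_r \vec u_e(t) \|_{\h(r' \geq r)}^2 \leq \| \vec u_e(t) \|_{\h(r' \geq r)}^2 \leq \de_1^2,
\end{align*}
by the choice of $R_1$ in Lemma \ref{l59}. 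This gives the two pointwise bounds $r^{-3/2}|\lam(t,r)| \lesssim \de_1$ and $r^{-1/2}|\mu(t,r)| \lesssim \de_1$, equivalently $|\lam(t,r)| \lesssim \de_1 r^{3/2}$ and $|\mu(t,r)| \lesssim \de_1 r^{1/2}$.

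Next I would use these bounds to absorb each quadratic and cubic term in \eqref{s528} into one of the two linear terms. For instance, $r^{-10/3}|\lam(t,r)|^2 = r^{-10/3}|\lam(t,r)| \cdot |\lam(t,r)| \lesssim r^{-10/3} |\lam(t,r)| \cdot \de_1 r^{3/2} = \de_1 r^{-11/6} |\lam(t,r)| \lesssim \de_1 |\lam(t,r)|$ since $r \geq R_1 > 1$; similarly $r^{-3}|\lam(t,r)|^3 \lesssim r^{-3} |\lam(t,r)| \cdot \de_1^2 r^3 = \de_1^2 |\lam(t,r)| \lesssim \de_1 |\lam(t,r)|$ (using $\de_1 \leq \de_0 \leq 1$). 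For the $\mu$ terms appearing in \eqref{s528}, $r^{-4/3}|\mu(t,r)|^2 \lesssim r^{-4/3} |\mu(t,r)| \cdot \de_1 r^{1/2} \lesssim \de_1 r^{-5/6} |\mu(t,r)| \lesssim \de_1 r |\mu(t,r)|$ (and in fact much smaller, but the stated form allows the crude bound $r \de_1 |\mu(t,r)|$ since $r^{-5/6} \leq r$), and likewise $|\mu(t,r)|^3 \lesssim \de_1^2 r |\mu(t,r)| \lesssim \de_1 r |\mu(t,r)|$. Finally $r^{-5/3}|\lam(t,r)| \lesssim \de_1 |\lam(t,r)|$ and $r^{-2/3}|\mu(t,r)| \lesssim r |\mu(t,r)| \cdot r^{-5/3} \le \de_1 r |\mu(t,r)|$ (using $r^{-5/3} \leq \de_1$ by the hypotheses of Lemma \ref{l59}). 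Collecting these yields \eqref{s530}. The argument for \eqref{s531} is identical: every term on the right of \eqref{s529} carries an extra factor of $r^{-1}$ relative to the corresponding term in \eqref{s528} (compare the powers $-8/3$ versus $-5/3$, etc.), so the same absorption produces $r^{-1}\de_1|\lam(t,r)| + \de_1|\mu(t,r)|$.

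There is no real obstacle here; the corollary is a bookkeeping exercise in which the only inputs are Lemma \ref{l59}, the smallness $\| \vec u_e(t) \|_{\h(r \geq R)} \leq \de_1$, the normalization $\de_1 \leq \de_0 \leq 1$, and the constraint $R^{-5/3} \leq \de_1$. The mildly delicate point, if any, is matching the crude right-hand sides \eqref{s530}--\eqref{s531} exactly as stated: one must be slightly careful that the $\mu$-contributions to the $\lam$-difference genuinely fit under $r\de_1|\mu(t,r)|$ rather than a smaller quantity, but since all the relevant negative powers of $r$ are bounded by $r$ this is automatic. I would therefore present the proof in roughly one paragraph, displaying the two a priori pointwise bounds on $\lam$ and $\mu$ first, then absorbing term by term.
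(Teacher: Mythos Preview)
Your proposal is correct and is exactly the argument the paper has in mind: the corollary is stated as ``a simple consequence of Lemma \ref{l59}'' with no proof given, and the intended deduction is precisely to use the smallness bounds $r^{-3/2}|\lam(t,r)| \lesssim \de_1$, $r^{-1/2}|\mu(t,r)| \lesssim \de_1$ (from $\|\pi_r \vec u_e(t)\|_{\h} \leq \de_1$) together with $r^{-5/3} \leq \de_1$ to absorb the quadratic and cubic terms of \eqref{s528}--\eqref{s529} into the linear ones. Your term-by-term verification is accurate.
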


Next we establish the following improved growth rate for $\lam$ and $\mu$. 

\begin{lem}\label{l511}
	For all $t \in \R$, 
	\begin{align}
	|\lam(t,r)| &\lesssim r^{1/6},  \label{s532}\\
	|\mu(t,r)| &\lesssim r^{1/18}. \label{s533}
	\end{align}
\end{lem}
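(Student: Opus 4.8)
The plan is a dyadic bootstrap on the quantities $\lambda(t,\cdot)$ and $\mu(t,\cdot)$, uniform in $t$, driven by the difference estimates in Corollary \ref{c510} and Lemma \ref{l59}. Throughout, $R_1 \ge R_0$ and $\de_1$ are as in Lemma \ref{l59}, and we set $r_k := 2^k R_1$. First, the compact range $\eta \le r \le R_1$ is harmless: since $\lambda(t,r) = (r^2+1)r\,u(t,r)$ and $\mu(t,r) = \tfrac{r}{R_1}\mu(t,R_1) + r\int_r^{R_1}\p_t u_e(t,\rho)\rho\,d\rho$, the Strauss estimate \eqref{s411s}, the Cauchy--Schwarz inequality, and the uniform bounds $\sup_t\|\vec u(t)\|_{\h}<\infty$ coming from precompactness of $K$ give $|\lambda(t,r)| + |\mu(t,r)| \lesssim 1$ on $[\eta,R_1]$, which is $\lesssim r^{1/6}$ and $\lesssim r^{1/18}$ respectively. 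So it remains to prove \eqref{s532}, \eqref{s533} for $r \ge R_1$. Moreover the projection identity \eqref{s525} together with $\|\vec u_e(t)\|_{\h(r\ge R)}\le\de_1$ yields the crude a priori bounds $|\lambda(t,R)|\lesssim\de_1 R^{3/2}$ and $|\mu(t,R)|\lesssim\de_1 R^{1/2}$ for all $R\ge R_1$, $t\in\R$.

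Step 1 (first improvement via Corollary \ref{c510}): write $A_k := |\lambda(t,r_k)|$ and $B_k := r_k|\mu(t,r_k)|$. Then \eqref{s530}, \eqref{s531} read $|A_{k+1}-A_k|\lesssim\de_1(A_k+B_k)$ and $|\tfrac12 B_{k+1}-B_k|\lesssim\de_1(A_k+B_k)$, so $A_{k+1}+B_{k+1}\le (2+C\de_1)(A_k+B_k)$. Iterating from $k=0$ and using the a priori bound $A_0+B_0\lesssim\de_1 R_1^{3/2}$, we obtain $A_k+B_k\lesssim r_k^{1+\e_0}$ with $\e_0=\e_0(\de_1)\to 0$ as $\de_1\to 0$; interpolating once more across a dyadic block by \eqref{s530}, \eqref{s531} gives, for all $r\ge R_1$ and $t\in\R$,
\begin{align*}
|\lambda(t,r)| \lesssim r^{1+\e_0}, \qquad |\mu(t,r)| \lesssim r^{\e_0}.
\end{align*}
We now fix $\de_1$ small enough that $3\e_0\le \tfrac16$.

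Step 2 (second improvement via Lemma \ref{l59}): inserting the Step 1 bounds into \eqref{s529}, each of the six terms on the right is $O(r^{-\sigma})$ for some $\sigma>0$ (the slowest-decaying being $r^{-4}|\lambda|^3\lesssim r^{-1+3\e_0}$ and $r^{-1}|\mu|^3\lesssim r^{-1+3\e_0}$), hence $\sum_k|\mu(t,r_{k+1})-\mu(t,r_k)|$ converges with a bound uniform in $t$, and therefore $|\mu(t,r)|\lesssim |\mu(t,R_1)| + 1\lesssim 1$ for $r\ge R_1$. Inserting $|\lambda(t,r)|\lesssim r^{1+\e_0}$ and $|\mu(t,r)|\lesssim 1$ into \eqref{s528}, the only terms on the right that are not $O(r^{-\sigma})$ are $r^{-3}|\lambda|^3\lesssim r^{3\e_0}$ and $|\mu|^3\lesssim 1$; summing over the $\lesssim\log_2(r/R_1)$ dyadic scales between $R_1$ and $r$ (the contributions $r_k^{3\e_0}$ form a geometric series comparable to $r^{3\e_0}$, and the bounded terms contribute $\lesssim\log r$) gives
\begin{align*}
|\lambda(t,r)| \lesssim |\lambda(t,R_1)| + r^{3\e_0} + \log r \lesssim r^{1/6}, \qquad r\ge R_1 .
\end{align*}
Together with $|\mu(t,r)|\lesssim 1\lesssim r^{1/18}$ and the discussion of the compact range, this proves \eqref{s532} and \eqref{s533}.

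The crux is Step 1: the difference estimates there couple $\lambda$ and $\mu$ with a power of $r$ and carry no negative powers of $r$ (their coefficient is only the small constant $\de_1$), so one can bring the growth down just from $r^{3/2}$ to $r^{1+\e_0}$, not directly to $r^{1/6}$. What makes this enough is that $r^{1+\e_0}$, $r^{\e_0}$ is already subcritical enough to render the cubic terms of Lemma \ref{l59} summable — or, in the single surviving case, bounded — in Step 2, which is exactly what closes the induction. The remaining care is purely bookkeeping: keeping all constants in the dyadic sums independent of $t$ (automatic, since the inputs are) and passing correctly between dyadic scales $r_k$ and arbitrary $r\ge R_1$ via one extra application of the block estimates.
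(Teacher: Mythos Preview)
Your argument is correct, but it takes a genuinely different route from the paper's. The paper scales the \emph{other} way: with $a_n := |\mu(t,2^n r_0)|$ and $b_n := (2^n r_0)^{-1}|\lambda(t,2^n r_0)|$, Corollary \ref{c510} yields $a_{n+1}+b_{n+1}\le (1+2C_1\de_1)(a_n+b_n)$, i.e.\ a growth rate $1+O(\de_1)$ rather than your $2+O(\de_1)$. Choosing $\de_1$ with $1+2C_1\de_1<2^{1/18}$ already gives $|\mu|\lesssim r^{1/18}$ in one pass; a second pass on $c_n=|\lambda(2^n r_0)|$ via \eqref{s528} (where $|\mu|^3\lesssim r^{1/6}$ is the worst surviving term) produces the recursion $c_{n+1}\le (1+C_1\de_1)c_n + C\de_1 r_n^{1/6}$ and hence $|\lambda|\lesssim r^{1/6}$.

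Your opposite scaling $B_k=r_k|\mu|$ forces the factor of $2$ into the recursion, so your Step 1 only reaches $|\lambda|\lesssim r^{1+\e_0}$, $|\mu|\lesssim r^{\e_0}$ --- barely below the trivial $r^{3/2}$, $r^{1/2}$. What makes your approach still close is that these slightly subcritical bounds render every term in \eqref{s529} summable over dyadic scales (the worst being $r^{-4}|\lambda|^3+r^{-1}|\mu|^3\lesssim r^{-1+3\e_0}$), giving $|\mu|\lesssim 1$; and then in \eqref{s528} the only non-summable remnants are $r^{-3}|\lambda|^3\lesssim r^{3\e_0}$ and $|\mu|^3\lesssim 1$, whose dyadic sums are $\lesssim r^{3\e_0}+\log r\lesssim r^{1/6}$. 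So you trade one clean iteration for a three-stage bootstrap. Both work; the paper's is shorter because the scaling $r^{-1}\lambda$ decouples the factor of $r$ in \eqref{s530} exactly.
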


\begin{proof}
	As in the proof of Lemma \ref{l55}, we only consider the case $t = 0$.  Fix $r_0 \geq R_1$.  By Corollary 
	\ref{c510}, 
	\begin{align}
	|\lam(2^{n+1}r)| &\leq  (1 + C_1 \de_1) |\lam(2^n r_0)| + (2^n r_0) C_1 \de_1 |\mu(2^n r_0)|, \label{s534} \\
	|\mu(2^{n+1}r)| &\leq  (1 + C_1 \de_1) |\mu(2^n r_0)| + (2^n r_0)^{-1} C_1 \de_1 |\lam(2^n r_0)|. \label{s535} 
	\end{align}
	If we define $a_n := |\mu(2^n r_0)|$ and $b_n := (2^n r_0)^{-1} |\lam(2^n r_0)|$, then \eqref{s534} and \eqref{s535} imply
	\begin{align*}
	a_{n+1} + b_{n+1} \leq \left ( 1 + 2 C_1 \de_1 \right ) (a_n + b_n).
	\end{align*} 
	By induction
	\begin{align*}
	a_n + b_n \leq \left ( 1 + 2 C_1 \de_1 \right )^n (a_0 + b_0).
	\end{align*}
	Choose $\de_1$ so small so that $1 + 2 C_1 \de_1 < 2^{1/18}$.  We conclude that 
	\begin{align}\label{s536a}
	a_n \leq C (2^n r_0)^{1/18},
	\end{align}
	where $C = C(r_0)$.  This proves \eqref{s533} for $r = 2^n r_0$.  Define
	\begin{align*}
	c_n = |\lam(2^n r_0)| = (2^n r_0) b_n. 
	\end{align*}
	Using \eqref{s536a} and \eqref{s528} we have, for some $C = C(r_0)$, 
	\begin{align*}
	c_{n+1} \leq (1 + C_1 \de_1) c_n + C \de_1 (2^n r_0)^{1/6}.
	\end{align*}
	By induction 
	\begin{align*}
	c_n &\leq (1 + C_1 \de_1)^n c_0 + C r_0^{1/6} \sum_{k = 1}^n (1 + C_1 \de_1)^{n-k} 2^{(k-1)/6} \\
	&\leq C (2^n r_0)^{1/6}.
	\end{align*}
	This proves \eqref{s532} for $r = 2^n r_0$. 
	
	To establish \eqref{s532} and \eqref{s533} for general $r$, let $r \geq r_0$ such that for some $n \geq 0$, $2^n r_0 \leq r \leq 2^{n+1}r_0$.  Then 
	applying \eqref{s528} to the pair $2^n r_0, r$, we conclude that 
	\begin{align*}
	|\lam(r)| &\leq |\lam(2^n r_0)| + |\lam(2^n r_0) - \lam(r)| \\
	&\leq C (2^n r_0)^{1/6} + \Bigl [ (2^n r_0)^{-5/3} (2^n r_0)^{1/6} + (2^n r_0)^{-10/3} (2^n r_0)^{1/3} + (2^n r_0)^{-3}
	(2^n r_0)^{1/2} \Bigr ] \\
	&\leq C (2^n r_0)^{1/6} \\
	&\leq C r^{1/6},
	\end{align*}
	where $C = C(r_0)$.  This proves \eqref{s532}.  A similar argument also establishes the bound \eqref{s533} for all $r \geq r_0$
	as well. 
\end{proof}

We now show that for each $t \in \R$, $\mu(t,r)$ has a limit $\beta(t)$ as $r \rar \infty$.  


\begin{lem}\label{l512}
	For all $t \in \R$, there exists $\beta(t) \in \R$ such that 
	\begin{align}\label{s536}
	|\mu(t,r) - \beta(t) | \leq C r^{-1}. 
	\end{align}
	The constant $C > 0$ is uniform in time.  
\end{lem}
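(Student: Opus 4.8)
The plan is to produce $\beta(t)$ as the limit of $\mu(t,r)$ along dyadic scales, using the difference estimate \eqref{s529} of Lemma \ref{l59} together with the polynomial growth bounds of Lemma \ref{l511} to make the telescoping series converge, and then to bootstrap the resulting decay rate from a small negative power of $r$ up to the sharp rate $r^{-1}$. I would first fix $t \in \R$ and restrict to $r \geq R_1$, with $R_1$ the radius from Lemma \ref{l59}; the range $\eta \leq r \leq R_1$ is then trivial after enlarging $C$, since $r^{-1}$ is bounded below there while $\mu(t,\cdot)$ is bounded on bounded $r$-intervals uniformly in $t$ (e.g. via \eqref{s525} and $\|\vec u_e(t)\|_{\h(r \geq R_1)} \leq \de_1$). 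I would also record at the outset that the constants in Lemma \ref{l511} may be taken independent of $t$: by \eqref{s525} and the uniform smallness $\|\vec u_e(t)\|_{\h(r \geq R_1)} \leq \de_1$ one has $|\lam(t,R_1)| \lesssim R_1^{3/2}$ and $|\mu(t,R_1)| \lesssim R_1^{1/2}$ uniformly in $t$, which is precisely the data feeding the induction in the proof of Lemma \ref{l511}.

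Next I would run the telescoping. Writing $r_n = 2^n r$ and applying \eqref{s529} to each pair $r_n \leq r_{n+1} = 2 r_n$, then inserting $|\lam(t,r)| \lesssim r^{1/6}$ and $|\mu(t,r)| \lesssim r^{1/18}$, every term on the right of \eqref{s529} is dominated by $r_n^{-5/6}$ (the slowest-decaying being $r^{-1}|\mu|^3$), so $|\mu(t,r_{n+1}) - \mu(t,r_n)| \lesssim (2^n r)^{-5/6}$. Since $\sum_n 2^{-5n/6} < \infty$, the sequence $\{\mu(t,2^n r)\}_n$ is Cauchy; I call its limit $\beta(t)$, note it is independent of the base point $r$ (two candidate limits must agree by the decay bound), and sum the geometric tail to get $|\mu(t,r) - \beta(t)| \lesssim r^{-5/6}$ for $r \geq R_1$, with a $t$-uniform constant. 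Taking $r = R_1$ and using $|\mu(t,R_1)| \lesssim R_1^{1/2}$ then gives $|\beta(t)| \lesssim 1$, hence $|\mu(t,r)| \lesssim 1$ for all $r \geq R_1$, uniformly in $t$.

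Finally I would bootstrap: feeding the improved bound $|\mu(t,r)| \lesssim 1$ (together with $|\lam(t,r)| \lesssim r^{1/6}$, whose contributions are already $O(r^{-5/2})$ or better) back into \eqref{s529} now yields $|\mu(t,r) - \mu(t,r')| \lesssim r^{-1}$ for $R_1 \leq r \leq r' \leq 2r$, the decisive term being $r^{-1}|\mu|^3 \lesssim r^{-1}$. Telescoping over dyadic scales one more time gives $|\mu(t,r) - \beta(t)| \lesssim \sum_{n \geq 0}(2^n r)^{-1} \lesssim r^{-1}$ for $r \geq R_1$ with a $t$-independent constant, and extending to $\eta \leq r \leq R_1$ as above finishes the proof. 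I do not expect a genuine analytic obstacle here; the only point requiring care is the uniformity in $t$, which is bookkeeping that reduces entirely to the uniform control of $\lam(t,R_1)$ and $\mu(t,R_1)$ by $\de_1$ noted in the first paragraph — once that is in hand, all constants in Lemma \ref{l511} and in both telescoping arguments are automatically uniform in $t$.
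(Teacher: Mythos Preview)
Your proposal is correct and follows essentially the same route as the paper: first use the growth bounds of Lemma~\ref{l511} in the difference estimate \eqref{s529} to obtain an $r^{-5/6}$ rate via dyadic telescoping, conclude that $\mu(t,\cdot)$ is bounded, and then feed this boundedness back into \eqref{s529} to upgrade to the $r^{-1}$ rate. Your treatment of the uniformity in $t$ is in fact more explicit than the paper's, which simply remarks that the general case follows from the uniform decay of $\vec u_e(t)$ in exterior regions.
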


\begin{proof}
	We only consider the case $t = 0$.  The general case follows, again, by using the decay of the trajectory $\vec u_e(t)$
	on exterior regions.  
	Let $R_1 > 1$ be as in Lemma \ref{l59}, and fix $r_0 \geq R_1$.  Then Lemma \ref{l511} and \eqref{s529} yield the estimate 
	\begin{align*}
	|\mu(2^{n+1} r_0) - \mu(2^n r_0)|
	&\lesssim (2^n r_0)^{-8/3} (2^n r_0)^{1/6} + (2^n r_0)^{-13/3}(2^n r_0)^{1/3} +  (2^n r_0)^{-4} (2^n r_0)^{1/2} \\
	&\:+ (2^n r_0)^{-5/3} (2^n r_0)^{1/18} + (2^n r_0)^{-7/3}(2^n r_0)^{1/9} +  (2^n r_0)^{-1} (2^n r_0)^{1/6} \\
	&\lesssim (2^n r_0)^{-5/6} \\
	&\lesssim 2^{-5n/6}, 
	\end{align*}
	where the implied constant is uniform in $r_0$.  Thus,
	\begin{align*}
	\sum_{n \geq 0} |\mu(2^{n+1}r_0) - \mu(2^n r_0)| \lesssim 1,
	\end{align*}
	with a constant uniform in $r_0$.  This implies that there exists $\beta = \beta \in \R$ such that 
	$\lim_{n \rar \infty} \mu(2^n r_0) = \beta$. Moreover, the sequence $\{ \mu(2^n r_0) \}_n$ is bounded by a constant depending only on $r_0$
	since 
	\begin{align*}
	|\mu(2^n r_0)| &\leq |\mu(r_0)| + |\mu(2^n r_0) - \mu(r_0)| \\&= |\mu(r_0)| + \left | \sum_{k = 0}^{n-1} (
	\mu(2^{k+1}r_0) - \mu(2^k r_0) ) \right | \\
	&\leq |\mu(r_0)| + C_1 \sum_{n \geq 0} 2^{-5n/6} \leq C(r_0). 
	\end{align*}
	Inserting this bound into the difference estimate \eqref{s529} improves the previous bound on $|\mu(2^{n+1}r_0) - 
	\mu(2^n r_0)|$ to 
	\begin{align}\label{s536b}
	|\mu(2^{n+1} r_0) - \mu(2^n r_0) | \leq C (2^n r_0)^{-1},
	\end{align} 
	where $C = C(r_0)$.  Now let $r \geq r_0$ such that $2^n r_0 \leq r \leq 2^{n+1} r_0$.  By Lemma \ref{l511}, \eqref{s529},
	and our improved bound \eqref{s536b}, we have that 
	\begin{align*}
	|\mu(r) - \beta| &\leq |\mu(r) - \mu(2^n r_0)| + |\beta - \mu(2^n r_0)| \\
	&= |\mu(r) - \mu(2^n r_0)| + \left | \sum_{k \geq n} (\mu(2^{k+1} r_0) - \mu(2^k r_0)) \right | \\
	&\lesssim (2^n r_0)^{-1} + \sum_{k \geq n} (2^k r_0)^{-1} \\
	&\lesssim (2^n r_0)^{-1} \\
	&\lesssim r^{-1}. 
	\end{align*}   
	This proves \eqref{s536}. 
\end{proof}

We now conclude the proof of the bound \eqref{s527} in Proposition \ref{p58}. 

\begin{lem}\label{l513}
	Let $\beta(t)$ be as in Lemma \ref{l512}.  Then $\beta(t) \equiv 0$. 
\end{lem}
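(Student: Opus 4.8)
The plan is to show that $\beta(t) \equiv 0$ by extracting, from the nonvanishing of some $\beta(t_0)$, a contribution to the energy $\mathcal E_V(\vec u(t_0))$ that leaks out to $r = +\infty$ at a fixed rate, contradicting the finiteness of the energy. Recall that $\mu(t,r) = r \int_r^\infty \p_t u_e(t,\rho)\rho \,d\rho$, so that $\mu(t,r) \to \beta(t)$ as $r \to \infty$ means the ``tail momentum'' of $u_e$ does not decay. Concretely, I would first observe that by the definition of $\mu$ and \eqref{s523}, for $R$ large we have
\begin{align*}
\left | \int_R^\infty \p_t u_e(t,\rho) \rho\, d\rho \right | = R^{-1}|\mu(t,R)| \geq \tfrac12 R^{-1} |\beta(t)|
\end{align*}
once $R \geq R(t)$ by Lemma \ref{l512}. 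On the other hand, $\| \vec u_e(t) \|_{\h(r \geq R)}^2 \gtrsim \int_R^\infty |\p_t u_e(t,\rho)|^2 \rho^4\, d\rho$, and by Cauchy--Schwarz $\left(\int_R^{2R} \p_t u_e \,\rho\, d\rho\right)^2 \leq \left(\int_R^{2R} \rho^{-2}\,d\rho\right)\left(\int_R^{2R} |\p_t u_e|^2 \rho^4 \, d\rho\right) \lesssim R^{-1}\int_R^{2R}|\p_t u_e|^2\rho^4\,d\rho$. Comparing with the dyadic pieces of $\int_R^\infty \p_t u_e\,\rho\,d\rho = \sum_{k\geq 0}\int_{2^kR}^{2^{k+1}R}\p_t u_e\,\rho\,d\rho$ and the bound $|\mu(t,2^kR) - \beta(t)| \lesssim (2^kR)^{-1}$ from Lemma \ref{l512}, one shows each dyadic block $\int_{2^kR}^{2^{k+1}R}\p_t u_e \rho\,d\rho$ is itself $\gtrsim (2^kR)^{-1}|\beta(t)|$ up to the summable error, hence $\int_{2^kR}^{2^{k+1}R}|\p_t u_e|^2\rho^4\,d\rho \gtrsim 2^kR \cdot (2^kR)^{-2}|\beta(t)|^2 = (2^kR)^{-1}|\beta(t)|^2$; but that is summable and does not immediately contradict finiteness. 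So the real leverage must come instead from time-translation: for a \emph{sequence} of times $t_n \to \pm\infty$, compactness of $K$ forces $\|\vec u_e(t_n)\|_{\h(r \geq R+|t_n|)} \to 0$ by \eqref{s56}, which will pin $\beta(t_n) \to 0$; combined with continuity of $t \mapsto \beta(t)$ (which follows from \eqref{s536} being uniform in $t$ plus continuity of the flow in $\h$) and a conservation/monotonicity property of $\beta$, we force $\beta \equiv 0$.

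The cleaner route, and the one I would actually carry out, is to identify $\beta(t)$ as (a multiple of) a conserved quantity — the total momentum $\int_0^\infty \p_t u_e(t,\rho)\rho^4 \cdot w(\rho)\,d\rho$-type integral is not conserved, but the limit $\beta(t) = \lim_{r\to\infty} r\int_r^\infty \p_t u_e(t,\rho)\rho\,d\rho$ should satisfy $\frac{d}{dt}\beta(t) = 0$ using the equation \eqref{s52e}. Indeed, differentiating under the limit, $\frac{d}{dt}\left(r\int_r^\infty \p_t u_e\,\rho\,d\rho\right) = r\int_r^\infty \p_t^2 u_e\,\rho\,d\rho = r\int_r^\infty \left(\p_r^2 u_e + \tfrac4r \p_r u_e - V_e u_e + N_e(r,u_e)\right)\rho\,d\rho$, and I would integrate by parts the linear wave part, using the decay of $u_e$, $\p_r u_e$ on exterior regions (Lemma \ref{l52}, \eqref{s56}) together with the pointwise bounds \eqref{s532}, \eqref{s533} on $\lambda,\mu$ and the decay \eqref{s57}--\eqref{s59} of $V_e, F_e, G_e$, to show all boundary and bulk terms vanish as $r \to \infty$. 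This gives $\beta'(t) = 0$, so $\beta(t) \equiv \beta$ is constant in $t$. Then applying \eqref{s536} at a time $t_n \to \infty$ and using \eqref{s56} (which gives $\mu(t_n, r) \to 0$ uniformly for $r \geq R + |t_n|$, hence $|\beta| = |\beta(t_n)| \leq \limsup_{r\to\infty}|\mu(t_n,r)| = 0$ after also controlling $|\mu(t_n,r) - \mu(t_n,R+|t_n|)|$ via the difference estimate \eqref{s529} and Lemma \ref{l511}) forces $\beta = 0$.

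In more detail, the final contradiction step goes as follows: fix any $\epsilon > 0$. By \eqref{s56}, choose $R_\epsilon$ so that $\sup_t \|\vec u_e(t)\|_{\h(r \geq R_\epsilon + |t|)} < \epsilon$. For $n$ large, $R_\epsilon + |t_n|$ is large, so Lemma \ref{l511} and the difference estimate \eqref{s529} (telescoped dyadically from $R_\epsilon + |t_n|$ to $\infty$) give $|\mu(t_n, r) - \mu(t_n, R_\epsilon+|t_n|)| \lesssim (R_\epsilon + |t_n|)^{-5/6} \to 0$. Also directly from the definition of $\mu$ and Cauchy--Schwarz, $|\mu(t_n, R_\epsilon + |t_n|)| = (R_\epsilon+|t_n|)\left|\int_{R_\epsilon+|t_n|}^\infty \p_t u_e(t_n,\rho)\rho\,d\rho\right| \lesssim (R_\epsilon+|t_n|)^{-1/2}\|\vec u_e(t_n)\|_{\h(r \geq R_\epsilon+|t_n|)} \lesssim (R_\epsilon+|t_n|)^{-1/2}\epsilon \to 0$. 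Hence $|\beta| = |\beta(t_n)| = \lim_{r\to\infty}|\mu(t_n,r)| = 0$, so $\beta = 0$, i.e. $\beta(t) \equiv 0$.

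The main obstacle I anticipate is the rigorous justification of $\beta'(t) = 0$ (equivalently, that $\beta$ is time-independent): passing $\frac{d}{dt}$ inside the limit $r \to \infty$ and controlling the boundary terms $r\, \p_r u_e(t,r)\cdot r \cdot$(weights) and the bulk term $r\int_r^\infty N_e(\rho,u_e)\rho\,d\rho$ requires the polynomial bounds on $\lambda,\mu$ from Lemma \ref{l511} to beat the powers of $r$ coming from the Laplacian's $\frac4r\p_r$ term and the volume element. One must check, for instance, that $r\int_r^\infty |G_e(\rho,u_e)|\rho\,d\rho \lesssim r\int_r^\infty |u_e|^3 \rho\,d\rho \lesssim r \int_r^\infty (\rho^{-3}\rho^{1/6})^3\rho\,d\rho$, which decays, and similarly for $F_e$ and the $V_e u_e$ term; and that the integrated-by-parts linear terms such as $r[\rho\,\p_\rho u_e - u_e]_{\rho=r}^{\rho=\infty}$ vanish using Lemma \ref{l52}. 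If making the differentiation-under-the-limit fully rigorous is delicate, the alternative is the purely sequential argument of the last paragraph combined with continuity of $t \mapsto \beta(t)$ (from \eqref{s536} uniform in $t$ and $\h$-continuity of $\vec u_e$) — but even there one needs $\beta$ to be \emph{somewhere} zero, which again comes from \eqref{s56} along $t_n \to \infty$; so that step is unavoidable and is really the crux. Everything else is routine given Lemmas \ref{l511}, \ref{l512} and the structural bounds \eqref{s56}--\eqref{s59}.
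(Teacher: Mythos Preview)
Your approach to Step 1 (showing $\beta(t)$ is constant in time) is essentially the same as the paper's, though the paper computes $\beta(t_2)-\beta(t_1)$ directly by averaging over $R\le s\le 2R$ and letting $R\to\infty$, rather than differentiating under the limit; either route works, and the paper's is slightly cleaner since it avoids the justification issues you flag.

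There is, however, a genuine gap in your final contradiction step. Your Cauchy--Schwarz estimate for $\mu(t_n,R)$ has the wrong sign in the exponent. With $R=R_\epsilon+|t_n|$,
\begin{align*}
|\mu(t_n,R)| = R\left|\int_R^\infty \p_t u_e(t_n,\rho)\,\rho\,d\rho\right|
\le R\left(\int_R^\infty \rho^{-2}\,d\rho\right)^{1/2}\left(\int_R^\infty |\p_t u_e|^2\rho^4\,d\rho\right)^{1/2}
= R^{1/2}\,\|\p_t u_e(t_n)\|_{L^2(r\ge R;\,r^4dr)},
\end{align*}
so the prefactor is $R^{+1/2}$, not $R^{-1/2}$. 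Since \eqref{s56} gives only $\|\vec u_e(t_n)\|_{\h(r\ge R_\epsilon+|t_n|)}<\epsilon$ with no quantitative rate, the product $(R_\epsilon+|t_n|)^{1/2}\epsilon$ does not go to zero as $t_n\to\infty$, and the argument collapses. Compactness alone is not strong enough here to force $\beta=0$.

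The paper instead uses a time-averaging trick for Step 2: since $\beta$ is constant, integrate $\beta = R\int_R^\infty \p_t u_e(t,r)\,r\,dr + O(R^{-1})$ over $t\in[0,T]$ and divide by $T$, obtaining
\begin{align*}
\beta = \frac{R}{T}\int_R^\infty\bigl[u_e(T,r)-u_e(0,r)\bigr]r\,dr + O(R^{-1}).
\end{align*}
Now the pointwise bound $|u_e(t,r)|\lesssim r^{-17/6}$ (from $|\lambda(t,r)|\lesssim r^{1/6}$, Lemma \ref{l511}) gives $|\beta|\lesssim R^{1/6}/T + R^{-1}$; choosing $R=T\to\infty$ yields $\beta=0$. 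The key point you are missing is that one must leverage the \emph{uniform-in-time} pointwise decay of $u_e$ from Lemma \ref{l511}, not the exterior energy decay from \eqref{s56}, to close the argument.
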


\begin{proof}
	The proof follows in two steps. 
	
	\emph{Step 1.} We first show that $\beta(t)$ is constant in time. By Lemma \ref{l512} and the definition of $\mu$, we have 
	shown that 
	\begin{align*}
	\beta(t) = r \int_r^\infty \p_t u_e(t,\rho) \rho d\rho + O(r^{-1}),
	\end{align*}
	where the $O(r^{-1})$ is uniform in time. Fix $t_1 < t_2$.  Since $u_e$ solves \eqref{s52e}, we have for $R \geq R_1$, 
	\begin{align*}
	\beta(t_2) - \beta(t_1) &= \frac{1}{R} \int_R^{2R} \beta(t_2) - \beta(t_1) ds \\
	&= \frac{1}{R} \int_R^{2R} s \int_s^\infty [ \p_t u_e(t_2,r) - \p_t u_e(t_1,r) ] r dr ds + O(R^{-1}) \\
	&= \frac{1}{R} \int_R^{2R} s \int_s^\infty \int_{t_1}^{t_2} \p^2_t u_e(t, r) dt r dr ds + O(R^{-1}) \\ 
	&= \frac{1}{R} \int_R^{2R} s \int_s^\infty \int_{t_1}^{t_2} r^{-3} \p_r (r^4 \p_r u_e(t,r) )dt dr ds \\
	&\:+ \frac{1}{R} \int_R^{2R} s \int_s^\infty \int_{t_1}^{t_2} [-r V_e(r) u_e(t,r) + r N_e(r,u_e(t,r)) ]dt dr ds + O(R^{-1}) \\
	&=: A + B + O(R^{-1}).
	\end{align*}
	We first estimate $B$.  Recall that $\lam(t,r) = r^3 u_e(t,r)$.  By \eqref{s532}, 
	\begin{align}\label{s537}
	|u_e(t,r)| \lesssim r^{-17/6},
	\end{align}
	uniformly in $t$.  This estimate, \eqref{s57}, \eqref{s58}, and \eqref{s59}, imply that 
	\begin{align*}
	|B| &\lesssim (t_2 - t_1) \frac{1}{R} \int_R^{2R} s \int_s^\infty \left [ r^{-35/6} + r^{-23/3} + r^{-15/2} \right ] dr ds \\
	&\lesssim (t_2 - t_1) \frac{1}{R} \int_R^{2R} s \int_s^\infty r^{-5} dr ds \\
	&\lesssim (t_2 - t_1) R^{-3}. 
	\end{align*}
	For $A$, we repeatedly use integration by parts and use \eqref{s537} to conclude that 
	\begin{align*}
	A &= \frac{3}{R} \int_{t_1}^{t_2} \int_R^{2R} s \int_s^\infty \p_r u_e(t,r) dr ds dt- 
	\frac{1}{R} \int_{t_1}^{t_2} \int_R^{2R} s^2 \p_s u_e(t,s) ds dt \\
	&=-\frac{3}{R} \int_{t_1}^{t_2} \int_R^{2R} s \p_s u_e(t,s) ds dt - 
	\frac{1}{R} \int_{t_1}^{t_2} \int_R^{2R} s^2 \p_s u_e(t,s) ds dt \\
	&= -\frac{1}{R} \int_{t_1}^{t_2} \int_R^{2R} s \p_s u_e(t,s) ds dt + \int_{t_1}^{t_2} [R u_e(t,R) - 2R u_e(t,2R) ] dt\\
	&= O(t_2 - t_1) O(R^{-11/6}). 
	\end{align*}
	In summary, we have that $|A| + |B| \lesssim R^{-1}(t_2 - t_1)$ so that 
	\begin{align*}
	\beta(t_2) - \beta(t_1) = O(t_2 - t_1) O(R^{-1}) + O(R^{-1}).
	\end{align*}
	Letting $R \rar \infty$ implies that $\beta(t_2) = \beta(t_1)$ as desired. This completed Step 1. 
	
	\emph{Step 2.} By Step 1, we have that $\beta(t) = \beta(0) =: \beta$ for all $t \in \R$.  In this step, we show that 
	$\beta = 0$ which concludes the proof of Lemma \ref{l513}.  By Lemma \ref{l512} and Step 1, for all $R \geq R_1$ and for all $t \in \R$ we have
	\begin{align*}
	\beta = R \int_R^\infty \p_t u_e(t,r) r dr + O(R^{-1}), 
	\end{align*}
	where the $O(R^{-1})$ term is uniform in time.  Integrating the previous expression from $0$ to $T$, dividing by $T$, 
	and using \eqref{s537} yield for all $T > 0$ and $R \geq R_1$
	\begin{align*}
	\beta &= \frac{R}{T} \int_R^\infty \int_0^T \p_t u_e(t,r) dt r dr + O(R^{-1}) \\
	&= \frac{R}{T} \int_R^\infty [ u_e(T,r) - u_e(0,r) ] r dr + O(R^{-1}) \\
	&= O \left ( \frac{R^{1/6}}{T} \right ) + O (R^{-1}).
	\end{align*}
	If we now choose $R = T$ and let $T \rar \infty$, we conclude that $\beta = 0$ as desired.  This concludes Step 2 and 
	the proof of Lemma \ref{l513}.
\end{proof}

\begin{lem}\label{l514} 
	There exists $\alpha \in \R$ such that 
	\begin{align*}
	|\lam(r) - \alpha| \lesssim r^{-1}.
	\end{align*}
\end{lem}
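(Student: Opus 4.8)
The plan is to establish the existence of $\alpha = \lim_{r\to\infty}\lambda(0,r)$ by showing that $\lambda(0,\cdot)$ is Cauchy at infinity, using the difference estimate from Lemma~\ref{l59} (or Corollary~\ref{c510}) now that we have the improved growth bounds $|\lambda(0,r)| \lesssim r^{1/6}$, $|\mu(0,r)| \lesssim r^{1/18}$ from Lemma~\ref{l511}, \emph{together with} the fact that $\mu(0,r) = O(r^{-1})$, which follows from Lemma~\ref{l512} and Lemma~\ref{l513} (since $\beta(0) = 0$). The point is that plugging these sharper bounds back into \eqref{s528} makes the right-hand side summable along a dyadic sequence. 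As usual, I would only treat $t = 0$, the general $t$ being identical by the exterior decay \eqref{s56}.

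First I would fix $r_0 \geq R_1$ and estimate $|\lambda(2^{n+1}r_0) - \lambda(2^n r_0)|$ by applying \eqref{s528} to the pair $r = 2^n r_0$, $r' = 2^{n+1}r_0$. Each of the three $\lambda$-terms on the right of \eqref{s528} contributes (using $|\lambda(2^n r_0)| \lesssim (2^n r_0)^{1/6}$) something bounded by $(2^n r_0)^{-5/3 + 1/6} = (2^n r_0)^{-3/2}$, $(2^n r_0)^{-10/3 + 1/3}$, $(2^n r_0)^{-3 + 1/2}$, all of which decay like $2^{-cn}$ with $c > 1$. For the three $\mu$-terms, I would now use $|\mu(2^n r_0)| \lesssim (2^n r_0)^{-1}$ (this is the crucial input that was not available when proving Lemma~\ref{l511}): they contribute at most $(2^n r_0)^{-2/3 - 1} = (2^n r_0)^{-5/3}$, $(2^n r_0)^{-4/3 - 2}$, and $(2^n r_0)^{-3}$, again all summable. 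Hence $\sum_n |\lambda(2^{n+1}r_0) - \lambda(2^n r_0)| < \infty$, so $\lambda(2^n r_0) \to \alpha$ for some $\alpha \in \R$, and the tail bound gives $|\lambda(2^n r_0) - \alpha| \lesssim 2^{-n}\lesssim (2^n r_0)^{-1}$.

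To pass from the dyadic sequence to all $r \geq r_0$, given $r$ with $2^n r_0 \leq r \leq 2^{n+1}r_0$, I would write $|\lambda(r) - \alpha| \leq |\lambda(r) - \lambda(2^n r_0)| + |\lambda(2^n r_0) - \alpha|$ and bound the first difference by \eqref{s528} applied to the pair $2^n r_0, r$, using the same growth bounds and $|\mu| \lesssim r^{-1}$; every term is $\lesssim (2^n r_0)^{-1} \lesssim r^{-1}$, and the second term is $\lesssim r^{-1}$ by the previous paragraph. This yields $|\lambda(r) - \alpha| \lesssim r^{-1}$ as claimed, with a constant depending only on $r_0$ (which is now fixed, e.g. $r_0 = R_1$).

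I do not expect a genuine obstacle here; this lemma is the ``payoff'' step where Lemmas~\ref{l59}--\ref{l513} are assembled. The only mild subtlety is to make sure one actually invokes $\mu(0,r) = O(r^{-1})$ (rather than the weaker $O(r^{1/18})$) when estimating the $\mu$-contributions in \eqref{s528} — with only the $r^{1/18}$ bound the term $|\mu|^3$ would give $(2^n r_0)^{1/6}$, which is not summable, so the improvement from $\beta \equiv 0$ is essential. Once that is noted, the argument is a routine dyadic summation exactly parallel to the proof of Lemma~\ref{l512}.
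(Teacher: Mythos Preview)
Your proposal is correct and follows essentially the same approach as the paper's proof: both use the difference estimate \eqref{s528} along a dyadic sequence, plugging in $|\lambda|\lesssim r^{1/6}$ from Lemma~\ref{l511} and the crucial $|\mu|\lesssim r^{-1}$ from Lemmas~\ref{l512}--\ref{l513} to get a summable bound (the paper obtains $(2^n r_0)^{-3/2}$, exactly as you do), and then pass from the dyadic sequence to all $r$ as in Lemma~\ref{l512}. Your explicit remark that the $\beta\equiv 0$ improvement on $\mu$ is essential (since $|\mu|^3$ with only the $r^{1/18}$ bound would not be summable) is precisely the point.
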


\begin{proof}
	The proof of Lemma \ref{l514} is very similar to the proof for Lemma \ref{l512} and we only sketch it. 
	Fix $r_0 \geq R_1$.  By Lemma \ref{l513}, the difference estimate \ref{s528}, and the growth estimate \ref{s532}
	we have 
	\begin{align*}
	|\lam(2^{n+1}r_0) - v_0(2^n r_0) | &\lesssim (2^n r_0)^{-5/3} (2^n r_0)^{1/6} + (2^n r_0)^{-10/3} (2^n r_0)^{1/3} +
	(2^n r_0)^{-3} (2^n r_0)^{1/2} \\
	&\:+ (2^n r_0)^{-2/3} (2^n r_0)^{-1} + (2^n r_0)^{-4/3} (2^n r_0)^{-2} + (2^n r_0)^{-3} \\
	&\lesssim (2^n r_0)^{-3/2}. 
	\end{align*}
	Thus, 
	\begin{align*}
	\sum_{n \geq 0} |\lam(2^{n+1}r_0) - \lam(2^n r_0) | < \infty,  
	\end{align*}
	so that there exists $\alpha \in \R$ such that $\lim_n \lam(2^n r_0) = \alpha$.  As in the proof of Lemma \ref{l512}, we then 
	use the fact that the sequence $\{ \lam(2^n r_0) \}_n$ is bounded and the difference estimate 
	\ref{s532} to conclude that for $r \geq r_0$ 
	\begin{align*}
	|\lam(r) - \alpha| \lesssim r^{-1}
	\end{align*}
	as desired. 
\end{proof}

We have shown that there exists $\alpha \in \R$ such that
\begin{align*}
r^3 u_e(0,r) &= \alpha + O(r^{-1}), \\
r\int_r^\infty \p_t u_e(0,\rho) \rho d\rho &= O(r^{-1}),
\end{align*}
as $r \rar \infty$. In the case $\alpha = 0$, we conclude that 
$\vec u(0) = (0,0)$ on $r \geq \eta$. 

\begin{lem}\label{l515}
	Let $\alpha$ be as in Lemma \ref{l514}.  If $\alpha = 0$, then $\vec u(0,r) = (0,0)$ for $r \geq \eta$. 
\end{lem}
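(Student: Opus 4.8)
The plan is to show that when $\alpha = 0$ the exterior data $\vec u_e(0)$ vanishes identically on $r \geq \eta$, and then transfer this back to $\vec u$. The starting point is the combination of Lemma \ref{l514} (with $\alpha = 0$) and Lemma \ref{l513}, which together give $\lambda(r) = O(r^{-1})$ and $\mu(r) = O(r^{-1})$ as $r \to \infty$, i.e. the data $\vec u_e(0)$ decays strictly faster than the borderline rate $r^{-3}$ (for $u_{e,0}$) entering the exterior energy estimate. The idea is then to feed this \emph{improved} decay back into the machinery of Step 1 and Step 2: since $\lambda(0,r) \to 0$ and $\mu(0,r) \to 0$, the projection $\pi_R \vec u_e(0)$ onto $P(R)$ satisfies $\|\pi_R \vec u_e(0)\|_{\h(r \geq R)}^2 = 3R^{-3}\lambda^2(R) + R^{-1}\mu^2(R) \to 0$ as $R \to \infty$, and in fact $\|\pi_R \vec u_e(0)\|_{\h(r\geq R)} \lesssim R^{-3/2}|\lambda(R)| + R^{-1/2}|\mu(R)| = O(R^{-3/2})$.

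First I would invoke Lemma \ref{l55} (in its $\lambda,\mu$ form, Lemma \ref{l58}) at $t = 0$: for $R \geq R_0$,
\begin{align*}
\int_R^\infty \Bigl( \tfrac{1}{r}\p_r \lambda(r) \Bigr)^2 dr + \int_R^\infty (\p_r \mu(r))^2 dr
\lesssim R^{-19/3}\lambda^2(R) + \cdots + R^{-3}\mu^6(R),
\end{align*}
and since $\lambda(R), \mu(R) = O(R^{-1})$ the right-hand side is $O(R^{-19/3 - 2}) = O(R^{-25/3})$, which tends to $0$ as $R \to \infty$. This forces
\begin{align*}
\int_\eta^\infty \Bigl( \tfrac{1}{r}\p_r \lambda(r) \Bigr)^2 dr + \int_\eta^\infty (\p_r \mu(r))^2 dr < \infty,
\end{align*}
but more importantly, letting $R \to \infty$ in the inequality applied on $[R,\infty)$ shows the tail integrals vanish, i.e. $\p_r \lambda \equiv 0$ and $\p_r \mu \equiv 0$ on $(\eta,\infty)$. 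Hence $\lambda(r)$ and $\mu(r)$ are constant on $(\eta,\infty)$; combined with $\lambda(r), \mu(r) \to 0$ as $r \to \infty$ we get $\lambda \equiv 0$ and $\mu \equiv 0$ on $(\eta,\infty)$. Unwinding the definitions \eqref{s522}, \eqref{s523}: $\lambda(r) = r^3 u_e(0,r) = 0$ gives $u_e(0,r) = 0$, hence $u(0,r) = \frac{r^2}{r^2+1}u_e(0,r) = 0$ for $r \geq \eta$; and $\mu(r) = r\int_r^\infty \p_t u_e(0,\rho)\rho\, d\rho \equiv 0$ together with $u_{e,1} = \frac{r^2+1}{r^2}\p_t u(0,r)$ and $\mu' \equiv 0$ gives $\p_t u_e(0,r) = 0$, hence $\p_t u(0,r) = 0$, for $r \geq \eta$. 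Therefore $\vec u(0,r) = (0,0)$ on $r \geq \eta$.

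Since $\eta > 0$ was arbitrary, this in fact gives $\vec u(0,r) = (0,0)$ for all $r > 0$; by the $r \mapsto -r$ symmetry of the argument (as remarked before Proposition \ref{positive lemma}) one also gets $\vec u(0,r) = (0,0)$ for $r < 0$, and continuity at $r = 0$ then yields $\vec u(0) = (0,0)$ on all of $\R$; by uniqueness of the solution to \eqref{s81}, $\vec u \equiv (0,0)$. The main subtlety — and the step requiring care — is the passage from "the tail bound in Lemma \ref{l58} goes to $0$" to "$\p_r \lambda \equiv 0$ on the whole interval": one must note that the estimate of Lemma \ref{l58} holds for \emph{every} $R \geq R_0$, so $\int_R^\infty (\tfrac{1}{r}\p_r\lambda)^2\,dr \lesssim R^{-25/3} \to 0$, which is exactly the statement that the monotone quantity $R \mapsto \int_R^\infty(\tfrac1r\p_r\lambda)^2\,dr$ has limit $0$ at infinity while also being bounded by a quantity decaying at infinity; evaluating at any fixed $R$ and using $\lambda(R) = O(R^{-1})$ already pins the value down, but the cleanest conclusion is that since the improved decay holds, all the right-hand terms are finite and the left-hand integral over $(\eta, \infty)$ is majorized by the $R \to R_0^+$ version, yet each tail is $o(1)$; a short argument shows the integrand must be zero a.e., hence (by smoothness of $u_e$) identically. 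No new ideas beyond those already in Steps 1–2 are needed — it is a matter of bootstrapping the decay one final time. The rest (transferring from $u_e$ to $u$, and from $r \geq \eta$ to all $r$) is bookkeeping via the explicit relations \eqref{s522}–\eqref{s523}.
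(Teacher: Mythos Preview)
Your argument has a genuine gap at the step you yourself flag as subtle. From Lemma~\ref{l58} together with $\lambda(R), \mu(R) = O(R^{-1})$ you correctly obtain
\[
\int_R^\infty \Bigl(\tfrac{1}{r}\p_r\lambda(r)\Bigr)^2 dr + \int_R^\infty (\p_r\mu(r))^2\,dr \lesssim R^{-25/3}, \qquad R \geq R_0,
\]
but this does \emph{not} force $\p_r\lambda \equiv 0$ or $\p_r\mu \equiv 0$. Any integrable nonnegative function has tail integrals tending to zero; a decay rate on the tail only bounds how fast the integrand can vanish. Concretely, $\lambda(r) = r^{-3}$ gives $(\tfrac{1}{r}\p_r\lambda)^2 = 9r^{-10}$ and $\int_R^\infty 9r^{-10}\,dr = R^{-9}$, which is perfectly consistent with your estimate yet has $\p_r\lambda \not\equiv 0$. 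The sentences after ``The main subtlety'' do not contain a valid deduction: the monotone function $R \mapsto \int_R^\infty(\cdot)$ having limit zero and being bounded by something decaying is automatic and yields nothing.

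What the paper does instead is combine the tail bound with a \emph{lower} bound coming from the difference estimates of Lemma~\ref{l59}. Those estimates say $|\lambda(2r) - \lambda(r)|$ and $|\mu(2r) - \mu(r)|$ are small relative to $|\lambda(r)|, |\mu(r)|$, which when rearranged gives
\[
|\lambda(2^{n+1}r_0)| + |\mu(2^{n+1}r_0)| \geq \tfrac{3}{4}\bigl(|\lambda(2^n r_0)| + |\mu(2^n r_0)|\bigr)
\]
for $r_0$ large. Iterating, $|\lambda(2^n r_0)| + |\mu(2^n r_0)| \geq (3/4)^n(|\lambda(r_0)| + |\mu(r_0)|)$; comparing with the upper bound $O((2^n r_0)^{-1})$ forces $(3/2)^n(|\lambda(r_0)| + |\mu(r_0)|) \lesssim 1$, hence $\lambda(r_0) = \mu(r_0) = 0$. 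This shows $\vec u_e(0)$ is compactly supported, after which a separate local continuation argument (the paper's Claim~\ref{finish clm}) pushes the support down from $r_0$ to $\eta$. Your proposal is missing both ingredients: the growth--versus--decay dichotomy that pins $\lambda,\mu$ to zero at large $r$, and the continuation step that covers the gap between $R_0$ and $\eta$ (Lemma~\ref{l58} is only valid for $R \geq R_0$, so even a correct large-$r$ argument would not reach $r = \eta$ directly).
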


\begin{proof}
	The proof of Lemma \ref{l515} is split into two steps. 
	
	\begin{clm}\label{comp clm}
		Let $\alpha$ be as in Lemma \ref{l514}. If $\alpha = 0$, then 
		$\vec u(0,r)$ is compactly supported in $r$.
	\end{clm}
	
	\begin{proof}[Proof of Claim \ref{comp clm}]
		Since $\alpha = 0$, 
		\begin{align}
		\lam(r) &= O(r^{-1}), \label{s538} \\
		\mu(r) &= O(r^{-1}). \label{s539}
		\end{align}
		Then, for $r_0 \geq R_1$, we have 
		\begin{align}
		|\lam(2^n r_0)| + |\mu(2^n r_0)| \lesssim (2^n r_0)^{-1}. \label{s540}
		\end{align}
		By the difference estimate \eqref{s528} and the growth estimates \eqref{s538}, \eqref{s539}, we conclude that
		\begin{align*}
		|\lam(2^{n+1}r_0)| &\geq (1 - C_1 \de_1) |\lam(2^n r_0)| - C_1 (2^n r_0)^{-2/3} |\mu(2^n r_0)|, \\
		|\mu(2^{n+1} r_0)| &\geq (1 - C_1 \de_1) |\mu(2^n r_0)| - C_1 (2^n r_0)^{-8/3} |\lam(2^n r_0)|.
		\end{align*}
		The constant $C_1$ is independent of $\de_1$ and $r_0$.  Thus
		\begin{align*}
		|\lam(2^{n+1} r_0)| + |\mu(2^{n+1}r_0)| \geq \left (1 - C_1 \de_1 - C_1 r_0^{-2/3} \right )
		\left [ |\lam(2^{n}r_0)| + |\mu(2^{n}r_0)| \right ].
		\end{align*}
		Take $r_0$ large and $\de_1$ small enough so that $C_1(\de_1 + r_0^{-2/3}) < 1/4$.  Then 
		\begin{align*}
		|\lam(2^{n+1} r_0)| + |\mu(2^{n+1}r_0)| \geq \frac{3}{4}
		\left [ |\lam(2^{n}r_0)| + |\mu(2^{n}r_0)| \right ].
		\end{align*}
		Proceeding inductively we obtain 
		\begin{align*}
		|\lam(2^{n+1} r_0)| + |\mu(2^{n+1}r_0)| \geq \left (\frac{3}{4} \right)^n \left [
		|\lam(r_0)| + |\mu(r_0)| \right ].
		\end{align*}
		By \eqref{s540} we conclude that 
		\begin{align*}
		\left (\frac{3}{4} \right)^n \left [
		|\lam(r_0)| + |\mu(r_0)| \right ] \lesssim (2^n r_0)^{-1}
		\end{align*}
		which implies 
		\begin{align*}
		\left (\frac{3}{2} \right)^n \left [
		|\lam(r_0)| + |\mu(r_0)| \right ] \lesssim 1,
		\end{align*}
		where the implied constant is uniform in $n$.  Hence $(\lam(r_0),\mu(r_0)) = (0,0)$.
		By \eqref{s525} $\| \pi_{r_0} \vec u_e(0) \|_{\h(r \geq r_0)} = 0$.  By Lemma \ref{l58} $\| \pi_{r_0}^{\perp} \vec u_e(0) 
		\|_{\h(r \geq r_0)} = 0$.  Hence $\| \vec u_e(0) \|_{\h(r \geq r_0)} = 0$. Since $\lim_{r \rar \infty} u_{e,0}(r) = 0$, we conclude
		that $(u_{e,0}(r),u_{e,1}(r)) = (0,0)$ for $r \geq r_0$.  Since $u(t,r) = 
		r^2 \la r \ra^{-2} u_e(t,r)$, we conclude that $\vec u(0,r) = (0,0)$ on $r \geq r_0$ as well.  This concludes the proof of the claim.
	\end{proof}   
	
	\begin{clm}\label{finish clm}
		If $\vec u(0,r)$ is compactly supported in $(\eta,\infty)$, then $\vec u(t,r) = (0,0)$ on $(\eta,\infty)$. 
	\end{clm}
	
	\begin{proof}[Proof of Claim \ref{finish clm}]
		Suppose not, i.e. $\vec u(0,r)$ is not identically 0 on $(\eta,\infty)$  Then  $(u_{e,0},u_{e,1})$ is not identically 0 on $(\eta,\infty)$. Define
		\begin{align*}
		\rho_0 = \inf \left \{ \rho : \| \vec u_e(0) \|_{\h(r \geq \rho)} = 0 \right \}.
		\end{align*}
		By our assumptions we have that $\eta < \rho < \infty$. Let $\rho_1 = \rho_1(\de_1)$ be so close to $\rho_0$ so that 
		$\eta < \rho_1 < \rho_0$ and 
		\begin{align*}
		0 < \| \vec u_e(0) \|_{\h(r \geq \rho_1)}^2 \leq \de_1^2, 
		\end{align*}
		where $\de_1$ is as in Lemma \ref{l59}.  
		
		By \eqref{s524} and \eqref{s525} and our choice of $\rho_1$, we have that 
		\begin{align}
		\begin{split}\label{s542a} 
		\int_{\rho_1}^\infty &\left ( \frac{1}{r} \p_r \lam(r) \right )^2 dr + \int_{\rho_1}^\infty (\p_r \mu(r))^2 dr \\
		&+ 3 \rho_1^{-3} \lam^2(\rho_1) + \rho_1^{-1} \mu^2(\rho_1) = \| \pi_{\rho_1}^{\perp} \vec u_e(0) \|^2_{\h(r \geq \rho_1)}
		+ \| \pi_{\rho_1} \vec u_e(0) \|^2_{\h(r \geq \rho_1)} < \de_1^2. 
		\end{split}
		\end{align}
		If we define $(u_{0,\rho_1}, u_{1,\rho_1})$ as in \eqref{s519} and \eqref{s520}, 
		we have for $\rho_1$ close to $\rho_0$, 
		\begin{align*}
		\| (u_{0,\rho_1}, u_{1,\rho_1}) \|_{\h(r \geq \eta)} \leq C(\rho_0) \| \vec u_e(0) \|_{\h(r \geq \rho_1)} \leq \de_1.
		\end{align*}
		Thus, by Lemma \ref{l58} we obtain 
		\begin{align}
		\begin{split}\label{s542}
		\int_{\rho_1}^\infty \left ( \frac{1}{r} \p_r \lam(r) \right )^2 dr + \int_{\rho_1}^\infty (\p_r \mu(r))^2 dr
		&\lesssim \rho_1^{-19/3}  \lam^2(\rho_1) + \rho_1^{-29/3} \lam^4(\rho_1) + R^{-\rho_1} \lam^6(\rho_1) \\
		&\:+ \rho_1^{-13/3} \mu^2(\rho_1) + \rho_1^{-17/3} \mu^4(\rho_1) + \rho_1^{-3} \mu^6(\rho_1) \\
		&\leq C(\rho_0) \left [|\lam(\rho_1)|^2 + |\mu(\rho_1)|^2 \right ],
		\end{split}
		\end{align}
		as long as $\rho_1$ is sufficiently close to $\rho_0$. Using the previous estimate and the fact that
		$\lam(\rho_0) = 0$, we argue as in the proof of Lemma 
		\ref{l59} to obtain
		\begin{align*}
		|\lam(\rho_1)|^2 &= |\lam(\rho_1) - \lam(\rho_0)|^2 \\
		&\leq (\rho_0 - \rho_1)^3 \left ( \int_{\rho_1}^{\rho_0} \left ( \frac{1}{r} \p_r \lam(r) \right )^2 dr \right ) \\
		&\leq C(\rho_0) (\rho_0 - \rho_1)^3 \left [|\lam(\rho_1)|^2 + |\mu(\rho_1)|^2 \right ].
		\end{align*}
		Similarly, 
		\begin{align*}
		|\mu(\rho_1)|^2 \leq C(\rho_0) (\rho_0 - \rho_1) \left [|\lam(\rho_1)|^2 + |\mu(\rho_1)|^2 \right ].
		\end{align*}
		We conclude that for all $\rho_1$ close to $\rho_0$,
		\begin{align*}
		|\lam(\rho_1)|^2 + |\mu(\rho_1)|^2 \leq 2 C(\rho_0) (\rho_0 - \rho_1) \left [|\lam(\rho_1)|^2 + |\mu(\rho_1)|^2 \right ]
		\end{align*}
		Thus, $(\lam(\rho_1),\mu(\rho_1)) = (0,0)$ for $\rho_1 < \rho_0$ close to $\rho_0$.  By \eqref{s542a} and \eqref{s542} we conclude that 
		$\| \vec u_e(0) \|_{\h(r \geq
			\rho_1)} = 0$.  This contradicts our definition of $\rho_0$ and the fact that $\rho_1 < \rho_0$. Thus, $\rho_0 = \eta$ and 
		$\| \vec u_e(0) \|_{\h(r \geq \eta)} = 0$ as desired. 
	\end{proof}
	
	Lemma \ref{l515} now follows immediately from Claim \ref{comp clm} and Claim 
	\ref{finish clm}.
\end{proof}

Using the previous arguments we can, in fact, conclude more in the case $\alpha = 0$. 

\begin{lem}\label{allt lem}
	Let $\alpha$ be as in Lemma \ref{l514}.  If $\alpha = 0$, then 
	\begin{align*}
	\vec u(t,r) = (0,0)
	\end{align*}
	for all $t \in \R$ and $r > 0$. 
\end{lem}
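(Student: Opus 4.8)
The plan is to deduce the statement for all times from the already-established time-$0$ case, Lemma \ref{l515}, by exploiting the time-translation invariance of the hypotheses together with the fact that the asymptotic coefficient $\alpha$ does not depend on time. Fix $t_0 \in \R$ and set $v(t,r) := u(t + t_0, r)$. Then $v$ is again a global solution of \eqref{s81}, and since its trajectory $\{\vec v(t) : t \in \R\}$ equals $K$, it is precompact in $\h$; in particular the associated Euclidean solution $v_e(t,r) = \frac{r^2+1}{r^2} v(t,r)$ solves \eqref{s52e} and satisfies the compactness property \eqref{s56}. Hence Lemma \ref{p58} applies to $v$ and yields a constant $\alpha(t_0) \in \R$ with $r^3 v_e(0,r) = \alpha(t_0) + O(r^{-1})$, i.e. $\lam(t_0, r) = \alpha(t_0) + O(r^{-1})$ as $r \to \infty$, and with $\mu(t_0, r) = O(r^{-1})$. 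By hypothesis $\alpha(0) = \alpha = 0$, and by Lemmas \ref{l512} and \ref{l513} the bound $|\mu(t,r)| \lesssim r^{-1}$ holds uniformly in $t$.

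The main step is to show $\alpha(t) \equiv 0$, which I would carry out exactly as in Step 1 of the proof of Lemma \ref{l513}. Given $t_1 < t_2$, average the relation $\alpha(t) = s^3 u_e(t,s) + O(s^{-1})$ over $s \in [R, 2R]$ and use the fundamental theorem of calculus in $t$ to get
\begin{align*}
\alpha(t_2) - \alpha(t_1) = \frac{1}{R} \int_R^{2R} s^3 \int_{t_1}^{t_2} \p_t u_e(t,s) \, dt \, ds + O(R^{-1}).
\end{align*}
From the definition $\mu(t,s) = s \int_s^\infty \p_t u_e(t,\rho) \rho \, d\rho$ one has the elementary identity $s^3 \p_t u_e(t,s) = \mu(t,s) - s \, \p_s \mu(t,s)$; inserting it and integrating by parts in $s$ on $[R,2R]$ converts the inner integral into $2 \int_R^{2R} \mu(t,s) \, ds - 2R \mu(t,2R) + R\mu(t,R)$, which is $O(1)$ uniformly in $t \in [t_1, t_2]$ thanks to the uniform bound on $\mu$. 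Therefore $\alpha(t_2) - \alpha(t_1) = O(|t_2 - t_1| R^{-1}) + O(R^{-1})$, and letting $R \to \infty$ gives $\alpha(t_2) = \alpha(t_1)$; in particular $\alpha(t_0) = \alpha(0) = 0$.

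To conclude, apply Lemma \ref{l515} to the translated solution $v = u(\cdot + t_0)$: since $v$ satisfies all the hypotheses of Proposition \ref{p51} and its coefficient $\alpha(t_0)$ vanishes, Lemma \ref{l515} gives $\vec v(0, r) = (0,0)$ for $r \geq \eta$, i.e. $\vec u(t_0, r) = (0,0)$ for $r \geq \eta$. Running this for every $\eta > 0$ shows $\vec u(t_0, r) = (0,0)$ for all $r > 0$, and since $t_0 \in \R$ was arbitrary we obtain $\vec u(t, r) = (0,0)$ for all $t \in \R$ and $r > 0$. The only point requiring genuine care is the time-independence of $\alpha$; the cancellation made visible by the identity for $s^3 \p_t u_e$, together with the uniform-in-time decay of $\mu$ from Lemma \ref{l513}, makes this go through just as in that lemma, so I do not expect an essential obstacle beyond carefully tracking the $O(r^{-1})$ remainders.
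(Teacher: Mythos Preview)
Your argument is correct, but it takes a different route from the paper. The paper never establishes that $\alpha(t)$ is constant; instead it uses finite speed of propagation. From Lemma \ref{l515} at $t=0$ one has $\vec u(0,r)=(0,0)$ on $\{r\geq\eta\}$, so by finite speed of propagation $\vec u(t,r)=(0,0)$ on $\{r\geq\eta+|t|\}$. For any fixed $t_0$, the time translate $u_{t_0}(t,r)=u(t+t_0,r)$ therefore has initial data compactly supported in $(\eta,\eta+|t_0|]$, and Claim \ref{finish clm} (already proved inside Lemma \ref{l515}) applies directly to give $\vec u_{t_0}(0,r)=(0,0)$ on $r\geq\eta$, bypassing any need to identify $\alpha(t_0)$.

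Your approach, by contrast, proves the time-independence of $\alpha$ through the identity $s^3\p_t u_e=\mu-s\,\p_s\mu$ and the uniform bound $|\mu(t,r)|\lesssim r^{-1}$, and then reruns the full Lemma \ref{l515} at every time. This is valid---the $O(r^{-1})$ remainders in $\lambda(t_j,\cdot)$ need only vanish as $R\to\infty$ for the \emph{fixed} times $t_1,t_2$, which follows from Lemma \ref{l514} applied to each time translate---but it is more laborious. The paper's argument is shorter and more robust because finite speed of propagation plus the compact-support claim avoid the asymptotic bookkeeping entirely; your argument has the mild advantage of yielding the statement ``$\alpha(t)$ is constant'' as a byproduct, though the paper never needs this.
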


\begin{proof}
	By Lemma \ref{l515} we know that if $\alpha = 0$ then $\vec u(0,r) = (0,0)$ 
	on $\{ r \geq \eta\}$.  
	By finite speed of propagation, we conclude that 
	\begin{align}\label{finite speed}
	\vec u(t,r) = (0,0) \quad \mbox{ on } \{ r \geq |t| + \eta \}. 
	\end{align}
	Let $t_0 \in \R$ be arbitrary and define $u_{t_0}(t,r) = u(t+t_0,r)$.  Then $\vec u_{t_0}$ inherits the following compactness property from $\vec u$: 
	\begin{align*}
	\forall R \geq 0, \quad \lim_{|t| \rar \infty} \| \vec u_{t_0}(t) \|_{\h(r \geq R + |t|; (r^2 +1)^2 dr )} &=  0, \\
	\lim_{R \rar \infty} \left [ \sup_{t \in \R} \| \vec u_{t_0}(t) \|_{\h(r \geq R + |t|; (r^2 + 1)^2 dr)} \right ] &= 0,
	\end{align*}
	and by \eqref{finite speed} $\vec u_{t_0}(0,r)$ is supported in $\{ 0 < r 
	\leq \eta + |t_0| \}$. By Claim \ref{finish clm} applied to $\vec u_{t_0}$ we conclude that $\vec u_{t_0}(0,r) = (0,0)$ on $r \geq \eta$.  Since $t_0$ was arbitrary, we conclude that
	\begin{align*}
	\vec u(t_0,r) = (0,0) \quad \mbox{on } \{ r \geq \eta \},
	\end{align*}  
	for any $t_0 \in \R$.  Since $\eta > 0$ was arbitrarily fixed in the beginning of this subsection, we conclude that 
	\begin{align*}
	\vec u(t,r) = (0,0)
	\end{align*}
	for all $t \in \R$ and $r > 0$ as desired. 
\end{proof}

\subsubsection{Step 3: Conclusion of the proof of Proposition \ref{positive lemma}}

We now conclude the proof of Proposition \ref{positive lemma} by proving the following. 

\begin{lem}\label{l516}
	Let $\alpha$ be as in Lemma \ref{l514}. As before, we denote the unique finite 
	energy harmonic map of degree $n$ by $Q$ and recall that there exists a unique 
	$\alpha_n > 0$ such that 
	\begin{align*}
	Q(r) = n\pi - \alpha_n r^{-2} + O(r^{-4}).
	\end{align*}
	Let $Q_{\al - \al_n}$ denote the unique solution to \eqref{ode} with the property that 
	\begin{align}\label{s91}
	Q_{\alpha - \alpha_n}(r) = n\pi + (\alpha - \alpha_n) r^{-2} + O(r^{-4})
	\end{align} 
	as $r \rar \infty$.  Note that $Q_{\al - \al_n}$ exists and is unique by 
	Proposition \ref{prescribe}.  Define a static solution $U_+$ to \eqref{s81} via
	\begin{align*}
	U_+(r) = \la r \ra^{-1} \bigl ( Q_{\alpha - \al_n}(r) - Q(r) \bigr ).
	\end{align*}
	Then 
	\begin{align*}
	\vec u(t,r) = (U_+(r),0)
	\end{align*}
	for all $t \in \R$ and $r > 0$.
\end{lem}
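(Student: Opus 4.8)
The plan is to reduce Lemma \ref{l516} to the already-established vanishing result (Lemma \ref{allt lem}) by subtracting off the static solution $U_+$. Set $v(t,r) := u(t,r) - U_+(r)$. Since $U_+$ is a static solution to \eqref{s81}, $v$ solves an equation of the same type, namely $\p_t^2 v - \Delta_g v + V v = N(\cdot, U_+ + v) - N(\cdot, U_+)$, which after rewriting $\tilde N(r,v) := N(r,U_+(r)+v) - N(r,U_+(r))$ is a semilinear wave equation whose nonlinearity $\tilde N$ still satisfies estimates of the form \eqref{s57}--\eqref{s59} in the exterior region (the extra linear-in-$v$ term produced by $U_+$ can be absorbed into a modified potential $\tilde V_e$ which still obeys $|\tilde V_e(r)| \lesssim r^{-4}$ for $r \gtrsim 1$ because $U_+$ decays; more precisely $U_+(r) = \la r\ra^{-1}(Q_{\al-\al_n}-Q) = O(r^{-3})$ by \eqref{s91} and Proposition \ref{harm}). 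Thus the entire machinery of Step 1 and Step 2 of this subsection applies verbatim to $v$, with the crucial point being that the ``Euclidean'' version $v_e := \frac{r^2+1}{r^2} v$ inherits the compactness properties \eqref{s56}.

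The key step is the asymptotic identification. First I would verify that the trajectory $\{\vec v(t) : t \in \R\}$ is precompact in $\h$ (equivalently $\{\vec v_e(t)\}$ has the exterior decay \eqref{s56}): this follows because $\vec u$ has it by hypothesis/Lemma \ref{l52}, and $(U_+,0)$ is a fixed (time-independent) element of the relevant space on $r \geq \eta$ with $U_+ \in \h(r \geq \eta)$ since $U_+ = O(r^{-3})$ and $\p_r U_+ = O(r^{-4})$. Applying Lemma \ref{p58} (which only used \eqref{s56}) to $v_e$ in place of $u_e$ gives a constant $\tilde\alpha \in \R$ with $r^3 v_{e,0}(r) = \tilde\alpha + O(r^{-1})$ and $r\int_r^\infty v_{e,1}(\rho)\rho\, d\rho = O(r^{-1})$ as $r \rar \infty$. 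Now I compute $\tilde\alpha$ directly from the definitions: $v_{e,0}(r) = \frac{r^2+1}{r^2}(u_0(r) - U_+(r))$, and by Lemma \ref{l514} we have $r^3 \frac{r^2+1}{r^2} u_0(r) = r^3 u_{e,0}(r) = \alpha + O(r^{-1})$, while $r^3 \frac{r^2+1}{r^2} U_+(r) = r^3\la r\ra^{-2}(r^2+1)r^{-2}\cdot r^2 \cdot \la r\ra^{-1}(Q_{\al-\al_n}-Q)\cdot r^{-2}$—more cleanly, $r^3 U_+(r) = r^3 \la r \ra^{-1}(Q_{\al-\al_n}(r) - Q(r))$, and using the asymptotics $Q_{\al - \al_n}(r) - Q(r) = (\alpha - \alpha_n)r^{-2} + \alpha_n r^{-2} + O(r^{-4}) = \alpha r^{-2} + O(r^{-4})$ together with $\la r\ra^{-1} = r^{-1} + O(r^{-3})$, one gets $r^3 U_+(r) = r^3 \cdot r^{-1} \cdot \alpha r^{-2} + O(r^{-1}) = \alpha + O(r^{-1})$. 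Hence $r^3 v_{e,0}(r) = r^3 u_{e,0}(r) - r^3 U_+(r)\cdot(1 + O(r^{-2})) = \alpha - \alpha + O(r^{-1}) = O(r^{-1})$, so $\tilde\alpha = 0$. (I should double-check the $\frac{r^2+1}{r^2}$ factors cancel consistently on both terms, which they do since $v_e$ and $u_e$ use the same weight and $U_{+,e} := \frac{r^2+1}{r^2}U_+$ enters $v_e = u_e - U_{+,e}$ additively.)

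With $\tilde\alpha = 0$, Lemma \ref{allt lem} applied to $v$ (whose validity rests only on the structural hypotheses on the equation and on the compactness \eqref{s56}, both of which we have verified for $v$) yields $\vec v(t,r) = (0,0)$ for all $t \in \R$ and all $r > 0$, i.e. $\vec u(t,r) = (U_+(r),0)$ for all $t$ and all $r > 0$, which is exactly the claim.

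The main obstacle I anticipate is the bookkeeping needed to confirm that $v = u - U_+$ genuinely solves an equation to which the Step 1--Step 2 analysis applies without change: one must check (i) that the linearization correction $N(r, U_+ + v) - N(r, U_+) - \p_u N(r,0)v$ and the term $\p_u N(r,0) v - V v$ recombine into $-\tilde V_e v + \tilde F_e + \tilde G_e$ with $\tilde V_e$ even-or-at-least-decaying like $r^{-4}$ and $\tilde F_e, \tilde G_e$ obeying the quadratic/cubic bounds \eqref{s58}--\eqref{s59} for $r \geq \eta$—here the decay $U_+ = O(r^{-3})$ is what saves us; and (ii) that $(U_+, 0) \in \h(r \geq \eta)$ so that precompactness of $\{\vec u(t)\}$ transfers to $\{\vec v(t)\}$. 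Both are routine given Proposition \ref{harm} and \eqref{s91}, but they are the points where care is required; everything else is a direct citation of Lemma \ref{p58} and Lemma \ref{allt lem}.
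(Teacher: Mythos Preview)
Your proposal is correct and follows essentially the same approach as the paper: subtract off $U_+$, verify the difference satisfies an equation with the same structural bounds and inherits the exterior compactness, observe that the new leading coefficient $\tilde\alpha$ vanishes by construction, and invoke the $\alpha=0$ case. The paper organizes the bookkeeping slightly more cleanly by noting that since $Q + \la r\ra u = Q_{\al-\al_n} + \la r\ra u_\al$, the equation for $u_\al$ is obtained by the \emph{same} reduction as for $u$ but with $Q_{\al-\al_n}$ in place of $Q$, so the potential $V_\al$ and nonlinearities $F_\al,G_\al$ have explicit formulas (your \eqref{s93}--\eqref{s94}) whose decay follows immediately from the asymptotics of $Q_{\al-\al_n}$; this sidesteps the ad hoc absorption of linear terms into $\tilde V_e$ that you flag as the main obstacle.
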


\begin{proof}
	Lemma \ref{l516} follows from the proof of the $\al = 0$ case and a change of variables.  Let $Q_{\alpha - \alpha_n}$ be as in the statement of the lemma.  We define
	\begin{align}
	\begin{split}\label{s92}
	u_{\al}(t,r) &:= u(t,r) - \la r \ra^{-1} \left ( Q_{\alpha - \alpha_n}(r) - Q(r)  \right ) \\
	&= u(t,r) - U_+(r)
	\end{split}
	\end{align}
	and observe that $u_{\al}$ solves 
	\begin{align*}
	\p_t^2 u_{\al} - \p_r^2 u_\al - \frac{4r}{r^2 + 1} \p_r u_\al + V_\al(r) u_\al = N_\al(r,u_\al), 
	\end{align*}
	where the potential $V_\al$ is given by 
	\begin{align}\label{s93}
	V_\al(r) = \la r \ra^{-4} + 2 \la r \ra^{-2} ( \cos 2 Q_{\alpha - \alpha_n} - 1 ), 
	\end{align}
	and $N_\al(r,u) := F_\al(r,u) + G_\al(r,u)$ with 
	\begin{align}
	\begin{split}\label{s94}
	F_\al(r,u) &:= 2 \la r \ra^{-3} \sin^2 (\la r \ra u) \sin 2 Q_{\alpha - \alpha_n} , \\
	G_\al(r,u) &:= \la r \ra^{-3} \left [ 2 \la r \ra u - \sin (2 \la r \ra u) \right ] \cos 2 Q_{\alpha - \alpha_n} .
	\end{split}
	\end{align}
	By \eqref{s91}, the potential $V_\al$ is smooth and satisfies
	\begin{align*}
	V_\al(r) = \la r \ra^{-4} + O ( \la r \ra^{-6} ),
	\end{align*}
	as $r \rar \infty$ and the nonlinearities $F_\al$ and $G_\al$ satisfy 
	\begin{align*}
	|F_\al(r,u)| &\lesssim \la r \ra^{-3} |u|^2, \\
	|G_\al(r,u)| &\lesssim |u|^3, 
	\end{align*}
	for $r \geq 0$.  Moreover, by \eqref{s92} we see that $\vec u_{\alpha}$ inherits the compactness property from $\vec u$:
	\begin{align}
	\begin{split}\label{comp prop al}
	\forall R \geq 0, \quad \lim_{|t| \rar \infty} \| \vec u_{\al}(t) \|_{\h( r \geq R + |t|; (1+r^2)^2 dr)} = 0, \\
	\lim_{R \rar \infty} \left [ \sup_{t \in \R} \| \vec u_{\al}(t) \|_{\h( r \geq R + |t|; (1+r^2)^2 dr)} \right ] = 0.
	\end{split}
	\end{align}
	
	Let $\eta > 0$.  We now define for $r \geq \eta$,
	\begin{align}\label{s96}
	u_{\al,e}(t,r) := \frac{r^2 + 1}{r^2} u_{\al}(t,r)
	\end{align}
	and note that $u_{\al,e}$ satisfies an equation analogous to $u_e$:
	\begin{align}\label{s97} 
	\p_t^2 u_{\al,e} - \p^2_r u_{\al,e} - \frac{4}{r} \p_r u_{\al,e} + V_{\al,e}(r) u_{\al,e} = N_{\al,e} (r,u_{\al,e}), 
	\quad t \in \R, r \geq \eta,
	\end{align}
	where 
	\begin{align*}
	V_{\al,e}(r) = V_\al(r) - \frac{2}{r^2 (r^2 + 1)},
	\end{align*}
	and $N_{\al,e}(r,u_e) = F_{\al,e}(r,u_e) + G_{\al,e}(r,u_e)$ where 
	\begin{align*}
	F_{\al,e}(r,u_{\al,e}) &= \frac{r^2 + 1}{r^2} F_\al \left (r, \frac{r^2}{r^2 + 1} u_{\al,e} \right ), \\
	G_{\al,e}(r,u_{\al,e}) &= \frac{r^2 + 1}{r^2} G_\al \left (r, \frac{r^2}{r^2 + 1} u_{\al,e} \right ).
	\end{align*}
	In particular, we have the analogues of \eqref{s57}, \eqref{s58}, and \eqref{s59}: for all $r \geq \eta$,
	\begin{align} 
	| V_{\al,e}(r) | &\lesssim r^{-4}, \label{s98} \\
	|F_{\al,e}(r,u)| &\lesssim r^{-3} |u|^{2}, \label{s99} \\
	|G_{\al,e}(r,u)| &\lesssim |u|^3. \label{s910}
	\end{align} 
	Moreover, $u_{\al,e}$ inherits the following compactness properties from $u_\alpha$:
	\begin{align}
	\begin{split}\label{s911}
	\forall R \geq \eta, \quad \lim_{|t| \rar \infty} \| \vec u_{\al,e}(t) \|_{\h( r \geq R + |t|; r^4 dr)} = 0, \\
	\lim_{R \rar \infty} \left [ \sup_{t \in \R} \| \vec u_{\al,e}(t) \|_{\h( r \geq R + |t|; r^4 dr)} \right ] = 0.
	\end{split}
	\end{align}
	Finally, by construction we see that
	\begin{align}
	r^3 u_{\al, e,0}(r) &= O(r^{-1}), \label{s912} \\
	r \int_r^\infty u_{\al, e,1}(\rho) \rho d\rho &= O(r^{-1}). \label{s913}
	\end{align} 
	
	Using \eqref{s97}--\eqref{s913}, we may repeat the previous arguments with $u_{e,\al}$ in place
	of $u_e$ to conclude the following analog of Lemma \ref{l515}:
	
	\begin{lem}\label{l515 al}
		$\vec u_{\al}(0,r) = (0,0)$ for $r \geq \eta$. 
	\end{lem}
	
	Finally, we obtain the following analog of Lemma \ref{allt lem}:
	
	\begin{lem}\label{allt lem al}
		We have 
		\begin{align*}
		\vec u_{\alpha}(t,r) = (0,0)
		\end{align*}
		for all $t \in \R$ and $r > 0$. 
	\end{lem}
	
	Equivalently, Lemma \ref{allt lem al} states that 
	\begin{align*}
	\vec u(t,r) = (U_+(r),0) 
	\end{align*}
	for all $t \in \R$ and $r > 0$. This concludes the proof of Lemma \ref{l516} and Proposition \ref{positive lemma}.  
	
\end{proof}

\subsection{Proof of Proposition \ref{p53}}

Using Proposition \ref{positive lemma} and its analog for $r < 0$, we quickly conclude the proof of Proposition \ref{p53}.  Indeed, we know that there exists static solutions $U_{\pm}$ to \eqref{s81} such that 
\begin{align}\label{allt pmr}
\vec u(t,r) = (U_{\pm}(r), 0)
\end{align}
for all $\pm r > 0$ and $t \in \R$.  In particular, $\p_t u(t,r) = 0$, 
$\p_r u(t,r) = \p_r u(0,r)$ and $u(t,r) = u(0,r)$ for all $t$ and almost every $r$.  Let $\psi \in C^\infty_0(\R)$ with 
$\int \psi dt = 1$ and let $\varphi \in C^\infty_0(\R)$.  Then since 
$u$ solves \eqref{s81} in the weak sense, we conclude that 
\begin{align*}
0 &= \int \int \bigl [ \psi'(t) \varphi(r) \p_t u(t,r) + \psi (t)\varphi'(r) \p_r u(t,r) + V(r) \psi(t) \varphi(r) u(t,r) \\&\hspace{1.2 cm} - \psi(t) \varphi(r) N(r,u(t,r))  \bigr ] (r^2 + 1)^2 dr dt \\
&= \int \int \psi(t) \bigl [ \varphi'(r) \p_r u(0,r) + V(r) \varphi(r) u(0,r) - \varphi(r) N(r,u(0,r))  \bigr ] (r^2 + 1)^2 dr dt \\
&= \int \bigl [ \varphi'(r) \p_r u(0,r) + V(r) \varphi(r) u(0,r) - \varphi(r) N(r,u(0,r))  \bigr ] (r^2 + 1)^2 dr.
\end{align*}
Since $\varphi$ was arbitrary, we see that $u(0,r)$ is a weak solution
in $H^1(\R)$ to the static equation $-\p_r^2 u - \frac{4r}{r^2+1} \p_r u + V(r) u = N(r,u)$ on $\R$.  By standard arguments we conclude that $u(0,r)$ is a classical solution.  Thus, $\vec u(t,r) = (U(r),0) := (u(0,r),0)$ for all $t,r \in \R$ as desired. 

\qed

\subsection{Proofs of Proposition \ref{p51} and Theorem \ref{t41}}

We briefly summarize the proofs of Proposition \ref{p51} and Theorem \ref{t41}.  From Proposition \ref{p53}, 
we obtain Proposition \ref{p51}.  

\begin{proof}[Proof of Proposition \ref{p51}]
	By Proposition \ref{p53}, we have that $\vec u = (U, 0)$ for some finite energy static solution $U$ to \eqref{s81}.  Thus, 
	$\psi = Q + \la r \ra U$ is a finite energy static solution to \eqref{s41}, i.e. a harmonic map.  By Proposition \ref{harm},
	the harmonic map $Q$ is the unique finite energy static solution to \eqref{s41} so that $Q = \psi = Q + \la r \ra U$ whence  
	$\vec u = (0,0)$ as desired.  
\end{proof}

Using Proposition \ref{p43} and Proposition \ref{p51}, we conclude the proof of our main result Theorem \ref{t41} (equivalently
Theorem \ref{t01}). 

\begin{proof}[Proof of Theorem \ref{t41}]
	Suppose that Theorem \ref{t41} fails.  Then by Proposition \ref{p43}, there exists a nonzero solution $u_*$ to \eqref{s44} such that 
	the trajectory 
	\begin{align*}
	K := \left \{ \vec u_*(t) : t \in \R \right \},
	\end{align*}
	is precompact in $\h$.  However, by Proposition \ref{p51}, we must have that $\vec u_* = (0,0)$, which contradicts the fact
	that $u_*$ is nonzero.  Thus, Theorem \ref{t41} holds.  
\end{proof}

\end{document}